\setlist[itemize]{noitemsep, topsep=1pt}
\setlist[enumerate]{noitemsep, topsep=1pt}
\theoremstyle{plain}
\newtheorem{theorem}{Theorem}[section]
\newtheorem{lemma}[theorem]{Lemma}
\newtheorem{corollary}[theorem]{Corollary}
\newtheorem{proposition}[theorem]{Proposition}
\newtheorem{definition}[theorem]{Definition}
\theoremstyle{definition}
\newtheorem{remark}[theorem]{Remark}
\def\NN{\mathbb{N}}
\def\QQ{\mathbb{Q}}
\def\ZZ{\mathbb{Z}}
\def\ZZd{\mathbb{Z}^d}
\def\RR{\mathbb{R}}
\def\ag{\mathcal{A}}
\def\bg{\mathcal{B}}
\def\FF{\mathcal{F}}
\def\EE{\mathbb{E}}
\newcommand{\End}{\mathrm{End}}
\newcommand{\Aut}{\mathrm{Aut}}
\newcommand{\TM}{\mathrm{TM}}
\newcommand{\RTM}{\mathrm{RTM}}
\newcommand{\OB}{\mathrm{OB}}
\newcommand{\LP}{\mathrm{LP}}
\newcommand{\FG}{\mathrm{RFA}}
\newcommand{\RFA}{\mathrm{RFA}}
\newcommand{\EL}{\mathrm{EL}}
\newcommand{\SHIFT}{\mathrm{SHIFT}}
\newcommand{\SP}{\mathrm{SP}}
\newcommand{\define}[1]{\textbf{#1}}
\newcommand{\AOS}{\mathrm{AOS}}
\newcommand{\OS}{\mathrm{OS}}
\newcommand{\CL}{\mathrm{CL}}
\newcommand{\ID}{\mathrm{id}}
\newcommand{\fix}{\mathrm{fix}}
\newcommand{\Sym}{\mathrm{Sym}}
\newcommand{\Alt}{\mathrm{Alt}}
\title{The group of reversible Turing machines: subgroups, generators and computability}
\author{Sebasti\'an Barbieri, Jarkko Kari \and Ville Salo}
	\date{}
\begin{document}
	
		\begin{abstract}
			We study an abstract group of reversible Turing machines. In our model, each machine is interpreted as a homeomorphism over a space which represents a tape filled with symbols and a head carrying a state. These homeomorphisms can only modify the tape at a bounded distance around the head, change the state and move the head in a bounded way. We study three natural subgroups arising in this model: the group of finite-state automata, which generalizes the topological full groups studied in topological dynamics and the theory of orbit-equivalence; the group of oblivious Turing machines whose movement is independent of tape contents, which generalizes lamplighter groups and has connections to the study of universal reversible logical gates; and the group of elementary Turing machines, which are the machines which are obtained by composing finite-state automata and oblivious Turing machines.
			We show that both the group of oblivious Turing machines and that of elementary Turing machines are finitely generated, while the group of finite-state automata and the group of reversible Turing machines are not. We show that the group of elementary Turing machines has undecidable torsion problem. From this, we also obtain that the group of cellular automata (more generally, the automorphism group of any uncountable one-dimensional sofic subshift) contains a finitely-generated subgroup with undecidable torsion problem. We also show that the torsion problem is undecidable for the topological full group of a full $\ZZ^d$-shift on a non-trivial alphabet if and only if $d \geq 2$.
		\end{abstract}

		\maketitle

		\medskip	
		
		\noindent
        \textbf{Keywords:} Reversible Turing machine, cellular automata, torsion problem, topological full group.
        \smallskip
		
		\noindent
		\textbf{MSC2010:} \textit{Primary:}
		68Q05, 
		68Q80, 
		37B15, 
		\textit{Secondary:}
		37B10, 
		37B50. 

\section{Introduction}\label{section.introduction}

This article is the extended version of the conference paper~\cite{BaKaSa16}. 
The main new results that did not appear in the conference paper are that elementary Turing machines are finitely generated, and that automorphism groups of uncountable sofic $\ZZ$-shifts have finitely-generated subgroups with undecidable torsion problem.

\subsection{Turing machines and their generalization}

Turing machines have been studied since the 30s as the standard formalization of the abstract concept of computation. However, more recently, Turing machines have also been studied in the context of dynamical systems. In~\cite{Ku97}, two dynamical systems were associated to a Turing machine, one with a ``moving tape'' and one with a ``moving head''. After that, there has been a lot of study of dynamics of Turing machines, see for example~\cite{DeKuBl06,BlCaNi02,KaOl08,GaMa07,GaGu10,Je13,GaOlTo15}. Another connection between Turing machines and dynamics is that they can be used to describe ``effectively closed'' zero-dimensional dynamical systems. A particularly interesting case is that of subshifts whose forbidden patterns are enumerated by a Turing machine. These subshifts are called effectively closed, or $\Pi^0_1$ subshifts, and especially in multiple dimensions, they are central to the topic due to the strong links known between SFTs, sofic shifts and $\Pi^0_1$-subshifts, see for example~\cite{Hochman2009b,DuRoSh10,AuSa13}. An intrinsic notion of Turing machine computation for these subshifts on general groups was proposed in~\cite{AuBaSa14}, and a similar study was performed with finite state machines in~\cite{SaTo15,SaTo15c}.

In all these papers, the definition of a Turing machine is (up to mostly notational differences and switching between the moving tape and moving head model) the following: A Turing machine is a function $T\colon \Sigma^\ZZ \times Q \to \Sigma^\ZZ \times Q$ defined by a local rule $f_T\colon \Sigma \times Q \to \Sigma \times Q \times \{-1,0,1\}$ by the formula
\[ T(x, q) = (\sigma^{-d}(\tilde{x}), q') \mbox{ if } f_T(x_0, q) = (a,q',d), \]
where $\sigma\colon \Sigma^{\ZZ} \to \Sigma^{\ZZ}$ is the shift action given by $\sigma^{d}(x)_{z} = x_{z-d}$, $\tilde{x}_0 = a$ and $\tilde{x}|_{\ZZ \setminus \{0\}} = x|_{\ZZ \setminus \{0\}}$. In this paper, such Turing machines are called \define{classical Turing machines}. This definition (as far as we know) certainly suffices to capture all computational and dynamical properties of interest, but it also has some undesirable properties: The composition of two classical Turing machines -- and even the square of a classical Turing machine -- is typically not a classical Turing machine, and the reverse of a reversible classical Turing machine is not always a classical Turing machine.

In this paper, we give a more general definition of a Turing machine, by allowing it to move the head and modify cells at an arbitrary (but bounded) distance on each timestep. With the new definition, we get rid of both issues: With our definition,
\begin{itemize}
	\item Turing machines are closed under composition, forming a monoid, and 
	\item reversible Turing machines are closed under inversion, forming a group.
\end{itemize}
We also characterize reversibility of classical Turing machines in combinatorial terms, and show what their inverses look like.
Our definition of a Turing machine originated in the yet unpublished work of M. Schraudner and the last author, where the group of such machines was studied on general subshifts (with somewhat different objectives). The same definition was given in 1991 by Moore~\cite{Mo91} under the name ``generalized shifts''.

These benefits of the definition should be compared to the benefits of allowing arbitrary radii in the definition of a cellular automaton: If we define cellular automata as having a fixed radius of, say, $3$, then the inverse map of a reversible cellular automaton is not always a cellular automaton, as the inverse of a cellular automaton may have a much larger radius, see~\cite{CzKa07}. Similarly, with a fixed radius, the composition of two cellular automata is not necessarily a cellular automaton.

We give our Turing machine definitions in two ways, with a moving tape and with a moving head, as done in~\cite{Ku97}. The moving tape point of view is often the more useful one when studying one-step behavior and invariant measures, whereas we find the moving head point of view easier for constructing examples, and when we need to track the movement of multiple heads. The moving head Turing machines are in fact a subset of cellular automata on a particular kind of subshift. The moving tape machine on the other hand is a generalization of the topological full group of a subshift, which is an important concept in particular in topological dynamics and the theory of orbit equivalence. For the study of topological full groups of minimal subshifts and their interesting group-theoretic properties, see for example~\cite{GiPuSk99,GrMe11,JuMo12}. The (one-sided) SFT case is studied in~\cite{Ma12}. We shall show that our two Turing machine models yield isomorphic monoids, and isomorphic groups in the case of reversible Turing machines.

\subsection{Our results and comparisons with other groups}

In Section~\ref{section.preliminaries}, we define our models and prove basic results about them. In Section~\ref{subsection_measure}, we define a natural uniform measure on these spaces and use it to show that injectivity and surjectivity are both equal to reversibility in our model.

Our results have interesting counterparts in the theory of cellular automata: One of the main theorems in the theory of cellular automata is that on a large class of groups (the \define{surjunctive groups}, see for instance Section 3 of~\cite{CeCo10}) injectivity implies surjectivity, and (global) bijectivity is equivalent to having a cellular automaton inverse map. Furthermore, one can attach to a reversible one- or two-dimensional cellular automaton its ``average drift'', that is, the speed at which information moves when the map is applied, and this is a homomorphism from the group of cellular automata to a subgroup of $\QQ^d$ under multiplication (where $d$ is the corresponding dimension), see \cite{Ka96}. In Section~\ref{section.subgroups} we use the uniform measure to define an analog, the ``average movement'' homomorphism for Turing machines.


In Section~\ref{section.subgroups}, we define some interesting subgroups of the group of reversible Turing machines. First, we define the local permutations -- Turing machines that never move the head at all --, and their generalization to oblivious Turing machines where movement is allowed, but is independent of the tape contents. The group of oblivious Turing machines can be seen as a kind of generalization of lamplighter groups. It turns out that the group of oblivious Turing machines is finitely generated. Our proof relies strongly on the existence of universal reversible logical gates, see~\cite{AaGrSc15}. 

We also define the group of (reversible) finite-state machines -- Turing machines that never modify the tape. This group is not finitely generated, but we give a natural infinite generating set for it. Finite-state machines with a single state exactly correspond to the topological full groups of full shifts, and in this sense our definition of a reversible finite-state machine can be seen as a generalization of the topological full group on a full $\ZZ^d$-shift.

Our original motivation for defining these subgroups -- finite-state machines and local permutations -- was to study the question of whether they generate all reversible Turing machines. Namely, a reversible Turing machine changes the tape contents at the position of the head and then moves, in a globally reversible way. Thus, it is a natural question whether every reversible Turing machine can actually be split into reversible tape changes (actions by local permutations) and reversible moves (finite-state automata).

We call the join of finite-state machines and local permutations elementary Turing machines, and show that not all Turing machines are elementary, as Turing machines can have arbitrarily small average movement, but elementary ones have only a discrete sublattice of possible average movements. However, we show that every reversible classical Turing machine (and thus all its iterates) is elementary.

A reason why elementary Turing machines are so interesting is that, while we show that the group of reversible Turing machines is not finitely generated (Theorem~\ref{RTM_is_not_finitely_generated}), we have that the group of elementary Turing machines on a one-dimensional tape is finitely generated, for any fixed alphabet size and number of states.

\begin{theorem}\label{theorem_EL_is_FG}
	The group of elementary Turing machines on $n$ symbols, $k$ states and dimension $1$ is finitely generated for every $n,k\geq 1$.
\end{theorem}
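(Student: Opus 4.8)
The plan is to produce a finite generating set built from three families of machines: (1) the local permutations, which form a subgroup, (2) a set of simple ``shift-like'' finite-state automata realizing unit moves of the head, and (3) enough machines to glue these together so that the group they generate contains all of $\EL$. I would first reduce the statement to finding finite generating sets for the two ``building block'' subgroups separately — the group $\LP$ of local permutations and the group $\FG$ of reversible finite-state automata — and then argue that a generating set for $\EL$ is obtained from generators of these two groups (possibly plus finitely many conjugating elements), using the fact that $\EL$ is defined as the subgroup generated by $\LP \cup \FG$. So the heart of the argument is: \emph{$\LP$ is finitely generated} and \emph{$\FG$ modulo $\LP$ is controlled by finitely many moves}.

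For $\LP$: a local permutation with window radius $r$ is just a permutation of $\Sigma^{[-r,r]} \times Q$ that acts ``cellwise'' on the tape. I would first observe that $\LP$ is the increasing union of the finite groups $\LP_r = \Sym(\Sigma^{[-r,r]} \times Q)$ acting on windows of radius $r$, intertwined by a shift-by-one ``renaming'' map. The key input here is the existence of \emph{universal reversible logical gates} (the result of~\cite{AaGrSc15} cited in the excerpt for the oblivious case): a fixed finite set of local permutations with small window, together with the ability to conjugate by the obvious ``move the work region by one cell'' automata, suffices to generate every permutation of every finite window, once the alphabet is padded — essentially the standard ``reversible circuit with ancillas'' argument, where the ancilla symbols are already present because the alphabet has $n$ symbols and the padding can be emulated on blocks. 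The technical work is to make the ancilla/padding bookkeeping rigorous on a tape of \emph{fixed} alphabet: one groups several tape cells into a macro-cell so that a single macro-cell already has enough ``space'' to run the reversible circuit synthesis, and then checks that passing between cell and macro-cell coordinates is itself done by finitely many local permutations and moves.

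For $\FG$ and the move structure: I would use the ``average movement'' homomorphism defined in Section~\ref{section.subgroups} (and the statement, quoted in the introduction, that elementary machines have only a discrete sublattice of average movements) to see that, after composing with finitely many fixed ``unit-speed'' automata, any elementary machine can be brought to one of average movement zero; such a machine's head returns, and one argues by a bounded-orbit / commutator argument that it is then generated by $\LP$ together with finitely many bounded finite-state automata. Concretely: pick the finitely many ``basic moves'' — for each of the finitely many states $q$, an automaton that, when the head is in some controlled configuration, moves one step in a chosen direction — and show every reversible finite-state automaton factors as a product of conjugates of these by local permutations. This is the finite-state analogue of writing a topological-full-group element as a product of elementary transpositions of the $\ZZ$-action, and the one-dimensionality is used crucially here: the head's trajectory over one ``period'' can be linearly ordered, which lets one peel off one move at a time.

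The main obstacle I expect is the $\LP$ step — proving that finitely many local permutations generate \emph{all} of $\LP$ on a fixed alphabet. The naive universal-gate argument needs spare symbols (ancillas) that are guaranteed to be in a known state, which a fixed tape does not provide; overcoming this requires the macro-cell regrouping together with a careful argument that the regrouping isomorphism and its inverse both lie in the group generated by the proposed finite set, and that the synthesis of an arbitrary window permutation can be done reversibly \emph{in place} using only the symbols available inside a bounded block. The finite-state part is comparatively routine given the average-movement homomorphism and the one-dimensional linear order of head trajectories, though one must be careful that the conjugating local permutations needed there stay within the already-constructed finite generating set for $\LP$.
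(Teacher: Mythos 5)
Your overall architecture does match the paper's (universal reversible gates for the tape-permutation part; an average-movement reduction plus a swap decomposition for the head-movement part), but there are two problems, one of framing and one substantive. The framing problem: your stated reduction, ``find finite generating sets for $\LP$ and $\FG$ separately,'' cannot work because neither subgroup is finitely generated. $\LP(\ZZ,n,k)$ is infinite and locally finite (finitely many local permutations have bounded window and hence generate a finite group), and $\RFA(\ZZ,n,k)$ is likewise not finitely generated (the paper shows this via a sign homomorphism on periodic configurations). What is true, and what you in substance describe, is that finitely many local permutations \emph{together with the shift} generate all of $\LP$, i.e.\ the group $\OB(\ZZ,n,k)=\langle\LP,\SHIFT\rangle$ is finitely generated; since the shift lies in $\FG$, this suffices for generating $\EL$, so this part is salvageable.

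The substantive gap is in the finite-state part. After killing the average movement and running the ``peel off one move at a time'' induction along the linear order of the line (which is indeed how the paper proceeds), what you obtain is a factorization of a zero-drift reversible finite-state automaton into \emph{controlled position swaps} $T_{u,a,v}$: head transpositions conditioned on reading a word $uav$. This is an infinite family parametrized by control words of unbounded length, and your plan to realize them as conjugates of finitely many basic moves by local permutations fails: conjugating a swap whose control condition has radius $r$ by a local permutation of radius $R$ produces a machine whose control condition still lives in a window of radius at most $r+R$, so no fixed finite set of conjugators can produce swaps with arbitrarily long control words. This is precisely where the paper has to work hardest: it proves by induction on $\ell$ that the swap controlled by $10^\ell$ is a product of a fixed short-control swap and oblivious machines, via an $n$-fold repetition of ``flip the position bit if an auxiliary cell holds $0$; increment that cell mod $n$ if the remaining control symbols are all $0$'' (with a separate affine-conjugation trick when $n$ is odd). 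Without some mechanism of this kind for lengthening control words using \emph{products} rather than conjugation, the argument does not close.
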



In Section~\ref{section.computability}, we show that the group of Turing machines is recursively presented and has a decidable word problem, but that its torsion problem (the problem of deciding if a given element has finite order) is undecidable in all dimensions. In fact, we show that the finitely-generated group of elementary Turing machines has an undecidable torsion problem in the purely group-theoretic sense. The proof is based on simulating classical Turing machines with elementary ones on a fixed alphabet and state set, with a simulation that preserves finite orbits. The result follows because the periodicity of reversible Turing machines is undecidable \cite{KaOl08}.

\begin{theorem}\label{thm:decidability_torsion_elementaryTM}
	The torsion problem of the group of elementary Turing machines on $n$ symbols, $k$ states and dimension $d$ is undecidable for every $n \geq 2$, $k \geq 1$ and $d \geq 1$.
\end{theorem}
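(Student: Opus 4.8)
\textbf{The plan} is to reduce from the periodicity problem for reversible classical Turing machines, which is undecidable by~\cite{KaOl08}. Given a reversible classical Turing machine $T$ with alphabet $\Sigma$ and state set $Q$, we associate to it an elementary Turing machine $\phi(T)$ on $n$ symbols, $k$ states and dimension $d$ such that $\phi(T)$ has finite order if and only if $T$ is periodic, i.e.\ $T^{m}=\ID$ for some $m\geq 1$. As the map $T\mapsto\phi(T)$ will be effective (and this finitely generated group has decidable word problem, so an element can be presented either by a local rule or by a word in a fixed generating set), this reduces the undecidable problem ``is $T$ periodic'' to the torsion problem, which proves the theorem.

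\textbf{Construction of $\phi(T)$.} Assume first $d=1$. Fix a block length $b$ large enough that a word of $b$ symbols over $\{0,\dots,n-1\}$ encodes a symbol of $\Sigma$ together with a ``state field'' holding a blank $\varepsilon$, a state $q\in Q$, a reversed copy $\bar q$, or a small transient ``arrow''. A configuration $(y,q)$ with head on cell $j$ is encoded by writing $y$ block by block, putting $q$ in the state field of block $j$ and $\varepsilon$ elsewhere, with the head of $\phi(T)$ on block $j$; block contents not of this prescribed form are \emph{garbage}. On a configuration whose head block is well formed and carries a genuine state, $\phi(T)$ executes one step of $T$: it updates the symbol and state fields via $f_{T}$, moves the encoded state to the adjacent block in the prescribed direction (leaving a back-pointing arrow), advances its head one block, and clears the arrow. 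This is assembled from a few local permutations and one finite-state machine, so $\phi(T)$ is elementary by construction; each piece is a bijective local rule whose behaviour on garbage windows is chosen freely, and that the pieces correctly realise $f_{T}$ on well-formed windows uses the combinatorial characterisation of reversibility of $T$ proved earlier. When the head would enter a garbage block, it instead flips to a reversed mode ($q\mapsto\bar q$) and simulates $T^{-1}$ (whose local form is also described earlier), flipping back on the next encounter with garbage; the garbage behaviour of the pieces is arranged precisely so that $\phi(T)$ performs this invertible reflection. For $k=1$ the simulated state lives entirely on the tape; for $k>1$ the extra head states are unused. For $d>1$, let $\phi(T)$ act only along the first coordinate axis, freezing all other cells: still elementary, and of the same order as the one-dimensional version. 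Nothing requires $n>2$, since surplus block contents merely count as garbage.

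\textbf{Correctness.} If $T$ is not periodic, fix $m$ with $T^{m}(y,q)\neq(y,q)$. The encoding of $(y,q)$ is well formed on all of $\ZZ$, so $\phi(T)$ never meets garbage and $\phi(T)^{t}$ maps it to the encoding of $T^{t}(y,q)$; injectivity of the encoding gives $\phi(T)^{m}\neq\ID$, and as $m$ was arbitrary, $\phi(T)$ has infinite order. If instead $T^{m}=\ID$, then on every configuration the head of $T$ (likewise of $T^{-1}$) returns to its starting position within $m$ steps, hence never leaves its $m$-cell neighbourhood. Consequently the head of $\phi(T)$ is, from any configuration, confined forever to a window of $O(m)$ blocks -- either because the tape is well formed near the head and $\phi(T)$ faithfully simulates $T$, or because the reflection keeps the head trapped between the two nearest garbage blocks, which lie within $O(m)$ of the start. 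In either case the configurations reachable from the given one form a finite set whose size is bounded by a quantity depending only on $T$. A reversible map on a finite set has uniformly bounded orbit periods, so $\phi(T)^{N}=\ID$ for some $N=N(T)$. Hence $\phi(T)$ has finite order precisely when $T$ is periodic.

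\textbf{Main difficulty.} The crux is the treatment of garbage. Reversibility forbids the tempting choice of making $\phi(T)$ the identity on ill-formed configurations: a move always uncovers fresh, unchecked cells, so such a rule would let $\phi(T)$ send a ``simulating'' configuration to a ``frozen'' one and so fail to be injective. One must react to garbage invertibly -- hence the reflection into a $T^{-1}$-simulating mode -- and then verify both that this keeps each constituent local permutation and finite-state machine, and therefore $\phi(T)$, bijective, so that $\phi(T)$ genuinely lies in the group, and that, when $T$ is periodic, it yields a period for $\phi(T)$ uniform over all configurations -- which is exactly what the $O(m)$ bound on head excursions of $T$ and $T^{-1}$ provides. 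Working out reversibility of the pieces, and that their composition both simulates $T$ on codeword configurations and reflects off garbage, is the technical heart of the argument.
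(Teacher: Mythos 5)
Your proposal follows the same overall strategy as the paper: reduce from the undecidability of periodicity for reversible classical Turing machines \cite{KaOl08}, encode tape-symbol/state pairs as blocks over the fixed $n$-letter alphabet, build the simulator as a composition of local permutations and finite-state machines (hence elementary), and identify the invertible handling of ill-formed (``garbage'') configurations as the crux. Two points where you diverge in execution are worth noting. First, for the garbage reflection you use an explicit reversed mode $q\mapsto\bar q$ that simulates $T^{-1}$; the paper instead completes each of the two halves $T_0'$ (a local permutation) and $T_1'$ (a finite-state automaton) into \emph{involutions} that also perform the inverse step and act as the identity elsewhere, so that time reversal happens automatically with no direction bit. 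Your mechanism is essentially the one from \cite{KaOl08} that Remark~\ref{rem:NotTheOnly} contrasts with; it is perfectly adequate for preserving periodicity of a single machine (which is all this theorem needs), though it would not yield the finiteness-preserving map used later for the cellular-automata corollary. Second, for the direction ``$T$ periodic $\Rightarrow$ $\phi(T)$ periodic'' you argue directly via head confinement and finiteness of reachable configurations, whereas the paper argues the contrapositive by compactness (a non-periodic orbit can contain at most one time reversal, hence eventually sees only valid encodings and projects to a non-periodic orbit of $T$). Your argument works, but the justification of confinement should be phrased via the observation that the encoded $T$-configuration always remains in the $T$-orbit of the initial one (a set of size at most $m$ when $T^m=\ID$), rather than via ``the two nearest garbage blocks lie within $O(m)$ of the start,'' which need not be true; with that correction the finite-orbit bound $N(T)$ follows as you state. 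The remaining work — verifying that each piece is genuinely a bijective local rule and that the composition both simulates $T$ on codewords and reflects invertibly off garbage — is acknowledged but not carried out; this is exactly where the paper's involution trick makes the verification essentially automatic.
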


As an application of the former result, we obtain that there is a finitely-generated group of reversible cellular automata on a full shift (thus on any uncountable sofic shift), which has undecidable torsion problem. This follows from an ``almost-embedding'' of the group of Turing machines into the group of cellular automata -- there is no actual embedding for group-theoretic reasons, but we show that the almost-embedding we construct preserves the decidability of the torsion problem.

\begin{corollary}\label{cor:autgroup_undecidabletorsion}
	Let $X$ be an uncountable sofic $\ZZ$-subshift. Then there is a finitely generated subgroup of $\Aut(X)$ which has undecidable torsion problem. 
\end{corollary}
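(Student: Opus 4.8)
The plan is to deduce this from Theorems~\ref{theorem_EL_is_FG} and~\ref{thm:decidability_torsion_elementaryTM} by simulating the finitely generated group $\EL$ of elementary Turing machines on $2$ symbols and a suitable number $k$ of states ``almost'' inside $\Aut(X)$, in a way that respects finiteness of order. Fix a finite generating set $S$ of $\EL$ as given by Theorem~\ref{theorem_EL_is_FG}. I would construct a map $s\mapsto\hat s$ from $S$ into $\Aut(X)$, extend it to a map $w\mapsto\hat w$ on words over $S^{\pm1}$, and prove that for every word $w$ the element $\hat w\in\Aut(X)$ has finite order if and only if the element $\bar w\in\EL$ that $w$ represents has finite order. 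Then $H:=\langle\hat S\rangle$ is a finitely generated subgroup of $\Aut(X)$ whose torsion problem is, word for word, the torsion problem of $\EL$ with respect to $S$; since the latter is undecidable and decidability of the torsion problem of a finitely generated group does not depend on the generating set, the corollary follows. The full $\ZZ$-shift on a nontrivial alphabet is an uncountable sofic shift, so this also yields the finitely generated subgroup of reversible cellular automata mentioned before the statement.

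The first step is structural. An uncountable sofic $\ZZ$-shift has positive entropy: in the Fischer cover of an infinite irreducible component there is a vertex through which pass two distinct simple return loops, and right-resolvingness prevents their labels from being powers of a common word, so after raising the loops to a common length one obtains two distinct words $u_0\neq u_1$ of equal length all of whose bi-infinite concatenations lie in $X$. Using in addition a synchronizing word and standard sofic-shift combinatorics, one can fix a bounded ``anchor'' word and a family of bounded super-blocks, each beginning with the anchor and formed out of $u_0$- and $u_1$-blocks, so that ``being inside a well-formed super-block'' is a local condition on configurations of $X$, and so that $X$ contains a subsystem $X_A\cong A^{\ZZ}$, where $A=\Sigma\sqcup(\Sigma\times Q)$ is the alphabet of the moving-head model of $\EL$ (up to a harmless finite phase ambiguity, one super-block per letter of $A$). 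On $X_A$ I would let $\hat s$ act as the moving-head cellular automaton of $s$; on configurations of $X$ that are not everywhere well-formed, $\hat s$ acts as the identity away from well-formed super-blocks and, near the boundary of a well-formed region or where two heads meet, by a prescribed reversible local interaction. Note that $s\mapsto\hat s$ is not a homomorphism -- this is why one obtains only an almost-embedding, and it is consistent with the fact that $\RTM$ does not embed in a cellular automaton group -- because when the encoded configuration carries several heads, the moving-head rule of a product $s_1s_2$ and the composition $\hat s_1\hat s_2$ differ near colliding heads.

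One direction of the desired equivalence is immediate. A Turing machine maps the subshift of configurations with at most one head into itself, so each $\hat s$ preserves the subsystem $X_1\subseteq X_A$ encoding such configurations and restricts there, under the encoding, to $s$ itself; hence any product $\hat w$ restricts on $X_1$ to $\bar w$ acting on the $\le 1$-head subshift. Since $\EL$ is a group of homeomorphisms of that subshift, $\hat w^N=\ID$ implies $\bar w^N=\ID$ in $\EL$, so $\hat w$ of finite order implies $\bar w$ of finite order.

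The converse is where the real work lies. Fix $w$ with $\bar w$ of order $m$. Under $\bar w$ a lone head returns to its initial cell and state after $m$ steps and never leaves a window of some fixed size $D=D(w)$; hence in any well-formed configuration two heads either never meet under iteration of $\hat w$ (and then each oscillates with period dividing $m$) or they collide, and similarly a head never wanders more than $D$ out of a well-formed region. The task is to choose the boundary/collision rule -- a single reversible local interaction, the same for all generators -- so that nevertheless every configuration of $X$ has a finite $\hat w$-orbit, uniformly bounded in terms of $m$. This is genuinely delicate, because a configuration may contain arbitrarily many, arbitrarily dense heads whose windows all overlap, so naive rules (merging heads, reflecting them, or freezing them) can produce orbits that never return; designing a reversible rule that keeps all orbits finite is the main obstacle, and is precisely the assertion that the almost-embedding preserves the decidability of the torsion problem. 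Granting such a rule, $H=\langle\hat S\rangle$ is a finitely generated subgroup of $\Aut(X)$ with undecidable torsion problem, as required.
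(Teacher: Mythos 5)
Your overall strategy matches the paper's: use Theorems~\ref{theorem_EL_is_FG} and~\ref{thm:decidability_torsion_elementaryTM}, build a computable ``finiteness-preserving'' map from $\EL(\ZZ,n,k)$ into the automorphism group of a full shift (the paper formalizes this as Definition~\ref{def:PPreserving} and Lemma~\ref{lemma_blurphism}), and then pass to an arbitrary uncountable sofic $X$ via the known embedding $\Aut(A^\ZZ)\hookrightarrow\Aut(X)$ from \cite{Sa15} --- which, incidentally, you do not need to re-derive from the Fischer cover; citing that embedding lets you work entirely on a full shift.

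However, there is a genuine gap at exactly the point you flag and then wave away with ``granting such a rule.'' The entire content of the corollary is the construction of a reversible local rule for which \emph{every} multi-head configuration has uniformly bounded orbit whenever $\bar w$ has finite order, and you do not produce one. This is not a routine detail: the paper's Remark~\ref{rem:NotTheOnly} exhibits why the natural candidates fail. For instance, the ``reflect/reverse time direction at a boundary'' rule sends the identity word $(f^{-1}g^{-1})\cdot g\cdot f$ to an automorphism that, on configurations with the direction bit reversed, simulates $g\circ f\circ g^{-1}\circ f^{-1}$, which can have infinite order --- so finiteness is \emph{not} preserved. The paper's resolution is not a clever collision rule but a construction that makes collisions impossible: the alphabet carries a third track of arrows $\rightarrow,\leftarrow$ and head symbols whose unique parsing cuts $\ZZ$ into \emph{zones}, each containing at most one head; the arrows are rewritten so that zone boundaries never move, and each bounded zone is folded into a ``conveyor belt'' using two tape tracks, so the head circulates on a finite cyclic tape and the induced action on each bounded zone is a permutation of a finite set. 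Unbounded zones carry a faithful copy of the machine, giving the other implication. Without this (or an equivalent) mechanism your argument does not close, since a configuration of $X$ can contain arbitrarily many heads with overlapping windows and no reversible ``merge/freeze/reflect'' rule is known to keep all such orbits finite.
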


For finite-state machines, we show that the torsion problem is decidable in dimension one, but is undecidable in higher dimensions; again we construct a finitely-generated subgroup where the problem is undecidable. The proof in the one-dimensional case is based on a simple pigeonhole argument, while in the higher-dimensional case undecidability is a corollary of the undecidability of the snake tiling problem~\cite{Ka03}.

\begin{theorem}\label{thm:decidability_torsion_FSA}
	Consider the group $G$ of finite-state machines on $n\geq 2$ symbols, $k \geq 1$ states and dimension $d \geq 1$,
	\begin{enumerate}
		\item The torsion problem of $G$ is decidable if $d = 1$.
		\item $G$ contains a finitely generated subgroup with undecidable torsion problem when $d \geq 2$.
	\end{enumerate}
\end{theorem}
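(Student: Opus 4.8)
The plan is to handle the two parts separately, since they have opposite flavors and require different techniques.

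For part (1), the claim is decidability of the torsion problem for the group of finite-state machines in dimension one. A finite-state machine never modifies the tape, so its action decomposes over the shift-orbits of the tape content: for each fixed configuration $x \in \Sigma^{\ZZ}$, the machine induces a permutation of the set $\ZZ \times Q$ of (head-position, state) pairs, and this permutation commutes with the shift in the appropriate sense. The element has finite order if and only if this permutation has bounded order over all $x$. The key observation is that, because the machine has bounded range, the next (position, state) of the head depends only on a bounded window of $x$ around the current head position together with the state; so one can track the ``head trajectory'' as a walk on a finite graph whose vertices record the state and a bounded window of the tape, reading off new tape symbols as the head advances into unseen territory. The order of the element is finite precisely when every such walk is eventually periodic with a globally bounded period, and — here is where dimension one is essential — a pigeonhole argument on this finite graph lets us bound how long a trajectory can go before it must repeat a (local-configuration, state) pair; combined with reversibility (which forbids the trajectory from merging, so a repeat forces a cycle), one gets a computable bound on the order whenever it is finite, and otherwise one detects a trajectory that drifts off to infinity or visits unboundedly many windows. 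Checking these finitely many conditions on the finite graph is decidable. I would phrase this carefully using the moving-head picture and the fact, available from earlier in the paper, that injectivity equals reversibility.

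For part (2), I want a finitely-generated subgroup of the group of finite-state machines in dimension $d \geq 2$ with undecidable torsion problem, and the stated route is via the undecidability of the snake tiling problem of \cite{Ka03}. The snake tiling problem asks whether a given Wang-type tile set admits a valid bi-infinite ``snake'' (a tiling of a path). The plan is: given an instance of the snake problem, build a finite-state machine $T$ on a fixed alphabet and state set (dimension $d$) whose head, on a tape encoding a potential snake, walks along the snake; if the local snake-compatibility conditions are satisfied the head keeps moving and never returns, whereas if it ever detects a violation (or reaches a dead end) it enters a behavior that makes it return and close up into a finite cycle. One arranges, as in the $\Pi^0_1$/aperiodicity literature, that $T$ has finite order if and only if the tile set admits no valid infinite snake. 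Since finite-state machines cannot write, the ``encoding of the snake'' must live entirely in the tape content that the machine only reads; the head's state and a bounded window play the role of the tile currently placed, and the snake directions are read off the tape. To make the subgroup finitely generated, one fixes in advance a large enough alphabet and state set to encode all tile sets of bounded size and bundles the finitely many generating moves (shifts, state permutations compatible with reading a window) into a finite set; different snake instances then correspond to different words in these generators. The torsion problem for this finitely generated subgroup thus decides the snake problem, giving undecidability.

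The main obstacle I expect is part (2): the snake tiling reduction is delicate because a finite-state machine is \emph{read-only and reversible}, so one cannot simply have the head ``mark'' where it has been, and one must ensure global reversibility of $T$ while still forcing non-returning trajectories exactly on valid snakes. Getting the head to detect a local tiling violation and then \emph{reversibly} retrace its steps into a finite cycle — without creating spurious infinite orbits on configurations that do not encode a legitimate snake — is the crux; I would address it by designing $T$ so that on ``garbage'' tape contents the head is forced into a short local cycle by construction (e.g. bouncing between two adjacent cells), so that the only way to have an infinite orbit is to sit on a genuine valid infinite snake, and reversibility is maintained because each local transition is a bijection of the finite (window, state) data.
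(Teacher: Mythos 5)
Your overall strategy matches the paper's (a crossing/pigeonhole analysis in dimension one, a snake-tiling reduction in dimension two), but both parts have genuine gaps at exactly the points where the work lies.

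For part (1), the easy direction is that finite order is semi-decidable (search for $n$ with $T^n=\ID$); the hard direction is producing a \emph{finite certificate of infinite order}. Your mechanism -- pigeonhole on (window, state) pairs along a trajectory, with ``a repeat forces a cycle'' -- does not supply this. A repeat of a window--state pair at two different positions does not close the orbit (the shift itself repeats such pairs at every step and has infinite order), and a repeat at the same position only handles one fixed configuration; the torsion problem quantifies over all tapes. Moreover the head may backtrack, so the tape it has consulted is not a bounded window, and ``detecting a trajectory that drifts off to infinity'' is not a finite check. What is actually needed is a composable crossing invariant: to each word $u$ one attaches the relation recording in which states and directions the head can exit $u$ after entering from either side, this invariant is multiplicative under concatenation, and by Ramsey's theorem every sufficiently long word contains a traversable factor $u$ with $I(u)=I(u^2)$, so that $u^k$ is traversable for all $k$ and yields a genuinely infinite orbit. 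That idempotent/pumping step is what makes ``infinite order'' recursively enumerable, and it is absent from your sketch. (The paper in fact proves the stronger statement that the finiteness problem for finitely generated subgroups is decidable, and notes that an earlier version of this very argument was erroneous -- this step is delicate.)

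For part (2), the fatal issue is your plan for finite generation: you propose to ``fix in advance a large enough alphabet and state set to encode all tile sets of bounded size.'' The snake tiling problem is only undecidable over tile sets of \emph{unbounded} size; restricting to boundedly many tiles leaves finitely many instances and the problem becomes trivially decidable, so this cannot yield undecidability. The actual difficulty is to express, as a \emph{word over one fixed finite generating set} of finite-state machines on a fixed alphabet, the snake-walking machine associated to an arbitrarily large tile set. The paper does this by encoding tiles as $M\times M$ macrotiles built from a fixed $7\times 7$ cell pattern (so arbitrarily many colours fit into a fixed alphabet), storing the state in the head's position inside a cell, and -- crucially -- showing that permutations controlled by arbitrarily large patterns can be generated from finitely many small controlled bit-flips and shifts via conjugation and a commutator-type identity such as $g_{p^*}=(T_{-7\vec v}\circ g_{+,c}\circ T_{7\vec v}\circ h_{p^*_{F\setminus\{\vec v\}}})^2$. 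Without some mechanism of this kind your reduction only shows undecidability of the torsion problem for the whole (non-finitely-generated) group, not for a finitely generated subgroup. Your reversibility idea for the walker (flip a direction bit and retrace on error, so garbage tapes give short cycles) is essentially the paper's, and is fine once the walker is built from involutions; the missing content is the finite generation.
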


Furthermore, we show that the decidability result holds for finite-state machines running on an arbitrary sofic subshift and that in fact their finiteness problem is decidable meaning we can decide whether a given finitely-generated subgroup is finite or not (which implies the decidability of their torsion problem). As a special case of our results, we obtain the following statement about topological full groups.

\begin{corollary}\label{cor:fullgroup_undecidabletorsion}
	Let $d \geq 2$. The topological full group of a full $\ZZ^d$-shift on at least two symbols contains a finitely generated subgroup with undecidable torsion problem.
\end{corollary}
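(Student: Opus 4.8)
The plan is to obtain this statement as an immediate specialization of Theorem~\ref{thm:decidability_torsion_FSA}(2) to a single state, using the identification of single-state reversible finite-state machines with the topological full group of a full shift that was observed in Section~\ref{section.subgroups}.

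First, I would recall why a reversible finite-state machine on $n$ symbols, $1$ state and dimension $d$ is the same thing as an element of the topological full group of the full shift $\Sigma^{\ZZ^d}$ with $|\Sigma| = n$, in the moving-tape model. On the one hand, a finite-state machine never alters the tape, and with only one state it is determined by a map sending each configuration to the shift vector by which the tape slides; by definition this vector depends only on a bounded window of the configuration, so the machine is a homeomorphism of $\Sigma^{\ZZ^d}$ that is locally given by a power of the shift, i.e. an element of the topological full group. Conversely, any element $\varphi$ of the topological full group comes with a continuous cocycle $c \colon \Sigma^{\ZZ^d} \to \ZZ^d$ such that $\varphi(x) = \sigma^{c(x)}(x)$; since $\Sigma^{\ZZ^d}$ is compact and $\ZZ^d$ is discrete, $c$ depends only on a bounded window, so $\varphi$ is a single-state reversible finite-state machine. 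These two assignments are mutually inverse group homomorphisms, so the groups are isomorphic.

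With this identification in hand, I would apply Theorem~\ref{thm:decidability_torsion_FSA}(2) with $k = 1$ and with the desired alphabet size $n \geq 2$ and dimension $d \geq 2$: it supplies a finitely generated subgroup of the group of finite-state machines on $n$ symbols, $1$ state and dimension $d$ whose torsion problem is undecidable, which under the isomorphism above is a finitely generated subgroup of the topological full group of the full $\ZZ^d$-shift on $n$ symbols with undecidable torsion problem. Since $n \geq 2$ is arbitrary, this is exactly the claimed statement.

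The only point that deserves care — and the place where the single-state restriction could in principle bite — is whether the undecidability construction underlying Theorem~\ref{thm:decidability_torsion_FSA}(2), which encodes the snake tiling problem of~\cite{Ka03} into the trajectory of the head, really goes through with just one state. This is harmless: the finitely many states of a finite-state machine provide only a bounded amount of memory carried by the head, and such memory can equivalently be kept in a bounded window of tape cells around the head. In any case Theorem~\ref{thm:decidability_torsion_FSA}(2) is already asserted for every $k \geq 1$, so no modification of it is needed; one only has to transport its $k=1$ instance through the isomorphism described above.
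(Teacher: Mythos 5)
Your proposal is correct and matches the paper's own argument: the paper likewise notes that $\FG(\ZZ^d,n,1)$ is by definition isomorphic to the topological full group of the full $\ZZ^d$-shift on $n$ symbols and obtains the corollary as the $k=1$ case of Theorem~\ref{thm:decidability_torsion_FSA}(2), whose construction is explicitly carried out so as to work for a single state. No gaps.
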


We note that our group is very similar to the Thompson-Brin group 2V \cite{BeMa14,BeBl14}, the main difference being that elements of 2V can erase and add symbols to the tape; indeed $\mathrm{RTM}(2,1)$ can be seen naturally as a subgroup of 2V, consisting of elements that do not use this addional functionality. Our group is not isomorphic to any of the Thompson-Brin groups nV, as they are finitely generated. Thompson's V has a decidable torsion problem, but that of 2V is not, also due to the undecidability of periodicity of reversible Turing machines.

%

\subsection{Preliminaries}

In this section we present general definitions and fix the notation which is used throughout the article. The review of these concepts will be brief and focused on the dynamical aspects. For a more complete introduction the reader may refer to~\cite{LiMa95}. We restrict our setting to finitely generated and torsion-free abelian groups $\ZZ^d$, although, just like with cellular automata~\cite{CeCo10}, the idea of a Turing machine directly generalizes to general groups. All of the results of this and the following section directly generalize for arbitrary countable groups $G$ in place of $\ZZ^d$.

Let $\ag$ be a finite alphabet. The set $\ag^{\ZZ^d} = \{ x \colon \ZZ^d \to \ag\}$ equipped with the shift action $\sigma \colon \ZZ^d \times \ag^{\ZZ^d} \to \ag^{\ZZ^d}$ defined by $(\sigma^{\vec{v}}(x))_{\vec{u}} = x_{\vec{u}-\vec{v}}$ is a \define{full shift}. Thus $\sigma^{\vec v}$ shifts cell contents in direction $\vec v$, or equivalently moves the origin of $x$ to $-\vec v$.
The elements $a \in \ag$ and $x \in \ag^{\ZZ^d}$ are called \define{symbols} and \define{configurations} respectively.
Configuration $x \in \ag^{\ZZ^d}$ is \define{periodic} if $\sigma^{\vec{v}}(x)=x$ for some non-zero $\vec{v}\in\ZZ^d$, and it is \define{eventually periodic} if there exists a periodic configuration
$y \in \ag^{\ZZ^d}$ that differs from $x$ only in finitely many positions.
With the discrete topology on $\ag$ the set of configurations $\ag^{\ZZ^d}$ is a compact metrizable space, a generating metric is given by $d(x,y) = 2^{-\inf\{|\vec{v}|\  : \ \vec{v} \in \ZZ^d,\ x_{\vec{v}} \neq y_{\vec{v}}\}}$ where $|\vec{v}|$ is the taxicab norm $|\vec{v}| = \sum_{i = 1}^d |\vec{v}_i|$.

This topology has the sets $[a]_{\vec{v}} = \{x \in \ag^{\ZZ^d} : x_{\vec{v}} = a\in \ag\}$ as a subbasis. A \define{support} is a finite subset $F \subset \ZZ^d$. Given a support $F$, a \define{pattern with support $F$} is an element $p$ of $\ag^F$. The \define{cylinder} generated by $p$ in position $\vec{v}$ is $[p]_{\vec{v}} = \bigcap_{\vec{u} \in F} [p_{\vec{u}}]_{\vec{v} + \vec{u}}$. For simplicity, we write $[p]= [p]_{\vec{0}}$.

\begin{definition}
	A subset $X$ of $\ag^{\ZZ^d}$ is a \define{subshift} if it is topologically closed and $\sigma$-invariant, that is, for every $\vec{v} \in \ZZ^d$ we have $\sigma^{\vec{v}}(X)\subset X$. 
\end{definition}

Equivalently, $X$ is a subshift if and only if there exists a set of patterns $\FF$ that defines it.
\[X = \bigcap_{p \in \FF, \vec{v} \in \ZZ^d}{\ag^{\ZZ^d} \setminus [p]_{\vec{v}}}.\]
Any such $\FF$ which defines $X$ is called a set of \define{forbidden patterns} for $X$.

For a subshift $X \subset \ag^{\ZZd}$ and a finite support $F \subset \ZZd$ we define the \define{language $L_F(X)$ of support} $F$ of $X$ as the set of patterns $p \in \ag^F$ such that $[p] \cap X \neq \varnothing$. The language of $X$ is the union $L(X)$ of $L_F(X)$ over all finite $F \subset \ZZd$. We denote $p\sqsubset X$ iff $p\in L(X)$. For an individual configuration $x\in \ag^{\ZZ^d}$ we denote
$p\sqsubset x$ iff $x\in [p]_{\vec{v}}$ for some $\vec{v}\in \ZZd$, and we say that pattern $p$ \define{occurs} in $x$.

Let $X,Y$ be subshifts over alphabets $\ag$ and $\bg$ respectively. A continuous $\ZZ^d$-equivariant (i.e.\ shift-commuting) map $\phi \colon  X \to Y$ between subshifts is called a morphism. A well-known theorem of Curtis, Lyndon and Hedlund which can be found in full generality in~\cite{CeCo10} asserts that morphisms are equivalent to maps defined by local rules as follows: There exists a finite $F \subset \ZZ^d$ and $\Phi\colon \ag^{F} \to \bg$ such that for every $x \in X$, $\phi(x)_{\vec{v}} = \Phi(\sigma^{-\vec{v}}(x)|_F)$. If $\phi$ is an endomorphism (that is, $X = Y$) then we refer to it as a cellular automaton. A cellular automaton is said to be reversible if there exists a cellular automaton $\phi^{-1}$ such that $\phi \circ \phi^{-1} = \phi^{-1} \circ \phi = \ID$. It is well known that reversibility is equivalent to bijectivity, see Section 1.10 of~\cite{CeCo10}.

Throughout this article we use the following notation inspired by Turing machines. We denote by $\Sigma = \{0,\dots,n-1\}$ the set of tape symbols and $Q = \{1,\dots,k\}$ the set of states. We also use the symbols $n = |\Sigma|$ for the size of the alphabet and $k = |Q|$ for the number of states. Given a function of the form $f \colon \Omega \to A_1 \times \dotsc \times A_m$ we denote by $f_i$ the projection of $f$ to the $i$-th coordinate.

\section{Two models for Turing machine groups}\label{section.preliminaries}

In this section we define our generalized Turing machine model, and the group of Turing machines. In fact, we give two definitions for this group, one with a moving head and one with a moving tape as in \cite{Ku97}. We show that -- except in the case of a trivial alphabet -- these groups are isomorphic.\footnote{Note that the \define{dynamics} obtained from these two definitions are in fact quite different, as shown in \cite{Ku97,Ku10}.} Furthermore, both can be defined both by local rules and ``dynamically'', that is, in terms of continuity and the shift action. In the moving tape model we characterize reversibility as preservation of a suitably defined measure.

\subsection{The moving head model}

In the moving head model, we will represent our space as  $\Sigma^{\ZZd} \times Q \times \ZZd$. That is, the product of the set of configurations $\Sigma^{\ZZd}$, a set of states $Q$ and the possible positions of a head $\ZZd$. The objects of this space are therefore $3$-tuples $(x,q,\vec{v})$. In order to write this in a shorter manner, we use the notation $x_q^{\vec{v}}$ instead of $(x,q,\vec{v})$.

Given a function
\[ f \colon \Sigma^{F_{\text{in}}} \times Q \to \Sigma^{F_{\text{out}}} \times Q \times \ZZ^d, \]
where $F_{\text{in}},F_{\text{out}}$ are finite subsets of $\ZZ^d$, we can define a map $T_f\colon \Sigma^{\ZZd} \times Q \times \ZZd \to \Sigma^{\ZZd} \times Q \times \ZZd$ as follows: given $x_q^{\vec{v}} \in \Sigma^{\ZZd} \times Q \times \ZZd$ let $p = \sigma^{-{\vec v}}(x)|_{F_{\text{in}}}$ and $f(p,q) = (\tilde{p},\tilde{q},\tilde{\vec u})$. Then we define $T_f(x_q^{\vec{v}}) := \tilde{x}_{\tilde{q}}^{\vec{v}+\tilde{\vec{u}}}$ where:

$$\tilde{x}_{\vec u} = \begin{cases}
x_{\vec{u}} & \text{ if } {\vec u} - {\vec v}  \notin F_{\text{out}} \\
\tilde{p}_{{\vec u} - {\vec v}}&\text{ if }  {\vec u}-{\vec v} \in F_{\text{out}}
\end{cases}$$

\begin{definition}
	A function $T$ for which there is an $f \colon \Sigma^{F_{\text{in}}} \times Q \to \Sigma^{F_{\text{out}}} \times Q \times \ZZ^d$ such that $T = T_f$ is called a \define{moving head $(\ZZd,n,k)$-Turing machine}, and $f$ is its \define{local rule}. If there exists a $(\ZZd,n,k)$-Turing machine $T^{-1}$ such that $T \circ T^{-1} = T^{-1} \circ T = \ID$, we say $T$ is \define{reversible}.
\end{definition}

This definition corresponds to classical Turing machines with the moving head model when $d = 1$, $F_{\text{in}} = F_{\text{out}} = \{\vec{0}\}$ and $f_3(x,q) \in \{-1,0,1\}$ for all $x, q$.
An illustration of how a moving head Turing machine acts can be seen in Figure~\ref{z_2_machine2}. Note that $\sigma^{{- \vec v}}(x)|_{F}$ is the $F$-shaped pattern ``at $\vec v$''. We do not write $x|_{{\vec v} + F}$ because we want the pattern we read from $x$ to have $F$ as its domain.

Note that one of these machines could be defined by several different local functions $f$, and that the third component of $f$ has finite range. Also, given $f \colon \Sigma^{F_{\text{in}}} \times Q \to \Sigma^{F_{\text{out}}} \times Q \times \ZZ^d$ we can define $F := F_{\text{in}} \cup F_{\text{out}} \cup f_3(\Sigma^{F_{\text{in}}} \times Q)$ and $f'\colon \Sigma^F \times Q \to \Sigma^F \times Q \times F$ such that $T_f = T_{f'}$. This motivates the following definition: The \define{minimal neighborhood} of $T$ is the minimum set for inclusion $F$ for which there is $f\colon \Sigma^F \times Q \to \Sigma^F \times Q \times F$ such that $T = T_f$. This minimum always exists as the set of finite subsets of $\ZZ^d$ which work for this definition is closed under intersections.

As $\ZZ^d$ is finitely generated, we can also use a numerical definition of radius in place of the neighborhood: Let $B(\vec{v},r)$ be the set of $\vec{u}\in \ZZ^d$ such that $|\vec{u}- \vec{v}| \leq r$. By possibly changing the local rule $f$, we can always choose $F_{\text{in}} = B(\vec{0},r_i)$ and $F_{\text{out}} = B(\vec{0},r_o)$ for some $r_i, r_o \in \NN$, without changing the Turing machine $T_f$ it defines. The minimal such $r_i$ is called the \define{in-radius} of $T$, and the minimal $r_o$ is called the \define{out-radius} of $T$. We say the in-radius of a Turing machine is $-1$ if there is no dependence on input, that is, the neighborhood $B(0^d,r_i)$ can be replaced by the empty set. The maximum value of $|\vec v|$ for all $\vec v \in f_3(\Sigma^{F} \times Q)$ is called the \define{move-radius} of $T$. Finally, the maximum of all these three radii is called the \define{radius} of $T$. In this terminology, classical Turing machines are those with in- and out-radius $0$, and move-radius $1$.

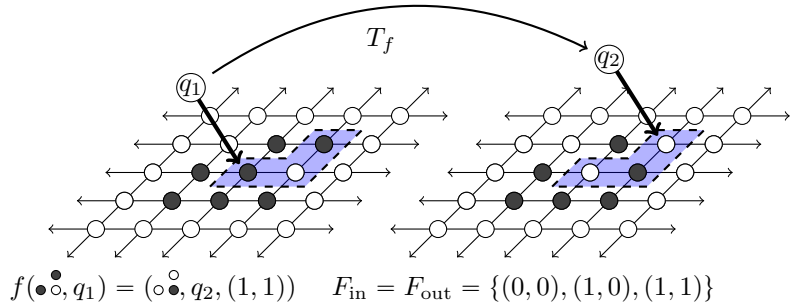
\begin{figure}[h!]
	\centering
	\begin{tikzpicture}[scale=0.50]

\begin{scope}[scale = 0.75, shift={(-6,0)},rotate=0]
\def \c{0.3}

\draw [<->] (-5,0) to (5,0);
\draw [<->] (-6,-1) to (4,-1);
\draw [<->] (-7,-2) to (3,-2);
\draw [<->] (-4,1) to (6,1);
\draw [<->] (-3,2) to (7,2);

\draw [<->] (3,3) to (-3,-3);
\draw [<->] (3-5/3,3) to (-3-5/3,-3);
\draw [<->] (3-10/3,3) to (-3-10/3,-3);
\draw [<->] (3+5/3,3) to (-3+5/3,-3);
\draw [<->] (3+10/3,3) to (-3+10/3,-3);

\foreach \i in {-10/3,-5/3,0,5/3,10/3}{
	\foreach \j in {-2,-1,0,1,2}{
	\draw[fill = white] (\i+\j,\j) circle (\c);
}
}


\filldraw[thick,dashed,fill=blue,fill opacity=0.3] (-5/6-0.5,-0.5) -- (-5/6+0.5,0.5) -- (5/6+0.5,0.5)  -- (5/6+1.5,1.5) -- (15/6+1.5,1.5)  -- (15/6+0.5,0.5) -- (15/6-0.5,-0.5)  -- cycle;

\draw[fill = white] (-10/3,0) circle (\c);
\draw[fill = black!75] (-5/3,0)circle (\c);
\draw[fill = black!75] (0,0)circle (\c);
\draw[fill = white] (5/3,0) circle (\c);
\draw[fill = white] (10/3,0) circle (\c);

\draw[fill = white] (-10/3+1,1) circle (\c);
\draw[fill = white] (-5/3+1,1)circle (\c);
\draw[fill = black!75] (1,1)circle (\c);
\draw[fill = black!75] (5/3+1,1) circle (\c);
\draw[fill = white] (10/3+1,1) circle (\c);

\draw[fill = white] (-10/3-1,-1) circle (\c);
\draw[fill = black!75] (-5/3-1,-1)circle (\c);
\draw[fill = black!75] (-1,-1)circle (\c);
\draw[fill = black!75] (5/3-1,-1) circle (\c);
\draw[fill = white] (10/3-1,-1) circle (\c);
%
%

\draw [->, ultra thick] (-2,3) to (-0.3, 0.3);
\draw[fill = white] (-2,3) circle (0.5);
\draw node (1) at (-2,3) {$q_1$};

\end{scope}

\begin{scope}[scale = 0.75, shift={(6,0)},rotate=0]
\def \c{0.3}

\draw [<->] (-5,0) to (5,0);
\draw [<->] (-6,-1) to (4,-1);
\draw [<->] (-7,-2) to (3,-2);
\draw [<->] (-4,1) to (6,1);
\draw [<->] (-3,2) to (7,2);

\draw [<->] (3,3) to (-3,-3);
\draw [<->] (3-5/3,3) to (-3-5/3,-3);
\draw [<->] (3-10/3,3) to (-3-10/3,-3);
\draw [<->] (3+5/3,3) to (-3+5/3,-3);
\draw [<->] (3+10/3,3) to (-3+10/3,-3);

\foreach \i in {-10/3,-5/3,0,5/3,10/3}{
	\foreach \j in {-2,-1,0,1,2}{
		\draw[fill = white] (\i+\j,\j) circle (\c);
	}
}


\filldraw[thick,dashed,fill=blue,fill opacity=0.3] (-5/6-0.5,-0.5) -- (-5/6+0.5,0.5) -- (5/6+0.5,0.5)  -- (5/6+1.5,1.5) -- (15/6+1.5,1.5)  -- (15/6+0.5,0.5) -- (15/6-0.5,-0.5)  -- cycle;

\draw[fill = white] (-10/3,0) circle (\c);
\draw[fill = black!75] (-5/3,0)circle (\c);
\draw[fill = white] (0,0)circle (\c);
\draw[fill = black!75] (5/3,0) circle (\c);
\draw[fill = white] (10/3,0) circle (\c);

\draw[fill = white] (-10/3+1,1) circle (\c);
\draw[fill = white] (-5/3+1,1)circle (\c);
\draw[fill = black!75] (1,1)circle (\c);
\draw[fill = white] (5/3+1,1) circle (\c);
\draw[fill = white] (10/3+1,1) circle (\c);

\draw[fill = white] (-10/3-1,-1) circle (\c);
\draw[fill = black!75] (-5/3-1,-1)circle (\c);
\draw[fill = black!75] (-1,-1)circle (\c);
\draw[fill = black!75] (5/3-1,-1) circle (\c);
\draw[fill = white] (10/3-1,-1) circle (\c);
%
%

\draw [->, ultra thick] (-2+5/3+1,3+1) to (-0.3+5/3+1, 0.3+1);
\draw[fill = white] (-2+5/3+1,3+1)  circle (0.5);
\draw node (2) at (-2+5/3+1,3+1)  {$q_2$};
\end{scope}
\path
(1) edge [thick, bend left=30,->] node[swap]  {} (2);

\draw node at (-1.5,-3) {$f($\begin{tikzpicture}[scale = 0.2]
\draw[fill = black!75] (0,0) circle (0.3);
\draw[fill = white] (1,0)circle (0.3);
\draw[fill = black!75] (1,1) circle (0.3);
\end{tikzpicture}$,q_1) = ($\begin{tikzpicture}[scale = 0.2]
\draw[fill = white] (0,0) circle (0.3);
\draw[fill = black!75] (1,0)circle (0.3);
\draw[fill = white] (1,1) circle (0.3);
\end{tikzpicture}$,q_2,(1,1))$ \ \ \ $F_{\text{in}} = F_{\text{out}} = \{(0,0), (1,0), (1,1)\}$};
\draw node at (-1,3.5) {$T_f$};

\end{tikzpicture}
	\caption{The action of a moving head machine $T_f$.}
	\label{z_2_machine2}
\end{figure}

\begin{definition}
	We denote by $\TM(\ZZd,n,k)$ the set of $(\ZZd,n,k)$-Turing machines and $\RTM(\ZZd,n,k)$ the set of reversible $(\ZZd,n,k)$-Turing machines.
\end{definition}

In some parts of this article we just consider $d = 1$. In this case we simplify the notation and just write $\RTM(n,k) := \RTM(\ZZ,n,k)$
and $\TM(n,k) := \TM(\ZZ,n,k)$. Of course, we want $\TM(\ZZd,n,k)$ to be a monoid and $\RTM(\ZZd,n,k)$ a group under function composition. This is indeed the case, and one can prove this directly by constructing local rules for the inverse of a reversible Turing machine and composition of two Turing machines. However, it is easier to extract this from the following characterization of Turing machines as a particular kind of cellular automaton.

Let $X_k$ be the subshift with alphabet $Q \cup \{0\}$ such that in each configuration the number of non-zero symbols is at most one. \[ X_k = \{ x \in \{0,1,\ldots,k\}^{\ZZ^d} :  0 \notin \{x_{\vec{u}}, x_{\vec{v}}\} \implies \vec{u} = \vec{v} \}. \] In the case where $k =1$ this subshift is often called the \define{sunny-side up subshift}. Notice that $X_0 = \{0^{\ZZ^d}\}$ consists of a single configuration and for non-negative integers $i < j$ we have $X_i \subsetneq X_j$. Let also $X_{n,k} = \Sigma^{\ZZ^d} \times X_{k}$, where recall that we always set $\Sigma = \{0, \ldots, n-1\}$. For the case $d = 1$, configurations in $X_{n,k}$ represent a bi-infinite tape filled with symbols in $\Sigma$ possibly containing a head that has a state in $Q$. Note that there might be no head in a configuration.

More precisely, we interpret $X_{n,k}$ as a compactification of $\Sigma^{\ZZd} \times Q \times \ZZd$ by identifying $x^{\vec v}_q = (x,q,\vec v) \in \Sigma^{\ZZ^d} \times Q \times \ZZ^d$ with the point $(x,y)$ where $y_{\vec v} = q$ and $y_{\vec u} = 0$ for $\vec u \neq \vec v$. We can now interpret Turing machines as functions on $X_{n,k}$ in the following way: For $(x,y) \in X_{n,k}$, if there is no ${\vec v} \in \ZZ^d$ such that $y_{\vec v} \neq 0$ then $T(x,y) := (x,y)$. Otherwise apply $T$ through the natural bijection.


For a subshift $X$, we denote by $\End(X)$ the monoid of endomorphisms of $X$ and $\Aut(X)$ the group of automorphisms of $X$. From the previous argument it follows that Turing machines are precisely cellular automata on $X_{n,k}$ with the additional property that ``the configurations are only modified near a head''. This is the content of the following proposition.

\begin{restatable}{proposition}{propCACharacterization}
	\label{prop_CACharacterization}
	Let $n,k$ be positive integers and $Y = X_{n,0}$. Then:
	\begin{align*}
	\TM(\ZZ^d,n,k) &= \{ \phi \in \End(X_{n,k}) : \phi|_Y = \ID, \phi^{-1}(Y) = Y \} \\
	\RTM(\ZZ^d,n,k) &= \{ \phi \in \Aut(X_{n,k}) : \phi|_Y = \ID \}.
	\end{align*}
\end{restatable}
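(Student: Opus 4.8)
The plan is to prove the two set equalities by mutual inclusion, treating the monoid case first and deriving the group case as a corollary (using that reversibility of a Turing machine means having a Turing-machine inverse, and reversibility of a cellular automaton on a subshift is equivalent to bijectivity, as recalled in the preliminaries). For the inclusion ``$\subseteq$'': given $T = T_f \in \TM(\ZZ^d,n,k)$ with local rule $f\colon \Sigma^{F_{\text{in}}}\times Q \to \Sigma^{F_{\text{out}}}\times Q\times\ZZ^d$, I would first check that the induced map $\phi$ on $X_{n,k}$ (extended by the identity on configurations with no head) is continuous and shift-commuting, hence a cellular automaton by Curtis--Hedlund--Lyndon; continuity is clear since the image near any cell depends only on a bounded window (the head position, which is locally constant on $X_{n,k}$ away from $0$, together with the $F_{\text{in}}$-pattern around it), and shift-commutation is immediate from the definition of $T_f$ via $\sigma^{-\vec v}(x)|_{F_{\text{in}}}$. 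Then $\phi|_Y = \ID$ holds because a configuration in $Y = X_{n,0}$ has no head, so $T$ acts as the identity by fiat. Finally $\phi^{-1}(Y) = Y$: a head is never created (the output always has exactly one head, namely $\tilde q$ at $\vec v + \tilde{\vec u}$) and never destroyed, so the preimage of the headless configurations is exactly the headless configurations.

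For the reverse inclusion ``$\supseteq$'': let $\phi \in \End(X_{n,k})$ with $\phi|_Y = \ID$ and $\phi^{-1}(Y) = Y$. By Curtis--Hedlund--Lyndon, $\phi$ has a local rule $\Phi\colon (\Sigma\times(Q\cup\{0\}))^{B(\vec0,r)} \to \Sigma\times(Q\cup\{0\})$ for some radius $r$. The key observations are: (i) since $\phi^{-1}(Y)=Y$, any configuration with exactly one head maps to a configuration with exactly one head; (ii) since $\phi|_Y = \ID$ and $\phi$ has radius $r$, at any cell $\vec u$ whose distance from the head exceeds $r$, the output symbol equals the input symbol and the output state-coordinate is $0$ — so the tape is only modified, and the head can only reappear, within $B(\vec v, r)$ of the original head position $\vec v$. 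This lets me read off a local rule $f$ for a Turing machine: $F_{\text{in}} = F_{\text{out}} = B(\vec 0, 2r)$ (a window large enough to see, from the head, everything that could change and everything that could determine the changes), with $f(p,q)$ obtained by applying $\Phi$ to the patch of $X_{n,k}$ corresponding to tape-pattern $p$ and a single head in state $q$ at the origin, extracting the new tape-patch, the new state, and the displacement $\tilde{\vec u}$ of the head. One must check this $f$ is well-defined (the output head is unique and lies within range, by (i) and (ii)) and that $T_f = \phi$ on all of $X_{n,k}$, including the headless configurations where both are the identity.

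The group identities then follow: if $T\in\RTM$ then $T$ has a Turing-machine — in particular endomorphism — inverse, so $\phi$ is an automorphism of $X_{n,k}$ fixing $Y$ pointwise (and $\phi^{-1}(Y)=Y$ is automatic); conversely if $\phi\in\Aut(X_{n,k})$ with $\phi|_Y=\ID$, then $\phi^{-1}|_Y = \ID$ as well, so $\phi^{-1}(Y) = Y$, hence both $\phi$ and $\phi^{-1}$ lie in the monoid characterization and are Turing machines, witnessing that $\phi\in\RTM$. I expect the main obstacle to be the ``$\supseteq$'' direction, specifically the bookkeeping in observation (ii): one must argue carefully that ``$\phi$ fixes $Y$'' plus ``finite radius'' genuinely confines all tape modifications to a bounded neighborhood of the head, handling the fact that a cell far from the head still ``sees'' the head only if it is within radius $r$ — a patching/locality argument that is conceptually easy but needs to be written precisely, including verifying that the displacement $\tilde{\vec u}$ is bounded uniformly so that the move-radius is finite.
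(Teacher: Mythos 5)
Your proposal is correct and follows essentially the same route as the paper: a direct verification of continuity, shift-commutation, head preservation and triviality on $Y$ for the forward inclusion, and for the converse the extraction of a Turing-machine local rule from the Curtis--Hedlund--Lyndon local rule of $\phi$, using that $\phi|_Y = \ID$ forces the local rule to act as the identity on any window not containing the head (so modifications and the new head position are confined to a bounded neighborhood of the old head), with the group case deduced exactly as you describe via $\phi^{-1}|_Y = \ID$. The only cosmetic difference is your choice of $F_{\text{out}} = B(\vec 0, 2r)$ where the paper uses the tighter $-F$; both are valid.
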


\begin{proof}
	Consider a Turing machine $T \in \TM(\ZZ^d,n,k)$ seen as a function on $X_{n,k}$. A direct calculation shows that $T$ is shift-commuting and continuous, therefore $T \in \End(X_{n,k})$. Also, $T$ acts trivially on $X_{n,0}$ so $T|_Y = \ID$ and if a configuration has a head, it can only be shifted but not disappear, thus $T^{-1}(Y) = Y$. Moreover, if $T \in \RTM(\ZZ^d,n,k)$, then $T$ has a Turing machine inverse, thus a cellular automaton inverse, and it follows that $T \in \Aut(X_{n,k})$.
	
	Conversely, let $\phi \in \End(X_{n,k})$, so that $\phi(x,y)_{\vec v} = \Phi(\sigma^{-\vec v}(x,y)|_F)$ for some local rule $\Phi\colon (\Sigma \times \{0,\dots,k\})^{F} \to \Sigma \times \{0,\dots,k\}$ and $F$ a finite subset of $\ZZd$, where we may suppose $\vec 0 \in F$.
	
	As $\phi|_Y = \ID$, we can deduce that $\Phi(u,v) = (u,v)_{\vec{0}}$ if $v = 0^F$. Therefore if $(x,y) \in X_{n,k}$, $y_{\vec v} \neq 0$ and we define $W_{\vec v} = \{\vec u : \vec v \in \vec u + F\} = \vec v - F$ we get that $\phi(x,y)|_{\ZZd \setminus W_{\vec v}} = (x,y)|_{\ZZd \setminus W_{\vec v}}$. We extend $\Phi$ to $\widetilde{\Phi} \colon (\Sigma \times \{0,\dots,k\})^{W_{\vec{0}} + F} \to  (\Sigma \times \{0,\dots,k\})^{W_{\vec{0}}}$ by pointwise application of $\Phi$. Note that $W_{\vec{0}} = -F$.
	
	We can then define $f_{\phi} \colon \Sigma^{F - F} \times Q \to \Sigma^{-F} \times Q \times \ZZ^d$ by using $\widetilde{\Phi}$ as follows:  We set $f_{\phi}(p,q) = (p',q',\vec u)$ if, after defining $r \in \{0,\dots,k\}^{F - F}$ such that $r_{\vec{0}} = q$ and $0$ elsewhere, we have $\widetilde{\Phi}(p,r) = (p',r')$ and $r' \in \{0,\dots,k\}^{-F}$ contains the symbol $q' \neq 0$ in position $\vec u$ (there is always a unique such position $\vec u$ as $\phi^{-1}(Y) = Y$). It can be verified that the Turing machine $T_{f_{\phi}}$ is precisely $\phi$, therefore $\phi \in \TM(G,n,k)$.
	
	If $\phi \in \Aut(X_{n,k})$ then $\phi^{-1}(Y) \supset Y$ is implied by $\phi|_Y = \ID$, and since the inverse automorphism $\phi^{-1}$ satisfies $\phi^{-1}|_Y = \ID$ as well, we also have $\phi^{-1}(Y) \subset Y$. Thus, $\phi$ is a Turing machine. Similarly, the inverse map $\phi^{-1}$ is a Turing machine. Thus, in this case $\phi \in \RTM(\ZZ^d,n,k)$.
\end{proof}

Alternatively, in the previous statement we can just write $\TM(\ZZ^d,n,k) = \{ \phi \in \End(X_{n,k}) \mid \phi|_Y = \ID, \phi^{-1}(Y) \subset Y \}$, since $\phi^{-1}(Y) \supset Y$ is
implied by $\phi|_Y = \ID$. 

\begin{corollary}
	Let $\phi \in \TM(\ZZ^d,n,k)$. We have that $\phi \in \RTM(\ZZ^d,n,k)$ if and only if $\phi$ is bijective.
\end{corollary}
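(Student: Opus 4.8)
The plan is to reduce everything to Proposition~\ref{prop_CACharacterization}, which identifies $\TM(\ZZ^d,n,k)$ and $\RTM(\ZZ^d,n,k)$ with the endomorphisms, respectively automorphisms, of $X_{n,k}$ that fix $Y = X_{n,0}$ pointwise. One direction is immediate: if $\phi \in \RTM(\ZZ^d,n,k)$, then by definition there is a Turing machine $\phi^{-1}$ with $\phi \circ \phi^{-1} = \phi^{-1} \circ \phi = \ID$, so in particular $\phi$ is a bijection. The content is the converse.

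So suppose $\phi \in \TM(\ZZ^d,n,k)$ is bijective as a self-map of $\Sigma^{\ZZ^d} \times Q \times \ZZ^d$. I would view $\phi$ as an endomorphism of $X_{n,k}$ via Proposition~\ref{prop_CACharacterization}; recall that this endomorphism acts as $\phi$ on configurations carrying a head, acts as the identity on $Y$, and satisfies $\phi^{-1}(Y) = Y$. The key observation is that, set-theoretically, $X_{n,k}$ is the disjoint union of $Y$ and $\Sigma^{\ZZ^d} \times Q \times \ZZ^d$ (a configuration of $X_{n,k}$ carries either no head or exactly one head), and $\phi$ respects this splitting: it maps $Y$ bijectively onto $Y$ (being the identity there), and since $\phi^{-1}(Y) = Y$ it maps the ``head'' part into itself, bijectively by hypothesis. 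Hence $\phi$ is a bijection of $X_{n,k}$.

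Now $\phi$ is a bijective cellular automaton on the subshift $X_{n,k}$. Since $X_{n,k}$ is compact and cellular automata are continuous and shift-commuting, $\phi$ is a homeomorphism whose inverse is again continuous and shift-commuting, hence a cellular automaton by the Curtis--Hedlund--Lyndon theorem; that is, $\phi \in \Aut(X_{n,k})$ (this is the standard fact, recalled in the preliminaries and in Section~1.10 of~\cite{CeCo10}, that bijective cellular automata are reversible). Moreover $\phi^{-1}|_Y = \ID$ because $\phi|_Y = \ID$. By Proposition~\ref{prop_CACharacterization} this says precisely that $\phi \in \RTM(\ZZ^d,n,k)$, which completes the argument.

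The only point needing care — the ``main obstacle'', though a mild one — is transferring bijectivity from the non-compact space $\Sigma^{\ZZ^d} \times Q \times \ZZ^d$ to the compact space $X_{n,k}$: one must check that extending $\phi$ by the identity on $Y$ destroys neither injectivity nor surjectivity, which is exactly guaranteed by the fact, already established in Proposition~\ref{prop_CACharacterization}, that a Turing machine neither creates nor destroys heads, i.e.\ $\phi^{-1}(Y) = Y$. Once the statement is moved to $X_{n,k}$, everything follows from the cited facts without further computation.
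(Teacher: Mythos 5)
Your argument is correct and is exactly the route the paper intends: the corollary is deduced from Proposition~\ref{prop_CACharacterization} by noting that a bijective endomorphism of the compact subshift $X_{n,k}$ is automatically an automorphism (its inverse is continuous and shift-commuting, hence a cellular automaton), and you rightly isolate the only point requiring care, namely that $\phi|_Y = \ID$ together with $\phi^{-1}(Y) = Y$ lets one pass between bijectivity on $\Sigma^{\ZZ^d}\times Q\times \ZZ^d$ and bijectivity on $X_{n,k}$. Nothing to add.
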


Readers familiar with the theory of cellular automata may wonder if injectivity is enough, since injective cellular automata on full shifts are surjective. This is not a priori clear since cellular automata on nontransitive sofic shifts (such as $X_{n,k}$) can be injective without being surjective. We will, however, later prove the stronger result that both injectivity and surjectivity are equivalent to bijectivity.

Clearly, the conditions of Proposition~\ref{prop_CACharacterization} are preserved under function composition and inversion. Thus:

\begin{corollary}
	Under function composition, $(\TM(\ZZ^d,n,k), \circ)$ is a monoid and $(\RTM(\ZZ^d,n,k), \circ)$ is a group.
\end{corollary}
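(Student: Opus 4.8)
The plan is to deduce the statement immediately from Proposition~\ref{prop_CACharacterization}, which already identifies $\TM(\ZZ^d,n,k)$ with the set of $\phi \in \End(X_{n,k})$ satisfying $\phi|_Y = \ID$ and $\phi^{-1}(Y) = Y$, and $\RTM(\ZZ^d,n,k)$ with the set of $\phi \in \Aut(X_{n,k})$ satisfying $\phi|_Y = \ID$. Since $\End(X_{n,k})$ is a monoid and $\Aut(X_{n,k})$ a group under composition (composites of continuous shift-commuting maps are continuous and shift-commuting, and associativity is automatic for function composition), it suffices to check that these two defining conditions are closed under composition, that the identity satisfies them, and — for the group statement — that the condition $\phi|_Y = \ID$ is preserved under taking inverses inside $\Aut(X_{n,k})$.

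First I would handle the neutral element: the identity of $X_{n,k}$ is an automorphism, it clearly restricts to the identity on $Y$, and $\ID^{-1}(Y) = Y$, so $\ID$ lies in both $\TM(\ZZ^d,n,k)$ and $\RTM(\ZZ^d,n,k)$. Next, for closure under composition, take $\phi,\psi$ with $\phi|_Y = \psi|_Y = \ID$. For $y \in Y$ we have $\psi(y) = y \in Y$, hence $(\phi\circ\psi)(y) = \phi(y) = y$, so $(\phi\circ\psi)|_Y = \ID$; and for the preimage condition, $(\phi\circ\psi)^{-1}(Y) = \psi^{-1}\bigl(\phi^{-1}(Y)\bigr) = \psi^{-1}(Y) = Y$. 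This shows $\TM(\ZZ^d,n,k)$ is a submonoid of $\End(X_{n,k})$, hence a monoid.

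For the group statement, I would additionally observe that $\phi|_Y = \ID$ already forces $\phi(Y) = Y$, so if $\phi \in \Aut(X_{n,k})$ then $\phi|_Y$ is a bijection of $Y$ equal to the identity, whence $\phi^{-1}|_Y = \ID$ as well; thus $\phi^{-1} \in \RTM(\ZZ^d,n,k)$. Together with the closure under composition established above (which also shows, via Proposition~\ref{prop_CACharacterization}, that the composite of automorphisms fixing $Y$ is again such an automorphism), this gives that $\RTM(\ZZ^d,n,k)$ is a subgroup of $\Aut(X_{n,k})$. There is no real obstacle here: the argument is a routine verification, the only point worth flagging being that $\phi|_Y = \ID$ automatically yields $\phi(Y) = Y$, which is precisely what keeps inversion inside $\RTM(\ZZ^d,n,k)$.
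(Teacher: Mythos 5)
Your argument is correct and is exactly the route the paper takes: the paper simply asserts that the conditions of Proposition~\ref{prop_CACharacterization} are preserved under composition and inversion, and you have filled in that routine verification (including the key observation that $\phi|_Y = \ID$ forces $\phi(Y) = Y$, so inverses also fix $Y$ pointwise). Nothing is missing.
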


We usually omit the function composition symbol, and use the notations $\TM(\ZZ^d,n,k)$ and $\RTM(\ZZ^d,n,k)$ to refer to the corresponding monoids and groups.

An important corollary of Proposition~\ref{prop_CACharacterization} is that every result we prove about Turing machine groups says something about cellular automata. In particular, if a group $H$ embeds into $\RTM(\ZZ,n,k)$, then $H$ also embeds into $\Aut(X_{n,k})$, which implies that there exists a one-dimensional sofic subshift whose automorphism group contains $H$.\footnote{In Section~\ref{sec:CATorsion}, we show that $\RTM(\ZZ,n,k)$ also ``almost'' embeds into $\Aut(\Sigma^\ZZ)$.}

Before defining the second model for Turing machines, we introduce an extended model which will be occasionally used in what follows. Given a subshift $X \subset \Sigma^{\ZZ^d}$ we denote the set of moving-head $(\ZZ^d,n,k)$-Turing machines $f$ which satisfy $f(X \times X_k) \subset X \times X_k$ by $\TM(X,k)$ (where we omit the group from the notation, since it is determined by $X$). And the set of reversible ones by $\RTM(X,k)$. This change basically amounts to replacing the full shift $\Sigma^{\ZZ^d}$ in the definition by an arbitrary subshift $X$. This will be used to make explicit which properties from $X$ are required for our results, though our focus is on the case where $X = \Sigma^{\ZZ^d}$ in which $\RTM(\Sigma^{\ZZ^d},k) = \RTM(\ZZ^d,n,k)$ for $|\Sigma| = n$. Proposition~\ref{prop_CACharacterization} is still valid in this extended context.

One use of this generalization is that it allows us to study Turing machines on a more robust class of subshifts than just full shifts. For example, when studying Turing machines on transitive SFTs rather than full shifts only, states can be eliminated, due to the following lemma.

\begin{definition}\label{def:sparseshift}
	Let $X \subset \Sigma^\ZZ$ be a subshift and $\# \notin \Sigma$. Write $\sqrt[n]{X}$ for the shift space with points
	\begin{align*}
	\{ x \in ( \Sigma \cup \{\#\})^\ZZ : &\mbox{ there are } k \in \{0,\dots,n-1\} \mbox{ and } y \in X \mbox{ such that for every } m \in \ZZ, \\
	& (x_{mn+k} = y_m \mbox{ and for every } j \in \{0,\dots,n-1\} \setminus \{k\} \mbox{ we have } x_{mn+j}= \#) \}.
	\end{align*}
\end{definition}

The subshift $\sqrt[n]{X}$ is the space of configurations on $\ZZ$ where some configuration of $X$ occurs on a coset of $n\ZZ$ and the rest of the positions are filled with a special symbol $\#$.

\begin{lemma}
	\label{lem:OneState} Let $X \subset A^\ZZ$ be a subshift. Then $\TM(X,k)$ is isomorphic to $\TM(\sqrt[k]{X},1)$ as a monoid and $\RTM(X,k)$ is isomorphic to  $\RTM(\sqrt[k]{X},1)$ as a group.
\end{lemma}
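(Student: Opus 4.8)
The plan is to build an explicit monoid isomorphism $\Psi\colon \TM(X,k)\to\TM(\sqrt[k]{X},1)$ and check it restricts to a group isomorphism on the reversible parts. The key idea is that a configuration of $\sqrt[k]{X}$ encodes both a configuration $y\in X$ (read off the unique coset $n\ZZ+r$ carrying non-$\#$ symbols, here $n=k$) and an element $r\in\{0,\dots,k-1\}$; we use $r$ to record the Turing machine's state. So a tape-with-state $y_q^{\vec v}$ in the $k$-state model over $X$ corresponds to a tape-with-state in the $1$-state model over $\sqrt[k]{X}$: spread $y$ out over the coset $k\ZZ+(q-1)$ (shifting the encoding so the offset equals the state), fill the rest with $\#$, and place the single head at position $k\vec v+(q-1)$ (or simply at $k\vec v$, carrying the offset information in the coset). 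I would fix the convention that the head sits at $k v$ and the current state is encoded by which coset $kz+r$ is non-$\#$; the head then always sees, within a bounded window, both the local pattern of $y$ and the offset $r$, hence the state.

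Concretely, given $T=T_f\in\TM(X,k)$ with local rule $f\colon\Sigma^{F_{\mathrm{in}}}\times Q\to\Sigma^{F_{\mathrm{out}}}\times Q\times\ZZ$, I would define $\Psi(T)=T_g$ for a $1$-state rule $g$ whose neighborhood is a dilated-and-enlarged version of $F_{\mathrm{in}},F_{\mathrm{out}}$ (large enough to read the offset $r$, i.e.\ to see which residue class carries non-$\#$ symbols in a window of length $>k$): $g$ reads the window, decodes $(p,q)$ with $p=\sigma^{-\vec v}(y)|_{F_{\mathrm{in}}}$ and $q=r+1$, computes $f(p,q)=(\tilde p,\tilde q,\tilde u)$, then rewrites the window so that the non-$\#$ positions now sit on the coset $k\ZZ+(\tilde q-1)$, carry the updated symbols $\tilde p$ (placed $k$ apart), and moves the head by $k\tilde u+((\tilde q-1)-(q-1))=k\tilde u+\tilde q-q$. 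One checks that $g$ is a well-defined local rule on $\sqrt[k]{X}$ (the output again lies in $\sqrt[k]{X}$ because exactly one coset stays non-$\#$), that $\Psi$ is a monoid homomorphism (composition of rules corresponds, since the intermediate encoding is reconstructed exactly), that $\Psi$ is injective (the encoding is a bijection on the head-carrying points, and both machines are the identity off the head), and that $\Psi$ is surjective: any $1$-state machine on $\sqrt[k]{X}$ must, to keep images in $\sqrt[k]{X}$, permute the "coset offset" in a way that is exactly a state change, and restricting to the encoded points gives a $k$-state machine on $X$. Finally, since $\Psi$ and $\Psi^{-1}$ are monoid homomorphisms, $T$ is invertible iff $\Psi(T)$ is, so $\Psi$ restricts to an isomorphism $\RTM(X,k)\cong\RTM(\sqrt[k]{X},1)$.

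The main obstacle I anticipate is purely bookkeeping: making sure the neighborhoods of $g$ are genuinely finite and that $g$ "sees enough" to recover the offset $r$ unambiguously. This needs the window to have length exceeding $k$ so that the non-$\#$ residue class is determined, and one must handle the boundary case where the head's window straddles several cosets — but since there is always exactly one non-$\#$ class globally, a window of $2k$ cells suffices to detect it. A second point requiring care is that in the definition of $\sqrt[n]{X}$ the encoding offset $k\in\{0,\dots,n-1\}$ is a global datum, so when the machine changes "state" it must rewrite the entire visited region's offset consistently; this is exactly why the head carries the offset change in its move vector, and why bounded-radius rewriting suffices (the machine only ever rewrites near the head, and the offset is globally constant so it is automatically consistent outside the window). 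Once these finiteness/consistency checks are in place, verifying the homomorphism and bijectivity properties is a routine unwinding of the definitions of $T_f$ and of $\sqrt[k]{X}$.
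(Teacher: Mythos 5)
There is a genuine gap, and it lies exactly in the point you flag as ``requiring care'': you encode the state $q$ in \emph{which coset} $k\ZZ+r$ carries the non-$\#$ symbols. That offset is a global datum of the configuration, and a Turing machine can only rewrite the tape in a bounded window around its head. If $f(p,q)=(\tilde p,\tilde q,\tilde u)$ with $\tilde q\neq q$, your rule $g$ rewrites the window so that the non-$\#$ symbols there sit on the coset $k\ZZ+(\tilde q-1)$, but outside the window they still sit on $k\ZZ+(q-1)$; the image is then not an element of $\sqrt[k]{X}$ at all (it has non-$\#$ symbols on two distinct residue classes), so $\Psi(T)$ does not map $\sqrt[k]{X}\times X_1$ into itself. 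The parenthetical ``the offset is globally constant so it is automatically consistent outside the window'' is precisely where the argument fails: it is constant in the input, but your local rewrite destroys that constancy in the output. Your alternative convention (head at $k\vec v+(q-1)$ with the tape on coset $k\ZZ+(q-1)$) fails for the dual reason: the non-$\#$ positions are then always at multiples of $k$ \emph{relative to the head}, so a shift-equivariant local rule cannot recover $q$ at all. Your surjectivity sketch inherits the same problem, since a one-state machine on $\sqrt[k]{X}$ can never ``permute the coset offset''.

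The fix --- and the paper's actual construction --- is to keep the tape content on a \emph{fixed} coset (via $\psi(x)_i=x_{i/k}$ for $i\equiv 0 \bmod k$ and $\#$ otherwise) and encode the state in the \emph{head's position within the $\#$-gap}: $\delta(x_q^{\vec v})=\psi(x)^{k\vec v+q-1}_1$. Then the state is read off locally as the residue of the head's position relative to the nearest non-$\#$ cells, and a state change is just a head movement of at most $k-1$ cells --- a bounded, local operation. With this encoding, conjugation $T\mapsto\delta^{-1}\circ T\circ\delta$ (extended to all of $\sqrt[k]{X}$ by shift-equivariance, using that non-$\#$ symbols occur with bounded gaps) gives the desired monoid isomorphism, and the rest of your plan (homomorphism, injectivity, surjectivity, restriction to the reversible parts) goes through as you describe.
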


\begin{proof}
	As the monoid of Turing machines can be seen as a submonoid of endomorphisms of a subshift, it follows that they commute with the shift. More precisely, for each machine $T$ and $\vec{m} \in \ZZ$ if we interpret $T$ acting on $X \times X_k$ by endomorphisms, then we have that $\sigma^{\vec{m}} \circ T \circ \sigma^{-\vec{m}} = T$. In particular, as non-$\#$ symbols appear at a bounded distance in each configuration of $\TM(\sqrt[k]{X},1)$, it suffices to define a Turing machine in $(\sqrt[k]{X} \setminus [\#]_0) \times \{1\} \times \ZZ$ to completely determine its action over $\sqrt[k]{X}\times \{1\}\times \ZZ$.
	
	Let $\psi \colon X \to \sqrt[k]{X}$ be defined by
	\[ \psi(x)_i = \left\{ \begin{array}{ll}
	\#, & \mbox{if } i \not\equiv 0 \bmod k, \mbox{ and} \\
	x_{i/k}, & \mbox{otherwise.} \end{array}\right.
	\]
	Note that $\psi$ defines a $1$-to-$1$ map onto $\sqrt[k]{X} \setminus [\#]_0$.
	
	Also, define $\eta \colon \ZZ \times Q \to \ZZ$ by $\eta(\vec{v},q) = k \vec{v}+q-1$ which is clearly bijective. We can thus finally define a bijection $\delta \colon X \times Q \times \ZZ \to (\sqrt[k]{X} \setminus [\#]_0) \times \{1\} \times \ZZ$ by $\delta(x_q^{\vec{v}}) = \psi(x)^{\eta(\vec{v},q)}_1$.
	
	Now, given a machine $T \in \TM(\sqrt[k]{X},1)$ we define $\varphi(T) \in \TM(X,k)$ by $\varphi(T) = \delta^{-1} \circ T \circ \delta$. Note that this is well-defined since $T \in \TM(\sqrt[k]{X},1)$ implies that $T(\sqrt[k]{X} \setminus [\#]_0 \times X_1) = \sqrt[k]{X} \setminus [\#]_0 \times X_1$ again because non-$\#$-symbols appear with bounded gaps, and since $T$ is the identity map on points where the head does not appear. By definition it is then clear that $\varphi(T \circ T') = \varphi(T) \circ \varphi(T')$ and that $\varphi$ is $1$-to-$1$. We only need to show that $\varphi(T) \in \TM(X,k)$ and that $\varphi$ is onto.
	
	Firstly, it is clear that $\delta^{-1} \circ T \circ \delta$ maps $X \times Q \times \ZZ$ to itself, therefore the head cannot disappear. We have that $\delta \circ \sigma^{\vec{m}} = \sigma^{\vec{km}} \circ \delta$ and thus $\varphi(T)$ is shift commuting.
	\begin{align*}
	\varphi(T) \circ \sigma^{\vec{m}} &= \delta^{-1} \circ T \circ \delta \circ \sigma^{\vec{m}}\\
	&= \delta^{-1} \circ T \circ \sigma^{\vec{km}} \circ\delta\\
	&= \delta^{-1} \circ \sigma^{\vec{km}} \circ T \circ \delta\\
	&= \sigma^{\vec{m}} \circ\delta^{-1} \circ T \circ \delta\\
	&= \sigma^{\vec{m}} \circ \varphi(T).
	\end{align*}
	Since $\delta$ and $T$ are continuous, and $\delta^{-1}$ is continuous on the image of $\delta$, we have that $\varphi(T)$ is continuous and shift invariant and therefore defines an endomorphism of $X \times X_k$ which is an element of $\TM(X,k)$. Conversely, an analogous argument shows that for each $T \in \TM(X,k)$ then the map $T'$ defined as $T' = \delta \circ T \circ \delta^{-1}$ on $\sqrt[k]{X} \setminus [\#]_0 \times \{1\} \times \ZZ$ (and by conjugating with a suitable power of $\sigma$ on other points) is in $\TM(\sqrt[k]{X},1)$ and thus $\varphi(T') = T$, showing that $\varphi$ is onto.	
	
	Finally, if $T \in \RTM(\sqrt[k]{X},1)$ then $\varphi(T)\circ \varphi(T^{-1}) = \ID$ and thus $\varphi(T) \in \RTM(X,k)$. \end{proof}

\subsection{The moving tape model}

Even though the moving head model is helpful when building examples, it has a fundamental disadvantage: the space on which the machines act is not compact. From an intuitive point of view, it means that a sequence of Turing machines could potentially move the head to infinity and make it disappear. Or alternatively, in the point of view of seeing Turing machines as endomorphisms as in Proposition~\ref{prop_CACharacterization} (which usually, e.g.\ in K\r{u}rka \cite{Ku97}, is directly what the moving head model refers to), the space is compact, but there are uncountably many points that do not quite represent Turing machine configurations.

It's also possible to consider the position of the Turing machine as fixed at $\vec{0}$, and move the tape instead, to obtain the moving tape Turing machine model. In \cite{Ku97}, where Turing machines are studied as dynamical systems, the moving head model and moving tape model give non-conjugate dynamical systems. However, the abstract monoids defined by the two points of view turn out to be equal, and we obtain an equivalent definition of the group of Turing machines. 


As in the previous section, we begin with a definition using local rules.

Given a function
$f \colon \Sigma^{F_{\text{in}}} \times Q \to \Sigma^{F_{\text{out}}} \times Q \times \ZZ^d$,
where $F_{\text{in}},F_{\text{out}}$ are finite subsets of $\ZZ^d$, we can define a map $T_{f} \colon \Sigma^{\ZZ^d} \times Q \to \Sigma^{\ZZ^d} \times Q$ as follows: If $f(x|_{F_{\text{in}}}, q) = (p, q', \vec v)$, then $T_f(x, q) = (\sigma^{-\vec v}(y), q')$
where
\[ y_{\vec u} = \left\{\begin{array}{cc}
x_{\vec u}, &\mbox{if } \vec u \notin F_{\text{out}} \\
p_{\vec u}, &\mbox{if } \vec u \in F_{\text{out}},
\end{array}\right. \]

\begin{definition}
	Any function $T \colon \Sigma^{\ZZd} \times Q \to \Sigma^{\ZZd} \times Q$ for which there is an $f$ as above such that $T = T_f$ is called a \define{moving tape $(\ZZ^d,n,k)$-Turing machine} and $f$ is its \define{local rule}. If there exists a $(\ZZ^d,n,k)$-Turing machine $T^{-1}$ such that $T \circ T^{-1} = T^{-1}\circ T = \ID$ we say $T$ is reversible.
\end{definition}

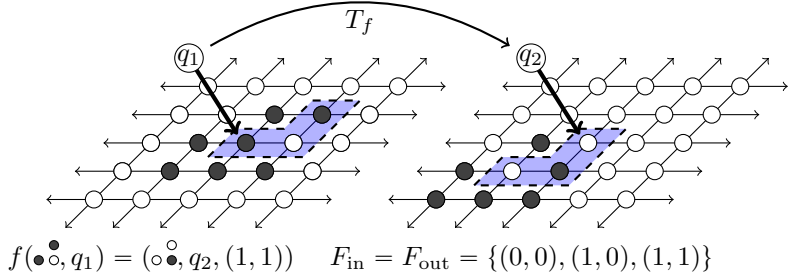
\begin{figure}[h!]
	\centering
	\begin{tikzpicture}[scale=0.50]

\begin{scope}[scale = 0.75, shift={(-6,0)},rotate=0]
\def \c{0.3}

\draw [<->] (-5,0) to (5,0);
\draw [<->] (-6,-1) to (4,-1);
\draw [<->] (-7,-2) to (3,-2);
\draw [<->] (-4,1) to (6,1);
\draw [<->] (-3,2) to (7,2);

\draw [<->] (3,3) to (-3,-3);
\draw [<->] (3-5/3,3) to (-3-5/3,-3);
\draw [<->] (3-10/3,3) to (-3-10/3,-3);
\draw [<->] (3+5/3,3) to (-3+5/3,-3);
\draw [<->] (3+10/3,3) to (-3+10/3,-3);

\foreach \i in {-10/3,-5/3,0,5/3,10/3}{
	\foreach \j in {-2,-1,0,1,2}{
	\draw[fill = white] (\i+\j,\j) circle (\c);
}
}


\filldraw[thick,dashed,fill=blue,fill opacity=0.3] (-5/6-0.5,-0.5) -- (-5/6+0.5,0.5) -- (5/6+0.5,0.5)  -- (5/6+1.5,1.5) -- (15/6+1.5,1.5)  -- (15/6+0.5,0.5) -- (15/6-0.5,-0.5)  -- cycle;

\draw[fill = white] (-10/3,0) circle (\c);
\draw[fill = black!75] (-5/3,0)circle (\c);
\draw[fill = black!75] (0,0)circle (\c);
\draw[fill = white] (5/3,0) circle (\c);
\draw[fill = white] (10/3,0) circle (\c);

\draw[fill = white] (-10/3+1,1) circle (\c);
\draw[fill = white] (-5/3+1,1)circle (\c);
\draw[fill = black!75] (1,1)circle (\c);
\draw[fill = black!75] (5/3+1,1) circle (\c);
\draw[fill = white] (10/3+1,1) circle (\c);

\draw[fill = white] (-10/3-1,-1) circle (\c);
\draw[fill = black!75] (-5/3-1,-1)circle (\c);
\draw[fill = black!75] (-1,-1)circle (\c);
\draw[fill = black!75] (5/3-1,-1) circle (\c);
\draw[fill = white] (10/3-1,-1) circle (\c);
%
%

\draw [->, ultra thick] (-2,3) to (-0.3, 0.3);
\draw[fill = white] (-2,3) circle (0.5);
\draw node (1) at (-2,3) {$q_1$};

\end{scope}

\begin{scope}[scale = 0.75, shift={(6,0)},rotate=0]
\def \c{0.3}

\draw [<->] (-5,0) to (5,0);
\draw [<->] (-6,-1) to (4,-1);
\draw [<->] (-7,-2) to (3,-2);
\draw [<->] (-4,1) to (6,1);
\draw [<->] (-3,2) to (7,2);

\draw [<->] (3,3) to (-3,-3);
\draw [<->] (3-5/3,3) to (-3-5/3,-3);
\draw [<->] (3-10/3,3) to (-3-10/3,-3);
\draw [<->] (3+5/3,3) to (-3+5/3,-3);
\draw [<->] (3+10/3,3) to (-3+10/3,-3);

\filldraw[thick,dashed,fill=blue,fill opacity=0.3] (-15/6-1.5,-1.5) -- (-15/6-0.5,-0.5) -- (-5/6-0.5,-0.5)  -- (-5/6+0.5,0.5) -- (5/6+0.5,0.5)  -- (5/6-0.5,-0.5) -- (5/6-1.5,-1.5)  -- cycle;

\foreach \i in {-10/3,-5/3,0,5/3,10/3}{
	\foreach \j in {-2,-1,0,1,2}{
		\draw[fill = white] (\i+\j,\j) circle (\c);
	}
}
\draw[fill = black!75] (-10/3-1,-1)circle (\c);
\draw[fill = black!75] (-1,-1) circle (\c);

\draw[fill = black!75] (-5/3,0)circle (\c);

\draw[fill = black!75] (-10/3-2,-2) circle (\c);
\draw[fill = black!75] (-5/3-2,-2)circle (\c);
\draw[fill = black!75] (-2,-2)circle (\c);

%
%

\draw [->, ultra thick] (-2,3) to (-0.3, 0.3);
\draw[fill = white] (-2,3) circle (0.5);
\draw node (2) at (-2,3) {$q_2$};
\end{scope}
\path
(1) edge [thick, bend left=30,->] node[swap]  {} (2);

\draw node at (-1.5,-3) {$f($\begin{tikzpicture}[scale = 0.2]
\draw[fill = black!75] (0,0) circle (0.3);
\draw[fill = white] (1,0)circle (0.3);
\draw[fill = black!75] (1,1) circle (0.3);
\end{tikzpicture}$,q_1) = ($\begin{tikzpicture}[scale = 0.2]
\draw[fill = white] (0,0) circle (0.3);
\draw[fill = black!75] (1,0)circle (0.3);
\draw[fill = white] (1,1) circle (0.3);
\end{tikzpicture}$,q_2,(1,1))$ \ \ \ $F_{\text{in}} = F_{\text{out}} = \{(0,0), (1,0), (1,1)\}$};
\draw node at (-1.5,3.2) {$T_f$};

\end{tikzpicture}
	\caption{The action of a moving tape machine $T_f$.}
	\label{z_2_machine}
\end{figure}

These machines also have the following characterization with a slightly more dynamical feel to it. Say that two configurations $x$ and $y$ in $\Sigma^{\ZZd}$ are \define{asymptotic}, and write $x \sim y$, if there exists a finite $F \subset \ZZ^d$ such that $x|_{\ZZ^d \setminus F} = y|_{\ZZ^d \setminus F}$. In order to be more specific, we write $x \sim_F y$ to claim that a particular choice of $F$ satisfies the property.

\begin{restatable}{lemma}{MovingTapeDynamicalDef}
	\label{lem:MovingTapeDynamicalDef}
	Let $T \colon\Sigma^{\ZZd} \times Q \to \Sigma^{\ZZ^d} \times Q$ be a function. Then $T$ is a moving tape Turing machine if and only if it is continuous, and for a continuous function $s \colon \Sigma^{\ZZ^d} \times Q \to \ZZ^d$ and a finite $F \subset \ZZ^d$ we have $T_1(x, q) \sim_F \sigma^{s(x, q)}(x)$ for all $(x, q) \in \Sigma^{\ZZ^d} \times Q$.
\end{restatable}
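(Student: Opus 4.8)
The plan is to prove the two implications directly: for the forward direction one reads the data $s$ and $F$ off a local rule, and for the converse one compiles a local rule out of the hypotheses using compactness of $\Sigma^{\ZZ^d}\times Q$. \emph{From a local rule to the dynamical description:} suppose $T=T_f$ with $f\colon\Sigma^{F_{\text{in}}}\times Q\to\Sigma^{F_{\text{out}}}\times Q\times\ZZ^d$. Set $s(x,q):=-f_3(x|_{F_{\text{in}}},q)$, which is locally constant, hence continuous, since it depends only on the finite pattern $x|_{F_{\text{in}}}$ and on $q$; continuity of $T$ itself is routine for the same reason, once one notes that $T_1(x,q)$ is a shift (by a locally constant amount) of a configuration obtained from $x$ by a locally constant modification on the fixed finite set $F_{\text{out}}$. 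Unwinding the definition of $T_f$: if $f(x|_{F_{\text{in}}},q)=(p,q',\vec v)$ then $T_1(x,q)=\sigma^{-\vec v}(y)$ with $y$ agreeing with $x$ off $F_{\text{out}}$, so a one-line shift computation gives $T_1(x,q)\sim_{F_{\text{out}}-\vec v}\sigma^{-\vec v}(x)=\sigma^{s(x,q)}(x)$. Since $f_3$ takes only finitely many values $R\subseteq\ZZ^d$, the finite set $F:=\bigcup_{\vec v\in R}(F_{\text{out}}-\vec v)$ works simultaneously for every $(x,q)$.

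Conversely, assume $T$ is continuous and $T_1(x,q)\sim_F\sigma^{s(x,q)}(x)$ for some continuous $s$ and finite $F$. Since $\Sigma^{\ZZ^d}\times Q$ is compact (as $\Sigma$ and $Q$ are finite), any continuous map from it into a discrete space is locally constant and, by a standard covering argument, is determined by $(x|_N,q)$ for some finite $N$ (and therefore has finite image). Applying this to $(x,q)\mapsto T_2(x,q)$, to $s$, and to $(x,q)\mapsto T_1(x,q)|_K$ for finite sets $K$ (each continuous into a finite set), we obtain: a finite $N_0$ and a rule computing $q':=T_2(x,q)$ from $(x|_{N_0},q)$; a finite $N_1$ computing $\vec v:=-s(x,q)$ from $(x|_{N_1},q)$, with the image $S$ of $s$ finite; and, setting $F_{\text{out}}:=F-S$ and $K:=F_{\text{out}}+S$ (both finite), a finite $N_2$ such that $T_1(x,q)|_K$ is determined by $(x|_{N_2},q)$.

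Now assemble $f$. Put $F_{\text{in}}:=N_0\cup N_1\cup N_2$. Given $(x,q)$, let $\vec v:=-s(x,q)\in -S$. Because $T_1(x,q)$ agrees with $\sigma^{-\vec v}(x)$ off $F$, the configuration $\sigma^{\vec v}(T_1(x,q))$ agrees with $x$ off $F+\vec v\subseteq F_{\text{out}}$; hence setting $p:=\sigma^{\vec v}(T_1(x,q))|_{F_{\text{out}}}$ and $f(x|_{F_{\text{in}}},q):=(p,q',\vec v)$, the ``modify on $F_{\text{out}}$, then shift by $-\vec v$'' recipe of $T_f$ reproduces $T_1(x,q)$ and $q'=T_2(x,q)$ exactly. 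The only point to verify is that $f$ genuinely is a function of $(x|_{F_{\text{in}}},q)$: the components $q'$ and $\vec v$ are by construction, and the entries $T_1(x,q)_{\vec u-\vec v}$ with $\vec u\in F_{\text{out}}$ needed to form $p$ all have indices in $K$, hence are determined by $x|_{N_2}$. Thus $f$ is a legitimate local rule with $T_f=T$.

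The difficulty is not in any single step but in the bookkeeping: tracking the several finite sets ($F$, $S$, $F_{\text{out}}$, $F_{\text{in}}$, $K$, the $N_i$) and the shifts, and above all realising that the local rule need only consult a bounded window of $x$ — this hinges on the image of $s$ being finite (the compactness fact) together with the fact that $T_1(x,q)$ differs from the explicit shift $\sigma^{s(x,q)}(x)$ only on the fixed finite set $F$, which is what keeps the ``patch'' $p$ supported on a fixed finite $F_{\text{out}}$.
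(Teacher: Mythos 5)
Your proof is correct and follows essentially the same route as the paper: read $s=-f_3$ and $F$ off the local rule for the forward direction, and use compactness (finite image of $s$, finite dependence windows for $T_1$, $T_2$, $s$) to assemble a local rule for the converse. The paper's version merely compresses the converse into ``it is then easy to extract a local rule,'' whereas you carry out the bookkeeping explicitly; there is no substantive difference.
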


\begin{proof}
	It is easy to see that $T_f$ for any local rule $f \colon \Sigma^{F_{\text{in}}} \times Q \to \Sigma^{F_{\text{out}}} \times Q \times \ZZ^d$ is continuous. The projection to the third component of $f$ gives the function $-s$, and one can take $F$ as the minimal neighborhood of $T_f$.
	
	For the converse, since $s$ is a continuous function from a compact space to a discrete one we conclude that the image of $s$ is bounded. Furthermore it only depends on a finite set $F_0$ of coordinates of $x$. Since $T$ is continuous, $T(x,q)|_{F + s(\Sigma^{\ZZ^d} \times Q)}$ depends only on a finite set of coordinates $F_1$ of $x$. It is then easy to extract a local rule
	\[ f \colon \Sigma^{F_0 \cup F_1} \times Q \to \Sigma^{F - s(\Sigma^{\ZZ^d} \times Q)} \times Q \times {\ZZ^d}, \]
	for $T$. 
\end{proof}

We call the function $s$ in the definition of these machines the \define{shift indicator} of $T$, as it indicates how much the tape is shifted depending on the local configuration around $\vec{0}$. In the theory of orbit equivalence and topological full groups, the analogs of $s$ are usually called \define{cocycles}. Note that in the definition using local functions, the third coordinate of the image indicates how much the head moves, while the shift indicator shows how the configuration shifts, hence the minus sign next to $s$ in the above proof.

\begin{remark}
	In the previous lemma it is not enough that $T_1(x, q) \sim \sigma^{s(x, q)}(x)$ for all $(x, q) \in \Sigma^{\ZZ^d} \times Q$, we need the configurations to be uniformly asymptotic to each other (with a fixed $F\subset \ZZd$). Indeed, let $Q = \{1\}$ and consider the function $T \colon \Sigma^{\ZZ} \times Q \to \Sigma^{\ZZ} \times Q$ defined by $(T_1(x, 1))_i = x_{-i}$ if $x_{[-|i|+1,|i|-1]} = 0^{2i-1}$ and $\{x_i,x_{-i}\} \neq \{0\}$, and $(T_1(x, 1))_i = x_i$ otherwise. Clearly this map is continuous, the constant map $s(x,q) = \vec 0$ gives a shift-indicator for it and $T_1(x, q) \sim x$ for every $x \in \Sigma^{\ZZd}$. However, $T$ is not defined by any local rule since it can modify the tape arbitrarily far from the origin.
\end{remark}

As for moving head machines, it is easy to see (either by constructing local rules or by applying the dynamical definition) that the composition of two moving tape Turing machines is again a moving tape Turing machine. This allows us to proceed as before and define their monoid and group.

\begin{definition}
	We denote by $\TM_{\fix}(\ZZ^d,n,k)$ and $\RTM_{\fix}(\ZZ^d,n,k)$ the monoid of moving tape $(\ZZ^d,n,k)$-Turing machines and the group of reversible moving tape $(\ZZ^d,n,k)$-Turing machines respectively.
\end{definition}

Now, let us show that both models are equivalent in group-theoretical terms. First, we define the natural monoid epimorphism $\Psi\colon \TM(\ZZ^d,n,k) \to \TM_{\fix}(\ZZ^d,n,k)$ that shifts the configurations so that the
head remains at the origin:

\begin{definition}
	\label{def:morphism}
	Let $T \in \TM(\ZZ^d,n,k)$. We define $\Psi(T)\in\TM_{\fix}(\ZZ^d,n,k)$ as the moving tape Turing machine such that \[\Psi(T)(x,q) = (\sigma^{-\vec{v}}(y),r) \mbox{ when } T(x_{q}^{\vec{0}})= y_{r}^{\vec{v}}.   \]
\end{definition}

In other terms, define the shift equivalence relation $\equiv$ on the moving head space
$\Sigma^{\ZZd} \times Q \times \ZZd$ by $x_{q}^{\vec{u}}\equiv \sigma^{\vec{v}}(x)_{q}^{\vec{u}+\vec{v}}$
for all $x\in\Sigma^{\ZZd}$, $q\in Q$ and $\vec{u}, \vec{v}\in \ZZd$. Then for $T \in \TM(\ZZ^d,n,k)$, $\Psi(T)$ is the action induced by $T$ on the quotient $\Sigma^{\ZZd} \times Q \times \ZZd/{\equiv}$. Note that 
for any two $\equiv$-classes $U$ and $V$, if $T(U)\subset V$ then $T|_U \colon U\longrightarrow V$ is a bijection. As we shall see, this shows that $\Psi$ preserves the injectivity and
the surjectivity of Turing machines.

\begin{restatable}{proposition}{InjectiveSurjectiveTapeVsHead}
	\label{injective_surjective_tape_vs_head}
	$T$ is injective (surjective) if and only if $\Psi(T)$ is injective (surjective, respectively).
\end{restatable}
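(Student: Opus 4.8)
The plan is to exploit the observation already highlighted in the text: for any two $\equiv$-classes $U$ and $V$ of the shift equivalence relation, if $T(U) \subset V$ then the restriction $T|_U \colon U \to V$ is a bijection. Here $U, V$ range over the orbits of the $\ZZ^d$-action by $\sigma$ on the moving-head space $\Sigma^{\ZZ^d} \times Q \times \ZZ^d$ (where $\sigma$ acts by $x_q^{\vec u} \mapsto \sigma^{\vec v}(x)_q^{\vec u + \vec v}$), and $\Psi(T)$ is precisely the induced map on the quotient, which we identify with the moving-tape space $\Sigma^{\ZZ^d} \times Q$ by choosing the representative with head at $\vec 0$. The key structural fact is that $T$ permutes these $\equiv$-classes (it commutes with $\sigma$, so $T(\sigma^{\vec v} z) = \sigma^{\vec v} T(z)$, hence $T$ maps each class into a single class), so $T$ is a ``bundle map'' over $\Psi(T)$: it covers $\Psi(T)$ via the quotient map $\pi$, and on each fibre (each class) it is a bijection onto the target fibre.

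For injectivity: suppose $\Psi(T)$ is injective and $T(z_1) = T(z_2)$. Applying $\pi$ and using $\pi \circ T = \Psi(T) \circ \pi$, we get $\Psi(T)(\pi(z_1)) = \Psi(T)(\pi(z_2))$, so $\pi(z_1) = \pi(z_2)$, i.e.\ $z_1, z_2$ lie in the same class $U$; then injectivity of $T|_U$ forces $z_1 = z_2$. Conversely, suppose $T$ is injective and $\Psi(T)(\pi(z_1)) = \Psi(T)(\pi(z_2))$; then $\pi(T(z_1)) = \pi(T(z_2))$, so $T(z_1)$ and $T(z_2)$ lie in a common class $V$, and since $T(z_1) \in V$ we may write $T(z_2) = \sigma^{\vec v}(T(z_1)) = T(\sigma^{\vec v} z_1)$ for a suitable $\vec v$; injectivity of $T$ gives $z_2 = \sigma^{\vec v} z_1$, hence $\pi(z_1) = \pi(z_2)$, proving $\Psi(T)$ injective.

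For surjectivity: if $T$ is surjective and $w \in \Sigma^{\ZZ^d} \times Q$, pick any $z$ in the class $\pi^{-1}(w)$; by surjectivity there is $z'$ with $T(z') = z$, and then $\Psi(T)(\pi(z')) = \pi(z) = w$. Conversely, if $\Psi(T)$ is surjective, take any $z \in \Sigma^{\ZZ^d} \times Q \times \ZZ^d$ and let $V = \pi^{-1}(\pi(z))$ be its class; by surjectivity of $\Psi(T)$ there is a class $U$ with $T(U) \subset V$, and since $T|_U \colon U \to V$ is a bijection (in particular surjective), $z$ has a $T$-preimage in $U$. One subtlety worth stating carefully: when $\Psi(T)$ maps $\pi(z')$ onto $\pi(z)$, the class $U := \pi^{-1}(\pi(z'))$ satisfies $T(U) \subseteq \pi^{-1}(\Psi(T)(\pi(z'))) = \pi^{-1}(\pi(z)) = V$, which is what licenses the ``bijection on fibres'' claim.

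The only real point requiring care — and the step I expect to be the mild obstacle — is justifying that $T|_U \colon U \to V$ is genuinely a bijection whenever $T(U)\subseteq V$, i.e.\ that $T$ restricted to a single $\sigma$-orbit is both injective and surjective onto the orbit it lands in, \emph{without} already assuming global injectivity/surjectivity of $T$. This holds because both $U$ and $V$ are free $\ZZ^d$-orbits (the head position pins down a unique coordinate), $T$ is $\sigma$-equivariant, and $T$ restricted to $U$ moves the head in a way determined by the local rule evaluated at the head: writing $U = \{\sigma^{\vec v}(x_q^{\vec 0}) : \vec v \in \ZZ^d\}$ and tracking the head displacement $\vec v \mapsto \vec v + \tilde{\vec u}(\vec v)$ where $\tilde{\vec u}$ is periodic in the relevant sense along $U$ — in fact constant, since the pattern read at the head is the same for every element of $U$ — one sees $T|_U$ is translation by a fixed vector composed with a relabelling, hence bijective. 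I would phrase this last part by noting that every element of $U$ reads the same input pattern $p = \sigma^{-\vec v}(x)|_{F_{\mathrm{in}}}$ at its head (independently of $\vec v$), so $f(p,q)$ is the same tuple throughout $U$, making $T|_U$ manifestly a bijection onto its image class $V$.
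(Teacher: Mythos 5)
Your proof is correct and follows essentially the same route as the paper's: both arguments rest on the fact that $T$ maps each $\equiv$-class bijectively onto its image class, and then transfer injectivity/surjectivity between $T$ and the induced quotient map $\Psi(T)$. The only difference is that you explicitly justify this fibrewise bijectivity (correctly, via freeness of the orbits and the observation that every element of a class reads the same pattern at its head), whereas the paper states it without proof just before the proposition.
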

\begin{proof}
	If $T$ is not injective then $T(c_1)=T(c_2)$ for some $c_1\neq c_2$. Because $T$ is injective in each $\equiv$-class, $c_1\not\equiv c_2$. Hence there are two
	distinct $\equiv$-classes with the same image so that $\Psi(T)$ is not injective. Conversely, if $\Psi(T)$ maps two distinct $\equiv$-classes into the same class,
	there are $c_1\not\equiv c_2$ such that $T(c_1)\equiv T(c_2)$. Because  $T$ is surjective in $\equiv$-classes, there is $c'_1\equiv c_1$ such that $T(c'_1)=T(c_2)$,
	so $T$ is not injective.
	
	If $T$ is surjective then clearly every $\equiv$-class has a pre-image so that $\Psi(T)$ is surjective. And if $\Psi(T)$ is surjective then every  $\equiv$-class
	has a pre-image, and because $T$ is surjective in $\equiv$-classes, every element of every $\equiv$-class has a pre-image, that is, $T$ is surjective.\end{proof}

The function $\Psi$ is clearly always monoid epimorphism. It is not injective in the trivial case $n = 1$: Indeed, we have that $\RTM_{\fix}(\ZZ^d,1,k)$ is isomorphic to the symmetric group on $k$ symbols, and $\TM_{\fix}(\ZZ^d,1,k)$ is isomorphic to the monoid of all functions from $\{1,\dots,k\}$ to itself. Therefore both of these groups are finite when $n=1$. On the other hand, clearly $\ZZ^d$ embeds into $\RTM(\ZZ^d,1,k)$ and $\TM(\ZZ^d,1,k)$ as the shifts are non-trivial elements of these groups. Next, we show that in most other cases $\Psi$ is injective.

Intuitively, in order to also have $\ZZ^d$ embed into the monoid of moving tape Turing machines, we need the configuration space to admit configurations with a certain degree of aperiodicity. We shall see that this is indeed the only obstruction and obtain as a corollary that for every $n \geq 2$ the map $\Psi$ is an isomorphism.

\begin{definition}
	\label{def:locallyaperiodic}
	A subshift $X \subset \Sigma^{\ZZ^d}$ is said to be \define{locally aperiodic} if for every $x\in X$, every finite $F \subset \ZZ^d$ and every non-zero $\vec{v}\in\ZZ^d$ there exists $y\in [x|_F]$ such that $\sigma^{\vec{v}}(y)\neq y$.
\end{definition}

For example, the sunny-side up subshift $X_1\subset \{0,1\}^\ZZ$ is
locally aperiodic because every non-empty cylinder contains a  configuration with exactly one occurrence of $1$, and such a configuration is non-periodic. But this means that the morphism $\Psi$ of Definition~\ref{def:morphism} is not necessarily injective even on reversible Turing machines over locally aperiodic subshifts. Consider, for example, the single state machine $T\in \TM(X_1,1)$ that maps, for some fixed $\vec{v}\in\ZZ$,
\[
x_{1}^{\vec{0}}\mapsto
\left\{
\begin{array}{ll}
x_{1}^{\vec{0}}, & \mbox{  if $x_0=0$},\\
\sigma^{\vec{v}}(x)_{1}^{\vec{v}}, & \mbox{ if $x_0=1$}.
\end{array}
\right.
\]
This machine shifts the configuration and the position by $\vec{v}$ if the cell under the head contains the unique $1$ of the configuration,
and does not do anything if the cell contains symbol $0$. The Turing machine is clearly reversible, and $\Psi(T)$ is the identity regardless of
the choice of vector $\vec{v}$.

To guarantee injectivity of $\Psi$ we define an even more restrictive variant of local aperiodicity:

\begin{definition}
	A subshift $X \subset \Sigma^{\ZZ^d}$ is \define{strongly locally aperiodic} if for every $x\in X$, every finite $F \subset \ZZ^d$ and every non-zero $\vec{v}\in\ZZ^d$ there exists $y\in [x|_F]$ such that $y_{\vec{u}}\neq y_{\vec{u}+\vec{v}}$  for some $\vec{u}$ such that
	$\vec{u},\vec{u}+\vec{v}\not\in F$.
\end{definition}

Obviously $X = \{0,\dots,n-1\}^{\ZZd}$ is strongly locally aperiodic whenever $n\geq 2$. With this definition we have the following.

\begin{restatable}{lemma}{GroupIsomorphismLemmabis}
	\label{group_isomorphism_lemmabis}
	Let $X \subset \Sigma^{\ZZ^d}$ be strongly locally aperiodic. We have that
	\begin{align*}
	\TM_{\fix}(X,k) &\cong \TM(X,k) \\
	\RTM_{\fix}(X,k) &\cong \RTM(X,k).
	\end{align*}
\end{restatable}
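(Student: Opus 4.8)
The plan is to show that the monoid epimorphism $\Psi\colon \TM(X,k) \to \TM_{\fix}(X,k)$ from Definition~\ref{def:morphism} is injective when $X$ is strongly locally aperiodic; together with the fact that $\Psi$ is always surjective this gives the monoid isomorphism, and since $\Psi$ visibly maps inverses to inverses it restricts to a group isomorphism $\RTM(X,k)\cong\RTM_{\fix}(X,k)$. So the entire content is the injectivity of $\Psi$. Suppose $T, T' \in \TM(X,k)$ satisfy $\Psi(T) = \Psi(T')$; I want to conclude $T = T'$ as maps on the moving-head space.

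First I would reduce the problem to a single $\equiv$-class. Recall $\Psi(T)$ is the action induced by $T$ on the quotient $\Sigma^{\ZZd}\times Q\times\ZZd/{\equiv}$; two Turing machines with the same induced quotient action can differ only in \emph{how much they shift} within a class, i.e.\ on the head-position coordinate (and correspondingly the global shift of the configuration). Concretely, fix $x_q^{\vec 0}$. Write $T(x_q^{\vec 0}) = y_r^{\vec v}$ and $T'(x_q^{\vec 0}) = y'_{r'}{}^{\vec v'}$. Since $\Psi(T)(x,q) = \Psi(T')(x,q)$ we get $(\sigma^{-\vec v}(y), r) = (\sigma^{-\vec v'}(y'), r')$, so $r = r'$ and $\sigma^{-\vec v}(y) = \sigma^{-\vec v'}(y')$, i.e.\ $y' = \sigma^{\vec v' - \vec v}(y)$ and $y_r^{\vec v}\equiv y'_{r'}{}^{\vec v'}$. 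Thus $T$ and $T'$ agree up to a (configuration-dependent, but locally determined and bounded) shift vector $\vec w(x,q) := \vec v' - \vec v$. The goal is to show $\vec w \equiv \vec 0$.

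The key step — and the main obstacle — is ruling out a nonzero shift using strong local aperiodicity. Pick local rules $f$ for $T$ and $f'$ for $T'$ with common neighborhood $F$ (enlarge as in the discussion after Definition~\ref{def:morphism}), so that $\vec w(x,q)$ depends only on $x|_F$ and is bounded, say $|\vec w| \le R$ for all inputs. Suppose for contradiction $\vec w(x,q) = \vec v' - \vec v \ne \vec 0$ for some $x_q^{\vec 0}$. Enlarge $F$ to a finite set $F^\sharp \supseteq F \cup F_{\text{out}} \cup F'_{\text{out}} \cup B(\vec 0, R)$ big enough that $T$ and $T'$ only read inside $F^\sharp$, only modify the tape inside $F^\sharp$, and move the head by at most $R$. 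Apply strong local aperiodicity to $x$, the support $F^\sharp$, and the nonzero vector $\vec v' - \vec v$: there is $z \in [x|_{F^\sharp}]$ and some $\vec u$ with $\vec u, \vec u + (\vec v' - \vec v) \notin F^\sharp$ and $z_{\vec u} \ne z_{\vec u + \vec v' - \vec v}$. Now run $T$ and $T'$ on $z_q^{\vec 0}$. Since $z$ agrees with $x$ on $F^\sharp \supseteq F$, the local rules fire identically: $T(z_q^{\vec 0}) = \hat z_r^{\vec v}$ and $T'(z_q^{\vec 0}) = \hat z'_r{}^{\vec v'}$ where $\hat z, \hat z'$ differ from $z$ only inside $F^\sharp$. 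But $\Psi(T) = \Psi(T')$ forces $\sigma^{-\vec v}(\hat z) = \sigma^{-\vec v'}(\hat z')$, i.e.\ $\hat z' = \sigma^{\vec v' - \vec v}(\hat z)$; evaluating the coordinate at $\vec u$, which lies outside $F^\sharp$ (so $\hat z_{\vec u} = z_{\vec u}$) and such that $\vec u + \vec v' - \vec v$ also lies outside $F^\sharp$ (so $\hat z'_{\vec u + \vec v' - \vec v} = z_{\vec u + \vec v' - \vec v}$, and also $\hat z'_{\vec u + \vec v' - \vec v} = \sigma^{\vec v' - \vec v}(\hat z)_{\vec u + \vec v' - \vec v}$... let me recompute): from $\hat z' = \sigma^{\vec v'-\vec v}(\hat z)$ we get $\hat z'_{\vec u + \vec v' - \vec v} = \hat z_{\vec u}$, hence $z_{\vec u + \vec v' - \vec v} = z_{\vec u}$, contradicting the choice of $\vec u$. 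Therefore $\vec w \equiv \vec 0$, whence $\vec v = \vec v'$ and then $y = y'$, $r = r'$ on every input, i.e.\ $T = T'$. The only delicate points to get right are the bookkeeping of neighborhoods (ensuring $F^\sharp$ simultaneously controls reading, writing, and head movement of both machines, and that $B(\vec 0, R) \subseteq F^\sharp$ so that $\vec u \notin F^\sharp$ with $\vec u$ in range is actually achievable) and handling the head-position index $\vec 0$ versus the general shifted representatives — but since both $T$ and $\Psi$ are shift-commuting it suffices to check everything at head position $\vec 0$.
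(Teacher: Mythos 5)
Your proposal is correct and follows essentially the same route as the paper's proof: reduce to injectivity of $\Psi$, fix a common neighborhood for the two local rules, observe that equality of the $\Psi$-images forces $r=r'$ and $\sigma^{-\vec v}(y)=\sigma^{-\vec v'}(y')$, and invoke strong local aperiodicity at a coordinate outside the modified region to rule out $\vec v\neq\vec v'$. The only (immaterial) difference is organizational — you first prove the shift discrepancy vanishes everywhere and then deduce agreement, while the paper argues by contradiction from a single point of disagreement.
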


\begin{proof}
	Consider again the epimorphism $\Psi\colon \TM(\ZZ^d,n,k) \to \TM_{\fix}(\ZZ^d,n,k)$ and suppose there exist a pair $T \neq T'$ in $\TM(X,k)$ such that $\Psi(T) = \Psi(T')$. Let $x_q^{\vec{0}}$ be such that $T(x_q^{\vec{0}}) \neq T'(x_q^{\vec{0}})$. Denoting by $W = F \cup F'$ the union of the neighborhoods $F$ of $T$ and $F'$ of $T'$ we get that $T$ and $T'$ can be described by rules of the form $f_T(x|_{W},q) = (p,r,\vec{v})$ and $f_{T'}(x|_{W},q) = (p',r',\vec{v}')$. Denote by $x_{[p]}$ and $x_{[p']}$ the configuration where the symbols of $x$ in the support $W$ have been replaced by $p$ and $p'$ respectively. Clearly $\Psi(T) = \Psi(T')$ implies that $r = r'$ and $\sigma^{-\vec{v}}(x_{[p]}) = \sigma^{-{\vec{v}'}}(x_{[p']})$.
	
	If $\vec{v}=\vec{v}'$ then also $x_{[p]}=x_{[p']}$, which contradicts $T(x_q^{\vec{0}}) \neq T'(x_q^{\vec{0}})$. So we must have  $\vec{v}\neq \vec{v}'$. As $X$ is strongly locally aperiodic, there exists $y\in [x|_{W}]$ such that
	$y_{\vec{u}}\neq y_{\vec{u}+\vec{v'}-\vec{v}}$ for some position $\vec{u}$ that satisfies $\vec{u},\vec{u}+\vec{v'}-\vec{v}\not\in W$.
	As $y|_{W}=x|_{W}$, we have that $\Psi(T)(y,q)=(\sigma^{-\vec{v}}(y_{[p]}),r)$ and $\Psi(T')(y,q)=(\sigma^{-\vec{v}'}(y_{[p']}),r')$.
	Then $\Psi(T) = \Psi(T')$ implies that $y_{[p]}=\sigma^{\vec{v}-\vec{v}'}(y_{[p']})$, which is not true in position $\vec{u}$.\end{proof}

%

A non-trivial full shift $\Sigma^{\ZZ^d}$ is strongly locally aperiodic, and thus Lemma~\ref{group_isomorphism_lemmabis} gives the following corollary.

\begin{restatable}{corollary}{GroupIsomorphismLemma}
	\label{group_isomorphism_lemma}
	If $n \geq 2$ then:
	\begin{align*}
	\TM_{\fix}(\ZZ^d,n,k) &\cong \TM(\ZZ^d,n,k) \\
	\RTM_{\fix}(\ZZ^d,n,k) &\cong \RTM(\ZZ^d,n,k).
	\end{align*}
\end{restatable}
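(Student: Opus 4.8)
The plan is to recognize this as an immediate specialization of Lemma~\ref{group_isomorphism_lemmabis}: the only thing that must be checked is that a non-trivial full shift is strongly locally aperiodic. So first I would fix $n\geq 2$, set $X=\{0,\dots,n-1\}^{\ZZ^d}$, and verify the definition directly. Given $x\in X$, a finite $F\subset\ZZ^d$ and a non-zero $\vec v\in\ZZ^d$, I note that $F\cup(F-\vec v)$ is finite, so I can pick $\vec u\in\ZZ^d$ with $\vec u\notin F$ and $\vec u+\vec v\notin F$; since $\vec v\neq\vec 0$ we also have $\vec u\neq\vec u+\vec v$. Let $y$ agree with $x$ on $\ZZ^d\setminus\{\vec u,\vec u+\vec v\}$ and put $y_{\vec u}=0$, $y_{\vec u+\vec v}=1$ (possible since $n\geq 2$). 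Then $y\in[x|_F]$ and $y_{\vec u}\neq y_{\vec u+\vec v}$ with $\vec u,\vec u+\vec v\notin F$, which is exactly the condition in the definition of strong local aperiodicity.

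Then I would invoke Lemma~\ref{group_isomorphism_lemmabis} for this $X$, using that $\TM(\Sigma^{\ZZ^d},k)=\TM(\ZZ^d,n,k)$ and $\RTM(\Sigma^{\ZZ^d},k)=\RTM(\ZZ^d,n,k)$ when $|\Sigma|=n$, to obtain both isomorphisms at once. I would also remark that the group statement can alternatively be extracted from the monoid one: $\RTM(\ZZ^d,n,k)$ and $\RTM_{\fix}(\ZZ^d,n,k)$ are by definition the unit groups of $\TM(\ZZ^d,n,k)$ and $\TM_{\fix}(\ZZ^d,n,k)$, and a monoid isomorphism carries units bijectively to units; or else one uses directly that $\Psi$ is a monoid epimorphism together with Proposition~\ref{injective_surjective_tape_vs_head} to see that $\Psi$ restricts to a group epimorphism $\RTM(\ZZ^d,n,k)\to\RTM_{\fix}(\ZZ^d,n,k)$, which is injective by Lemma~\ref{group_isomorphism_lemmabis}.

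I do not expect any genuine obstacle: the entire substance lies in Lemma~\ref{group_isomorphism_lemmabis}, and the only new ingredient is the elementary verification above. It is worth observing that this verification is exactly where $n\geq 2$ is used — when $n=1$ it fails, consistent with the fact noted before the corollary that $\Psi$ then collapses the embedded copy of $\ZZ^d$ in $\RTM(\ZZ^d,1,k)$ onto the finite group $\RTM_{\fix}(\ZZ^d,1,k)$, so the two models are genuinely non-isomorphic in that degenerate case.
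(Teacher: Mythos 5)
Your proposal is correct and follows the paper's own route exactly: the paper derives this corollary by observing that a non-trivial full shift is strongly locally aperiodic and then applying Lemma~\ref{group_isomorphism_lemmabis}. Your explicit verification of strong local aperiodicity (choosing $\vec u$ outside $F\cup(F-\vec v)$ and setting two distinct values at $\vec u$ and $\vec u+\vec v$) is exactly the check the paper leaves as ``obvious,'' and your remarks on where $n\geq 2$ is used are consistent with the discussion preceding the corollary.
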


The previous result means that apart from the trivial case $n = 1$ where the tape plays no role, we can study the properties of these groups using any model. 



\subsection{The uniform measure and reversibility.}\label{subsection_measure}

Consider the space $\Sigma^{\ZZ^d} \times Q$. Let $\mu$ be the product of the uniform Bernoulli measure on $\Sigma^{\ZZ^d}$ and the uniform discrete measure on $Q$. That is, $\mu$ is the measure such that for every finite $F\subset \ZZ^d$ and $p \in \Sigma^F$, we have \[\mu([p]\times \{q\}) = \frac{1}{kn^{|F|}}.\]

\begin{theorem}
	\label{theorem_injective_surjective_reversible}
	Let $T \in \TM_{\fix}(\ZZ^d,n,k)$. Then the following are equivalent:
	\begin{enumerate}
		\item $T$ is injective.
		\item $T$ is surjective.
		\item $T \in \RTM_{\fix}(\ZZ^d,n,k)$.
		\item $T$ preserves the uniform measure ($\mu(T^{-1}(A)) = \mu(A)$ for all Borel sets $A$).
		\item $\mu(T(A)) = \mu(A)$ for all Borel sets $A$.
	\end{enumerate}
\end{theorem}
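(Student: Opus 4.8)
The plan is to prove the equivalences via the cycle $(3)\Rightarrow(4)\Rightarrow(1)\Rightarrow(2)\Rightarrow(5)\Rightarrow(3)$, using the compactness of $\Sigma^{\ZZ^d}\times Q$ and the local structure of moving tape machines. Throughout, I would exploit the fact that a moving tape Turing machine $T=T_f$ is a \emph{local} map: by Lemma~\ref{lem:MovingTapeDynamicalDef} there is a finite $F$ (the minimal neighborhood) so that $T$ only reads $x|_F$, rewrites coordinates in $F$, and shifts by a bounded vector $-s(x,q)$. Consequently $T$ restricts to a map on the finite sets of cylinders determined by a large enough window, and measure-theoretic statements reduce to counting.

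First I would establish $(3)\Rightarrow(4)$ and the symmetric $(3)\Rightarrow(5)$: if $T$ is reversible with inverse $T^{-1}$ also a moving tape machine, it suffices to show any moving tape machine can only \emph{increase} the measure of cylinders, i.e. $\mu(T^{-1}([p]\times\{q\}))\geq \mu([p]\times\{q\})$, since applying this to both $T$ and $T^{-1}$ forces equality; then a standard approximation argument (cylinders generate the Borel $\sigma$-algebra) upgrades this to all Borel sets. To see the inequality: fix a cylinder $C=[p]\times\{q'\}$ with $p\in\Sigma^E$. Choose a window $D\supseteq F\cup E$ large enough that $T$'s action on any $x$ with head state $q$ is determined by $x|_D$ and lands inside a controlled region; then $T^{-1}(C)$ is a union of cylinders over the finite alphabet $\Sigma^{D'}\times Q$ for suitable $D'$, and because $T$ is a \emph{function} (well-defined), the preimages of distinct refined cylinders contained in $C$ are disjoint; counting preimage cylinders against image cylinders and using that $T$ shifts by a bounded amount (so the ``bookkeeping'' of coordinates outside the active window is measure-preserving) gives $\mu(T^{-1}(C))\geq \mu(C)$. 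The implication $(4)\Rightarrow(1)$ is immediate: a measure-preserving (in the sense $\mu(T^{-1}A)=\mu(A)$) map on a probability space which is also surjective onto a generating algebra... actually more directly, if $T(x)=T(y)=z$ with $x\neq y$, pick disjoint cylinders $C_x\ni x$, $C_y\ni y$ small enough that $T(C_x), T(C_y)$ are both cylinders containing $z$; then $T^{-1}(T(C_x))\supseteq C_x\cup C_y$ has measure exceeding $\mu(C_x)=\mu(T(C_x))$ (using that $C_x$ maps injectively, by shrinking, onto $T(C_x)$), contradicting $(4)$ after refining.

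Next, $(1)\Rightarrow(2)$: this is the genuinely substantive topological/combinatorial step, and I expect it to be the main obstacle. I would prove it by a finitary pigeonhole argument in the spirit of the Garden-of-Eden-type counting for cellular automata, but adapted to the non-transitive ``sofic'' carrier $X_{n,k}$. The clean route is to pass through the moving head picture and Proposition~\ref{prop_CACharacterization}: $T$ corresponds to an injective endomorphism $\phi$ of the subshift $X_{n,k}=\Sigma^{\ZZ^d}\times X_k$ fixing $Y=X_{n,0}$. I would count admissible patterns of $X_{n,k}$ on large boxes $B_N$: the number of such patterns with at most one head symbol is roughly $k\cdot |B_N|\cdot n^{|B_N|}$ (head somewhere) plus $n^{|B_N|}$ (no head). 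Injectivity of $\phi$, together with its bounded radius $r$, means that $\phi$ is injective on patterns of $B_{N}$ up to reading a collar of width $r$ and writing inside $B_{N-r}$; if $\phi$ were not surjective, some pattern $w$ of some box would be a ``orphan'', and — here is the delicate point absent in the full-shift case — one must combine this orphan on the tape component with the freedom of placing the head far away to derive that the image patterns on $B_N$ number strictly fewer than $n^{|B_N-2r|}$-many tape patterns times the head-placement count, contradicting the injective lower bound for large $N$. Carrying this out requires separating the ``head present'' and ``head absent'' strata and noting that on the head-absent stratum $\phi$ is the identity, so all the loss of surjectivity must occur on the head-present stratum, where the head-position bookkeeping contributes only a polynomial factor that is swamped by the exponential pattern count; I would make this precise with a Myhill/Moore-style inequality. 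Finally $(2)\Rightarrow(5)$ mirrors $(3)\Rightarrow(5)$ via the same counting-of-cylinders estimate (surjectivity gives the reverse inequality to the one used above), and $(5)\Rightarrow(3)$: a measure-preserving surjective local map between these spaces is injective by the cylinder-counting argument again, hence bijective; its inverse is then readily seen to be a moving tape machine (the inverse of a bounded-shift local bijection is a bounded-shift local bijection, since the head cannot escape to infinity — one uses that $s$ is bounded — so $T^{-1}$ satisfies the hypotheses of Lemma~\ref{lem:MovingTapeDynamicalDef}), giving $T\in\RTM_{\fix}(\ZZ^d,n,k)$.

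The technical heart, and where I would spend the most care, is the combinatorial surjectivity argument $(1)\Rightarrow(2)$ on the non-full-shift carrier $X_{n,k}$: the standard injective-implies-surjective theorem for CA needs a surjunctive ambient space, and $X_{n,k}$ is sofic but not transitive, so one must genuinely use the extra structure (the identity action on the head-free part, and the bounded number of heads) rather than cite a black box. A cleaner alternative I would also consider is to invoke the uniform measure directly: show that \emph{any} $T\in\TM_{\fix}$ satisfies $\mu(T^{-1}(A))\geq\mu(A)$ for cylinders $A$ (the ``one-sided'' estimate above, which needs only that $T$ is a well-defined local map), whence if $T$ is injective the inequality $\mu(T^{-1}(\Sigma^{\ZZ^d}\times Q))=1$ forces $\mu(T^{-1}(A))=\mu(A)$ for all $A$, i.e. $(1)\Rightarrow(4)$ directly; then $(4)$ plus the dual one-sided estimate for the image forces surjectivity. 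This reroutes the cycle as $(3)\Rightarrow(4)\Leftrightarrow(1)$, $(4)\Rightarrow(5)\Rightarrow(2)\Rightarrow(3)$, avoiding the delicate pattern-counting entirely and localizing all difficulty into the single lemma ``a moving tape Turing machine cannot decrease cylinder measure,'' which is a clean finitary statement I would prove by explicitly partitioning a large cylinder according to the head state and local window and matching preimage cells bijectively against image cells.
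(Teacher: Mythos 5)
Your reduction hinges on the lemma that every $T\in\TM_{\fix}(\ZZ^d,n,k)$ satisfies $\mu(T^{-1}(C))\geq\mu(C)$ for all cylinders $C$, and this lemma is false. Take $n\geq 2$ and let $T$ be the machine that writes $0$ at the origin and does not move the head or change the state; for $C=[1]\times\{q\}$ we have $T^{-1}(C)=\varnothing$, so $\mu(T^{-1}(C))=0<\tfrac{1}{kn}=\mu(C)$. The failure is not cosmetic: since $\mu(T^{-1}(\Sigma^{\ZZ^d}\times Q))=1$ holds for \emph{any} function, your ``cleaner alternative'' would conclude that every moving tape machine satisfies item (4) --- and hence, by your own implication $(4)\Rightarrow(1)$, that every moving tape machine is injective, which is absurd. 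The counting heuristic you offer for the lemma breaks precisely where refined cylinders inside $C$ have empty preimage; preimage counting can undercount as well as overcount, depending on whether the images of the basic cylinders overlap.

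The inequality that does hold --- and is the actual engine of the argument --- is the forward one. Writing $F$ for the minimal neighborhood of $T$ and letting $C_i=[p_i]\times\{q\}$, with $p_i\in\Sigma^F$ and $q\in Q$, range over the resulting clopen partition of $\Sigma^{\ZZ^d}\times Q$ into $kn^{|F|}$ pieces, each restriction $T|_{C_i}$ is a homeomorphism onto a cylinder $D_i$ of the same measure $\tfrac{1}{kn^{|F|}}$ (a symbol substitution on $F$ followed by a shift). Injectivity of $T$ is then equivalent to the $D_i$ being pairwise disjoint, surjectivity to $\bigcup_i D_i=\Sigma^{\ZZ^d}\times Q$, and since $\sum_i\mu(D_i)=1$ these two conditions coincide; this settles $(1)\Leftrightarrow(2)\Leftrightarrow(3)$ in a few lines and makes your Moore--Myhill-style orphan-pattern argument on $X_{n,k}$ (which you rightly flag as delicate and do not carry out) unnecessary. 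Items (4) and (5) follow by applying the forward identity $\mu(T(A))=\mu(A)$ to $T$ and to its inverse, together with a short computation in the non-reversible case exhibiting a clopen set whose measure is not preserved in each sense.
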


\begin{proof}
	Let $T$ be arbitrary, and let $F$ be its minimal neighborhood. Consider the cylinders $C_i = [p_i] \times \{q\}$ where $p_i \in \Sigma^F, q \in Q$. These cylinders form a clopen partition of $\Sigma^{\ZZ^d} \times Q$ into $kn^{|F|}$ cylinders of measure $\frac{1}{kn^{|F|}}$.
	
	Now, because $F$ is the minimal neighborhood of $T$, $T$ is a homeomorphism from $C_i$ onto $D_i = T(C_i)$, and $D_i$ is a cylinder set of the form $[p'] \times \{q'\}$ for some $p' \in \Sigma^{\vec{v} + F}$, $q' \in Q$, which must be of the same measure as $C_i$ as the domain $\vec{v} + F$ of $p'$ has as many coordinates as the domain $F$ of $p$. Note that $D_i$ is not necessarily a cylinder centered at the origin, and the offset $\vec v$ is given by the shift-indicator. Now, observe that injectivity is equivalent to the cylinders $D_i = T(C_i)$ being disjoint. Namely, they must be disjoint if $T$ is injective, and if they are disjoint then $T$ is injective because $T|_{C_i} : C_i \to D_i$ is a homeomorphism. Surjectivity on the other hand is equal to $\Sigma^{\ZZ^d} \times Q = \bigcup_i D_i$, since $\bigcup_i D_i = \bigcup_i T(C_i) = T(\Sigma^{\ZZ^d} \times Q)$.
	
	Now, it is easy to show that injectivity and surjectivity are equivalent: If $T$ is injective, then the $D_i$ are disjoint, and $\mu(\bigcup_i D_i) = \sum_i \mu(D_i) = 1$, so we must have $\bigcup_i D_i = \Sigma^{\ZZd} \times Q$ because $\Sigma^{\ZZ^d} \times Q$ is the only clopen set of full measure. If $T$ is not injective, then for some $i \neq j$ we have $D_i \cap D_j \neq \varnothing$. Then $D = D_i \cap D_j$ is a nonempty clopen set, and thus has positive measure. It follows that $\mu(\bigcup_i D_i) \leq \sum_i \mu(D_i) - \mu(D) < 1$, so $\bigcup_i D_i \subsetneq \Sigma^{\ZZ^d} \times Q$. Of course, since injectivity and surjectivity are equivalent, they are both equivalent to bijectivity, and thus to reversibility of $T$.
	
	The argument given above in fact shows that reversibility is equivalent to preserving the uniform Bernoulli measure in the forward sense -- if $T$ is reversible, then $\mu(T(A)) = \mu(A)$ for all clopen sets $A$, and thus for all Borel sets, while if $T$ is not reversible, then there is a disjoint union of cylinders $C \cup D$ such that $\mu(T(C \cup D)) < \mu(C \cup D)$.
	
	For measure-preservation in the usual (backward) sense, observe that the reverse of a reversible Turing machine is reversible and thus measure-preserving in the forward sense, so a reversible Turing machine must itself be measure-preserving in the traditional sense. If $T$ is not reversible, then $\mu(T(C \cup D)) < \mu(C \cup D)$ for some disjoint cylinders $C$ and $D$ large enough that $T|_C$ and $T|_D$ are measure-preserving homeomorphisms. Then for $E = T(C) \cap T(D)$ we have $\mu(T^{-1}(E)) \geq \mu((T^{-1}(E) \cap C) \cup (T^{-1}(E) \cap D)) = 2\mu(E)$. 
\end{proof}

\begin{remark}
	The proof is based on showing that every Turing machine is a \define{local homeomorphism} and preserves the measure of all large-radius cylinders $C$ in the forward sense $\mu(T(C)) = \mu(C)$. Note that preserving the measure of large-radius cylinders in the forward sense does not imply preserving the measure of all Borel sets (or even all cylinders), in general. For example, the machine with $n \geq 2, k = 1$ which turns the symbol in $F =\{\vec{0}\}$ to $0$ without moving the head satisfies $\mu([p]) = \mu(T[p])$ for any $p \in \Sigma^S$ with $S \supset F$. But $\mu(\Sigma^{\ZZ^d} \times Q) = 1$ and $\mu(T(\Sigma^{\ZZ^d} \times Q)) = \mu([0]) = 1/2$.
\end{remark}

\begin{remark}
	Using Proposition~\ref{injective_surjective_tape_vs_head} and Theorem~\ref{theorem_injective_surjective_reversible} we see that also under
	the moving head model injectivity and surjectivity are equivalent.
\end{remark}

We can also use the uniform measure to define the average movement of a Turing machine.

\begin{definition}
	Let $T \in \TM_{\fix}(\ZZd,n,k)$ with shift indicator function $s \colon \Sigma^{\ZZd} \times Q \to \ZZd$. 
	We define the \define{average movement} $\alpha(T) \in \RR^d$ as
	\[ \alpha(T) := \EE_{\mu}(s) = \int_{\Sigma^{\ZZd}\times Q} s(x,q) d\mu, \]
	where $\mu$ is the uniform measure on $\Sigma^{\ZZd}\times Q$.
	For $T$ in $\TM(\ZZd,n,k)$ we define $\alpha$ as the application to its image under the canonical epimorphism $\Psi$ from Definition~~\ref{def:morphism}, that is, $\alpha(T) := \alpha(\Psi(T))$.
\end{definition}

Of course, as $s(x,q)$ depends only on finitely many coordinates of $x$, this integral is actually a finite sum over the cylinders $p \in \Sigma^F$ for some finite $F \subset \ZZd$, and thus we have $\alpha(T) \in \QQ^d$ for all $T$. The following lemma shows that in fact $\alpha$ is an homomorphism.

\begin{lemma}
	The map $\alpha \colon \RTM_{\fix}(\ZZd,n,k) \to \QQ^d$ is a group homomorphism.
\end{lemma}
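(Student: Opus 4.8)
The plan is to show $\alpha(ST) = \alpha(S) + \alpha(T)$ for all $S, T \in \RTM_{\fix}(\ZZ^d,n,k)$ by tracking how shift indicators compose, then integrating against the measure $\mu$, which is preserved by every reversible machine by Theorem~\ref{theorem_injective_surjective_reversible}.

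First I would establish the cocycle identity for shift indicators. If $s$ and $t$ are the shift indicators of $S$ and $T$ respectively, then writing $T(x,q) = (\sigma^{-t(x,q)}(y), q')$ (using the dynamical characterization from Lemma~\ref{lem:MovingTapeDynamicalDef}, up to the sign convention there), and applying $S$ to the result, one checks directly from the definition $T_f$ of composition that the shift indicator $u$ of $ST = S \circ T$ satisfies
\[ u(x,q) = t(x,q) + s(T(x,q)). \]
This is the standard orbit-equivalence cocycle relation and should follow by a routine unwinding of the two local rules — the only care needed is bookkeeping of the sign conventions between "how much the head moves" and "how much the tape shifts."

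Next I would integrate this identity over $\Sigma^{\ZZ^d} \times Q$ against $\mu$:
\[ \alpha(ST) = \int u \, d\mu = \int t(x,q)\, d\mu + \int s(T(x,q))\, d\mu. \]
The first term is $\alpha(T)$ by definition. For the second term, since $T \in \RTM_{\fix}(\ZZ^d,n,k)$, Theorem~\ref{theorem_injective_surjective_reversible} gives that $T$ preserves $\mu$ (indeed $\mu(T^{-1}(A)) = \mu(A)$ for all Borel $A$), so by the change-of-variables formula for pushforward measures $\int s \circ T \, d\mu = \int s \, d\mu = \alpha(S)$. Hence $\alpha(ST) = \alpha(S) + \alpha(T)$, and since $(\QQ^d,+)$ is a group and $\alpha$ is defined on all of $\RTM_{\fix}(\ZZ^d,n,k)$ (finiteness of the integral having been noted just before the lemma), $\alpha$ is a group homomorphism. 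One should also note $\alpha(\ID) = 0$, which is immediate.

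The main obstacle — really the only subtle point — is getting the cocycle identity and the sign conventions exactly right, since the paper uses the head-movement vector $\vec v$ in local rules but the tape-shift indicator $s$ with an opposite sign (as the proof of Lemma~\ref{lem:MovingTapeDynamicalDef} explicitly flags). Everything else is formal: measure-preservation is already in hand from Theorem~\ref{theorem_injective_surjective_reversible}, and integrability is already observed. It is worth remarking that the argument uses reversibility of $T$ in an essential way (to invoke measure-preservation), so $\alpha$ is naturally a homomorphism on the group rather than merely a map on the monoid $\TM_{\fix}(\ZZ^d,n,k)$.
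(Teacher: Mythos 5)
Your proof is correct and follows essentially the same route as the paper: the cocycle identity $s_{S\circ T}(x,q) = s_T(x,q) + s_S(T(x,q))$ followed by integration against $\mu$, using the measure-preservation of reversible machines from Theorem~\ref{theorem_injective_surjective_reversible} to conclude $\int s\circ T\,d\mu = \int s\,d\mu$. No gaps.
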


\begin{proof}
	If $T_1,T_2 \in \RTM_{\fix}(\ZZd,n,k)$ then since reversibility implies measure-preservation, we have
	\begin{align*}
	\alpha(T_1 \circ T_2) &= \EE_{\mu}(s_{T_1 \circ T_2}) \\
	&= \EE_{\mu}(s_{T_1}\circ T_2+s_{T_2}) \\
	&= \EE_{\mu}(s_{T_1}\circ T_2) + \EE_{\mu}(s_{T_2}) \\
	&= \EE_{\mu}(s_{T_1})+ \EE_{\mu}(s_{T_2}) \\
	&= \alpha(T_1) + \alpha(T_2).
	\end{align*}
	where $\EE_{\mu}(s_{T_1 \circ T_2}) = \EE_{\mu}(s_{T_1}\circ T_2+s_{T_2})$ holds because $s_{T_1 \circ T_2}(x, q) = s_{T_1}(T_2(x, q)) + s_{T_2}(x, q)$ for all $(x,q)\in \Sigma^{\ZZd}\times Q$.
\end{proof}

\section{Subgroups and generators}
\label{section.subgroups}	
In this section we study several subgroups of $\RTM(\ZZd,n,k)$. The main result of this section is that there is a finitely-generated subgroup of ``elementary Turing machines''. In the following sections, we show that in pure computability terms, these are able to simulate general Turing machines.

The group of elementary Turing machines $\EL(\ZZd,n,k)$ is the subgroup of reversible Turing machines which is generated by the union of two natural subgroups: $\LP(\ZZd,n,k)$, the group of local permutations and $\FG(\ZZd,n,k)$, the group of reversible finite-state automata. These two groups separately capture the dynamics of changing the tape and moving the head. We also define the group of oblivious Turing machines $\OB(\ZZd,n,k)$ as an extension of $\LP(\ZZd,n,k)$ where arbitrary tape-independent moves are allowed.

The main results we prove about these subgroups are the following, when $n \geq 2$:
\begin{itemize}
	\item $\RFA(\ZZ^d,n,k)$ is not finitely generated (Theorem~\ref{thm:FGFinitelyGenerated}), 
	\item $\RTM(\ZZd,n,k)$ is not finitely generated (Theorem~\ref{RTM_is_not_finitely_generated}). 
	\item $\RFA(X,1)$ is generated by ``orbitwise shifts'' and ``controlled position swaps'' for any one-dimensional subshift $X$ (Theorem~\ref{thm:RFAGenerators}),
	\item $\OB(\ZZ^d,n,k)$ is finitely generated (Theorem~\ref{thm:OBFG}), 
	\item $\EL(\ZZ,n,k)$ is finitely generated (Theorem~\ref{theorem_EL_is_FG}). 
\end{itemize}

For the definitions of ``orbitwise shifts'' and ``controlled position swap', see Section~\ref{sec:FG_generators}. For any class $\CL(\ZZd,n,k)$ of Turing machines with moving head we denote by $\CL_{\fix}(\ZZd,n,k)$
the corresponding class of moving tape machines, that is, the image of the class under the morphism $\Psi$ from Definition~\ref{def:morphism}.



\subsection{Definitions of subgroups}

\subsubsection{Oblivious Turing machines}

For $\vec v \in \ZZ^d$, define the machine $T_{\vec v}$ which does not modify the state or the tape, and moves the head by the vector $\vec v$ on each step. Denote $\SHIFT(\ZZ^d,n,k) = \langle \{T_{\vec{v}} \}_{\vec{v} \in \ZZ^d} \rangle$. Clearly $\SHIFT(\ZZ^d,n,k) \cong \ZZ^d$. Define also $\SP(\ZZ^d,n,k)$ as the \define{state-permutations}: Turing machines that never move and only permute their state as a function of the tape.

\begin{definition}
	We define the group $\LP(\ZZ^d,n,k)$ of \define{local permutations} as the subgroup of Turing machines in $\RTM(\ZZ^d,n,k)$ whose shift-indicator is trivial, that is, the constant function $x \mapsto \vec{0}$.
	
	We define also $\OB(\ZZ^d,n,k) = \langle \SHIFT(\ZZ^d,n,k), \LP(\ZZ^d,n,k) \rangle$, the group of \define{oblivious Turing machines}. 
\end{definition}

In other words, $\LP(\ZZ^d,n,k)$ is the group of reversible machines that do not move the head, and $\OB(\ZZ^d,n,k)$ is the group of reversible Turing machines whose head movement is independent of the state and the tape contents. 
Note that in the definition of both groups, we allow changing the state as a function of the tape, and vice versa, thus $\SP(\ZZ^d,n,k) \leqslant \LP(\ZZ^d,n,k)$.

\begin{remark}
	The restricted wreath products $H \wr \ZZ^d$ where $H$ is a finite group are sometimes called \define{generalized lamplighter groups}, the original lamplighter group being $\ZZ/2\ZZ \wr \ZZ$. Thus, $\OB(\ZZd,n,k)$ can be seen as a doubly generalized lamplighter group, since the subgroup of $\OB(\ZZd,n,k)$ generated by the local permutations $\LP(\ZZd,n,1)$ with radius $0$ and $\SHIFT(\ZZd,n,1)$ is isomorphic to the wreath product $S_n \wr \ZZd$ of the symmetric group $S_n$. Further on we show that, similar to lamplighter groups, $\OB(\ZZd,n,k)$ is also finitely generated.
\end{remark}

\subsubsection{Finite-state automata}

\begin{definition}
	We define the group $\FG(\ZZ^d,n,k)$ of \define{reversible finite-state automata} as the group of reversible $(\ZZ^d,n,k)$-Turing machines that do not change the tape. That is, the local rules are of the form $f(p,q) = (p,q',\vec{v})$ for all entries $p \in \Sigma^F, q \in Q$.
\end{definition}

Similarly, for a subshift $X\subset \Sigma^{\ZZd}$ we let $\FG(X,k)$ be the subgroup machines in $\RTM(X,1)$ that do not change the tape.

%
%

This group is ``orthogonal'' to $\OB(\ZZ^d,n,k)$ in the following sense,

	\begin{align*}
	\FG(\ZZ^d,n,k) \cap \LP(\ZZ^d,n,k) &= \SP(\ZZ^d,n,k) \\
	\FG(\ZZ^d,n,k) \cap \OB(\ZZ^d,n,k) &= \langle \SP(\ZZ^d,n,k), \SHIFT(\ZZ^d,n,k) \rangle
	\end{align*}

\begin{remark}
	It follows directly from the definitions that the group $\FG(\ZZ^d,n,1)$ is isomorphic to the topological full group of the full $\ZZ^d$-shift on $n$ symbols as defined in~\cite{GiPuSk99}. Similarly, if we fix a subshift $X$, then $\FG_{\text{fix}}(X,1)$ is isomorphic to the topological full group of the shift action on $X$. The subscript ``fix'' is only needed when $X$ is not strongly locally aperiodic, see~Lemma~\ref{group_isomorphism_lemmabis}.
\end{remark}


As usual, the case $n = 1$ is not particularly interesting, and we have that $\FG(\ZZ^d,1,k) = \RTM(\ZZ^d,1,k)$. In the general case the group is more interesting.

\begin{restatable}{theorem}{FGNotFinitelyGenerated}
	\label{thm:FGFinitelyGenerated}
	Let $n \geq 2$. Then $\FG(\Sigma^{\ZZ^d},n,k)$ is not finitely generated.
\end{restatable}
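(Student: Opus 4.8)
The plan is to exhibit, for each finite generating candidate, a reversible finite-state automaton that cannot lie in the group it generates, using the radius as an obstruction. More precisely, recall that a reversible finite-state automaton $T \in \FG(\Sigma^{\ZZ^d},n,k)$ has a local rule $f(p,q) = (p,q',\vec v)$ reading a pattern $p \in \Sigma^{F}$ and moving the head; the key invariant I would track is the \emph{in-radius} $r_i(T)$, i.e.\ the smallest $r$ such that the behavior of $T$ depends only on the tape restricted to $B(\vec 0, r)$ around the head. The strategy is: (i) show that composition can only increase the in-radius in a controlled, additive way, so that a finite set $S \subset \FG(\Sigma^{\ZZ^d},n,k)$ generates only automata of in-radius bounded by a linear function of word length is \emph{not} quite enough — instead one fixes the \emph{set of generators} and notes that there is no a priori bound, so one must be subtler; (ii) therefore, rather than bounding in-radius, I would find an infinite family $\{T_m\}_{m \geq 1}$ of elements of $\FG$ such that any finite subset generates a subgroup missing infinitely many $T_m$.

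Concretely, here is the family I would use. For each $m$, let $T_m$ be the automaton that reads the tape symbols at positions $\vec 0$ and $m\vec e_1$ (where $\vec e_1$ is the first standard basis vector), and moves the head by $+\vec e_1$ if these two symbols are equal and by $-\vec e_1$ if they differ — this is reversible since on the inverse one reads the same two positions (which are unchanged, as the tape is never modified) and undoes the move. The point is that $T_m$ genuinely depends on position $m\vec e_1$: flipping only that symbol changes the head trajectory. Now suppose $S = \{S_1,\dots,S_N\}$ is a finite subset of $\FG(\Sigma^{\ZZ^d},n,k)$ and let $R = \max_j r_i(S_j)$. The main step is to prove that every element $g \in \langle S \rangle$ has in-radius at most $R$ in the following strong sense: the head movement of $g$ on $x^{\vec v}_q$ depends on $x$ only through $x|_{B(\vec v, R)}$. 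This is because the head of $g$ is a head of a product of automata, and since \emph{the tape is never modified by any $S_j$}, at every intermediate step the tape seen by each factor is literally the original tape $x$; each factor $S_j$ shifts the head by a vector depending only on a radius-$R$ window, and composing $\ell$ such maps, the total displacement depends only on the union of $\ell$ radius-$R$ windows along the head's path — but crucially each of those windows is centered at the \emph{current head position}, and the head position after the first factor already depends only on a radius-$R$ window, so by induction the entire trajectory, hence $g(x^{\vec v}_q)$, depends on $x$ only through a bounded neighborhood of $\vec v$ — in fact of radius $R$, since windows chain through head positions rather than widening. I would make this induction precise: define $W_\ell$ = set of cells whose contents the head position after $\ell$ steps can depend on, show $W_0 = \{\vec v\}$ and $W_{\ell+1} \subseteq \bigcup_{\vec u \in \text{(possible head positions after } \ell \text{ steps)}} B(\vec u, R)$, and observe that since the possible head positions after $\ell$ steps already form a set determined by $x|_{W_\ell}$... the care needed is that $W_\ell$ can grow, but its growth is governed by $R$ and $\ell$, not by $m$. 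The cleanest formulation: for a fixed word length $\ell$ in the generators, the output of $g = S_{j_1}\cdots S_{j_\ell}$ on $x^{\vec v}_q$ depends on $x$ only through $x|_{B(\vec v, \ell(R + c))}$ where $c$ bounds the move-radius of the $S_j$. Hence $\langle S \rangle$ contains no element whose head movement depends on a cell at distance $> \ell(R+c)$ from $\vec v$ for \emph{every} word length $\ell$ — wait, this does bound each element, but by a length-dependent quantity, so it does not immediately exclude all $T_m$.

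To close the gap, I would instead argue as follows. Fix $S$ finite and let $R$ bound the radius (in, out=0, move) of all generators. Then every $g \in \langle S\rangle$ of generator-length $\ell$ has \emph{move-radius at most $\ell R$} and \emph{in-radius at most $\ell R$} (tape unchanged throughout, so windows are along a path of length $\le \ell R$, giving in-radius $\le \ell R$; more carefully, in-radius $\le$ (number of steps) $\times R \le \ell R$). Now here is the real dichotomy: the element $T_m$ has in-radius exactly $m$ (it must read position $m\vec e_1$) but move-radius $1$. An element of generator-length $\ell$ with in-radius $\ge m$ must have $\ell \ge m/R$; but such an element, being a product of $\ell \ge m/R$ generators each of move-radius $\le R$, \emph{could} have small move-radius after cancellation. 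So in-radius alone still isn't enough. The genuinely correct approach — and the one I'd commit to — is the one used in the paper for $\RTM$ not being finitely generated, adapted: use the fact (provable by the measure/counting argument of Theorem~\ref{theorem_injective_surjective_reversible}, or directly) that if $S$ is finite then $\langle S\rangle$ lies in $\FG(\Sigma^{\ZZ^d}, n, k)$ restricted to automata whose local rule, when written with neighborhood $B(\vec 0, \ell R)$, factors through a proper subgroup or has a bounded "complexity" — concretely, count: the number of reversible finite-state automata with in-radius $\le \rho$ is finite, but grows, and a finite generating set produces at generator-length $\ell$ at most $|S|^\ell$ elements, all of in-radius $\le \ell R$; meanwhile the number of distinct reversible automata of in-radius exactly $\rho$ grows super-exponentially in $\rho$ (e.g.\ at least roughly $(k!)^{n^{\rho}}$-ish worth of state-permutation-free choices), so $|S|^{\ell} = |S|^{\ell}$ cannot cover all automata of in-radius $\le \ell R$ once $\ell$ is large.

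\textbf{The main obstacle} I anticipate is making the counting in the last paragraph airtight: one must (a) correctly bound the in-radius of a length-$\ell$ product of radius-$R$ tape-preserving automata by $O(\ell R)$ — here the fact that the tape is \emph{never} modified is what keeps this linear rather than exponential, since all factors read the same configuration $x$, just at shifting head positions, so the dependency region is a path-neighborhood of size $O(\ell R)$ — and (b) exhibit a family of reversible finite-state automata of in-radius $\rho$ whose cardinality exceeds $|S|^{\ell}$ for all $\ell$ with $\ell R \ge \rho$, i.e.\ grows faster than exponentially in $\rho$. For (b), a clean witness family: automata that, reading the window $B(\vec 0, \rho)$, apply an \emph{arbitrary} permutation of $Q$ that depends on the whole window — there are $(k!)^{n^{|B(\vec 0,\rho)|}}$ such state-permutations in $\SP(\ZZ^d, n, k) \le \FG$, which is doubly-exponential in $\rho$, dwarfing $|S|^{\ell}$ with $\ell \approx \rho / R$. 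Since every element of $\langle S\rangle$ of generator-length $\ell$ has in-radius $\le \ell R$, and there are only $\sum_{i \le \ell} |S|^i \le |S|^{\ell+1}$ such elements, for large $\ell$ these cannot exhaust even the state-permutations of in-radius $\ell R$; pick such a missing state-permutation $T$, note $T \in \FG(\Sigma^{\ZZ^d},n,k)$, and conclude $T \notin \langle S\rangle$. Therefore $\FG(\Sigma^{\ZZ^d}, n, k)$ is not finitely generated. The one technical point requiring genuine care — hence where I'd spend the most effort — is the linear in-radius bound (a): I would prove it by induction on $\ell$, showing that if $g$ has generator-length $\ell$ then $g(x^{\vec v}_q)$ is determined by $x|_{B(\vec v, \ell R)}$, using that the first generator moves the head within $B(\vec v, R)$ reading only $B(\vec v, R)$, and then the remaining length-$(\ell-1)$ product reads only a radius-$(\ell-1)R$ ball around the new head position, which sits inside $B(\vec v, R)$, giving total radius $R + (\ell-1)R = \ell R$.
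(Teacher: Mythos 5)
There is a genuine gap, and it sits at the very end of your argument, in the step ``pick such a missing state-permutation $T$ and conclude $T \notin \langle S\rangle$.'' Your bound (a) is fine: since the generators never modify the tape, a product of $\ell$ generators of radius $\leq R$ reads only $B(\vec v, \ell R)$, so words of length $\leq \ell$ all have in-radius $\leq \ell R$. But this is only a \emph{forward} bound. The counting then shows that the $\leq |S|^{\ell+1}$ words of length $\leq \ell$ cannot exhaust the doubly-exponentially many automata of in-radius $\leq \ell R$ --- which is true, but irrelevant: an automaton of in-radius $\leq \ell R$ that is not a word of length $\leq \ell$ may perfectly well be a word of length $2\ell$, or $\ell^{10}$. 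To rule that out you would need the \emph{reverse} implication (in-radius $\leq \rho$ forces word length $\leq f(\rho)$), which you do not prove and which there is no reason to expect. The sets of ``missed'' elements at each level $\ell$ are moving targets whose intersection over all $\ell$ could be empty; indeed $\langle S\rangle$ and $\FG(\Sigma^{\ZZ^d},n,k)$ are both countable, so no cardinality comparison at finite stages can certify that a specific element lies outside $\langle S\rangle$. Two smaller problems: your witness family $\SP$ of state-permutations is \emph{empty} for $k=1$ (there $\Sym(Q)$ is trivial), yet the theorem must cover $k=1$, which is exactly the topological full group; and your first family $T_m$ (move by $+\vec e_1$ or $-\vec e_1$ according to whether $x_{\vec 0}=x_{m\vec e_1}$) is not reversible --- the same tape with heads at $\vec u - \vec e_1$ and at $\vec u + \vec e_1$ can both map to head position $\vec u$, since the two conditions involve disjoint cells.

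The fix is to replace counting by a homomorphism onto a visibly non-finitely-generated group, which is what the paper does: for each $t \geq 2$, the set of $(t,\dots,t)$-periodic configurations (times $Q$) is finite and invariant, so taking the sign of the permutation that a machine induces on it gives a homomorphism $\phi \colon \FG_{\fix}(\Sigma^{\ZZ^d},n,k) \to (\ZZ/2\ZZ)^{\ZZ_{\geq 2}}$. One then exhibits, for each $t$, an involution $T_t$ (detecting an isolated $1$ in a $t\times\cdots\times t$ block and swapping two positions) with $\phi(T_t)_t = 1$ and $\phi(T_t)_{t'} = 0$ for $t' < t$, so the image surjects onto $(\ZZ/2\ZZ)^{\{2,\dots,m\}}$ for every $m$ and hence is not finitely generated; since homomorphic images of finitely generated groups are finitely generated, neither is $\FG(\Sigma^{\ZZ^d},n,k)$. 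This is the same mechanism the paper uses for $\RTM$ (there via the average-movement homomorphism $\alpha$), and it is the ingredient your proposal is missing.
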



\begin{proof}
	We prove this in the moving-tape model. For $\vec{v} \in \ZZd$, let $\operatorname{Per}_{\vec v}(\Sigma^{\ZZd})$ be the set of configurations of $\Sigma^{\ZZd}$ whose stabilizer under the shift action contains $\vec{v}\ZZd$. Let $\ZZ_{\geq 2}$ be the set of integers $t \geq 2$. Let $\phi \colon \FG_{\text{fix}}(\Sigma^{\ZZ^d},n,k) \to (\ZZ/2\ZZ)^{\ZZ_{\geq 2}}$ be the parity homomorphism where $\phi(T)_{t}$ is the sign of the permutation $T$ performs on the finite set $\operatorname{Per}_{(t,t,\dots,t)}(\Sigma^{\ZZd}) \times Q$.
	
	As the image of a finitely generated group under a homomorphism is also finitely generated, it suffices to show that $\phi(\FG_{\text{fix}}(\Sigma^{\ZZ^d},n,k))$ is not finitely generated. It suffices thus to prove that for any finite $m\geq 2$ the restriction of $\phi(\FG_{\text{fix}}(\Sigma^{\ZZ^d},n,k))$ to $(\ZZ/2\ZZ)^{\{2,\dots,m\}}$ is surjective. From here it clearly follows that $\phi(\FG_{\text{fix}}(\Sigma^{\ZZ^d},n,k))$ is not finitely generated.
	
	Let $t \geq 2$ and $\vec v = (t,t,\dots,t)\in \ZZd$. Let $T_{t}$ be the machine which in state $q \neq 1$ acts trivially, and if $q = 1$ does the following: Let $\vec{e}=\vec{e}_1=(1,0,\dots,0,0)$
	be the first canonical basis vector. For $\vec u \in \ZZd$ let $A_{\vec u} = \vec u + \{0,\dots,t-1\}\times\cdots\times \{0,\dots,t-1\}$. Then, on configuration $x\in \Sigma^{\ZZd}$,
	\begin{itemize}
		\item if the restriction of $x$ to $A_{\vec{0}}$ contains a unique $1$ which is at $\vec 0$, and is otherwise identically zero, shift the configuration by $-\vec{e}$,
		\item if the restriction of $x$ to $A_{\vec{e}}$ contains a unique $1$ which is at $\vec{e}$, and it otherwise identically zero, shift the configuration by $\vec{e}$,
		\item otherwise do nothing.
	\end{itemize}
	
	The machine $T_{t}$ is an involution and thus is reversible. From this construction it follows that $\phi(T_{t})_{t} = 1$ and $\phi(T_{t})_{t'} = 0$ for all $2 \leq t' < t$.
	
	Now, let $y \in (\ZZ/2\ZZ)^{\{2,\dots,m\}}$. Let $M_1 = \ID$. Iteratively for $2 \leq j \leq m$ construct \[ M_j = \begin{cases}
	M_{j-1} & \mbox{ if } y_j = \phi(M_{j-1})_j\\
	T_j \circ M_{j-1} & \mbox{ if } y_j \neq \phi(M_{j-1})_j
	\end{cases}  \]
	
	As $\phi$ is a homomorphism it follows that $\phi(M_m) = y$ and therefore the restriction of $\phi(\FG_{\text{fix}}(\Sigma^{\ZZ^d},n,k))$ to $(\ZZ/2\ZZ)^{\{2,\dots,m\}}$ is surjective.\end{proof}

We shall introduce a new point of view on finite-state machines, which we call the \define{permutation model}. This model be helpful in the upcoming proof that elementary Turing machines are finitely generated. In this model, we associate to every Turing machine in $\RFA_{\text{fix}}(\ZZd,n,k)$ an automorphism of $(\Sigma\times (\ZZ/2\ZZ)^Q)^{\ZZd}$. The main idea behind this correspondence is that given $T \in \RFA_{\text{fix}}(\ZZd,n,k)$, every configuration $x\in \Sigma^{\ZZd}$ induces an action over $\ZZd \times Q$, namely, a head pointing at some position in $\ZZd$ in state $q$ would move to a new pair (position,state) in $\ZZd \times Q$ after applying $T$. As $T$ is reversible, this induces a permutation over $\ZZd \times Q$ which can be applied simultaneously to an arbitrary number of heads, which can in turn be represented as a configuration in $((\ZZ/2\ZZ)^Q)^{\ZZd}$. We make this embedding precise in the proof of the following proposition.

\begin{proposition}
	\label{prop:RFAInAut}
	The group $\RFA_{\text{fix}}({\ZZ^d},n,k)$ embeds into $\Aut(A^{\ZZ^d})$ for $|A| = n 2^k$.
\end{proposition}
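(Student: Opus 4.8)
The plan is to make precise the ``permutation model'' sketched just before the statement. Fix $T \in \RFA_{\text{fix}}(\ZZ^d,n,k)$ with neighborhood $F$ and local rule $f(p,q) = (p, q', \vec v)$, where the tape component is unchanged because $T$ is a finite-state automaton. For each configuration $x \in \Sigma^{\ZZ^d}$, the rule $f$ together with $x$ determines a map $\pi_x \colon \ZZ^d \times Q \to \ZZ^d \times Q$ sending a head at position $\vec u$ in state $q$ to position $\vec u + \vec v$ in state $q'$, where $(\,\cdot\,, q', \vec v) = f(\sigma^{-\vec u}(x)|_F, q)$; since $T$ is reversible, $\pi_x$ is a permutation of $\ZZ^d \times Q$ (this is exactly the action of $T$ restricted to configurations with tape $x$, read through Proposition~\ref{prop_CACharacterization}). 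I would record the key finitary feature: $\pi_x$ commutes with the $\ZZ^d$-translation in the sense that $\pi_{\sigma^{\vec w}(x)}(\vec u + \vec w, q) = \pi_x(\vec u, q) + (\vec w, 0)$, and the image $\pi_x(\vec u, q)$ depends only on $x|_{\vec u + F}$, with the position moving by a bounded amount.

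Next I would define the target alphabet $A = \Sigma \times (\ZZ/2\ZZ)^Q$, so $|A| = n 2^k$, and think of a point of $A^{\ZZ^d}$ as a pair $(x, \Lambda)$ where $x \in \Sigma^{\ZZ^d}$ is a tape and $\Lambda \colon \ZZ^d \to (\ZZ/2\ZZ)^Q$ records, at each cell $\vec u$, a subset $\Lambda(\vec u) \subseteq Q$ of ``heads present'' — equivalently an element of $(\ZZ/2\ZZ)^{\ZZ^d \times Q}$ thought of as a $\ZZ/2\ZZ$-valued ``multiset'' (really a set) of head-tokens. The candidate automorphism $\Theta(T)$ fixes the tape $x$ and pushes the token set forward by $\pi_x$: if $S \subseteq \ZZ^d \times Q$ is the set encoded by $\Lambda$, then $\Theta(T)(x, \Lambda) = (x, \Lambda')$ where $\Lambda'$ encodes $\pi_x(S)$. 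I would check that $\Theta(T)$ is well defined (the fibers of $\pi_x$ over any cell are finite and uniformly bounded, because the position moves boundedly, so the preimage of a single cell is a finite set and the resulting symbol at each cell is a genuine element of $(\ZZ/2\ZZ)^Q$), that it is continuous (the symbol at cell $\vec u$ of the image depends only on $x$ and $\Lambda$ on a bounded neighborhood of $\vec u$, using the bound on the move-radius and radius of $f$), that it is shift-commuting (immediate from the translation-equivariance of $x \mapsto \pi_x$ above), and that it is bijective with inverse $\Theta(T^{-1})$ (since $\pi_x^{-1} = (\pi^{-1})_x$ for the induced permutation of $T^{-1}$, again by reversibility). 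Thus $\Theta(T) \in \Aut(A^{\ZZ^d})$.

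Finally I would verify that $\Theta$ is a group homomorphism and injective. Homomorphism: $\pi^{T_1 \circ T_2}_x = \pi^{T_1}_x \circ \pi^{T_2}_x$ because both $T_1$ and $T_2$ leave the tape fixed, so pushing a token set forward by $T_1 T_2$ is pushing forward first by $T_2$ then by $T_1$; hence $\Theta(T_1 T_2) = \Theta(T_1) \Theta(T_2)$. (Care is needed only in that $T_2$ might move the tape in the \emph{moving head} picture — but in $\RFA_{\text{fix}}$ the tape is genuinely fixed, so there is no issue; this is why the statement is phrased for the fixed-head group.) Injectivity: if $T \neq \ID$, there is some $x$, position $\vec u$ and state $q$ with $\pi_x(\vec u, q) \neq (\vec u, q)$, and then $\Theta(T)$ applied to the point with tape $x$ and the single token $(\vec u, q)$ differs from the identity; hence $\ker \Theta$ is trivial. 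The main obstacle I anticipate is the well-definedness and continuity check for $\Theta(T)$: one must be careful that placing possibly many head-tokens on the tape simultaneously does not cause ``collisions'' that break bijectivity — but because for a fixed $x$ the map $\pi_x$ is a genuine permutation of $\ZZ^d \times Q$, distinct tokens always stay distinct, so using the full set $(\ZZ/2\ZZ)^Q$ (rather than just $\{0,1\}$ counting) at each cell is exactly what is needed, and the boundedness of the displacement guarantees that only finitely many tokens can land in any given cell, keeping everything local and continuous.
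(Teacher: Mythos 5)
Your proposal is correct and follows essentially the same route as the paper: both construct the induced permutation of $\ZZ^d\times Q$ from the (reversible) finite-state automaton, take the alphabet $\Sigma\times(\ZZ/2\ZZ)^Q$, push the set of head-tokens forward, and verify the homomorphism property and injectivity via a single-token configuration. Your additional checks (well-definedness from injectivity of $\pi_x$, locality from the bounded move-radius) are exactly the points the paper leaves implicit, so there is nothing to change.
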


\begin{proof}
	Note that any pair $(T,x) \in \RFA_{\text{fix}}({\ZZ^d},n,k) \times \Sigma^{\ZZ^d}$ induces a permutation $\sigma_{T,x}$ over ${\ZZ^d} \times Q$. Namely, let $s$ be the shift-indicator of $T$, and let
	\[ \sigma_{T,x}(\vec v, q) = (\vec v - s(\sigma^{-\vec v}(x), q), T_2(\sigma^{-\vec v}(x), q)) \]
	
	We identify $A$ as the alphabet $\Sigma \times (\ZZ/2\ZZ)^Q$ and thus the configurations of $A^{\ZZ^d}$ can be seen as pairs $(x,y) \in \Sigma^{\ZZ^d} \times ((\ZZ/2\ZZ)^{Q})^{\ZZ^d}$.
	
	Finally, let $\varphi\colon \RFA_{\text{fix}}({\ZZ^d},n,k)  \to \Aut(\ag^{\ZZ^d})$ be the map defined by $\varphi(T) = \phi_T$ where:
	
	\[(\phi_T)_1(x, y)_{\vec{v}}= x_{\vec{v}},\]
	\[(\phi_T)_2(x, y)_{\sigma_{x,T}(\vec{v},q)} = 1 \mbox{ if and only if } y_{(\vec{v},q)} = 1.\]
	
	That is, $\phi_T$ does not modify the $\Sigma^{\ZZ^d}$ tape, and for every position $(\vec{v},q) \in {\ZZ^d} \times Q$ marked with a one, we interpret it as a Turing machine head in state $q$ in position $\vec{v}$ and mark in the image the state and position it would end up after applying $T$. This is clearly a cellular automaton on $\Sigma^{\ZZ^d} \times ((\ZZ/2\ZZ)^{Q})^{\ZZ^d}$ as the shift indicator of $T$ has a finite radius.
	
	We claim $\varphi$ is an embedding. A direct computation shows that the permutation induced by $T_1 \circ T_2$ is just $\sigma_{T_1,x} \circ \sigma_{T_2,x}$ thus showing that $\varphi$ is a homomorphism. Now, if $T_1 \neq T_2$ there is some pair $(x,q)$ where they act differently. If we consider the configuration $(x,y)$ where $y_{0^d,q} =1$ and $0$ elsewhere. Clearly  $\phi_{T_1}(x,y) \neq \phi_{T_2}(x,y)$. Therefore $\varphi$ is injective. 
\end{proof}

\begin{definition}
	For any $T \in \RFA_{\text{fix}}(\ZZd,n,k)$ the automorphism $\phi_T \in \Sigma^{\ZZ^d} \times (\ZZ/2\ZZ)^{{\ZZ^d} \times Q}$ obtained by appling the embedding of the previous proof is called the \define{permutation model} of $T$.
\end{definition}

The permutation model has the remarkable property of being linear in the second component. Namely, given $T \in \RFA_{\text{fix}}(\ZZd,n,k)$ and its permutation model $\phi(T)$, we have that for any $x \in \Sigma^{\ZZ^d}$ and $y,z \in (\ZZ/2\ZZ)^{{\ZZ^d} \times Q}$ we have \[\phi_T(x,y+z) = (x,(\phi_T)_2(x,y)+(\phi_T)_2(x,z)),\]
where the sum is computed coordinate-wise.

It is known that the automorphism group of any nontrivial full shift embeds in the automorphism group of any uncountable sofic shift \cite{Sa15}. Thus we have the following corollary:

\begin{corollary}
	If $X$ is an uncountable sofic $\ZZ$-subshift, then $\RFA(\ZZ,n,k)$ embeds into $\Aut(X)$. In particular $\RFA(\ZZ,n,k)$ embeds into $\Aut(\ag^{\ZZ})$ for any $|\ag| \geq 2$.
\end{corollary}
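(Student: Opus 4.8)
The plan is to assemble the corollary from Proposition~\ref{prop:RFAInAut} and the cited embedding result of~\cite{Sa15}, using Corollary~\ref{group_isomorphism_lemma} as the bridge between the moving-head and moving-tape models. Assume first $n \geq 2$. First I would invoke Corollary~\ref{group_isomorphism_lemma}: since the full shift $\Sigma^{\ZZ}$ is strongly locally aperiodic, the canonical epimorphism $\Psi$ of Definition~\ref{def:morphism} restricts to an isomorphism $\RFA(\ZZ,n,k) \cong \RFA_{\fix}(\ZZ,n,k)$, so it suffices to embed the moving-tape group $\RFA_{\fix}(\ZZ,n,k)$ into $\Aut(X)$.

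Next I would apply the permutation model: Proposition~\ref{prop:RFAInAut} provides an embedding $\RFA_{\fix}(\ZZ,n,k) \hookrightarrow \Aut(A^{\ZZ})$ with $|A| = n2^k$. Since $n2^k \geq 2$, the full shift $A^{\ZZ}$ is nontrivial, hence uncountable and sofic, so by the main theorem of~\cite{Sa15} its automorphism group embeds into $\Aut(X)$ for every uncountable sofic $\ZZ$-subshift $X$. Composing the three maps gives $\RFA(\ZZ,n,k) \hookrightarrow \Aut(X)$, and the ``in particular'' statement follows by taking $X = \ag^{\ZZ}$, since every full shift on at least two symbols is uncountable and sofic. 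Note that the embedding into $\Aut(\ag^{\ZZ})$ is already immediate from Proposition~\ref{prop:RFAInAut} when $|\ag| = n2^k$; the content of~\cite{Sa15} is precisely that the alphabet may be shrunk down to size two. The degenerate case $n = 1$ is handled separately by an elementary argument: one checks that $\RFA(\ZZ,1,k)$ coincides with $\Aut(X_k)$, a finitely generated virtually abelian group, which embeds into $\Aut(B^{\ZZ})$ for a large enough alphabet $B$ (for instance, writing $B = C^k$ and using the $k$ commuting coordinatewise shifts together with coordinate permutations), after which one concludes via~\cite{Sa15} as before.

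I do not expect a genuine obstacle here: the substantive work is already contained in Proposition~\ref{prop:RFAInAut} (the permutation-model embedding into the automorphism group of a full shift) and in~\cite{Sa15} (the reduction to an arbitrary uncountable sofic target, in particular a two-symbol full shift). The only points requiring a moment of care are the passage to the moving-tape model --- where strong local aperiodicity of $\Sigma^{\ZZ}$ is exactly the hypothesis that Corollary~\ref{group_isomorphism_lemma} demands, so this is clean for $n \geq 2$ --- and the bookkeeping of which intermediate embedding supplies which alphabet size.
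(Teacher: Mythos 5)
Your proof is correct and follows the paper's own route: compose the permutation-model embedding of Proposition~\ref{prop:RFAInAut} with the embedding $\Aut(A^{\ZZ})\hookrightarrow\Aut(X)$ supplied by~\cite{Sa15}. The only additions are your explicit use of Corollary~\ref{group_isomorphism_lemma} to pass between the moving-head and moving-tape groups and your separate treatment of the degenerate case $n=1$; both points are left implicit in the paper and are handled correctly in your write-up.
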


\subsubsection{Elementary Turing machines}

\begin{definition}
	The group of \define{elementary Turing machines} $\EL(\ZZ^d,n,k)$ is the group generated by finite-state machines and local permutation, that is\[\EL(\ZZ^d,n,k) := \langle \FG(\ZZ^d,n,k),\LP(\ZZ^d,n,k)\rangle.\]
\end{definition}

The group of elementary Turing machines is generated by machines which either do not change the tape or do not move the head, and its aim is to approximate the group $\RTM(\ZZ^d,n,k)$ using simple building blocks. In other words, it can be understood as a sufficiently rich class of Turing machines which can be constructed from simple atoms. For instance, Langdon's ant~\cite{La86} is an example of a machine in $\EL(\ZZ^2,2,4)$.

Clearly the group $\LP(\ZZ^d,n,k)$ is not finitely generated because it is locally finite and infinite. We have also shown that $\RFA(\ZZ^d,n,k)$ is not finitely generated and will soon show that $\RTM(\ZZd,n,k)$ is not finitely generated. However, we are later going to show that both $\OB(\ZZ^d,n,k)$ and $\EL(\ZZ,n,k)$ are finitely generated.

Before studying these machines, we show that $\RTM(\ZZd,n,k)$ is not finitely generated. This proof uses the average movement homomorphism $\alpha$ defined in Section~\ref{subsection_measure}.

\begin{lemma}
	\[ \alpha(\RTM(\ZZd,n,k)) = \left\langle \frac{e_i}{kn^j} : j \in \NN, i \in \{1,\dots,d\} \right\rangle \leqslant (\QQ^d, +) \]
\end{lemma}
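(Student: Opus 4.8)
The plan is to prove the two inclusions of the displayed equality separately. The inclusion $\subseteq$ is routine, while the content of the statement is the realization of the fractional generators on the right. For $\subseteq$: given $T\in\RTM(\ZZd,n,k)$, by Lemma~\ref{lem:MovingTapeDynamicalDef} the shift indicator $s$ of $\Psi(T)$ is continuous, so it factors through $\Sigma^F\times Q$ for some finite $F\subset\ZZd$. Since $\mu([p]\times\{q\})=1/(kn^{|F|})$ for $p\in\Sigma^F$, $q\in Q$, and $s$ takes values in $\ZZd$,
\[ \alpha(T)=\EE_\mu(s)=\frac{1}{kn^{|F|}}\sum_{p\in\Sigma^F,\ q\in Q}s(p,q)\ \in\ \frac{1}{kn^{|F|}}\ZZd , \]
which lies in the claimed subgroup (for $|F|=0$, note that $\tfrac1k e_i=n\cdot\tfrac1{kn}e_i$ is already in it).

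For $\supseteq$, since $\alpha$ is a homomorphism its image is a subgroup of $(\QQ^d,+)$, so it suffices to produce, for every $\ell\in\{1,\dots,d\}$ and every integer $j\ge0$, a machine $T_{\ell,j}\in\RTM(\ZZd,n,k)$ with $\alpha(T_{\ell,j})=\tfrac1{kn^j}e_\ell$; these contain the listed generators and also generate $\ZZd$ (since $e_\ell=kn^j\cdot\tfrac1{kn^j}e_\ell$), hence generate the whole subgroup. The construction uses the description of reversibility from the proof of Theorem~\ref{theorem_injective_surjective_reversible}: if $F$ is a neighborhood of a Turing machine $T$ and the cylinders $C_i=[p_i]\times\{q_i\}$ ($p_i\in\Sigma^F$, $q_i\in Q$) form the clopen partition of $\Sigma^{\ZZd}\times Q$ determined by the $F$-content and the state, then each $D_i:=T(C_i)$ is a cylinder $[p_i']_{w_i}\times\{q_i'\}$ (with $p_i'\in\Sigma^F$, $w_i\in\ZZd$) of measure $1/(kn^{|F|})$, the shift indicator of $\Psi(T)$ equals $w_i$ on $C_i$ so that $\alpha(T)=\tfrac1{kn^{|F|}}\sum_i w_i$, and $T$ is reversible iff the $D_i$ partition $\Sigma^{\ZZd}\times Q$. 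Conversely, any bijection $C_i\mapsto D_i$ onto cylinders of this form with bounded offsets $w_i$ is realized by the Turing machine whose local rule sends $(p_i,q_i)$ to $(p_i',q_i',-w_i)$, and this machine is reversible precisely when the $D_i$ partition the space.

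So I need a partition of $\Sigma^{\ZZd}\times Q$ into cylinders over shifted copies of a fixed size-$(j+1)$ window whose offsets sum to $n\,e_\ell$. Take $F=\{0,e_\ell,2e_\ell,\dots,je_\ell\}$. On each slice $\Sigma^{\ZZd}\times\{q\}$, $q\neq1$, use the plain partition into the $n^{j+1}$ cylinders $[p]$, $p\in\Sigma^F$ (all offsets $0$). On $\Sigma^{\ZZd}\times\{1\}$, start from that plain partition, fix symbols $c_1,\dots,c_j\in\Sigma$ at the coordinates $e_\ell,\dots,je_\ell$, and replace the $n$ partition cylinders prescribing these symbols and free at coordinate $0$ — whose union is $\{x:x_{e_\ell}=c_1,\dots,x_{je_\ell}=c_j\}$ — by the $n$ cylinders over $e_\ell+F$ prescribing the same symbols at $e_\ell,\dots,je_\ell$ and free at coordinate $(j+1)e_\ell$, whose union is the same set. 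This is again a partition with $n^{j+1}$ cylinders per slice, and only the $n$ replaced cylinders have nonzero offset, each equal to $e_\ell$. Fixing any bijection from the $C_i$ onto these $D_i$ yields, by the previous paragraph, a reversible $T_{\ell,j}$ with $\alpha(T_{\ell,j})=\tfrac1{kn^{j+1}}(n\,e_\ell)=\tfrac1{kn^j}e_\ell$.

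The substantive step is the second inclusion. The reason one cannot just write down a machine of small drift is that "conditional movers" — machines that shift on some tape patterns and stay put on others — are never reversible, since moving versus staying forces two of the partition cylinders to have overlapping images; this is what forces the images onto genuinely shifted windows as above. The only remaining work is the bookkeeping that the shifted-bundle family is still a partition and that its offsets add to $n\,e_\ell$; the inclusion $\subseteq$ and the generation claim are immediate.
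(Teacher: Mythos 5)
Your proof is correct and takes essentially the same route as the paper: the inclusion $\subseteq$ is the same finite-sum-over-cylinders observation, and for $\supseteq$ your conveyor machine (which, on the measure-$\frac{1}{kn^j}$ set where the state is $1$ and the window reads the fixed word $c_1\cdots c_j$ ahead, slides the window by $e_\ell$ and recycles the discarded symbol) is the paper's $T_{\texttt{SURF},m}$ in a different guise, with reversibility verified via the cylinder-partition characterization from the proof of Theorem~\ref{theorem_injective_surjective_reversible} rather than asserted from the local rule. One caveat on your closing remark: the claim that machines which ``shift on some tape patterns and stay put on others'' are never reversible is false in general --- the controlled position swaps $T_{u,a,v}$ of Section~\ref{sec:FG_generators} do exactly that; only the \emph{one-directional}, tape-preserving conditional movers fail injectivity --- but this aside plays no role in your argument.
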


\begin{proof}
	Consider the $(\ZZ,n,k)$-Turing machine $T_{\texttt{SURF},m}$ given by the local function $f \colon \Sigma^{\{0,\dots,m\}} \times Q \to \Sigma^{\{0,\dots,m\}} \times Q \times \ZZ$ which is defined as follows: For $a \in \Sigma$ and $q < k$ let $f(0^ma,q) = (0^ma,q+1,0)$ and $f(0^ma,k) = (a0^m,1,1)$. Otherwise $f(u,q)= (u,q,0)$. This machine is reversible, and satisfies that $\alpha(T_{\texttt{SURF},m}) = 1/{kn^{m}}$. This machine can easily be extended to a $(\ZZd,n,k)$-Turing machine with average movement $(1/{kn^m},0,\dots,0)$, and an analogous construction yields a Turing machine with the exact same movement in other coordinates. Thus we obtain that \[\left\langle \frac{e_i}{kn^j} : j \in \NN, i \in \{1,\dots,d\} \right\rangle \leqslant \alpha(\RTM(\ZZd,n,k)).\]
	
	To obtain the equality, observe that the integral $\int_{\Sigma^{\ZZd}\times Q} s(x,q) d\mu$ defining the average movement is a finite sum over cylinders, the contribution of each cylinder is an integer vector, and the measure of a cylinder is in the group generated by $\frac{1}{kn^j}$. Thus every element in $\alpha(\RTM(\ZZd,n,k))$ can be written as a finite sum of $\frac{e_i}{kn^j}$. \end{proof}

The image under the homomorphism $\alpha$ of $(\RTM(\ZZd,n,k))$ computed above is not finitely generated whenever $n \geq 2$, therefore we conclude that $\RTM(\ZZd,n,k)$ cannot be finitely generated for $n \geq 2$.

\begin{restatable}{theorem}{RTMIsNotFinitelyGenerated}
	\label{RTM_is_not_finitely_generated}
	For $n \geq 2$, the group $\RTM(\ZZd,n,k)$ is not finitely generated.
\end{restatable}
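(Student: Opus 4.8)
The plan is to use the average movement homomorphism $\alpha$ together with the computation of its image carried out in the lemma just above the statement. The guiding principle is the elementary fact that a homomorphic image of a finitely generated group is finitely generated; so if $\RTM(\ZZ^d,n,k)$ were finitely generated, then so would be its image $\alpha(\RTM(\ZZ^d,n,k)) \leq (\QQ^d,+)$. Thus it suffices to argue that the subgroup
\[ G := \left\langle \frac{e_i}{kn^j} : j \in \NN,\ i \in \{1,\dots,d\} \right\rangle \leq \QQ^d \]
is not finitely generated when $n \geq 2$.

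To make the first point precise: $\alpha$ is a group homomorphism on $\RTM_{\fix}(\ZZ^d,n,k)$ by the lemma asserting this, and on $\RTM(\ZZ^d,n,k)$ it is defined as $\alpha \circ \Psi$, the composition of that homomorphism with the monoid epimorphism $\Psi$ of Definition~\ref{def:morphism}, hence again a homomorphism; by the preceding lemma its image is exactly $G$. Now project $G$ onto its first coordinate to obtain $H := \langle 1/(kn^j) : j \in \NN \rangle \leq (\QQ,+)$, a homomorphic image of $G$. Every finitely generated subgroup of $(\QQ,+)$ is cyclic: if it is generated by $a_1/b_1,\dots,a_r/b_r$, it is contained in $\frac{1}{\lcm(b_1,\dots,b_r)}\ZZ$, and subgroups of infinite cyclic groups are cyclic. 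But $H$ contains $1/(kn^j)$ for every $j$, and since $n \geq 2$ the integers $kn^j$ are unbounded, so $H$ is not contained in $\frac{1}{m}\ZZ$ for any fixed $m \in \NN$; therefore $H$ is not cyclic, hence not finitely generated. Consequently $G$ is not finitely generated, and neither is $\RTM(\ZZ^d,n,k)$.

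I do not expect a genuine obstacle here: essentially all the content lies in the preceding lemma — the construction of the machines $T_{\texttt{SURF},m}$ realizing average movement $1/(kn^m)$, and the observation that the integral defining $\alpha$ is a finite sum of integer vectors weighted by cylinder measures lying in $\langle 1/(kn^j)\rangle$, which together pin down $\alpha(\RTM(\ZZ^d,n,k)) = G$. The remaining steps — that finite generation passes to homomorphic images, and that finitely generated subgroups of $\QQ$ are cyclic — are standard, so the proof is essentially a two-line deduction from the lemma.
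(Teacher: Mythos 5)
Your proposal is correct and follows exactly the paper's route: the paper likewise computes $\alpha(\RTM(\ZZ^d,n,k))$ in the preceding lemma and then deduces non-finite-generation from the fact that this image is not finitely generated. You merely spell out the elementary last step (projecting to one coordinate and using that finitely generated subgroups of $(\QQ,+)$ are cyclic), which the paper leaves implicit.
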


Although $\alpha$ is not a homomorphism on $\TM(\ZZd,n,k)$, using Theorem~\ref{theorem_injective_surjective_reversible} we obtain that $\TM(\ZZd,n,k)$ cannot be finitely generated either.

\begin{theorem}
	For $n \geq 2$ the monoid $\TM(\ZZd,n,k)$ is not finitely generated.
\end{theorem}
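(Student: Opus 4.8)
The plan is to reduce the monoid statement to the already-established group statement (Theorem~\ref{RTM_is_not_finitely_generated}) by exhibiting $\RTM(\ZZd,n,k)$ as a quotient — or at least a homomorphic image — of $\TM(\ZZd,n,k)$, so that finite generation of the monoid would force finite generation of the group. The natural map to use is the one sending a Turing machine to its ``inverse-closure behaviour'', but a cleaner route is the following: a finitely generated monoid $M$ has only finitely many units whose inverses lie in $M$? No — that is false. Instead I would argue directly via the average movement. Although $\alpha$ is not additive on all of $\TM_{\fix}(\ZZd,n,k)$ (measure need not be preserved), Theorem~\ref{theorem_injective_surjective_reversible} tells us that on the submonoid of \emph{injective} Turing machines — which by that theorem coincides with $\RTM_{\fix}(\ZZd,n,k)$ — it \emph{is} additive. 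So the key observation is that the set of invertible elements (units) of $\TM(\ZZd,n,k)$ is exactly $\RTM(\ZZd,n,k)$.

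From here the argument is short. Suppose for contradiction that $\TM(\ZZd,n,k)$ is generated as a monoid by a finite set $S$. First I would show that every unit of $\TM(\ZZd,n,k)$ lies in the subgroup generated by those elements of $S$ that happen to be units together with those that have a ``local left inverse'' in the generated monoid; more carefully, write $U = \RTM(\ZZd,n,k)$ for the unit group, and note that if $u \in U$ then both $u$ and $u^{-1}$ are words over $S$. The standard fact is that in any monoid, if $w = s_1 \cdots s_m$ is a unit then each $s_i$ is a unit (since $s_1$ has a right inverse $s_2\cdots s_m w^{-1}$ and, composing the whole word's inverse on the left, also a left inverse, and similarly down the line). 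Wait — that needs the monoid to be such that one-sided invertibility implies two-sided invertibility, which holds here precisely because of Theorem~\ref{theorem_injective_surjective_reversible}: an injective Turing machine is surjective hence bijective hence reversible, and a Turing machine with a one-sided inverse is in particular injective or surjective. Thus every $s_i$ appearing in a factorization of a unit is itself in $U$. Consequently $U = \RTM(\ZZd,n,k)$ is generated \emph{as a group} by the finite set $S \cap U$, contradicting Theorem~\ref{RTM_is_not_finitely_generated}.

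The step I expect to be the main obstacle is the claim that a factor $s_i$ of a unit word is itself a unit. In a general monoid this fails, so the proof must genuinely invoke the Turing-machine-specific dichotomy. Concretely: if $u = s_1 \cdots s_m \in \RTM$, then $s_1$ has the right inverse $r := s_2 \cdots s_m \circ u^{-1} \in \TM(\ZZd,n,k)$ (using that $u^{-1}\in\TM$ as well). Then $s_1 \circ r = \ID$ shows $s_1$ is surjective, hence by Theorem~\ref{theorem_injective_surjective_reversible} (applied via Proposition~\ref{injective_surjective_tape_vs_head} and the remark that surjectivity/injectivity equivalence transfers to the moving-head model) $s_1$ is reversible, i.e. $s_1 \in \RTM$; then $r = s_1^{-1} u^{-1}\cdot$... rather, $s_2\cdots s_m = s_1^{-1} u \in \RTM$ as a composite of reversible maps? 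No: $s_2\cdots s_m$ need not be reversible yet. But we can now peel: $s_1^{-1} u = s_2 \cdots s_m$, and $s_1^{-1}u\in\RTM$ since both factors are, so $s_2\cdots s_m\in\RTM$ and induction on $m$ finishes it, giving each $s_i\in\RTM$. So the only real content is the surjectivity-implies-reversibility input, which is exactly Theorem~\ref{theorem_injective_surjective_reversible} together with the remark transferring it to the moving-head setting; everything else is bookkeeping, and I would present it in two or three lines.
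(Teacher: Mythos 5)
Your argument is correct and is essentially the paper's own proof: both reduce to Theorem~\ref{RTM_is_not_finitely_generated} by showing that any factorization of a reversible machine over a generating set must consist entirely of reversible machines, using Theorem~\ref{theorem_injective_surjective_reversible} to upgrade one-sided invertibility (you use surjectivity of the outermost factor, the paper uses injectivity of the innermost -- a cosmetic difference) to reversibility and then peeling off factors inductively. The conclusion that a finite generating set of the monoid would yield a finite generating set of $\RTM(\ZZd,n,k)$ is exactly the paper's contradiction.
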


\begin{proof}
	Let $T \in \RTM(\ZZd,n,k)$. As $T$ is injective, we conclude that if $T = T_n \circ \dots \circ T_1$ then $T_1$ is also injective. By Theorem~\ref{theorem_injective_surjective_reversible} this means that $T_1$ is reversible, which in turn implies $T_2$ is injective and so on. Therefore, if $\TM(\ZZd,n,k)$ is generated by $S$, then there exists $S' \subset S$ such that $\langle S' \rangle = \RTM(\ZZd,n,k)$. But every such $S'$ is infinite by Theorem~\ref{RTM_is_not_finitely_generated} and thus $S$ must also be infinite. 
\end{proof}

In the remainder of the section we study how close $\EL(\ZZ^d,n,k)$ is to $\RTM(\ZZ^d,n,k)$.  We also give a natural infinite generating set for $\RFA(X,k)$ for every $\ZZ$-subshift $X$. Later on we show that $\EL(\ZZ,n,k)$ is finitely generated which implies that $\EL(\ZZ,n,k)$ is strictly contained in $\RTM(\ZZ,n,k)$. However, we will first provide a simple proof of that fact which works in any dimension.

\begin{restatable}{proposition}{Qp}
	Let $\frac{1}{k}\ZZ = \{\frac{n}{k} \mid n \in \ZZ\}$. We have that \[\alpha(\FG(\ZZd,n,k)) = \alpha(\EL(\ZZd,n,k)) = \left(\frac{1}{k}\ZZ \right)^d.\] In particular, $\EL(\ZZd,n,k)$ is strictly contained in $\RTM(\ZZd,n,k)$.
\end{restatable}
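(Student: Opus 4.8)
The plan is to reduce the whole statement to computing $\alpha(\FG(\ZZd,n,k))$. Since $\alpha$ is a group homomorphism on $\RTM_{\fix}(\ZZd,n,k)$ and $\EL$ is generated by $\FG$ and $\LP$, we get $\alpha(\EL(\ZZd,n,k))=\langle\alpha(\FG(\ZZd,n,k))\cup\alpha(\LP(\ZZd,n,k))\rangle$; local permutations have trivial shift indicator, so $\alpha(\LP(\ZZd,n,k))=\{\vec 0\}$, and as $\alpha(\FG(\ZZd,n,k))$ is already a subgroup of $\QQ^d$ we conclude $\alpha(\EL(\ZZd,n,k))=\alpha(\FG(\ZZd,n,k))$. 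Granting $\alpha(\FG(\ZZd,n,k))=(\tfrac1k\ZZ)^d$ (for which one needs $n\geq 2$, the case $n=1$ being degenerate since then $\RTM_{\fix}(\ZZd,1,k)$ is finite), the final assertion is then immediate: the lemma preceding Theorem~\ref{RTM_is_not_finitely_generated} gives $\alpha(\RTM(\ZZd,n,k))=\langle e_i/(kn^j)\rangle$, which for $n\geq 2$ strictly contains $(\tfrac1k\ZZ)^d$, so $\EL(\ZZd,n,k)\subsetneq\RTM(\ZZd,n,k)$.

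For the inclusion $(\tfrac1k\ZZ)^d\subseteq\alpha(\FG(\ZZd,n,k))$ I would exhibit explicit generators. For each $i\in\{1,\dots,d\}$ let $C_i$ be the tape-independent ``clock'' whose local rule sends state $q<k$ to state $q+1$ without moving the head, and sends state $k$ to state $1$ while moving the head by $e_i$. This $C_i$ never modifies the tape, hence lies in $\FG(\ZZd,n,k)$, and it is reversible (its inverse decrements the state, moving by $-e_i$ at the wrap-around). Its shift indicator is supported on the single state $k$, so $\alpha(C_i)=\pm e_i/k$, and therefore $\langle\alpha(C_1),\dots,\alpha(C_d)\rangle=(\tfrac1k\ZZ)^d$.

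The substantive direction is $\alpha(\FG(\ZZd,n,k))\subseteq(\tfrac1k\ZZ)^d$, i.e.\ $k\,\alpha(T)\in\ZZd$ for every $T\in\FG_{\fix}(\ZZd,n,k)$; heuristically this says that a machine which cannot write has only its $k$ states available as a counter and so cannot generate drift finer than $1/k$. To make this precise I would pass to periodic configurations. Let $s\colon\Sigma^F\times Q\to\ZZd$ be the shift indicator and $L=t\ZZd$ with $t$ large enough that a fundamental domain of $L$ contains $F$; since $s$ is locally constant, averaging $s$ over all $L$-periodic configurations (and over $Q$) recovers $\alpha(T)$ exactly, up to the normalising factor $kn^{|F|}$. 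Because $T$ never changes the tape, a fixed $L$-periodic configuration $x$ induces a permutation of the finite set $(\ZZd/L)\times Q$ (the reduction modulo $L$ of the permutation model of Proposition~\ref{prop:RFAInAut}), and reversibility of $T$ makes it a genuine permutation. Lifting one of its orbits back to $\ZZd\times Q$, the head returns to its initial state and to its initial position modulo $L$, so the head-displacement accumulated along the orbit lies in $L$; summing over all orbits yields $\sum_{(\vec v,q)\in(\ZZd/L)\times Q}s(\sigma^{-\vec v}x,q)\in L$ for every $L$-periodic $x$.

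Combining these two facts and then separating off the contribution of the configurations of small period (for $L=t\ZZd$ with $t$ a large prime these are exactly the constant configurations, on which $T$ acts as a state-permutation composed with a state-dependent shift) reduces the statement to a congruence $kn^{|F|}\alpha(T)\equiv C\pmod t$ valid for all large primes $t$, where $C\in\ZZd$ depends only on the behaviour of $T$ on constant configurations. Fermat's little theorem, together with the observation that an integer divisible by infinitely many primes must vanish, then pins down the denominator of $\alpha(T)$; a last step, using that $T\mapsto C(T)$ is a homomorphism $\FG(\ZZd,n,k)\to\ZZd$ and that $C$ is divisible by $n$, upgrades this to $k\,\alpha(T)\in\ZZd$. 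I expect this final divisibility — equivalently, ruling out average movements lying in $(\tfrac1{kn}\ZZ)^d\setminus(\tfrac1k\ZZ)^d$ — to be the main obstacle; everything else is bookkeeping with local rules, periodic configurations, and the homomorphism property of $\alpha$.
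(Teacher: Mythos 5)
Your reduction of the statement to computing $\alpha(\FG(\ZZd,n,k))$, your generators $C_i$ (the paper's $T_j$), and the deduction of strict containment from the earlier lemma are all correct and match the paper. The problem is the hard inclusion $\alpha(\FG(\ZZd,n,k))\subseteq(\tfrac1k\ZZ)^d$, where your periodic-orbit argument does not close. Carried out carefully, the orbit-lifting step gives $\sum_{\vec v\in\ZZd/t\ZZd}\sum_q s(\sigma^{-\vec v}x,q)\in t\ZZd$ for each $t\ZZd$-periodic $x$, and summing over configurations yields only $t^{d-1}kn^{t^d}\alpha(T)\in\ZZd$ for large primes $t$; after Fermat this pins the denominator down to a divisor of $kn$, not of $k$. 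You correctly identify this residual factor of $n$ as ``the main obstacle,'' but the two ingredients you propose to remove it do not work as stated: (i) for $d\geq 2$ and $t$ prime, the $t\ZZd$-periodic configurations with non-full orbit are \emph{not} just the constant configurations (e.g.\ configurations constant along one axis and $t$-periodic along another), so your correction term $C$ is not what you claim it is; and (ii) the assertion that $C$ is divisible by $n$ is exactly the content you are trying to prove — it is equivalent to all constant configurations having the same state-averaged displacement, which is a genuine theorem (a chain-transitivity/index-map fact), not bookkeeping. As written, the proposal assumes the conclusion at its crux.

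For comparison, the paper's proof of this inclusion is shorter and avoids number theory entirely. One first reads off the candidate value $\vec v=\frac1k\sum_q\vec v_q\in(\tfrac1k\ZZ)^d$ from the behaviour of $T$ on the all-zero configuration, composes with the generators $T_j$ to reduce to $\vec v=\vec 0$, and then shows $\alpha(T)=\vec 0$ by approximating $\alpha(T)$ with averages $\alpha_m(T)$ over configurations that equal an arbitrary pattern on $\{-m,\dots,m\}^d$ and are zero outside. Since $T$ never writes, its action on each such configuration is a permutation of position–state pairs agreeing with the all-zero behaviour outside a finite set, so the total displacement over positions and states vanishes up to an $o(m^d)$ boundary term, forcing $\alpha(T)=\lim_m\alpha_m(T)=\vec 0$. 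If you want to salvage your route, you would need to prove directly that the state-averaged drift on a constant configuration is independent of the symbol (which in dimension one follows from the head index map being an orbitwise shift, hence a power of $\sigma$ on the weakly chain-transitive full shift), at which point the paper's perturbation argument is doing the same work more transparently.
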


\begin{proof}
	Clearly $\alpha(T)=0$ for every $T\in \LP(\ZZ,n,k)$, so the first equality holds. Let us then consider average movement values of finite-state automata.
	The machine $T_j$ that increments the state by $1$ on each step (modulo the number of states) and walks one step along the $j$th axis whenever it enters the state $1$, has $\alpha(T_j) = (0,\ldots,0,1/k,0,\ldots,0)$. We obtain
	\[ \left(\frac{1}{k}\ZZ \right)^d = \langle \alpha(T_j) L j \leq d \rangle \leqslant \alpha(\FG(\ZZd,n,k)). \]
	
	Next, let us show that for every finite-state machine $T$, we have $\alpha(T) \in (\frac{1}{k}\ZZ )^d$. For this, consider the behavior of $T$ on the all-zero configuration. Given a fixed state $q$, $T$ moves by an integer vector $\vec v_q$, thus contributing $\frac{1}{k}\vec v_q$ to the average movement. Let $\vec{v} = \sum_{q \in Q}\frac{1}{k}\vec v_q$ be the average movement of $T$ on the all-zero configuration.

	We claim that $\alpha(T) = \vec v$. Note that by composing $T$ with a suitable combination of the machines $T_{j}$ and their inverses, it is enough to prove this in the case $\vec v = \vec{0}$. Now, for a large $m$, Let $p \in \Sigma^{\{-m,\dots,m\}^d}$ be a pattern, $\vec{u} \in \{-m,\dots,m\}^d$ a position and $q \in Q$ a state. Complete $p$ to a configuration $x_p \in \Sigma^{\ZZd}$ by writing $0$ in every cell outside $\{-m,\dots,m\}^d$. Write $\alpha_m(T)$ for the average movement of $T$ for the finitely many choices of $p, u, q$. Formally, if $s_T$ is the shift indicator of $T$: \[\alpha_m(T) = \frac{1}{k (2m+1)^d n^{(2m+1)^d} } \sum_{p,u,q} s_T(\sigma^{-\vec{u}}(x_p),q).\] As $m \rightarrow \infty$, it is easy to show that $\alpha_m(T) \rightarrow \alpha(T)$, as the movement vector of $T$ is distributed correctly in all positions except at the border of $\{-m,\dots,m\}^d$ which grows as $o(m^d)$.
	
	On the other hand, for any fixed $p \in \Sigma^{\{-m,\dots,m\}^d}$ the average movement of $T$ on $x_p$ started from a random state and a random position is $\vec 0$, that is, \[\sum_{u,q} s_T(\sigma^{-\vec{u}}(x_p),q) = \vec{0}.\] This follows from the fact that $T \in \FG(\ZZ^d,n,k)$ and thus the action is simply a permutation of the set of position-state pairs and the fact that $\vec{v}= \vec{0}$. From here we conclude that the sum restricted to $u \in \{-m,\dots,m\}^d$ is $o(m^d)$. It follows that $\alpha(T) = \lim \alpha_m(T) = \vec 0$. 
\end{proof}

\subsection{The oblivious Turing machines are finitely generated}
\label{sec:ObliviousFG}

In this section, we show that $\OB(\ZZd,n,k)$ is finitely generated. Our proof is based on the existence of strongly universal reversible gates for permutations of $A^m$, which can be found in~\cite{AaGrSc15,Xu15} for the binary alphabet case, and generalized to other alphabets in~\cite{Boykett16}. We need a finite generating set for permutations of $Q\times \Sigma^m$, and hence the proof in~\cite{Boykett16} has to be adjusted to account for non-homogeneous alphabet sizes (that is, due to possibly having $n\neq k$).

Let us remark that the case $n=1$ is trivial: The group $\LP(\ZZd,1,k)$ is finite and
$\SHIFT(\ZZd,1,k)$ is generated by the single step moves.
We hence assume that $n\geq 2$.

The following result was proved in~\cite{Boykett16}
(Lemmas 3 and 5):

\begin{lemma}
	\label{lem:graphlemma}
	Let $H=(V,E)$ be a connected undirected graph.
	\begin{enumerate}
		\item[(a)] The transpositions $(s\; t)$ for $\{s,t\}\in E$ generate $\Sym(V)$, the set of permutations of the vertex
		set.
		\item[(b)] Let $\Delta\subset \Sym(V)$ be a set of permutations of $V$ that
		contains for each edge  $\{s,t\}\in E$ a 3-cycle $(x\; y\; z)$ where $\{s,t\}\subset \{x,y,z\}$. Then $\Delta$ generates
		$\Alt(V)$, the set of even permutations of the vertex set.
	\end{enumerate}
\end{lemma}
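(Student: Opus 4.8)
Both statements are classical facts about generating $\Sym(V)$ and $\Alt(V)$ from data attached to the edges of a connected graph, and I expect part~(a) to be routine, with the only genuinely delicate point occurring in part~(b).

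For part~(a) I would use only connectivity of $H$. The key identity is $(a\; c)(c\; b)(a\; c)=(a\; b)$. By induction on the length $m$ of a path $a=v_0-v_1-\dots-v_m=b$ in $H$ (base $m=1$, and for the inductive step apply this identity with $c=v_1$ together with the inductive claim applied to the subpath $v_1-\dots-v_m$), every transposition $(a\; b)$ with $a,b\in V$ distinct is a product of the edge transpositions $(s\; t)$, $\{s,t\}\in E$. Since $H$ is connected such a path exists for every pair $a,b$, so the edge transpositions generate all transpositions, hence all of $\Sym(V)$; the reverse inclusion is obvious. (Equivalently one could fix a spanning tree and induct on $|V|$ by removing a leaf.)

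For part~(b), write $G=\langle\Delta\rangle$. We clearly have $G\subseteq\Alt(V)$ when $\Delta$ consists of $3$-cycles, so the point is $G\supseteq\Alt(V)$; assume $|V|\ge 3$ (the cases $|V|\le 2$ are trivial or vacuous). I would \emph{not} argue by deleting a vertex --- the obstruction is exactly that a $3$-cycle attached to an edge may use a deleted vertex as its ``free'' third vertex --- but instead grow an alternating subgroup on an increasing vertex set $W$ (writing $\Alt(W)\le\Sym(V)$ for the even permutations fixing $V\setminus W$ pointwise). Concretely: pick any edge $\{a,b\}\in E$ and its $3$-cycle $c\in\Delta$, with $\mathrm{supp}(c)=\{a,b,c_3\}$; put $W=\{a,b,c_3\}$, so $\Alt(W)=\langle c\rangle\le G$ and $|W|=3$. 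Now suppose $|W|\ge 3$, $\Alt(W)\le G$ and $W\ne V$; by connectivity there is an edge $\{s,t\}\in E$ with $s\in W$, $t\notin W$, and a $3$-cycle $\tau\in\Delta$ with $\mathrm{supp}(\tau)=\{s,t,x\}$ for some $x\notin\{s,t\}$. If $x\in W$, then $\tau$ has exactly two vertices in $W$, and since $\Alt(W\cup\{t\})$ is generated by $\Alt(W)$ together with any one $3$-cycle meeting $W$ in two points, we get $\langle\Alt(W),\tau\rangle=\Alt(W\cup\{t\})$. If $x\notin W$, then $\Alt(W)$ fixes both $t$ and $x$ and is transitive on $W$, so conjugating $\tau$ by $\Alt(W)$ produces the $3$-cycles $(y\; t\; x)$ for all $y\in W$, and for distinct $y_1,y_2\in W$ the product $(y_1\; t\; x)(y_2\; t\; x)^{-1}=(y_1\; t\; y_2)$ is a $3$-cycle with two vertices in $W$, reducing to the first case. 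Either way we may replace $W$ by $W\cup\{t\}$; after finitely many steps $W=V$, giving $\Alt(V)\le G$.

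The main thing to get right is the case $x\notin W$ in part~(b): the chosen $3$-cycle reaches two steps beyond $W$, and one must check both that it can be ``pulled back'' to a $3$-cycle with two vertices in $W$ and that the sub-lemma ``$\langle\Alt(W),\tau\rangle=\Alt(W\cup\{t\})$ for a $3$-cycle $\tau$ with exactly two vertices in $W$'' already holds when $|W|=3$. Both are short finite verifications --- the $|W|=3$ instance reduces to the statement that a transitive subgroup of $S_4$ containing a $3$-cycle equals $A_4$.
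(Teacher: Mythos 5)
The paper does not actually prove Lemma~\ref{lem:graphlemma}; it is quoted verbatim from \cite{Boykett16} (Lemmas 3 and 5), so there is no in-paper argument to compare yours against. Judged on its own, your proof is correct. Part (a) is the standard conjugation-along-a-path argument and is fine. Part (b) is also sound, and you correctly isolated the two points where a naive induction breaks: (i) the sub-lemma ``$\langle \Alt(W),\tau\rangle=\Alt(W\cup\{t\})$ for a $3$-cycle $\tau$ meeting $W$ in exactly two points'' needs a separate check at $|W|=3$, since $\Alt(W)$ is then only $1$-transitive (your reduction to ``a transitive subgroup of $A_4$ containing a $3$-cycle is $A_4$'' works, e.g.\ because $A_4$ has no subgroup of order $6$); and (ii) the third point of the $3$-cycle attached to a boundary edge may lie outside $W$, which is exactly why one cannot induct by deleting a leaf of a spanning tree. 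Your identity $(y_1\,t\,x)(y_2\,t\,x)^{-1}=(y_1\,t\,y_2)$ checks out. One cosmetic remark: depending on the cyclic orientation of the given $3$-cycle, conjugation may instead yield the cycles $(y\,x\,t)$, in which case the analogous product is $(y_1\,x\,y_2)$ and case (i) adjoins $x$ rather than $t$; either way $W$ strictly grows, so the induction terminates, but it is worth saying this explicitly rather than silently fixing one orientation.
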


Let $m\geq 1$, and consider permutations
of $Q\times\Sigma^m$. \define{Controlled swaps} are transpositions $(s\; t)$ where $s,t\in Q\times \Sigma^m$
have Hamming distance one. \define{Controlled 3-cycles\/} are permutations $(s\; t\; u)$ where the Hamming distances between the three
pairs are 1, 1 and 2.


Let us denote by $C_m^{(2)}$ and  $C_m^{(3)}$ the sets of controlled swaps and 3-cycles in
$\Sym(Q\times\Sigma^m)$ respectively. Let $H=(V,E)$ be the graph with vertices $V=Q\times\Sigma^m$ and
edges $\{s,t\}$ that connect elements $s$ and $t$ having Hamming distance one. This is a connected undirected graph, so we get from
Lemma~\ref{lem:graphlemma}(b) that
the controlled 3-cycles generate all its even permutations:

\begin{lemma}
	\label{lem:controlled}
	Let $n\geq2$ and $m\geq 1$. The group $\Alt(Q\times \Sigma^m)$  is generated by $C_m^{(3)}$.
\end{lemma}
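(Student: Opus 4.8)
The plan is to obtain the statement as a direct application of Lemma~\ref{lem:graphlemma}(b) to the Hamming graph $H=(V,E)$ with $V=Q\times\Sigma^m$ and edges joining pairs at Hamming distance one, taking for the set $\Delta$ of Lemma~\ref{lem:graphlemma}(b) the set $C_m^{(3)}$ of controlled $3$-cycles. Two hypotheses of that lemma must be verified: that $H$ is connected, and that $C_m^{(3)}$ contains, for every edge $\{s,t\}\in E$, a $3$-cycle $(x\;y\;z)$ with $\{s,t\}\subset\{x,y,z\}$.

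Connectivity is immediate and has already been noted above: given $s,t\in Q\times\Sigma^m$, one reaches $t$ from $s$ by successively changing the coordinates on which they differ, each such change being a single edge of $H$. For the second hypothesis I would fix an edge $\{s,t\}$ and let $i$ be the unique coordinate of the product $Q\times\Sigma^m$ on which $s$ and $t$ disagree. Then I would pick a coordinate $j\neq i$ ranging over a set of at least two values — if $i$ is the $Q$-coordinate, any of the $m\ge 1$ symbol coordinates works since $n\ge 2$, while if $i$ is a symbol coordinate one can take another symbol coordinate (when $m\ge 2$) or the $Q$-coordinate (when $k\ge 2$) — and let $u$ be obtained from $s$ by changing its value in coordinate $j$. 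Then $s,t,u$ are pairwise distinct, with pairwise Hamming distances $d(s,t)=1$, $d(s,u)=1$ and $d(t,u)=2$; hence $(s\;t\;u)$ is a controlled $3$-cycle containing both $s$ and $t$, so it lies in $C_m^{(3)}$.

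With both hypotheses in hand, Lemma~\ref{lem:graphlemma}(b) gives $\langle C_m^{(3)}\rangle=\Alt(V)=\Alt(Q\times\Sigma^m)$; the inclusion $\langle C_m^{(3)}\rangle\subseteq\Alt(Q\times\Sigma^m)$ needs no argument, since every $3$-cycle is an even permutation. I do not expect a genuine obstacle here: the only delicate point is the choice of the auxiliary coordinate $j$, which tacitly uses that $Q\times\Sigma^m$ admits at least two coordinates on which a change of value is possible. The sole exception is the degenerate case $m=k=1$, where $Q\times\Sigma^m$ reduces to $\Sigma$, the graph $H$ is complete, $C_m^{(3)}=\varnothing$, and the statement is only meaningful for $n\le 2$ (where $\Alt(\Sigma)$ is trivial); in every case that is actually used later, $m$ and $k$ are large enough for the argument above to go through verbatim.
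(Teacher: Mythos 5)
Your proof is correct and follows the same route as the paper: both obtain the statement as a direct application of Lemma~\ref{lem:graphlemma}(b) to the Hamming graph on $Q\times\Sigma^m$, the paper simply omitting the explicit verification that every edge is contained in some controlled $3$-cycle. Your caveat about the degenerate case $m=k=1$ (where $C_1^{(3)}=\varnothing$ while $\Alt(\Sigma)$ is nontrivial for $n\geq 3$) is a genuine, if harmless, gap in the lemma as stated; it never arises in the paper's applications, which all take $m\geq 4$.
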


Let $\ell \leq m$, and let $f$ be a permutation of  $Q\times \Sigma^{\ell}$. We can apply $f$ on
$1+\ell$ coordinates of $Q\times \Sigma^{m}$ (including the first), while leaving the other
$m-\ell$ coordinates untouched. More precisely, the \define{prefix application} $\hat{f}$ of $f$ on $Q\times \Sigma^{m}$, defined by \[\hat{f}(q,s_1,\dots ,s_\ell,\dots ,s_m) =
(f_1(q,s_1,\dots ,s_\ell), \dots ,f_{\ell+1}(q,s_1,\dots ,s_\ell), s_{\ell+1},\dots ,s_m),\]
applies $f$ on the first $1+\ell$ coordinates. To apply it on other choices of coordinates we conjugate $\hat{f}$
using rewirings of symbols.
For any permutation $\pi\in \Sym(\{1,\dots,m\})$ we define the
\define{rewiring} permutation of $Q\times \Sigma^m$ by
\[ r_\pi : (q,s_1,\dots ,s_m) \mapsto (q,s_{\pi(1)},\dots ,s_{\pi(m)}).\]
It permutes the positions of the $m$ tape symbols according to $\pi$.
Now we can conjugate the prefix application $\hat{f}$ using a rewiring to get
$\hat{f}_\pi = r_\pi^{-1} \circ \hat{f}\circ  r_\pi$, we call $\hat{f}_\pi$ an
\define{application} of $f$ in the coordinates $\pi(1),\dots,\pi(\ell)$. Let us denote by
\[[f]_m = \{ \hat{f}_\pi\ |\ \pi\in \Sym(m)\},\]
the set of permutations of $Q\times \Sigma^{m}$ that are applications of $f$.
For a set $P$ of permutations we denote by $[P]_m$ the union of $[f]_m$ over all $f\in P$.

Note that if $n$ is even and $f\in\Sym(Q\times \Sigma^\ell)$ for $\ell < m$ then $[f]_m$ only contains even permutations.
The reason is that
the coordinates not participating in the application of $f$ carry a symbol of the even
alphabet $\Sigma$. The application $[f]_m$ then consists of an even number of
disjoint permutations of equal parity -- hence the result is even. In contrast,
for the analogous reason, if $n$ is odd then $[f]_m$ only contains odd permutations
whenever $f$ is itself is an odd permutation.

\begin{lemma}
	\label{lem:iteration}
	Let $m\geq 6$, and let $G_m =\langle [C_4^{(2)}]_m\rangle$ be the group generated by the
	applications of controlled swaps of $Q\times \Sigma^4$ on $Q\times \Sigma^m$.
	If $n=|\Sigma|$ is odd then $G_m=\Sym(Q\times \Sigma^{m})$. If $n$ is even
	then $G_m=\Alt(Q\times \Sigma^{m})$.
\end{lemma}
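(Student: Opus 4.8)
The plan is to prove Lemma~\ref{lem:iteration} by induction on $m$, reducing applications of controlled swaps on $Q\times\Sigma^m$ to applications of controlled $3$-cycles (or swaps) on fewer coordinates, so that Lemma~\ref{lem:controlled} and Lemma~\ref{lem:graphlemma} can be invoked. First I would record the two inclusions we already know: by the parity discussion immediately preceding the statement, $G_m\subseteq \Alt(Q\times\Sigma^m)$ when $n$ is even (every element of $[C_4^{(2)}]_m$ is even, since $4<m$ forces at least two untouched even-alphabet coordinates), and there is no parity obstruction when $n$ is odd, where we aim for all of $\Sym(Q\times\Sigma^m)$. So the content is the reverse inclusion: every (even, when $n$ even) permutation of $Q\times\Sigma^m$ lies in $G_m$.

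\textbf{The induction step.} The key claim is: for $m\geq 6$, the group $G_m=\langle[C_4^{(2)}]_m\rangle$ contains $[C_5^{(2)}]_m$, hence $[C_{m'}^{(2)}]_m$ for all $4\leq m'\leq m$, and in particular $[C_{m-1}^{(3)}]_m\subseteq G_m$. To get this, I would first show $G_m$ contains all applications of controlled $3$-cycles on $4$ coordinates: a controlled $3$-cycle $(s\;t\;u)$ on $Q\times\Sigma^3$ (Hamming distances $1,1,2$) can be written as a product of two controlled swaps, but these two swaps have Hamming distance $2$ between their "far" endpoints, so they are not both in $C_3^{(2)}$ — however, using a fourth coordinate as scratch space one writes the $3$-cycle as a commutator/conjugate of controlled swaps on $4$ coordinates, which \emph{is} in $[C_4^{(2)}]_m$. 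Then, applying Lemma~\ref{lem:controlled} with $m\leftarrow 4$, the applications $[f]_m$ for $f\in\Alt(Q\times\Sigma^4)$ all lie in $G_m$. Now a controlled swap on $5$ coordinates, $(s\;t)$ with $s,t\in Q\times\Sigma^5$ at Hamming distance one, differs in exactly one of the five coordinates; I would realize it as a single permutation $f\in\Sym(Q\times\Sigma^4)$ applied on an appropriate $4$-subset after first "compressing" the disagreement — or, more cleanly, write $(s\;t)$ on $Q\times\Sigma^5$ as a product of controlled $3$-cycles on $Q\times\Sigma^5$ each of which, via a scratch coordinate, is an application of an element of $\Alt(Q\times\Sigma^4)$. (If $n$ is odd and we want odd permutations too, one also needs a single application of an odd permutation of $Q\times\Sigma^4$, e.g.\ a controlled swap on $3$ of the $4$ coordinates, to escape $\Alt$; this is where the $n$ odd vs.\ even case split is handled.) Iterating the step $5\to 6\to\cdots\to m$ gives $[C_{m-1}^{(3)}]_m\subseteq G_m$, and even $C_{m-1}^{(3)}$ applied in the first $1+(m-1)=m$ coordinates is just $C_m^{(3)}$ itself.

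\textbf{Closing the argument.} Once $C_m^{(3)}\subseteq G_m$, Lemma~\ref{lem:controlled} (now with the ambient $m$) gives $\Alt(Q\times\Sigma^m)\subseteq G_m$. If $n$ is even this already forces $G_m=\Alt(Q\times\Sigma^m)$ together with the parity inclusion above. If $n$ is odd, I would additionally exhibit a single \emph{odd} element of $G_m$ — for instance an application $\hat f_\pi$ of a controlled swap of $Q\times\Sigma^4$ in which one of the four participating coordinates is used purely as a control and the remaining "free" coordinates number $m-3$, an even count only when $m$ is odd, so instead take $f$ to be a controlled swap acting on all of $Q\times\Sigma^3$ inside the $4$-block composed with a transposition, arranged so that $f$ is odd; then the $m-4$ free coordinates over the odd alphabet $\Sigma$ contribute an odd-power product of odd permutations, so $\hat f_\pi$ is odd. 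Adjoining one such odd permutation to $\Alt(Q\times\Sigma^m)$ yields $\Sym(Q\times\Sigma^m)$. This completes both cases.

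\textbf{Main obstacle.} The routine-looking but genuinely delicate point is the base reduction "controlled swap on $\ell+1$ coordinates $\in$ group generated by controlled swaps/$3$-cycles on $\ell$ coordinates" — i.e.\ showing one can always borrow a scratch coordinate to decompose a Hamming-distance-one transposition, while staying within the \emph{controlled} (Hamming-distance-bounded) gate set and not accidentally fixing the wrong parity. Keeping careful track of which permutations are forced even and ensuring the hypothesis $m\geq 6$ (equivalently $\geq 4+2$, giving at least two free coordinates at every stage of the iteration) is exactly what makes the parity bookkeeping consistent; that is the step I expect to require the most care.
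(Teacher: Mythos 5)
Your high-level skeleton (parity bookkeeping, then an induction that reduces wide gates to width-$4$ gates via Lemma~\ref{lem:controlled}) matches the paper, but the engine of the induction is missing and the substitutes you propose for it do not work. The paper's key technical fact is the circuit identity of Figure~\ref{fig:gates}: a \emph{fully controlled} $3$-cycle of $Q\times\Sigma^{m}$ equals $p_1p_2p_1p_2$ for two controlled swaps $p_1,p_2$ of $Q\times\Sigma^{m-2}$, each omitting one of the two wires on which the cycled elements differ; this gives $C_m^{(3)}\subset\langle[C_{m-2}^{(2)}]_m\rangle$, which combined with Lemma~\ref{lem:controlled} is what lets one climb. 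Your bootstrapping step replaces this with two mechanisms, both of which fail. First, a controlled swap $(s\;t)$ of $Q\times\Sigma^{5}$ cannot be ``a single permutation $f\in\Sym(Q\times\Sigma^4)$ applied on an appropriate $4$-subset'': any application $\hat f_\pi$ acts uniformly in the non-participating coordinate, so it moves points in multiples of $n$, whereas $(s\;t)$ moves exactly two. Second, $(s\;t)$ is an odd permutation of $Q\times\Sigma^5$ and so is never a product of controlled $3$-cycles of $Q\times\Sigma^5$; and for even $n$ no single ``odd application'' can rescue this, since every application of a permutation of $Q\times\Sigma^4$ to $Q\times\Sigma^5$ consists of $n$ disjoint copies and is therefore even. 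The object you actually need to produce in $G_m$ is the application of $(s\;t)$ to all $m$ wires (which is even for even $n$), and showing that this lies in $\langle[C_4^{(2)}]_m\rangle$ is essentially the content of the lemma itself; without the $p_1p_2p_1p_2$ decomposition you have no way to start.

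Two smaller points. Your claim that the two transpositions decomposing a controlled $3$-cycle $(s\;t\;u)$ ``are not both in $C_3^{(2)}$'' is false: with $d(s,t)=d(t,u)=1$ one has $(s\;t\;u)=(s\;t)(t\;u)$ and both factors are controlled swaps; the distance-$2$ pair $(s\;u)$ only appears in the other factorization. (Consequently your first step is true but for a trivial reason: $\Sym(Q\times\Sigma^4)=\langle C_4^{(2)}\rangle$ by Lemma~\ref{lem:graphlemma}(a), so $[\Sym(Q\times\Sigma^4)]_m\subset G_m$ outright.) Finally, in the odd-$n$ case the odd element of $G_m$ is immediate: an application of a controlled swap of $Q\times\Sigma^4$ is $n^{m-4}$ disjoint transpositions, an odd number when $n$ is odd, irrespective of the parity of $m$; your discussion of ``an even count only when $m$ is odd'' conflates the number of free wires with the number of disjoint copies.
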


\begin{proof}
	For even $n$, by the note above,  $[C_4^{(2)}]_m \subset \Alt(Q\times
	\Sigma^{m})$, and for odd $n$ there are odd permutations in $[C_4^{(2)}]_m$.
	So in both cases
	it is enough to show $\Alt(Q\times \Sigma^{m}) \subset G_m$. We
	also note that, obviously, $[G_{m-1}]_m\subset G_m$.
	
	Based on the
	decomposition in Figure~\ref{fig:gates}, we first conclude that
	any controlled 3-cycle $f$ of $Q\times \Sigma^m$ is a composition
	of four applications of controlled swaps of $Q\times \Sigma^{m-2}$.
	In the figure, the components of $Q\times \Sigma^m$ have been
	ordered in parallel horizontal wires, the $Q$-component being among the topmost three
	wires. Referring to the symbols in the illustration, the gate on the left is a
	generic 3-cycle $(pszabcdw\; ptzabcdw\;
	qszabcdw)$ where one of the first three wires is the $Q$-component,
	$a,b,c,d\in \Sigma$ and $w\in \Sigma^{m-6}$.
	The proposed decomposition consists of two different controlled swaps $p_1$ and $p_2$ applied twice in
	the order $f=p_1p_2p_1p_2$. Because $p_1$ and $p_2$ are involutions,
	the decomposition amounts to identity unless the input is of the form $xyz abcd w$ where
	$x\in\{p,q\}$ and $y\in\{s,t\}$. When the input is of this form, it
	is easy to very that the circuit on the right indeed amounts to the required 3-cycle. We
	conclude that $C_m^{(3)}\subset  \langle [C_{m-2}^{(2)}]_m \rangle$, for all
	$m\geq 6$. By Lemma~\ref{lem:controlled},
	\begin{equation}
	\label{eq:gateq}
	\Alt(Q\times \Sigma^m) = \langle C_m^{(3)} \rangle \subset \langle
	[C_{m-2}^{(2)}]_m \rangle.
	\end{equation}
	
	\begin{figure}[ht]
		\begin{center}
			\includegraphics[width=11.5cm]{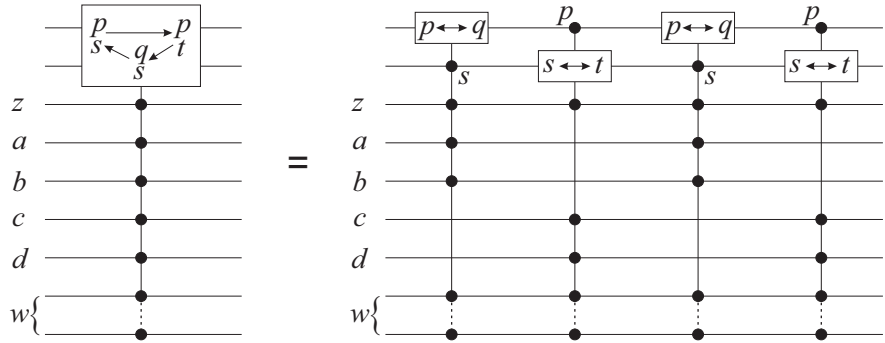}
		\end{center}
		\caption{A decomposition of a controlled 3-cycle of $Q\times \Sigma^m$ on the left into a sequence of
			four applications of
			controlled swaps of $Q\times \Sigma^{m-2}$ on the right. The ordering of the wires is such that
			topmost three wires contain the $Q$-component and the two wires
			changed by the 3-cycle (one of which may or may not be the $Q$-component). Black circles are control points: the gate computes the identity unless the
			wire carries the symbol indicated at the left of the wire or next
			to the control point.
		}
		\label{fig:gates}
	\end{figure}

	We proceed by induction on $m$. The base case $m=6$ is clear: By
	(\ref{eq:gateq}),
	\[\Alt(Q\times \Sigma^6) \subset \langle [C_{4}^{(2)}]_6 \rangle =
	G_6.\]
	
	Consider then $m>6$ and suppose
	that $G_{m-1}$ is as claimed. If $n$ is odd then, by the inductive hypothesis,
	$$
	[C_{m-2}^{(2)}]_m  \subset  [\Sym(Q\times \Sigma^{m-1})]_m \subset
	[G_{m-1}]_m \subset G_m.
	$$
	By (\ref{eq:gateq}) then $\Alt(Q\times \Sigma^m)\subset
	\langle [C_{m-2}^{(2)}]_m \rangle \subset  G_m$.
	As pointed out above, $G_m$ contains odd
	permutations (all elements of $[C_4^{(2)}]_m$ are odd), so $G_m=\Sym(Q\times \Sigma^{m})$
	as claimed.

	If $n$ is even then
	an application of a permutation of $Q\times \Sigma^{m-2}$ on
	$Q\times \Sigma^m$ is also an application of an even permutation of $Q\times \Sigma^{m-1}$
	on $Q\times \Sigma^m$. (For this reason we left two non-controlling wires
	for the gates on the
	right side of Figure~\ref{fig:gates}.) By this and the inductive hypotheses,\[
	[C_{m-2}^{(2)}]_m \subset [\Alt(Q\times \Sigma^{m-1})]_m\subset
	[G_{m-1}]_m \subset G_m,\]
	so, by (\ref{eq:gateq}), we have the required $\Alt(Q\times \Sigma^m)\subset
	G_{m}$.\end{proof}

\begin{corollary}
	\label{cor:borrowedbit}
	$[\Sym(Q\times \Sigma^{m})]_{m+1} \subset \langle [\Sym(Q\times \Sigma^4)]_{m+1}\rangle$
	for all $m\geq 5$.
\end{corollary}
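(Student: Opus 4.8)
The plan is to deduce this directly from Lemma~\ref{lem:iteration}, applied with its parameter set equal to $m+1$. Since the corollary assumes $m \geq 5$ we have $m+1 \geq 6$, so Lemma~\ref{lem:iteration} applies and tells us that $G_{m+1} = \langle [C_4^{(2)}]_{m+1}\rangle$ equals $\Sym(Q\times\Sigma^{m+1})$ when $n$ is odd and $\Alt(Q\times\Sigma^{m+1})$ when $n$ is even. Every controlled swap of $Q\times\Sigma^4$ is in particular an element of $\Sym(Q\times\Sigma^4)$, so $[C_4^{(2)}]_{m+1} \subseteq [\Sym(Q\times\Sigma^4)]_{m+1}$ and hence $G_{m+1} \subseteq \langle [\Sym(Q\times\Sigma^4)]_{m+1}\rangle$. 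Thus it suffices to show $[\Sym(Q\times\Sigma^m)]_{m+1} \subseteq G_{m+1}$, and the two parities of $n$ will be handled separately.

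When $n$ is odd this is immediate: $G_{m+1}$ is all of $\Sym(Q\times\Sigma^{m+1})$, and by definition every application of a permutation of $Q\times\Sigma^m$ on $Q\times\Sigma^{m+1}$ is simply some permutation of $Q\times\Sigma^{m+1}$, hence lies in $G_{m+1}$.

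When $n$ is even the argument needs the parity bookkeeping already recorded in the paragraph preceding Lemma~\ref{lem:iteration}. Here one would fix $f \in \Sym(Q\times\Sigma^m)$ together with one of its applications $\hat f_\pi \in [f]_{m+1}$, and note that such an application touches the $Q$-coordinate and exactly $m$ of the $m+1$ tape coordinates, so it leaves exactly one tape coordinate, say $c$, untouched. For each of the $n$ possible values $a\in\Sigma$ at coordinate $c$, the restriction of $\hat f_\pi$ to the set of points with coordinate $c$ equal to $a$ is a copy of $f$, and these $n$ sets partition $Q\times\Sigma^{m+1}$; hence $\hat f_\pi$ is a product of $n$ disjoint permutations, each conjugate to $f$, so its sign is $(\mathrm{sgn}\,f)^{n}=1$ because $n$ is even. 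Consequently $[\Sym(Q\times\Sigma^m)]_{m+1} \subseteq \Alt(Q\times\Sigma^{m+1}) = G_{m+1}$, which completes the proof.

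I do not anticipate any genuine obstacle here: the whole statement is a short corollary of Lemma~\ref{lem:iteration}. The only point that requires a moment's care is the last one — the observation that applying a permutation on strictly fewer than all of the tape coordinates splits it into $n$ disjoint conjugate copies, so that the result is automatically even once $n$ is even — and everything else is bookkeeping about the definition of the application operation $[\,\cdot\,]_m$.
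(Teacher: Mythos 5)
Your proposal is correct and follows essentially the same route as the paper: both reduce to Lemma~\ref{lem:iteration} applied with parameter $m+1\geq 6$, and both handle the even-$n$ case by observing that an application of $f\in\Sym(Q\times\Sigma^m)$ on $Q\times\Sigma^{m+1}$ decomposes into $n$ disjoint copies of equal parity and is therefore even (the paper simply cites the remark preceding Lemma~\ref{lem:iteration} for this, where you spell the computation out). The only cosmetic difference is that the paper states the containment $[\Sym(Q\times\Sigma^m)]_{m+1}\subset\Alt(Q\times\Sigma^{m+1})$ (resp.\ $\Sym$) first and then invokes the lemma, whereas you invoke the lemma first; the content is identical.
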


\begin{proof}
	If $n$ is even then $[\Sym(Q\times \Sigma^{m})]_{m+1} \subset \Alt(Q\times
	\Sigma^{m+1})$ and if $n$ is odd then
	$[\Sym(Q\times \Sigma^{m})]_{m+1} \subset \Sym(Q\times
	\Sigma^{m+1})$. In either case, the claim follows from
	Lemma~\ref{lem:iteration} and $C_4^{(2)}\subset \Sym(Q\times
	\Sigma^4)$.\end{proof}

In Corollary~\ref{cor:borrowedbit}, arbitrary permutations of $Q\times\Sigma^{m}$
are obtained as projections of permutations of
$Q\times\Sigma^{m+1}$.
The extra symbol is an ancilla that can have an arbitrary initial
value and is returned back to this value in the end.
Such an ancilla is called a ``borrowed bit'' in~\cite{Xu15}.
It is needed in the case of even $n$
to facilitate implementing odd permutations of $Q\times \Sigma^{m}$.

Now we are ready to prove the following theorem.

\begin{restatable}{theorem}{OBFG}
	\label{thm:OBFG}
	$\OB(\ZZd,n,k)$ is finitely generated.
\end{restatable}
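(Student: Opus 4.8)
The plan is to exhibit a finite generating set for $\OB(\ZZd,n,k)$ built out of three families of machines: (i) the single-step shifts $T_{\vec e_i}$ for $i \in \{1,\dots,d\}$, which generate $\SHIFT(\ZZd,n,k)$; (ii) a fixed finite set of local permutations acting on the window $B(\vec 0, r)$ for some small radius $r$ (concretely, representatives of the applications $[\Sym(Q\times \Sigma^4)]_{m}$ for a bounded $m$, transported into $\LP$); and (iii) a ``sliding'' auxiliary machine that moves the head a bounded amount while carrying a bit of tape information, used to relocate the site where a local permutation acts. The key structural fact is that an arbitrary oblivious machine $T$ decomposes, via the normal form for elements of $\OB = \langle \SHIFT, \LP\rangle$, into a product $T = g_1 S_1 g_2 S_2 \cdots g_\ell S_\ell$ where each $g_j \in \LP$ and each $S_j \in \SHIFT$; since $\SHIFT$ is already finitely generated, it suffices to finitely generate all of $\LP(\ZZd,n,k)$ modulo $\SHIFT$, i.e.\ to show every local permutation lies in the group generated by the bounded-window local permutations together with the shifts.

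First I would reduce to local permutations. A local permutation $g$ with neighborhood $F \subset B(\vec 0, r)$ is exactly, after the identification with the moving-tape model, a permutation of $Q \times \Sigma^{B(\vec 0,r)}$ applied ``in place'' at the head; composing with a shift lets us relocate this window anywhere, so the question is purely: which permutations of $Q \times \Sigma^{B(\vec 0, r)}$ are reachable? Here I invoke Corollary~\ref{cor:borrowedbit} and Lemma~\ref{lem:iteration}: enumerating $B(\vec 0, r)$ as $m = |B(\vec 0,r)|$ tape coordinates, every permutation of $Q \times \Sigma^m$ is a product of applications of controlled swaps of $Q \times \Sigma^4$ — provided we allow one borrowed symbol, i.e.\ we work inside $Q\times\Sigma^{m+1}$. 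The borrowed symbol is supplied by an adjacent tape cell: moving the head one step (via $T_{\vec e_1}$ and back) brings a neighboring cell into the active window, and because the ancilla is returned to its initial value, the net effect on the configuration is the desired permutation of the original $m$ cells. Thus the controlled swaps of $Q\times\Sigma^4$, realized as bounded-radius local permutations, together with the single-step shifts, generate every local permutation of every radius, hence all of $\LP(\ZZd,n,k)$, hence all of $\OB(\ZZd,n,k)$.

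Two technical points need care, and the second is where I expect the real work to be. The first is bookkeeping: a local permutation in the moving-\emph{head} picture that permutes $Q\times\Sigma^F$ must be conjugated by shifts to act on a window that has grown to include the ancilla cell, and one must check that the ``rewiring'' permutations $r_\pi$ appearing in the definition of $[f]_m$ are themselves obtained from bounded-window local permutations composed with shifts — this is routine since a coordinate transposition of tape cells at distance $1$ is a controlled swap up to symbol values, and longer transpositions factor through adjacent ones. The second, and the main obstacle, is the \textbf{parity issue} when $n$ is even: then $[\Sym(Q\times\Sigma^m)]_{m+1}\subset \Alt(Q\times\Sigma^{m+1})$ only, so naively we only reach \emph{even} permutations of the enlarged window — but that is harmless precisely because we only ever need to realize permutations of the \emph{smaller} window $Q\times\Sigma^m$, and every such permutation, viewed inside $Q\times\Sigma^{m+1}$ where the ancilla ranges over the $n\ge 2$ symbols, becomes even (it is an $n$-fold disjoint product of identical permutations). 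This is exactly the content of Corollary~\ref{cor:borrowedbit}, so the argument goes through uniformly; I would simply make explicit that the finite generating set is $\{T_{\vec e_1},\dots,T_{\vec e_d}\} \cup \{\hat f_\pi : f \in C_4^{(2)},\ \pi \in \Sym(5)\}$ (suitably interpreted as radius-$\le 2$ local permutations on $\ZZd$), and verify that this finite set generates $\SHIFT$, generates every $\LP$-window permutation via the borrowed-bit trick, and hence generates $\OB(\ZZd,n,k)$.
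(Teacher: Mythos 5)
Your proposal is correct and follows essentially the same route as the paper: single-step shifts, a fixed bounded-window family of (controlled-swap) permutations, and Corollary~\ref{cor:borrowedbit} to decompose an arbitrary local permutation with one ancilla cell, with scattered $4$-cell supports brought to the fixed window by conjugating with cell-content permutations built from adjacent transpositions and shifts. The paper merely packages the last ingredient as explicit generators $C_i$ and the conjugation $C_\alpha^{-1}P_\rho C_\alpha$, where you fold it into the ``rewirings are routine'' remark; the content is the same.
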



\begin{proof}
	We construct a finite generating set $A_1\cup A_2\cup A_3$. Let $A_1$ contain the one step moves $T_{e_i}$ for $i=1,\dots ,d$. These clearly generate
	$\SHIFT(\ZZd,n,k)$.
	
	Each $T\in \LP(\ZZd,n,k)$
	is defined by a local rule $f\colon \Sigma^F\times Q \rightarrow  \Sigma^F\times Q\times\{\vec{0}\}$
	with a finite $F\subset \ZZd$. To have injectivity, we clearly need that $\pi\colon(p,q)\mapsto (f_1(p,q), f_2(p,q))$
	is a permutation of $\Sigma^F\times Q$. We denote $T=P_\pi$.
	Let us fix an arbitrary $E\subset  \ZZd$ of size $4$, and let
	$A_2$ be the set of all $P_\pi\in \LP(\ZZd,n,k)$ determined by $\pi\in \Sym(\Sigma^E\times Q)$.
	
	For any  permutation $\alpha$ of $\ZZd$ with finite support, we define
	the cell permutation machine $C_\alpha\colon (p,q) \mapsto (p',q)$, where $p'_{\vec v}=p_{\alpha({\vec v})}$
	for all ${\vec v}\in\ZZd$. These are clearly in $\LP(\ZZd,n,k)$. We take $A_3$ to consist of the cell
	permutation machines $C_i=C_{(0\; e_i)}$ that, for each $i=1,\dots ,d$, swaps the contents of the currently scanned cell and its
	neighbor with offset $e_i$.
	
	Observe that $A_1$ and $A_3$ generate all cell permutation
	machines $C_\alpha$. First, conjugating $C_i$ with
	$T_{\vec v}\in\SHIFT(\ZZd,n,k)$ gives the cell permutation machine $C_\alpha=T_{\vec v}^{-1}C_iT_{\vec
		v}$ for the transposition $\alpha=({\vec v}\; {\vec v} + e_i)$.
	Such transpositions generate all permutations of $\ZZd$ with finite
	support: This follows from Lemma~\ref{lem:graphlemma}(a) by
	considering a finite connected grid graph containing the support of
	the permutation.
	
	Consider then an arbitrary
	$P_\pi\in \LP(\ZZd,n,k)$, where $\pi\in \Sym(\Sigma^F\times Q)$.
	We can safely assume $|F|\geq 5$.
	Let us pick one ancilla $v\in\ZZd\setminus F$ and denote $F'=F\cup \{v\}$.
	By Corollary~\ref{cor:borrowedbit}, $P_\pi$ is a composition of
	machines of type $P_\rho$ for $\rho\in \Sym(\Sigma^H\times Q)$
	where $H\subset F'$ has size $|H|=4$. It is enough to be able to generate these.
	Let $\alpha$ be a permutation of $\ZZd$ that exchanges $E$ and $H$,
	two sets of cardinality four. Then $C_\alpha^{-1} P_\rho C_\alpha
	\in A_2$, which implies that $P_\rho$ is generated by $A_1\cup A_2\cup
	A_3$.\end{proof}

\subsection{Generators for finite-state automata on any one-dimensional subshift}
\label{sec:FG_generators}

In this section, we show that while finite-state automata are not finitely generated on the one-dimensional full shift, the group of finite-state automata has a natural generating set on every one-dimensional subshift. 


We shall show that a generating set for this group is composed of two types of objects: there is a (possibly infinite rank) abelian group that translates orbits and is an abstracted notion of average movement, and a collection of elements with zero average-movement which is generated by ``controlled position swaps'' which are similar in spirit to the controlled swaps of the previous section. We show this result for the topological full group (one state) and will extend it to the case of multiple states using Lemma~\ref{lem:OneState}.

Similar results are known for topological full groups of minimal systems (see for example~\cite{Ma06,GrMe11}). The main additional issue is with average movement, which does not actually apply in our main application of the full shift. Usually aperiodicity is assumed when studying topological full groups, but periodic points do not pose any problems, except for the small issue that without strong local aperiodicity we might not have $\RFA_{\text{fix}}(X,k) \cong \RFA(X,k)$. We study $\RFA(X,k)$, understanding that the groups are the same in all cases we are actually interested in.



As suggested, we begin by showing that on any $\ZZ$-subshift, there is a natural generalization of the average-movement homomorphism which measures the average movement separately on every orbit. This homomorphism coincides with the average-movement $\alpha$ when $X = \Sigma^{\ZZ}$, and on minimal subshifts it corresponds to the index map defined for topological full groups~\cite{GiPuSk99}. 

The results of this section will rely strongly on our subshifts being one-dimensional. Our analogue of the index map will be based on counting heads to the left and right of some interval, which is not straightforward to generalize to the multidimensional case.

Suppose for the rest of this section that $X$ denotes a one-dimensional subshift. In this section, we write $\RFA(X) = \RFA(X,1)$. We can see the shift $\sigma$ as an element of $\RFA(X)$ defined by $\sigma(x^i_1) = x^{i-1}_1$. 

Let $\sim_n'$ be the relation on the language $L_n(X)$ of words of length $n$ appearing on $X$ defined by $u \sim_n' v$ if there exists $x \in X$ such that $u$ and $v$ occur in $x$. Let $\sim_n$ be the transitive closure of $\sim_n'$. We write $\sim$ for $\sim_n$ the equivalence relation induced by the collection of $\sim_n$ for every $n \in \NN$. 

\begin{definition}
A subshift $X\subset \Sigma^{\ZZ}$ is \define{weakly chain-transitive} if for every $n \in \NN$ and $u,v \in L_n(X)$ we have $u \sim v$. 
\end{definition}

We use the prefix ``weak'' to distinguish this notion from other (stronger) notions of chain-transitivity in the literature such as the one found in~\cite{Kaz08}.

For any subshift $X$, let $\widehat X \subset \prod_n L_n(X)/{\sim_n}$ be the image of $X$ under the factor map $\pi(x)_n = x_{[0,n-1]}/\!\sim_n$. Considering $\widehat X$ as a dynamical system with a trivial $\ZZ$-action, it is easy to see that $\pi\colon X \to \widehat X$ is a factor map: $x_{[0,n-1]} \sim x_{[-1,n-2]}$ for all $x \in X$ and $n \in \NN$, so $\pi(\sigma(x)) = \pi(x) = \ID(\pi(x))$. Notice that $X$ is weakly chain-transitive if and only if $\widehat X$ is a singleton.

Recall from Definition~\ref{def:sparseshift} that given a subshift $X\subset \Sigma^{\ZZ}$, we define a subshift $\sqrt[k]{X} \subset (\Sigma \cup \{\#\})^{\ZZ}$ where configurations of $X$ occur in a coset of $k\ZZ$ and all other positions are filled with the symbol $\#$.

\begin{lemma}
Let $X$ be a weakly chain-transitive subshift. Then $\sqrt[k]{X}$ is weakly chain-transitive.
\end{lemma}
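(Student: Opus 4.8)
The plan is to fix an arbitrary word length $n$ and show that any two words $u,v\in L_n(\sqrt[k]{X})$ are $\sim_n$-related, transporting the weak chain-transitivity of $X$ at a suitable length $\ell$ up to $\sqrt[k]{X}$. The natural choice is $\ell=\lfloor (n-1)/k\rfloor+1$: a window of length $n$ in a configuration of $\sqrt[k]{X}$ sees precisely the non-$\#$ symbols coming from a block of length at most $\ell$ of the underlying $y\in X$, and conversely a length-$\ell$ block of $y$ fits, $\#$-padded, inside a length-$n$ window because $(\ell-1)k\le n-1$. I would also record at the outset the elementary fact that in \emph{any single} configuration $x\in\sqrt[k]{X}$, all of its length-$n$ subwords occur in $x$ and are therefore pairwise $\sim'_n$-related, hence $\sim_n$-related.

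The engine of the proof is a canonical-form map $\beta\colon L_\ell(X)\to L_n(\sqrt[k]{X})$ sending $q=q_0\cdots q_{\ell-1}$ to the length-$n$ word having $q_j$ at position $jk$ for $0\le j\le\ell-1$ and $\#$ at every other position; if $q$ occurs in $y\in X$ then $\beta(q)$ occurs in the phase-$0$ realization $x$ of $y$ (with $x_{mk}=y_m$, $x_i=\#$ otherwise), so $\beta(q)\in L_n(\sqrt[k]{X})$. I then establish two claims. Claim (i): every $u\in L_n(\sqrt[k]{X})$ satisfies $u\sim_n\beta(q)$ for some $q\in L_\ell(X)$ — take a configuration $x$ in which $u$ occurs, slide the length-$n$ window to start at a non-$\#$ position, and observe that the window there equals $\beta$ of the corresponding length-$\ell$ block of the underlying $X$-configuration, so $u\sim'_n\beta(q)$. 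Claim (ii): if $q,q'\in L_\ell(X)$ occur in a common $y\in X$, then $\beta(q)\sim'_n\beta(q')$, since both $\beta(q)$ and $\beta(q')$ occur in the phase-$0$ realization of $y$.

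Granting (i) and (ii), the conclusion is routine: given $u,v\in L_n(\sqrt[k]{X})$, pick $q,q'\in L_\ell(X)$ with $u\sim_n\beta(q)$ and $v\sim_n\beta(q')$; weak chain-transitivity of $X$ gives a chain $q=q^{(0)}\sim'_\ell\cdots\sim'_\ell q^{(m)}=q'$, and applying (ii) along it yields $\beta(q)\sim_n\beta(q')$, whence $u\sim_n v$. Since $n$ is arbitrary, $\sqrt[k]{X}$ is weakly chain-transitive. The degenerate cases are immediate: if $X=\varnothing$ both spaces are empty, and if $k=1$ then $\sqrt[1]{X}\cong X$. I do not anticipate a genuine obstacle here; the only point requiring care is the bookkeeping around the phase $c\in\{0,\dots,k-1\}$ of a configuration of $\sqrt[k]{X}$ and the block offsets inside $y$ — one must check that shifting the window to a content position really produces exactly $\beta$ of a block of $y$ and that this block genuinely lies in $L_\ell(X)$ — after which everything is a direct unwinding of the definitions of $\sqrt[k]{X}$, of $\sim'_n$, and of weak chain-transitivity.
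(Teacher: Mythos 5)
Your proof is correct and takes essentially the same approach as the paper: both arguments normalize the given words to canonical $\#$-interspersed forms of words of $X$ and then lift a chain witnessing weak chain-transitivity of $X$ by interspersing each link. The only cosmetic difference is that you slide the window within a fixed length $n$ (via the map $\beta$) where the paper extends the words to longer interspersed forms and then descends to subwords; both variants rest on the same observation that words occurring in a common configuration are $\sim'$-related.
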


\begin{proof}
Let $u', v'$ be two words of the same length in $\sqrt[k]{X}$. Then possibly by extending $u'$ and $v'$ we may write them as subwords of words $u''$ and $v''$ of the form \[u' \sqsubset u'' = \#^{n-1} u_1 \#^{k-1} u_2 \#^{k-1} u_3 \cdots u_{n} \#^{k-1},\]\[v' \sqsubset v'' = \#^{k-1} v_1 \#^{k-1} v_2 \#^{k-1} v_3 \cdots v_{n} \#^{k-1},\] for some $n \in \NN$ and words $u,v$ in $L_n(X)$. Since $X$ is weakly chain-transitive, there is a finite chain $u = w_0 \sim w_1 \sim \cdots \sim w_k = v$ such that $w_i$ and $w_{i+1}$ occur in the same point of $X$ for all $i$. Then the corresponding interspersed versions of the $w_i$ give a chain between $u''$ and $v''$, and thus between $u'$ and $v'$. \end{proof}

\begin{definition}
We say that $T \in \RFA(X)$ is an \define{orbitwise shift} if for every $x \in X$ there exists $k \in \ZZ$ such that\[T(x^i_1) = x^{i-k}_1 \mbox{ for every } i \in \ZZ.\] 
An \define{abstract orbitwise shift} is a continuous function $f\colon X \to \ZZ$ such that $f(x) = f(\sigma(x))$ for all $x \in X$. Write $\OS(X)$ and $\AOS(X)$ for the group of orbitwise shifts and the group of abstract orbitwise shifts respectively.
\end{definition}

Note that orbitwise shifts form a subgroup of $\RFA(X)$, and abstract orbitwise shifts form a group under pointwise addition. Orbitwise shifts are a much smaller group than $\RFA(X)$ in general. For example $\OS(X)$ is always abelian, while one can show that $\RFA(X)$ may contain a free group on two generators, see for example~\cite{ElMo12} or~\cite{BaKaSa16}.

\begin{lemma}\label{lemma_evidente}
The abstract orbitwise shifts are precisely the continuous functions $f\colon X \to \ZZ$ that factor through $\widehat X$ in the sense that $f = g \circ \pi$ for some continuous map $g\colon \widehat X \to \ZZ$.
\end{lemma}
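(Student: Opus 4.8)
The plan is to prove the two inclusions separately, starting with the easy one. If $f = g \circ \pi$ for some continuous $g \colon \widehat X \to \ZZ$, then $f$ is continuous as a composition of continuous maps, and since $\pi(\sigma(x)) = \pi(x)$ for every $x \in X$ — because $x_{[0,n-1]}$ and $\sigma(x)_{[0,n-1]} = x_{[-1,n-2]}$ are both length-$n$ subwords of $x$, hence $\sim_n$-equivalent — we get $f(\sigma(x)) = g(\pi(\sigma(x))) = g(\pi(x)) = f(x)$. Thus $f \in \AOS(X)$, which gives one direction.

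For the converse, let $f \in \AOS(X)$. First I would use that $X$ is compact and zero-dimensional, so the continuous map $f$ is locally constant: there is $N \in \NN$ and a function $h$ on $L_{2N+1}(X)$ with $f(x) = h(x|_{[-N,N]})$ for all $x \in X$. Combining this with shift-invariance, $f(x) = f(\sigma^j(x)) = h\big((\sigma^j x)|_{[-N,N]}\big)$, and since $(\sigma^j x)|_{[-N,N]}$ reads off the window $x|_{[-N-j,\,N-j]}$, letting $j$ range over $\ZZ$ shows that $f(x) = h(w)$ for \emph{every} word $w \sqsubset x$ of length $2N+1$. Writing $c := h \colon L_{2N+1}(X) \to \ZZ$, we thus have a single function with $f(x) = c(w)$ for all $x\in X$ and all $w \sqsubset x$ with $|w| = 2N+1$.

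Next I would check that $c$ is constant on $\sim_{2N+1}$-classes: if $u \sim'_{2N+1} v$, witnessed by some $z \in X$ in which both $u$ and $v$ occur, then $c(u) = f(z) = c(v)$, and passing to the transitive closure gives that $c$ descends to a map $\overline{c} \colon L_{2N+1}(X)/{\sim_{2N+1}} \to \ZZ$. Now define $g \colon \widehat X \to \ZZ$ by $g(\hat x) = \overline{c}(\hat x_{2N+1})$, i.e.\ let $g$ depend only on the $(2N+1)$-st coordinate of a point of $\widehat X \subset \prod_n L_n(X)/{\sim_n}$; this $g$ is well-defined on all of $\widehat X$ and continuous, being the composition of the continuous projection onto a finite discrete coordinate space with an arbitrary map into $\ZZ$. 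Finally, $\pi(x)_{2N+1} = x_{[0,2N]}/{\sim_{2N+1}}$, so $g(\pi(x)) = \overline{c}(x_{[0,2N]}/{\sim}) = c(x_{[0,2N]}) = f(x)$, which is exactly the required factorization $f = g \circ \pi$.

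The only genuine work is the opening step of the converse: turning ``locally constant'' together with ``$\sigma$-invariant'' into ``determined by an arbitrary length-$(2N+1)$ window'', and then keeping the $\sim'_n$ versus $\sim_n$ (transitive-closure) bookkeeping straight. No obstacle of real substance is expected — consistently with the label, the statement merely records that $\widehat X$ is, by construction, the natural zero-dimensional $\sigma$-invariant quotient of $X$ through which every such $f$ must factor.
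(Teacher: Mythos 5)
Your proof is correct and follows essentially the same route as the paper's: both directions use continuity plus compactness to reduce $f$ to a finite window, then shift-invariance to propagate the value across all windows of a configuration, which makes $f$ constant on $\sim'$-related (hence $\sim$-related) words. The only cosmetic difference is that you build $g$ explicitly as $\overline{c}$ composed with the projection onto the $(2N+1)$-st coordinate of $\widehat X$, whereas the paper defines $g(\pi(x)) := f(x)$ and verifies well-definedness; both amount to the same argument.
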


\begin{proof}
Suppose that $f = g \circ \pi$ for some $g \colon \widehat X \to \ZZ$. Then $f$ is continuous as the composition of two continuous functions, and
\[ f(x) = g(\pi(x)) = g(\pi(\sigma(x))) = f(\sigma(x)), \]
so $f$ is an abstract orbitwise shift.

On the other hand, if $f \colon X \to \ZZ$ is continuous and $f(x) = f(\sigma(x))$, then the map $g \colon \widehat X \to \ZZ$ given by the formula $g(\pi(x)) = f(x)$ for all $x \in X$ is well-defined: suppose not, and that $\pi(x) = \pi(y)$ and $f(x) \neq f(y)$. Since $f$ is continuous and $X$ is compact, $f(z)$ depends only on finitely many coordinates of $z$, that is, there exists $n$ such that $f(z) = f(z')$ whenever $z_{\{-n,\dots,n\}} = z_{\{-n,\dots,n\}}$. Since $\pi(x) = \pi(y)$, $u = x_{\{-n,\dots,n\}}$ and $v = y_{\{-n,\dots,n\}}$ are connected by a finite chain $u = w_0 \sim w_1 \sim \cdots \sim w_k = v$ such that $w_i$ and $w_{i+1}$ occur in the same configuration of $X$ for all $0\leq i<k$.

Since the image of $f$ is determined by the restriction to the central $2n+1$ coordinates, we can write $n_i = f(w_i)$ for this unique value. Now, since $f(z) = f(\sigma(z))$ for all $z \in X$, it is easy to see that in fact $n_i = n_{i+1}$ for all $i$, by moving along orbits of points connecting the words $w_i$ and $w_{i+1}$. This shows that $g$ is well-defined. Furthermore, as the image of $f$ is determined by the restriction to the central $2n+1$ coordinates, it follows easily that $g$ is continuous. 
\end{proof}

We shall now define our analogue of the index map. Let $T \in \RFA(X)$ and let $\phi := \phi_T$ be its permutation model, so that $\phi$ is an automorphism of $X \times (\ZZ/2\ZZ)^\ZZ$. Let $r \in \NN$ such that the biradius of $\phi$ is bounded by $r$ (i.e., the maximum of radii of $\phi$ and $\phi^{-1}$ is bounded by $r$ as an automorphism). Let $u \in L(X)$ such that $|u|\geq 4r$ and let $(x, y) \in X \times (\ZZ/2\ZZ)^\ZZ$ be a configuration where 
\begin{enumerate}
	\item $x|_{\{0,\dots,|u|-1\}} = u$
	\item $y_m = 1$ if and only if $r \leq m < |u|-r$.
\end{enumerate}

Recall that the automorphism $\phi$ preserves the number $|y^{-1}(1)|$, therefore $\phi_2(x,y)$ also has $|u|-2r$ heads. We define \[ L_{\phi}(u) = |\phi(x,y)^{-1}(1) \cap \{0,\dots, 2r-1\}| -r \mbox{ and } R_{\phi}(u) = |\phi_2(x,y)^{-1}(1) \cap \{|u|-2r, \dots, |u|-1\}| -r.  \]

This definition is best explained informally. Count the number of heads on the left side of the coordinate $2r$ (exclusive) of $u$ after applying $\phi$ and call this $L^*_{\phi}(u)$. Let $R^*_{\phi}(u)$ be the number of heads on the right side of coordinate $|u|-2r$ (inclusive). We clearly have $L^*_{\phi}(u) + R^*_{\phi}(u) = 2r$, as ${\phi}$ permutes the heads on any configuration and since its biradius is $r$, coordinates in $[2r,|u|-2r-1]$ all contain heads. Then $L_{\phi}(u) = L^*_{\phi}(u) - r$ and $R_{\phi}(u) = R^*_{\phi}(u) - r$ satisfy that $L_{\phi}(u)+R_{\phi}(u)=0$. For an illustration in the case where $\phi$ is the permutation model associated to the square of the shift, see Figure~\ref{ville_head_counter}.

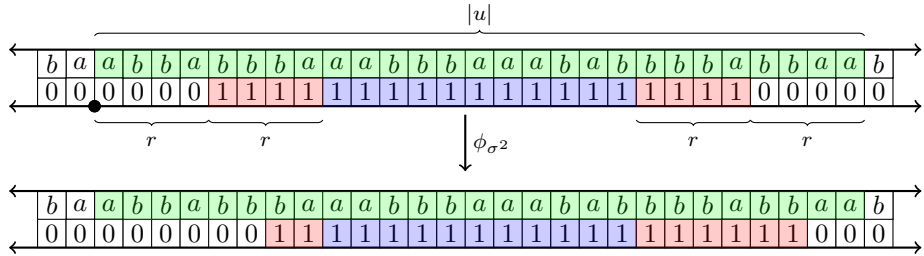
\begin{figure}[h!]
	\centering
	\begin{tikzpicture}[scale = 0.75]

\foreach \i in {-15,...,14}{
	\draw[ fill = white] (\i/2,0) rectangle (\i/2+0.5,0.5);
	\node at (\i/2+0.25,0.25) {$a$};
	\draw[ fill = white] (\i/2,-0.5) rectangle (\i/2+0.5,0);
	\node at (\i/2+0.25,-0.25) {$0$};
}
\foreach \i in {-15,-12,-11,-9,-8,-7,-3,-2,-1,3,5,6,7,8,10,11,14}{
	\draw[ fill = white] (\i/2,0) rectangle (\i/2+0.5,0.5);
	\node at (\i/2+0.25,0.25) {$b$};
}
\foreach \i in {-9,...,9}{
	\draw[ fill = white] (\i/2,-0.5) rectangle (\i/2+0.5,0);
	\node at (\i/2+0.25,-0.25) {$1$};
}
\draw[decorate,decoration={brace,amplitude=2pt,mirror,raise=2pt},yshift=0pt]
(7,0.65) -- (-6.5,0.65) node [black,midway, yshift=10pt] {\footnotesize
	$|u|$};
\filldraw (-6.5,-0.5) circle (0.1);
\draw[decorate,decoration={brace,amplitude=2pt,mirror,raise=2pt},yshift=0pt]
(-6.5,-0.65) -- (-4.5,-0.65) node [black,midway, yshift=-10pt] {\footnotesize
	$r$};
\filldraw (-6.5,-0.5) circle (0.1);
\draw[decorate,decoration={brace,amplitude=2pt,mirror,raise=2pt},yshift=0pt]
(-4.5,-0.65) -- (-2.5,-0.65) node [black,midway, yshift=-10pt] {\footnotesize
	$r$};
\draw[decorate,decoration={brace,amplitude=2pt,mirror,raise=2pt},yshift=0pt]
(5,-0.65) -- (7,-0.65) node [black,midway, yshift=-10pt] {\footnotesize
	$r$};
\draw[decorate,decoration={brace,amplitude=2pt,mirror,raise=2pt},yshift=0pt]
(3,-0.65) -- (5,-0.65) node [black,midway, yshift=-10pt] {\footnotesize
	$r$};

\draw[<->,thick] (-8,0.5) to (8,0.5);
\draw[<->,thick] (-8,-0.5) to (8,-0.5);

\draw[fill = blue, opacity = 0.2] (-2.5,-0.5) rectangle (3,0);
\draw[fill = green, opacity = 0.2] (-6.5,0) rectangle (7,0.5);
\draw[fill = red, opacity = 0.2] (-4.5,-0.5) rectangle (-2.5,0);
\draw[fill = red, opacity = 0.2] (3,-0.5) rectangle (5,0);

\draw[thick, ->]
(0,-0.65) -- (0,-1.65) node [black,midway, xshift= 10pt] {\footnotesize
	$\phi_{\sigma^2}$};

\begin{scope}[shift = {(0,-2.5)}]

\foreach \i in {-15,...,14}{
	\draw[ fill = white] (\i/2,0) rectangle (\i/2+0.5,0.5);
	\node at (\i/2+0.25,0.25) {$a$};
	\draw[ fill = white] (\i/2,-0.5) rectangle (\i/2+0.5,0);
	\node at (\i/2+0.25,-0.25) {$0$};
}
\foreach \i in {-15,-12,-11,-9,-8,-7,-3,-2,-1,3,5,6,7,8,10,11,14}{
	\draw[ fill = white] (\i/2,0) rectangle (\i/2+0.5,0.5);
	\node at (\i/2+0.25,0.25) {$b$};
}
\foreach \i in {-7,...,11}{
	\draw[ fill = white] (\i/2,-0.5) rectangle (\i/2+0.5,0);
	\node at (\i/2+0.25,-0.25) {$1$};
}
\draw[<->,thick] (-8,0.5) to (8,0.5);
\draw[<->,thick] (-8,-0.5) to (8,-0.5);

\draw[fill = blue, opacity = 0.2] (-2.5,-0.5) rectangle (3,0);
\draw[fill = green, opacity = 0.2] (-6.5,0) rectangle (7,0.5);
\draw[fill = red, opacity = 0.2] (-3.5,-0.5) rectangle (-2.5,0);
\draw[fill = red, opacity = 0.2] (3,-0.5) rectangle (6,0);

\end{scope}

\end{tikzpicture}
	\caption{For $T = \sigma^2$ its permutation model $\phi_{\sigma^2}$ moves the heads to the right twice. If we choose $r = 4$ we observe that the sum of the red zones is $2r$, the blue zone remains unchanged, $L_{\phi}(u) = -2$ and $R_{\phi}(u)=2$.}
	\label{ville_head_counter}
\end{figure}

\begin{definition}
	Let $X$ be a $\ZZ$-subshift, $T \in \RFA(X)$ and let $\phi$ be its permutation model. The \define{head index map} of $X$ and $T$ is the map $H_T \colon X \to \ZZ$ given by \[H_T(x) = \lim\limits_{n \to \infty} L_{\phi}(x|_{\{0,\dots,n-1\}}).\]
\end{definition}

Notice that the function $H_T$ is well defined, as the values of $L_{\phi}$ stabilize after $n \geq 4r$, where $r$ is the biradius of $\phi$. It is also noteworthy that choosing any value $r'$ greater than the biradius $r$ does not change the value. Indeed, as $\phi$ only permutes the heads in a finite radius, any head which is at the left of $2r'$ at a distance greater than the biradius will necessarily stay left of $2r'$ , and thus will not change the value $L_{\phi}(u) = L^*_{\phi}(u)-r'$ as the difference $r'-r$ is accounted for on $L^*_{\phi}(u)$.

\begin{lemma}
Let $X$ be a $\ZZ$-subshift. For every $T \in \RFA(X)$ we have that $H_T \in \AOS(X)$ and the map $\gamma \colon \RFA(X) \to \AOS(X)$ defined by $\gamma({\phi}_T) = H_T$ is a homomorphism. This homomorphism splits with section $\beta \colon \AOS(X) \to \RFA(X)$ defined by $\beta(f)(x) = \sigma^{f(x)}(x)$, and $\beta(\AOS(X)) = \OS(X)$, so $\beta$ gives an isomorphism between $\AOS(X)$ and $\OS(X)$.
\end{lemma}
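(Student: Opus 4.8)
The plan is to funnel every assertion through one combinatorial object: for $T\in\RFA(X)$ and $x\in X$, let $\pi_{T,x}\in\Sym(\ZZ)$ be the permutation of head positions induced by $T$ on the fixed tape $x$, i.e.\ the first-coordinate projection of the map $\sigma_{T,x}$ from the proof of Proposition~\ref{prop:RFAInAut} (here $k=1$, so head positions are indexed by $\ZZ$). Four facts about $\pi_{T,x}$ are immediate from the definitions: (i) its displacement is bounded by the biradius $r$ of $\phi_T$; (ii) $\pi_{T,x}(m)$ depends only on a bounded window of $x$ around $m$; (iii) $\pi_{T_1\circ T_2,x}=\pi_{T_1,x}\circ\pi_{T_2,x}$, since $T\mapsto\phi_T$ is a homomorphism and finite-state automata leave the tape unchanged; and (iv) $\pi_{T,\sigma x}=\tau_1\circ\pi_{T,x}\circ\tau_1^{-1}$, where $\tau_c(m)=m+c$, because $\phi_T$ commutes with the shift. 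I would pair this with the \emph{flux} of a bounded-displacement permutation $\pi$, $\Phi(\pi)=|\{m<0:\pi(m)\geq0\}|-|\{m\geq0:\pi(m)<0\}|=\sum_{m\in\ZZ}\big(\mathbf{1}[\pi(m)\geq0]-\mathbf{1}[m\geq0]\big)$ (a sum with finitely many nonzero terms), recording that $\Phi(\pi_1\pi_2)=\Phi(\pi_1)+\Phi(\pi_2)$ (a short telescoping argument on the displayed sum, after which $\Phi(\mathrm{id})=0$ and conjugation-invariance are formal), and that $\Phi(\tau_c)=c$. This $\Phi$ is the index map of~\cite{GiPuSk99} in the minimal case.

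The technical core is the identity $H_T(x)=-\Phi(\pi_{T,x})$. I would obtain it by unwinding the definition of $L_\phi(u)$ for $u=x|_{\{0,\dots,n-1\}}$ with $n\geq 4r$: the machine $\phi_T$ sends the head region $\{r,\dots,n-r-1\}$ bijectively onto $\pi_{T,x}(\{r,\dots,n-r-1\})$, and since $\pi_{T,x}$ moves points by at most $r$, the number of resulting heads inside $\{0,\dots,2r-1\}$ minus $r$ rearranges (using $\pi_{T,x}(\{m\leq r-1\})\subseteq(-\infty,2r-1]$) to $|\{m\geq0:\pi_{T,x}(m)<0\}|-|\{m<0:\pi_{T,x}(m)\geq0\}|=-\Phi(\pi_{T,x})$, independently of the admissible $n$. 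Everything in the first two sentences of the lemma then drops out: $H_T(x)=-\Phi(\pi_{T,x})$ depends only on $x|_{\{-r,\dots,r-1\}}$, so $H_T$ is continuous; $H_T(\sigma x)=-\Phi(\tau_1\pi_{T,x}\tau_1^{-1})=-\Phi(\pi_{T,x})=H_T(x)$, so $H_T\in\AOS(X)$; and $\gamma(T_1\circ T_2)(x)=-\Phi(\pi_{T_1,x}\circ\pi_{T_2,x})=-\Phi(\pi_{T_1,x})-\Phi(\pi_{T_2,x})=(\gamma(T_1)+\gamma(T_2))(x)$, so $\gamma$ is a homomorphism into the additive group $\AOS(X)$.

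For the splitting I would first verify that $\beta$ is a well-defined homomorphism into $\OS(X)$: for $f\in\AOS(X)$, compactness makes $f$ locally constant with finite image, and since $f=f\circ\sigma$, a head at position $i$ reading the window of $x$ around $i$ can evaluate $f(\sigma^{-i}x)=f(x)$, so $\beta(f)$ has a finite local rule, changes no tape cell, is reversible with inverse $\beta(-f)$, and is an orbitwise shift; also $\beta(f)\circ\beta(g)=\beta(f+g)$ using $f\circ\sigma^{j}=f$. Its induced head permutation $\pi_{\beta(f),x}$ is $\tau_{-f(x)}$, so $H_{\beta(f)}(x)=-\Phi(\tau_{-f(x)})=f(x)$, i.e.\ $\gamma\circ\beta=\mathrm{id}_{\AOS(X)}$; in particular $\beta$ is injective. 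Finally, if $T\in\OS(X)$, the integer $k(x)$ with $T(x^i_1)=x^{i-k(x)}_1$ for all $i$ is uniquely determined (distinct $x^j_1$ are distinct points of the moving-head space), giving $f_T\colon X\to\ZZ$; $f_T$ is locally constant because $T$ has a finite local rule, and $f_T=f_T\circ\sigma$ because $T$ commutes with the shift of the moving-head space ($T((\sigma x)^{i+1}_1)=\sigma\cdot T(x^i_1)=(\sigma x)^{i+1-k(x)}_1$), so $f_T\in\AOS(X)$ and $\beta(f_T)=T$. Hence $\beta(\AOS(X))=\OS(X)$ and $\beta\colon\AOS(X)\to\OS(X)$ is a bijective homomorphism with inverse $\gamma|_{\OS(X)}$.

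The steps I expect to be the main obstacle are the additivity of $\Phi$ (elementary but requiring the telescoping argument, or an appeal to the topological-full-group literature) and the bookkeeping that turns the head-counting definition of $L_\phi$ into $-\Phi(\pi_{T,x})$, where the sign conventions relating the shift indicator, the direction in which the head moves, and the orientation built into $L_\phi$ must all be reconciled.
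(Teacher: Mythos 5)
Your proposal is correct, and all parts of the lemma (membership of $H_T$ in $\AOS(X)$, the homomorphism property of $\gamma$, well-definedness of $\beta$, $\gamma\circ\beta=\mathrm{id}$, and $\beta(\AOS(X))=\OS(X)$) are covered. However, the route is genuinely different from the paper's. The paper argues directly on the head-counting quantity $L_\phi$: shift-invariance of $H_T$ comes from showing $L_\phi(au)=L_\phi(u)$ via the identity $L^*_\phi(v)+R^*_\phi(v)=2r$, and the homomorphism property comes from a separate counting argument (filling in the gaps of a partial head pattern $v$ and tracking how many heads land in a fixed window when two machines are applied in succession); the splitting is then dismissed in one sentence. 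You instead funnel everything through the single identity $H_T(x)=-\Phi(\pi_{T,x})$, where $\Phi$ is the flux of the bounded-displacement permutation of head positions induced by $T$ on the tape $x$. Once the additivity and translation-conjugation-invariance of $\Phi$ are in hand, shift-invariance, the homomorphism property, and $\gamma\circ\beta=\mathrm{id}_{\AOS(X)}$ all drop out formally; this is the classical index-map formalism for topological full groups, and it buys a cleaner, more modular argument at the cost of the one bookkeeping computation relating $L_\phi(u)$ to $\Phi(\pi_{T,x})$. That computation does check out: the image heads landing in $\{0,\dots,2r-1\}$ are exactly those $m\in[r,3r-1]$ with $\pi_{T,x}(m)<2r$, and subtracting $r$ yields minus the flux of $\pi_{T,x}$ across $2r$, which by conjugation-invariance equals $-\Phi(\pi_{T,x})$ as you claim (and is consistent with Figure~\ref{ville_head_counter}, where $\pi=\tau_2$ gives $L_\phi(u)=-2$). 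Your treatment of the splitting is also more detailed than the paper's and is correct: local constancy plus $f=f\circ\sigma$ is exactly what makes $\beta(f)$ realizable by a finite local rule, and the verification that every orbitwise shift arises this way is the right way to get $\beta(\AOS(X))=\OS(X)$. The only point requiring care is the one you flag yourself, namely reconciling the sign of the shift indicator with the orientation built into $L_\phi$; the overall sign does not affect any assertion of the lemma.
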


\begin{proof}
Consider an arbitrary $\phi$ in the permutation model acting on $X \times (\ZZ/2\ZZ)^\ZZ$ with biradius $r$. We first claim that if $|u| \geq 4r$ and $au \sqsubset X$, then $L_{\phi}(au) = L_{\phi}(u)$. To see this, note that all heads except the leftmost new one map exactly as before, and thus $R^*_{\phi}(au) = R^*_{\phi}(u)$. As $R^*_{\phi}(v) + L^*_{\phi}(v) = 2r$ for every $v \in L(X)$ we conclude that $L^*_{\phi}(au)=L^*_{\phi}(u)$ and thus $L_{\phi}(au) = L_{\phi}(u)$. Symmetrically, one shows that $L_{\phi}(ua) = L_{\phi}(u)$.

Suppose now that $u, v \sqsubset x \in X$. Let $w \sqsubset x$ be any word containing both $u$ and $v$, and apply the observation of the previous paragraph repeatedly to get $L_{\phi}(u) = L_{\phi}(w) = L_{\phi}(v)$. It follows that $H_T(x) = H_T(\sigma(x))$ for all $x$. The function $H_T$ is continuous because the limit stabilizes after $n > 4r$. We conclude that $H_T \in \AOS(X)$.

To see that $\gamma(T) = H_T$ is a homomorphism, we need the following stronger fact about $L_{\phi}$: fix $m \in \NN$ and let $(x, y) \in X \times (\ZZ/2\ZZ)^\ZZ$ be a configuration where $x_{[0,|u|-1]} = u$ where $|u| \geq m+4r$, and let $y = ...000.0^rv1^{|u|-|v|-r}000...$ where $v\in (\ZZ/2\ZZ)^m$ is any word of length $m$. Then the number of heads in ${\phi}_2(x, y)|_{\{0,\dots,m+2r-1\}}$ is $|v^{-1}(1)| + r + H_{T}(x)$. To see this, simply fill in the gaps of $v$ (turn its $0$-symbols to $1$-symbols). At each step, the number of heads in the image increases by one, and the head is always added among the coordinates in $\{0,\dots,m+2r-1\}$ since ${\phi}$ has biradius $r$. After filling the gaps in $v$, we have $m+r+H_T(x)$ heads in ${\phi}_2(x, y)|_{\{0,\dots,m+2r-1\}}$ by the definition of $L_{\phi}$.

Now, computation of $\gamma(T \circ T')(x)$ can be done by first applying $\phi'=\phi_{T'}$ on the left side of a long word $u$ in $x$, with heads positioned suitably on coordinates of $u$, and then applying $\phi=\phi_T$ to the resulting scattered set of heads. By the previous paragraph, we see that $\gamma$ is a homomorphism, that is, $\gamma(T \circ {T'})(x) = \gamma(T)(x) + \gamma(T')(x)$ for all $x \in X$. The map $\beta$ is well-defined essentially by the definition of $\AOS(X)$, and it is clearly a section for $\gamma$. \end{proof}

As mentioned, the homomorphism $\gamma$ generalizes the index map defined on elements of the topological full group. It is well-known that the index map is $\ZZ$-valued in the minimal case. This is true more generally for all weakly chain-transitive subshifts, because for a weakly chain-transitive subshift the maximal invariant symbolic factor is trivial.

\begin{lemma}
If $X$ is weakly chain-transitive, then its orbitwise shifts are precisely the shifts.
\end{lemma}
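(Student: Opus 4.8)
The plan is to reduce the statement to a description of $\AOS(X)$, using the isomorphism $\beta\colon \AOS(X)\to\OS(X)$ established in the preceding lemma, and then to invoke Lemma~\ref{lemma_evidente} together with the observation that weak chain-transitivity collapses $\widehat X$ to a point.

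First I would recall that the preceding lemma gives $\beta(\AOS(X))=\OS(X)$ with $\beta(f)(x)=\sigma^{f(x)}(x)$; in particular a constant function $f\equiv m$ is mapped to the shift $\sigma^m$, so the shifts form exactly $\beta$ of the constant functions. Hence it suffices to prove that when $X$ is weakly chain-transitive every $f\in\AOS(X)$ is constant (the converse inclusion, that every constant lies in $\AOS(X)$, is immediate from the definition of $\AOS(X)$).

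Next, by Lemma~\ref{lemma_evidente}, any $f\in\AOS(X)$ factors as $f=g\circ\pi$ for a continuous map $g\colon\widehat X\to\ZZ$, where $\pi\colon X\to\widehat X$ is the factor map $\pi(x)_n=x_{[0,n-1]}/\!\sim_n$ and $\widehat X\subseteq\prod_n L_n(X)/\!\sim_n$. Since $X$ is weakly chain-transitive, each quotient $L_n(X)/\!\sim_n$ is a singleton, so $\widehat X$ is a singleton; therefore $g$ is constant and so is $f$. Combining with the previous paragraph, $\OS(X)=\beta(\AOS(X))=\{\sigma^m:m\in\ZZ\}$, i.e.\ the orbitwise shifts are precisely the shifts.

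I do not expect a genuine obstacle here: the lemma is essentially a repackaging of Lemma~\ref{lemma_evidente} and of the remark that $X$ is weakly chain-transitive if and only if $\widehat X$ is a singleton. The only point worth a word of care is interpreting ``the shifts'' as the cyclic subgroup $\{\sigma^m\}_{m\in\ZZ}\leqslant\RFA(X)$; these are patently orbitwise shifts, so the real content is the reverse containment, which is exactly what the argument above supplies.
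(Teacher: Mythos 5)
Your argument is correct and is essentially the paper's own proof: both reduce the claim via $\beta$ to showing that $\AOS(X)$ consists only of constants, and both obtain this from Lemma~\ref{lemma_evidente} together with the observation that weak chain-transitivity makes $\widehat X$ a singleton. Your write-up merely spells out the factorization $f=g\circ\pi$ in a bit more detail than the paper does.
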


\begin{proof}
Since $\widehat{X}$ is a singleton, Lemma~\ref{lemma_evidente} implies that $\AOS(X) \cong \ZZ$ and therefore $\beta \circ \gamma$ must map every element of $\RFA(X)$ to a power of $\sigma$ by the definition of $\beta$. 
\end{proof}

A clear example where the previous lemma holds is the full $\ZZ$-shift $\Sigma^{\ZZ}$. Given any $T \in \FG_{\text{fix}}(X)$, we can compose $T$ with orbitwise shifts in order to force the average movement to be zero in every orbit, that if, to obtain $T \in \FG_{\text{fix}}(X)$ such that $\gamma(T)$ is identically zero. Therefore, it only remains to find a way to generate all elements of $\RFA_{\fix}(X)$ with have no average movement on any orbit.

\begin{definition}
Let $u, v \in \Sigma^*, a \in \Sigma$. A reversible finite-state automaton $T = T_{u,a,v} \in \RFA(X)$ is a \define{controlled position swap} if $T((xu.avy)^j_1) = (xu.avy)^{1-j}_1$ for $j \in \{0,1\}$, and $T(z) = z$ for all $z \in X \times X_1$ whose image these rules do not determine. 
\end{definition}

\begin{lemma}
\label{lem:Unary}
Let $u, v \in \Sigma^*, a \in \Sigma$. Then the controlled position swap $T_{u,a,v}$ is well-defined and in $\RFA(X)$ whenever $uav$ is non-unary (i.e., when $uav \notin \{a\}^*$). 
\end{lemma}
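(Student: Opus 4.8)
The plan is to write $T = T_{u,a,v}$ out explicitly as a partial moving-head rule, to check that its two ``activation conditions'' are mutually exclusive precisely when $uav$ is non-unary, and then to read off that $T$ is a well-defined involution, hence an element of $\RFA(X)$.

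Fix coordinates so that an occurrence of the word $uav$ in a configuration $z \in X$ is witnessed with its distinguished letter $a$ sitting in some cell $i$; concretely this means $z|_{[i-|u|,\,i-1]} = u$, $z_i = a$ and $z|_{[i+1,\,i+|v|]} = v$, with the convention that an empty $u$ or $v$ imposes no constraint. Placing the head at $i$ gives the configuration $(xu.avy)^0_1$ of the definition, while $(xu.avy)^1_1$ is the same tape $z$ with the head moved to $i+1$. So the defining relations of $T$ amount to the following tape-preserving local rule: if the head at position $i$ sees $u$ immediately to its left, $a$ under it and $v$ immediately to its right (the \emph{$0$-condition at $i$}), move the head one step right; if instead the head at $i$ sees an occurrence of $uav$ on the cells $[i-1-|u|,\,i-1+|v|]$ with $a$ at cell $i-1$ (the \emph{$1$-condition at $i$}), move the head one step left; otherwise act as the identity. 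This is plainly given by a local rule with in-neighborhood $\{-|u|-1,\dots,|v|\}$, empty out-neighborhood and move-radius $1$; so once well-definedness is settled, $T$ belongs to $\TM(X,1)$, does not change the tape, and -- being an involution -- is its own inverse, whence $T \in \RFA(X)$.

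The heart of the matter, and the only place the hypothesis is used, is that no $(z,i) \in X \times X_1$ can satisfy both the $0$-condition and the $1$-condition at once. Indeed, were this the case, $uav$ would occur in $z$ on the cells $[i-|u|,\,i+|v|]$ and also on the cells $[i-1-|u|,\,i-1+|v|]$, i.e.\ at two positions differing by exactly one; comparing these two occurrences cell by cell forces $uav$ to have period one, that is $uav \in \{a\}^*$, contradicting non-unarity. Hence every configuration triggers at most one of the two moves; each move is governed by a single pattern condition, so is single-valued; and thus $T$ is a well-defined function that is the identity outside the union of the two ``active'' sets. The same overlap observation yields $T^2 = \ID$: the $0$-condition at $i$ is literally the $1$-condition at $i+1$, so applying the $0$-rule to $(z,i)$ produces $(z,i+1)$, which satisfies the $1$-condition and (again by the overlap argument) not the $0$-condition, whence $T(z,i+1) = (z,i)$; the symmetric statement holds for the $1$-rule, and $T$ is the identity elsewhere, mapping the complement of the two active sets onto itself.

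The remaining checks -- continuity and shift-commutation (immediate from the local rule), boundedness of the neighborhood, tape-preservation, and the bookkeeping that keeps the image inside $X \times X_1$ -- are routine, and the degenerate cases $u = \varepsilon$ or $v = \varepsilon$ (and the case where $uav$ does not occur in $X$, for which $T$ is just the identity) are covered by the same argument. So the one genuine step is the short overlap-and-periodicity computation, which is exactly where non-unarity of $uav$ is needed; I do not expect any other obstacle.
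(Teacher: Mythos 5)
Your proposal is correct and follows essentially the same route as the paper: the paper's proof simply notes that the cylinders $[u.av]$ and $[ua.v]$ are disjoint when $uav$ is non-unary (your overlap-forces-period-one computation makes this explicit) and that the map is an involution, hence invertible. No issues.
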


\begin{proof}
The cylinders $[u.av]$ and $[ua.v]$ have empty intersection when $uav$ is non-unary. Thus, the conditions under which we move the head do not overlap, and the finite-state machine $T_{u,a,v}$ is well-defined. It is clearly an involution, thus invertible.
\end{proof}

More generally, given a clopen set $C \subseteq X$, we define the map $T_C \in \RFA(X)$, which we call a \define{clopen-controlled position swap}.
\[ T_C(x^i_1) = \left\{
\begin{array}{ll}
x^{i+1}_1 & \mbox{if } \sigma^{-i}(x) \in C \\
x^{i-1}_1 & \mbox{if } \sigma^{-i+1}(x) \in C \\
x & \mbox{otherwise}
\end{array}
\right. \]

As above, these maps are well-defined for any clopen set $C$ such that $C \cap \sigma(C) = \varnothing$. Any such $T_C$ can be obtained as a composition of finitely many controlled position swaps, since clopen sets can be written as a finite union of cylinder sets.

\begin{theorem}
\label{thm:RFAGenerators}
Let $X$ be a subshift. Then $\RFA(X)$ is generated by orbitwise shifts and controlled position swaps.
\end{theorem}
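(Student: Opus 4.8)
The strategy is to take an arbitrary $T \in \RFA(X)$ and, after multiplying by a suitable orbitwise shift, reduce to the case where the head index map $H_T$ vanishes identically; then show that such a "zero-drift" automaton is a product of clopen-controlled position swaps, which by the remark after Lemma~\ref{lem:Unary} are products of controlled position swaps. Concretely: given $T$, the composition $\beta(\gamma(T))^{-1} \circ T$ lies in the kernel of $\gamma$ by the splitting lemma, so it suffices to prove that $\ker \gamma$ is generated by clopen-controlled position swaps. So from now on assume $\gamma(T) = 0$, i.e.\ the permutation model $\phi = \phi_T$ moves every head by a net displacement of $0$ measured across any long window.

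The main work is a "conveyor belt / bubble sort" argument in the permutation model. Let $r$ be the biradius of $\phi$. Work on a configuration $x \in X$ and think of $\phi$ as permuting the set of head-positions $\ZZ$ (the $y$-coordinates), reversibly, by a permutation $\sigma_{T,x}$ of $\ZZ$ that only moves each coordinate by at most $r$. Because $\gamma(T)=0$, this permutation is "balanced": on any finite interval the number of heads pushed out to the left equals the number pushed out to the right, uniformly. The plan is to decompose $\sigma_{T,x}$, \emph{uniformly in $x$}, into a bounded-length product of adjacent transpositions $(i\ i{+}1)$, each of which is applied only on the clopen set of configurations where $T$'s local rule dictates that swap — i.e.\ into clopen-controlled position swaps $T_C$ with $C \cap \sigma(C) = \varnothing$. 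One clean way: since $\phi$ has biradius $r$, group the integers into blocks of size, say, $N = 10r$; within each block $\sigma_{T,x}$ restricted appropriately is a permutation of a bounded set, hence a bounded product of adjacent transpositions by Lemma~\ref{lem:graphlemma}(a); the boundedness of $N$ and $r$ means there are only finitely many "local situations", so each adjacent transposition used is controlled by a clopen (in fact cylinder) set, and the non-overlap condition $C \cap \sigma(C) = \varnothing$ can be arranged by always separating the two cells of a swap by a sufficiently long context. One then handles the interactions between adjacent blocks by a second sweep (an even/odd pass), exactly as in a sorting network, using that zero drift prevents heads from escaping their $O(r)$-neighborhood.

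The step I expect to be the main obstacle is making the block decomposition \emph{simultaneously consistent and finitely supported}: a priori the factorization of $\sigma_{T,x}$ into adjacent transpositions could depend on $x$ in an uncontrolled way, and one needs the resulting sequence of $T_C$'s to be a single finite word in controlled position swaps valid for all $x \in X$. This is where compactness of $X$ and the bounded biradius are essential — $\sigma_{T,x}$ near any coordinate depends only on a bounded window of $x$, so there are finitely many local patterns, and one must choose the sorting-network factorization as a function of that finite local data in a way that the local choices glue into a globally defined automaton. A subtlety flagged already in Lemma~\ref{lem:Unary} is the non-unary requirement for controlled position swaps to be well-defined; one must ensure the contexts $u,v$ chosen to witness each swap are non-unary, which can always be arranged by padding the context (if a block happens to sit inside a unary stretch, a head there is either immobile or its motion is forced and can be realized by an orbitwise shift absorbed in the first reduction step). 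Finally, once $\ker\gamma$ is handled, the theorem follows by combining with the splitting $\RFA(X) = \OS(X) \ltimes \ker\gamma$ established in the lemma preceding Definition~\ref{def:controlled...}, noting $\OS(X) = \beta(\AOS(X))$ consists of orbitwise shifts by definition.
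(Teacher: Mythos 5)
Your first reduction is the same as the paper's: compose with an orbitwise shift (via the section $\beta$ of the index map $\gamma$) to reduce to the case of zero drift on every orbit. The divergence, and the gap, is in the second half. You propose to factor the head permutation $\sigma_{T,x}$ into adjacent transpositions by laying down blocks of size $N=10r$ and running a sorting network, with each transposition realized as a clopen-controlled swap $T_C$. The obstacle you yourself flag --- making the block decomposition ``simultaneously consistent and finitely supported'' --- is not resolved by the compactness/finitely-many-local-patterns argument you give, and it is in fact the crux. A partition of $\ZZ$ into blocks of size $N$ must be determined shift-equivariantly by bounded local data in $x$ (otherwise the resulting maps are not elements of $\RFA(X)$), and no such partition exists on, say, the all-zero configuration of a full shift, nor on any periodic point. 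Making this work requires a marker lemma, and markers fail precisely on low-period configurations, which then need a separate argument: on a periodic point a zero-drift automaton can still permute heads nontrivially within a period, and this is not ``absorbed into the orbitwise shift'' as you suggest --- it has to be decomposed by hand into swaps using the fact that the displacements sum to zero over a period. Your remark that the non-unary hypothesis of Lemma~\ref{lem:Unary} ``can always be arranged by padding the context'' also breaks down exactly there. So as written the proposal is a plan with its hardest step missing.

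For comparison, the paper's proof avoids markers entirely. To $T$ with zero drift it associates the set $P_T$ of pairs $(m,k)$ such that some run of $k$ consecutive heads is each displaced by exactly $m$, ordered lexicographically with $m$ most significant; zero drift guarantees a maximal element $M_T$ exists (otherwise compactness yields a configuration with uniformly positive displacement). If $M_T>(0,0)$, the set $C$ of configurations realizing a maximal run ending at the origin is clopen with $C\cap\sigma(C)=\varnothing$, and composing with the single clopen-controlled swap $T_C$ strictly decreases $M_T$ (a run $m^k j$ in the displacement sequence becomes $m^{k-1}(j+1)(m-1)$ with $j\le m-2$). Well-foundedness of $\omega^2$ then finishes the descent, and periodic points require no special treatment. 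This is both more elementary and strictly more robust than a sorting-network decomposition; if you want to salvage your route, you would essentially have to reconstruct the marker-based argument with its periodic-point case analysis, which is considerably longer.
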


\begin{proof}
For $T \in \RFA(X)$, let $P_T \subset \{(0,0)\} \cup \ZZ_+^2$ be the set of all pairs $(m, k)$ such that there exists $x \in X$ and $i \in \ZZ$ such that $T(x^{i+j}_1) = x^{i+j+m}_1$ for all $0 \leq j < k$. Order these pairs lexicographically ($m$ is more significant). The order type is a suborder of $1 + \omega^2 \cong \omega^2$, thus well-founded. It is now enough to prove that, whenever $T$ has zero average movement,
\begin{enumerate}
\item $P_T$ has a maximal element $M_T$,
\item if $M_T > (0,0)$, there exists controlled position swap $T'$ such that $M_{T \circ T'} < M_T$, and
\item $T$ is the identity map if and only if $M_T = (0,0)$.
\end{enumerate}
Namely, by well-foundedness we must reach $(0,0)$ in finitely many steps by iterating the second item, and by the third we have reached the identity map.

Fix now $T \in \RFA(X)$. To see that a maximum $M_T = (m, k)$ exists, first observe that $P_T \neq \emptyset$ since $(0,0) \in P_T$. 
Next, for $(m, k) \in P_T$ the local rule of $T$ clearly gives a finite upper bound on $m$. Finally, if $m$ is maximal and $(m,k) \in P_T$ for arbitrarily large $k$, then $m > 0$ (since $P_T \subset \{(0,0)\} \cup \ZZ_+^2$ by choice), and by compactness there exists a configuration $x \in X$ satisfying $T(x^i_1) = x^{i+m}_1$ for all $i \in \ZZ$, thus average movement on $x$ is clearly nonzero.

Suppose now that $M_T = (m,k) > (0,0)$. Now, let $C \subset X$ be the set of configurations $x$ such that $\forall i \in [-k+1, 0]: T(x^i_1) = x^{i+m}_1$. Observe that $x \in C \implies T(x^1_1) = x^{1+j}_1$ where $j \leq m-2$, namely $j \leq m-1$ by the maximality of $k$, and $j = m-1$ would contradict bijectivity of $T$. Clearly $C$ is a clopen set, and $C \cap \sigma(C) = \varnothing$ because $k > 0$. We let $T' = T_C$.

Consider now an arbitrary point $x \in X$ and $i \in \ZZ$.
\begin{enumerate}
\item If $\sigma^{-i}(x) \notin C$ and $\sigma^{-i+1}(x) \notin C$, then $(T \circ T') (x^i_1) = T(x^i_1)$.
\item If $\sigma^{-i}(x) \in C$, then $(T \circ T')(x^i_1) = T(x^{i+1}_1)$.
\item If $\sigma^{-i+1}(x) \in C$, then $(T \circ T')(x^i_1) = T(x^{i-1}_1)$.
\end{enumerate}

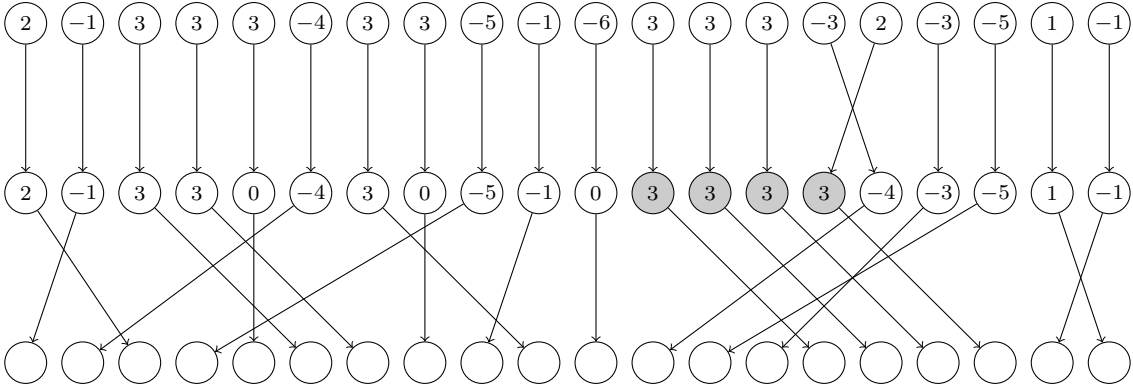
\begin{figure}
\begin{center}
\begin{tikzpicture}[scale=0.75,inner sep = 0,circle,draw,minimum width = 0.55cm]
\node[circle,draw] (c1) at (3,0) {};
\node[circle, draw] (c2) at (4,0) {};
\node[circle, draw] (c3) at (5,0) {};
\node[circle, draw] (c4) at (6,0) {};
\node[circle, draw] (c5) at (7,0) {};
\node[circle, draw] (c6) at (8,0) {};
\node[circle, draw] (c7) at (9,0) {};
\node[circle, draw] (c8) at (10,0) {};
\node[circle, draw] (c9) at (11,0) {};
\node[circle, draw] (c10) at (12,0) {};
\node[circle, draw] (c11) at (13,0) {};
\node[circle, draw] (c12) at (14,0) {};
\node[circle, draw] (c13) at (15,0) {};
\node[circle, draw] (c14) at (16,0) {};
\node[circle, draw] (c15) at (17,0) {};
\node[circle, draw] (c16) at (18,0) {};
\node[circle, draw] (c17) at (19,0) {};
\node[circle, draw] (c18) at (20,0) {};
\node[circle, draw] (c19) at (21,0) {};
\node[circle, draw] (c20) at (22,0) {};
\node[circle, draw] (b1) at (3,3) {\footnotesize $2$};
\node[circle, draw] (b2) at (4,3) {\footnotesize $-1$};
\node[circle, draw] (b3) at (5,3) {\footnotesize$3$};
\node[circle, draw] (b4) at (6,3) {\footnotesize$3$};
\node[circle, draw] (b5) at (7,3) {\footnotesize$0$};
\node[circle, draw] (b6) at (8,3) {\footnotesize$-4$};
\node[circle, draw] (b7) at (9,3) {\footnotesize$3$};
\node[circle, draw] (b8) at (10,3) {\footnotesize$0$};
\node[circle, draw] (b9) at (11,3) {\footnotesize$-5$};
\node[circle, draw] (b10) at (12,3) {\footnotesize$-1$};
\node[circle, draw] (b11) at (13,3) {\footnotesize$0$};
\node[circle, draw, fill=black!20!white] (b12) at (14,3) {\footnotesize$3$};
\node[circle, draw, fill=black!20!white] (b13) at (15,3) {\footnotesize$3$};
\node[circle, draw, fill=black!20!white] (b14) at (16,3) {\footnotesize$3$};
\node[circle, draw, fill=black!20!white] (b15) at (17,3) {\footnotesize$3$};
\node[circle, draw] (b16) at (18,3) {\footnotesize$-4$};
\node[circle, draw] (b17) at (19,3) {\footnotesize$-3$};
\node[circle, draw] (b18) at (20,3) {\footnotesize$-5$};
\node[circle, draw] (b19) at (21,3) {\footnotesize$1$};
\node[circle, draw] (b20) at (22,3) {\footnotesize$-1$};
\node[circle, draw] (a1) at (3,6) {\footnotesize$2$};
\node[circle, draw] (a2) at (4,6) {\footnotesize$-1$};
\node[circle, draw] (a3) at (5,6) {\footnotesize$3$};
\node[circle, draw] (a4) at (6,6) {\footnotesize$3$};
\node[circle, draw] (a5) at (7,6) {\footnotesize$3$};
\node[circle, draw] (a6) at (8,6) {\footnotesize$-4$};
\node[circle, draw] (a7) at (9,6) {\footnotesize$3$};
\node[circle, draw] (a8) at (10,6) {\footnotesize$3$};
\node[circle, draw] (a9) at (11,6) {\footnotesize$-5$};
\node[circle, draw] (a10) at (12,6) {\footnotesize$-1$};
\node[circle, draw] (a11) at (13,6) {\footnotesize$-6$};
\node[circle, draw] (a12) at (14,6) {\footnotesize$3$};
\node[circle, draw] (a13) at (15,6) {\footnotesize$3$};
\node[circle, draw] (a14) at (16,6) {\footnotesize$3$};
\node[circle, draw] (a15) at (17,6) {\footnotesize$-3$};
\node[circle, draw] (a16) at (18,6) {\footnotesize$2$};
\node[circle, draw] (a17) at (19,6) {\footnotesize$-3$};
\node[circle, draw] (a18) at (20,6) {\footnotesize$-5$};
\node[circle, draw] (a19) at (21,6) {\footnotesize$1$};
\node[circle, draw] (a20) at (22,6) {\footnotesize$-1$};
\draw (b1) edge[->] (c3);
\draw (b2) edge[->] (c1);
\draw (b3) edge[->] (c6);
\draw (b4) edge[->] (c7);
\draw (b5) edge[->] (c5);
\draw (b6) edge[->] (c2);
\draw (b7) edge[->] (c10);
\draw (b8) edge[->] (c8);
\draw (b9) edge[->] (c4);
\draw (b10) edge[->] (c9);
\draw (b11) edge[->] (c11);
\draw (b12) edge[->] (c15);
\draw (b13) edge[->] (c16);
\draw (b14) edge[->] (c17);
\draw (b15) edge[->] (c18);
\draw (b16) edge[->] (c12);
\draw (b17) edge[->] (c14);
\draw (b18) edge[->] (c13);
\draw (b19) edge[->] (c20);
\draw (b20) edge[->] (c19);
\draw (a1) edge[->] (b1);
\draw (a2) edge[->] (b2);
\draw (a3) edge[->] (b3);
\draw (a4) edge[->] (b4);
\draw (a5) edge[->] (b5);
\draw (a6) edge[->] (b6);
\draw (a7) edge[->] (b7);
\draw (a8) edge[->] (b8);
\draw (a9) edge[->] (b9);
\draw (a10) edge[->] (b10);
\draw (a11) edge[->] (b11);
\draw (a12) edge[->] (b12);
\draw (a13) edge[->] (b13);
\draw (a14) edge[->] (b14);
\draw (a15) edge[->] (b16);
\draw (a16) edge[->] (b15);
\draw (a17) edge[->] (b17);
\draw (a18) edge[->] (b18);
\draw (a19) edge[->] (b19);
\draw (a20) edge[->] (b20);
\end{tikzpicture}
\end{center}
\caption{The composition $T \circ T'$ illustrated with $m = 3$, $k = 4$. The movement of heads in the permutation model on a configuration $x$ is shown, and values at the nodes are those of $y$ (central row) and $z$ (top row). The four nodes forming $m^k$ are highlighted. The configuration $x$ determines these moves, but its contents are not shown.}
\label{fig:Swabadab}
\end{figure}

Defining $y, z \in \ZZ^\ZZ$ by $y_i = j$ where $T(x^i_1) = x^{i+j}_1$ and $z_i = j$ where $(T \circ T')(x^i_1) = x^{i+j}_1$, the difference between $y$ to $z$ is precisely that all subsequences of the form $m^k j \in \ZZ^{k+1}$ are replaced by $m^{k-1} (j+1) (m-1)$. Since $j \leq m-2$, the run of $m$s become strictly shorter, and no symbols larger than $m$ are introduced, thus $M_{T \circ T'} < M_T$ are required. See Figure~\ref{fig:Swabadab} for an illustration.
\end{proof}

\begin{remark}
	In the proof above, only the part about non-zero average movement and orbitwise shifts crucially depends on the properties of $X$. The other part does not: if $T \in \RFA(Y)$, $T$ has average movement zero (on $Y$), and $X \subset Y$, then one can perform all choices in the proof canonically so that decomposing $T$ into controlled position swaps on $Y$ and then restricting to $X$ is equivalent to first restricting to $X$ and then applying our construction on $X$.
\end{remark}

Let us use Lemma~\ref{lem:OneState} to extract corollaries in the case where we have more than one state. If $k \geq 2$, then recall that the isomorphism between $\RTM(X,k)$ and $\RTM(\sqrt[k]{X},1)$ simply uses the different positions on $\#$-segments to encode the state. Thus, moving a head one step to the right along a contiguous segment of $\#$-symbols corresponds to increasing the state by one. Moving a head from the rightmost $\#$-symbol to non-$\#$ symbol to the right of it means corresponds to changing the state to $1$ and stepping to the right.

Thus, translating the previous result amounts to the following: Let $X \subset \Sigma^\ZZ$ be a locally aperiodic subshift and $k \geq 2$. For $u, v \in \Sigma^*$ and $q \in \{1,\dots,k-1\}$ define the \define{controlled state swap} $f_{u,q,v} \in \RFA_{\text{fix}}(X,k)$ as
\[ f_{u,q,v}(x, q') = (x, q'') \]
where $q'' = q'$ if $q' \notin \{q, q+1\}$ or if $x_{\{-|u|,\dots,|v|-1\}} \neq uv$, and otherwise let $q'' \in \{q, q+1\} \setminus \{q'\}$. For $u, v \in \Sigma^*, a \in \Sigma$, define a \define{stateful controlled position swap} as $f(xu.avy, k) = (xua.vy, 1)$ and $f(xua.vy, 1) = (xy.avy, k)$ for all tails $x, y$, and $f(z) = z$ for all points not of this form. This is a well-defined involution for all $u, v$ (since $k \geq 2$).

\begin{corollary}
Let $X$ be a weakly chain-transitive locally aperiodic subshift and $k \geq 2$. Then $\RFA(X,k)$ is generated by the shift map, controlled state swaps, and stateful controlled position swaps.
\end{corollary}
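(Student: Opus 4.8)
The plan is to obtain the corollary as a translation of Theorem~\ref{thm:RFAGenerators} through the state‑elimination isomorphism of Lemma~\ref{lem:OneState}. Write $Y = \sqrt[k]{X}$. First I would check that the isomorphism $\varphi$ of Lemma~\ref{lem:OneState} between $\RTM(Y,1)$ and $\RTM(X,k)$ restricts to an isomorphism $\RFA(Y) = \RFA(Y,1) \cong \RFA(X,k)$: a machine of $Y$ fixes its tape if and only if it fixes the non‑$\#$ symbols (the $\#$‑skeleton of a configuration of $Y$ is determined by them), and under $\varphi$ the non‑$\#$ content of $Y$ is exactly the $X$‑content, while the state of the $X$‑machine is recorded not by the tape but by the offset of the head inside a $\#$‑block. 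Hence tape‑fixing machines correspond on both sides.

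Next I would record two facts about $Y$. Since $X$ is weakly chain‑transitive, so is $Y$ by the lemma stating that $\sqrt[k]{\cdot}$ preserves weak chain‑transitivity; hence, by the lemma identifying the orbitwise shifts of a weakly chain‑transitive subshift with the powers of the shift, $\OS(Y) = \{\sigma^n \mid n \in \ZZ\}$. Feeding this into Theorem~\ref{thm:RFAGenerators} applied to $Y$, the group $\RFA(Y)$ is generated by $\sigma$ together with the controlled position swaps $T_{u,a,v}$ of $Y$ (those with $uav$ non‑unary).

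It then remains to identify the images of these generators under $\varphi$. The shift $\sigma$ of $Y$ corresponds to the $\RFA(X,k)$‑machine that increments the state as the head runs along a $\#$‑block and, from the maximal state, resets the state to $1$ and steps one cell over into the next block; this is precisely the correspondence spelled out in the paragraph preceding the statement, so this machine is ``the shift map''. For a controlled position swap $T_{u,a,v}$ of $Y$ I would first write it as a pairwise commuting, hence finite, product of controlled position swaps with suitably extended control words — extending a non‑unary word keeps it non‑unary, so this is legitimate, and once the control is long enough it pins down the residue modulo $k$ of the pair of cells being swapped. Such a swap exchanges the head between two adjacent cells $i,i+1$ of $Y$: if $i$ and $i+1$ encode two states $q,q+1$ with $q \in \{1,\dots,k-1\}$ of the same head position in $X$, the image under $\varphi$ is a controlled state swap $f_{u',q,v'}$, with $u',v'$ read off from the $X$‑content visible in the now‑aligned control window; if instead $i$ is the last $\#$‑cell of a block and $i+1$ the non‑$\#$ cell opening the next one, the image is a stateful controlled position swap. (A control word consisting only of $\#$'s transports to a finite product of controlled state swaps with empty control words, which are of the allowed form.) Collecting these, $\RFA(X,k)$ is generated by the shift map, controlled state swaps, and stateful controlled position swaps.

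Since Theorem~\ref{thm:RFAGenerators} carries the real weight, the main obstacle is this last identification: bookkeeping the $Y$‑control words and swap positions against the $X$‑control words and state indices, and verifying the ``extend the control word'' reduction so that the two designated families of $X$‑machines account for every controlled position swap of $Y$. I would also flag where the hypotheses enter: weak chain‑transitivity is used only to collapse $\OS(Y)$ to the cyclic group $\langle \sigma \rangle$, so that one ``shift map'' generator suffices, while local aperiodicity and $k \geq 2$ are what guarantee that the controlled state swaps and the stateful controlled position swaps are well‑defined involutions.
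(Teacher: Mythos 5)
Your proposal is correct and follows exactly the route the paper intends (and essentially leaves implicit): reduce to $\sqrt[k]{X}$ via Lemma~\ref{lem:OneState}, note that $\sqrt[k]{X}$ inherits weak chain-transitivity so Theorem~\ref{thm:RFAGenerators} gives generation by $\sigma$ and controlled position swaps there, and translate those generators back, with extended control words pinning down the $\bmod\ k$ alignment so each swap becomes a controlled state swap or a stateful controlled position swap. The only quibble is your closing attribution: well-definedness of the two swap families comes from $k \geq 2$ (and non-unarity), while local aperiodicity is assumed in the paper mainly so that the fixed-head and moving-head versions of these groups agree.
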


\subsection{Elementary Turing machines are finitely generated}

In this section, we prove~Theorem~\ref{theorem_EL_is_FG}. That is, we show that for any $n, k \geq 1$ the group $\EL(\ZZ,n,k)$ is finitely-generated.

\begin{proof}[Proof of Theorem~\ref{theorem_EL_is_FG}]
The case $n = 1$ is easy. Suppose thus that $n \geq 2$. Let $X = \Sigma^\ZZ$. Since $\OB(X)$ is finitely generated and $\OS(X) = \langle \sigma \rangle$, we only need to show that the controlled position swaps can be implemented when $k = 1$, and that the stateful controlled position swaps and controlled state swaps can be implemented when $k \geq 2$. We work in the fixed-head model.

Suppose first that $k = 1$. 
Let $s_\ell = T_{\epsilon, 1, 0^{\ell}}$. 
By conjugating with an element of $\OB(X)$, these swaps generate all controlled swaps where the left control is $\epsilon$. Namely, to generate $T_{\epsilon, a, v}$ where $av$ is not unary (see Lemma~\ref{lem:Unary}), conjugate $s_\ell$ where $\ell = |v|$ with any local permutation mapping
\[ f(x . av y) = x . 10^\ell y \mbox{ and } f(x a . v y) = x 1 . 0^\ell y. \]
Such $f$ exists because $av$ is not unary and thus does not overlap with its $1$-shift -- simply map these patterns correctly, and fill the permutation arbitrarily. 
Swaps at other positions are obtained by conjugating with $\sigma$. Thus, we only need to show that $s_{\ell}$ can be decomposed into elements of $\OB(X)$, shifts, and finitely many controlled swaps. We do this by induction on $\ell$.

We begin with a construction in the case when $n$ is even. We begin with the intuitive description. Since whether the head is on the coordinate $0$ or $1$ in the word $10^\ell$ can be checked locally, we can consider the position to be just a binary digit $c$ (so $c = 0$ means we are on the zero-coordinate of a word $10$, $c = 1$ means we are on the one-coordinate of such pattern, and otherwise we fix the point anyway). Note that using elements of $\OB(X)$, we can swap positions to the right of the pattern $10$ independently of $c$. Thus, our controlled swaps may depend on positions arbitrarily far to the right from the central $10$, as long as they do not depend on more than a constant number of coordinates.

We begin with state $cwb$ where $c$ is the head position (either $0$ or $1$, marking whether we are on the first or second coordinate of $w$), and $w = 10u$ where $u \in \Sigma^{\ell-1}$ is the control word that we are checking for value $0^{\ell-1}$. The permutation is the following: First, flip $c$ if $b = 0$ (by moving $b$ next to the initial $10$ of $w$ performing a small-radius controlled swap, and moving $b$ back). Then add $1$ (modulo $n$) to $b$ (by an element of $\OB(X)$) if $u = 0^{\ell-1}$. Then repeat these two steps $n$ times. The end result is that $c$ is flipped once if $u = 0^{\ell-1}$, since $b = 0$ at exactly one of the $c$-flipping steps, and otherwise $c$ is flipped either $0$ or $n$ times and thus is not changed.

More precisely, define first $g \in \RFA(X)$ as the controlled swap $T_{\epsilon,1,00}$, that is,
\[ g(x1.00y) = x.100y,\;\; g(x.100y) = x1.00y \]
and $g(z) = z$ for points not of this form. This is clearly in $\RFA(X)$, and we take it in our set of generators. Also define $h \in \OB(X)$ as any local permutation such that
\[ h(x1.0uby) = x1.0buy,\;\; h(x.10uby) = x.10buy \]
for all $u \in \Sigma^{\ell-1}, b \in \Sigma$ (and $h(z)$ may be arbitrary for points not of this form),
which is again well-defined since no point is of both forms.

Now, defining $f' = h^{-1} \circ g \circ h$ we have
\[ f'(x1.0u0y) = x.10u0y,\;\; f'(x.10u0y) = x1.0u0y \]
for all $u \in \Sigma^{\ell-1}$ and $f'(z) = z$ for points not of this form.

Next, define $f'' \in \OB(X)$ by
\[ f''(x.10^\ell by) = x.10^\ell(b+1)y,\;\; f''(x1.0^\ell by) = x1.0^\ell(b+1)y \]
for all $b \in \{0,1,\ldots,n-1\}$ (where increment is modulo $n$) and $f''(z) = z$ for all $z \in X$ not of this form.

Then $(f'' \circ f')^n$ is our desired map. 

Suppose then that $n$ is odd. We will use the proof structure of \cite{Se16}. Again, consider the input to be $c10ub$ where $c \in \{0,1\}$ indicates the position of the head (either on the symbol $1$ or the symbol $0$ of the word $10$ after $c$), and $u \in \Sigma^{\ell-1}$ and $b \in \Sigma$. We construct the machine by induction. Suppose we have already constructed a machine that flips $c$ conditioned on $u = 0^{\ell-1}$, and let us show how to check $b = 0$. First flip $b$ between $0$ and $1$ if $u = 0^{\ell-1}$ (and fix the value of $b$ if $b \in \Sigma \setminus \{0,1\}$), using a machine in $\OB(X)$. Then flip the bit $c$ if $b = 0$ using a machine in $\RFA(X)$ conjugated by a machine in $\OB(X)$, as above. Repeat these steps. The resulting machine $g^2$ is in $\EL(X)$ and will flip $c$ if and only if $b \in \{0,1\}$ and $u \in 0^{\ell-1}$.

Now, conjugate the machine by affine translations: let $h \in \OB(X)$ add $2$ to the value of $b$. Then $h^{-\frac{n-1}{2}} \circ (g^2 \circ h)^{\frac{n-1}{2}}$ will flip the value of $c$ if and only if $u = 0^{\ell-1}$ and $b \neq 0$. Finally, flip $c$ if $u = 0^{\ell-1}$, so that the resulting machine flips $c$ if and only if $ub = 0^{\ell}$.

The details of translation to machines in $\RFA(X)$ and $\OB(X)$ are omitted, as they are similar as in the case of an even alphabet.

In the case $k \geq 2$, we need to perform controlled state swaps and stateful controlled position swaps using finitely many elements of $\RFA(X)$ and elements of $\OB(X)$. The proof of controlled state swaps is the same as the proof above, but now the bit $c$ (which indicated the position above) indicates which state we are in, out of the two we are swapping (and nothing happens if we are in neither state). The proof of stateful controlled position swaps is also similar; and now the bit $c$ is $0$ if we are in state $k$, and $1$ if we are in state $k$, and in state $1$ the control word is seen through an offset of one.\end{proof}


\section{Computability aspects}
\label{section.computability}
In this section we study the computability aspects of $\RTM(\ZZ^d,n,k)$. We begin the section by briefly showing which properties of Turing machines are computable. In particular we prove that injectivity and thus the reversibility of a machine in $\TM(\ZZd,n,k)$ is decidable. This property, along with the possibility to compute the rule of a composition and an inverse gives a recursive presentation for $\RTM(\ZZd,n,k)$ which has a decidable word problem.

In this context, we proceed to study the torsion problem of $\RTM(\ZZd,n,k)$ and its subgroups, that is, whether there exists an algorithm which given a description of $T \in \RTM(\ZZd,n,k)$ always halts and accepts if and only if there exists $n \geq 1$ such that $T^n = \ID$. In this context we show that $\EL(\ZZ,n,k)$ has an undecidable torsion problem (Theorem~\ref{thm:decidability_torsion_elementaryTM}). Furthermore, we use this result to show that the automorphism group of any uncountable $\ZZ$-subshift contains a finitely generated subgroup with undecidable torsion problem (Corollary~\ref{cor:autgroup_undecidabletorsion}).

Finally, we study the torsion problem for $\FG(\ZZd,n,k)$. We show by a simple argument that $\FG(\ZZ,n,k)$ has a decidable torsion problem. Interestingly, the torsion problem in $\FG(\ZZd,n,k)$ for $d \geq 2$ is undecidable. We present a detailed proof of this result which draws upon the undecidability of the snake tiling problem~\cite{Ka03}. These two results add up to the Theorem~\ref{thm:decidability_torsion_FSA} we discussed in the introduction.

\subsection{Basic decidability results}

First, we observe that basic management of local rules is decidable.

\begin{restatable}{lemma}{Decisions}
\label{lemma_computability_obvious_properties}
Given two local rules $f, g$ in the moving head model,
\begin{itemize}
	\item It is decidable whether $T_f = T_g$,
	\item We can effectively compute a local rule for $T_f \circ T_g$,
	\item It is decidable whether $T_f$ is injective,
	\item It is decidable whether $T_f$ is reversible, and
	\item We can effectively compute a local rule for $T_f^{-1}$ when $T_f$ is reversible.
\end{itemize}
\end{restatable}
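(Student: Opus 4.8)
\textbf{Proof plan for Lemma~\ref{lemma_computability_obvious_properties}.}
The plan is to reduce each item to a finite check on local rules, using the fact that all the relevant data of a Turing machine is carried by a local rule $f\colon \Sigma^{F_{\text{in}}}\times Q \to \Sigma^{F_{\text{out}}}\times Q\times \ZZ^d$ with finite domain and finite-range third coordinate. First I would normalize: given $f$ and $g$, extract an integer $r$ which is an upper bound for the radius (in-, out- and move-radius) of both $T_f$ and $T_g$ by inspecting $F_{\text{in}}$, $F_{\text{out}}$ and the finite image of the third coordinate. Then, as remarked after the definition of minimal neighborhood, we may rewrite both local rules over the common window $F = B(\vec 0, r)$, i.e.\ as maps $\Sigma^{F}\times Q \to \Sigma^{F}\times Q\times F$; this rewriting is an explicit finite computation.

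For equality, once $T_f$ and $T_g$ are expressed over the same window $F$, the action of $T_f$ on a point $x_q^{\vec v}$ depends only on $\sigma^{-\vec v}(x)|_F$ and $q$, so $T_f = T_g$ if and only if the (finitely many) rewritten rules agree on every pair $(p,q)\in \Sigma^F\times Q$; this is a finite comparison. For composition, I would pick a window large enough to ``see'' the two successive local applications: if $T_g$ has radius $r_g$ and $T_f$ has radius $r_f$, then on input $x_q^{\vec 0}$ the machine $T_g$ reads inside $B(\vec 0,r_g)$, moves by at most $r_g$, and then $T_f$ reads inside $B(\vec 0, r_g + r_f)$ around the new head position; composing the two finite tables on $\Sigma^{B(\vec 0, 2(r_f+r_g))}\times Q$ (say) and then minimizing yields an explicit local rule for $T_f\circ T_g$ — a routine but purely finite manipulation, and it is precisely what underlies the proof that $\TM(\ZZ^d,n,k)$ is a monoid.

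For injectivity I would invoke Theorem~\ref{theorem_injective_surjective_reversible} (via Proposition~\ref{injective_surjective_tape_vs_head}), which says injectivity, surjectivity and reversibility all coincide and, moreover, that $T_f$ is injective iff the cylinder images $D_i = T_f(C_i)$ of the partition $\{C_i = [p_i]\times\{q\}\}$ by patterns over the minimal neighborhood are pairwise disjoint. Computing the minimal neighborhood is a finite search (intersect all windows that work, over a bounded range, which is decidable by the equality test above), and then checking disjointness of the finitely many images $D_i$ — each a shifted cylinder $[p_i']_{\vec v_i}\times\{q_i'\}$ described by the rewritten local rule — is a finite combinatorial check on words. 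This simultaneously decides reversibility, since reversibility is equivalent to injectivity. Finally, when $T_f$ is reversible the images $D_i$ partition $\Sigma^{\ZZ^d}\times Q$ and each $T_f|_{C_i}\colon C_i \to D_i$ is a homeomorphism between shifted cylinders; inverting these finitely many cylinder bijections and reading off the resulting pattern-replacement-plus-shift data gives an explicit local rule $f^{-1}$ with $T_{f^{-1}} = T_f^{-1}$ — again a finite computation, and one can double-check the output with the composition and equality procedures.

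The only genuinely non-trivial point is the injectivity/reversibility test, and even there the work has already been done: the main obstacle is not in the argument but in making sure one keeps track of radii correctly so that the ``same window'' reductions are valid. Once Theorem~\ref{theorem_injective_surjective_reversible} is available, injectivity is a disjointness test on a finite family of cylinders, and everything else (equality, composition, inverse extraction) is bookkeeping with finite tables; I would present each item as a short explicit algorithm and remark that correctness is immediate from Proposition~\ref{prop_CACharacterization} and Theorem~\ref{theorem_injective_surjective_reversible}.
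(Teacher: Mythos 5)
Your plan is correct, and its overall skeleton — normalize both rules to a common finite window, decide equality by comparing finite tables, treat composition as finite bookkeeping, and lean on Theorem~\ref{theorem_injective_surjective_reversible} (plus Proposition~\ref{injective_surjective_tape_vs_head}) to identify injectivity with reversibility — matches the paper's proof. Two of the five items are, however, handled by genuinely different arguments. For injectivity, the paper does not pass through the cylinder partition: it argues directly that a failure of injectivity $T_f(x,y)=T_f(x',y')$ forces the two heads to lie within the move-radius of each other and the tapes to agree outside the radius of $T_f$, so one may assume both configurations are $0$ there; non-injectivity is then witnessed on an explicit finite set of configurations. Your route instead extracts the disjointness criterion for the images $D_i=T(C_i)$ from the proof of Theorem~\ref{theorem_injective_surjective_reversible} and turns it into a finite intersection test on shifted cylinders; this is equally valid (and note that minimality of the neighborhood is not actually needed there — any window containing $F_{\text{in}}\cup F_{\text{out}}$ makes each $C_i$ map homeomorphically onto a shifted cylinder — so you can skip the minimal-neighborhood computation). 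For the inverse, the paper simply enumerates all local rules and tests each candidate $T'$ for $T_f\circ T'=T'\circ T_f=\ID$, relying on the already-established decidability of composition and equality to make this a terminating search; your direct inversion of the finitely many cylinder bijections is more constructive and gives an explicit radius bound for the inverse, at the cost of a little more verification that the inverted data really assembles into a local rule (which your suggested equality/composition double-check covers). Both approaches are sound; yours buys explicitness where the paper buys brevity.
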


\begin{proof}
For the first claim, let $f\colon \Sigma^{F_1} \times Q \to \Sigma^{F_2} \times Q \times \ZZd$ and $g\colon \Sigma^{F_3} \times Q \to \Sigma^{F_4} \times Q \times \ZZd$ and define $F = F_1 \cup F_2 \cup F_3 \cup F_4$. Extend the rules $f,g$ so that they are defined as $f',g'\colon \Sigma^F \times Q \to \Sigma^F \times Q \times \ZZd$ satisfying $T_f = T_{f'}$ and $T_g = T_{g'}$. If for some $(p,q)\in \Sigma^F \times Q$ $f'(p,q) \neq g'(p,q)$, then clearly $T_{f '} \neq T_{g'}$. Otherwise, we have $T_{f'} = T_{g'}$.

Finding a local rule for the composition of two Turing machines is a straightforward, if somewhat tedious, exercise. 

For the decidability of reversibility, we give a semi-algorithm for both directions. First, if $T_f$ is reversible, then it has a reverse $T_g$. We thus only need to enumerate local rules $g$, and check whether $T_f \circ T_g = \ID$, which is decidable by using the two previously described procedures.

If $T_f$ is not injective, then $T_f(x,y) = T_f(x',y')$ for some $(x,y), (x',y') \in \Sigma^{\ZZd} \times X_k$ with $y_{\vec 0} \neq 0$. If $r$ is the move-radius of $f$, then necessarily the nonzero position of $y'$ is at distance at most $r$ from the origin and $x_{\vec v} = x'_{\vec v}$ for $|\vec v|$ larger than the radius of $T_f$. Then we can assume $x_{\vec v} = x'_{\vec v} = 0$ for all such $\vec v$. It follows that if $T_f$ is not injective, it is not injective on the finite set of configurations $(x,y) \in \Sigma^{\ZZd} \times X_k$ where $(x,y)_{\vec v} = 0$ for all $|\vec v|$ larger than the radius of $T_f$, which we can check algorithmically.

By Theorem~\ref{theorem_injective_surjective_reversible}, injectivity is equal to reversibility, so the decidability of the reversibility of a Turing machine is a direct consequence of the previous argument.

Finally, if $T_f$ is reversible, we can effectively construct a reverse ${T_f}^{-1}$ for it by enumerating all Turing machines and outputting the first $T'$ such that $T_f \circ T' = T' \circ T_f = \ID$. \end{proof}	

From the results above, we obtain that the set of all possible local rules $f$ which generate reversible actions $T_f$ gives a recursive countable presentation of $\RTM(\ZZd,n,k)$. Furthermore, that presentation has a decidable word problem.

\begin{proposition}
$\RTM(\ZZd,n,k)$ admits a recursive presentation with decidable word problem.
\end{proposition}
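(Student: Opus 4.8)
The plan is to produce a \emph{recursive presentation} for $\RTM(\ZZd,n,k)$ whose generators and relations are uniformly enumerable, and then to show that the word problem is decidable by exhibiting an algorithm that, given a finite product of generators and their inverses, decides whether that product equals the identity. The key observation, supplied by Lemma~\ref{lemma_computability_obvious_properties} (together with Theorem~\ref{theorem_injective_surjective_reversible}), is that the set of local rules $f$ in the moving-head model is itself a recursive set, that one can algorithmically decide whether a given $T_f$ is reversible, and that one can effectively compute local rules for compositions and inverses. This already gives us a complete, computable handle on the group as an abstract object: its elements can be listed, multiplied, and inverted effectively.

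The concrete construction I would carry out is the following. First, fix the countable set $\mathcal{R}$ of all reversible local rules; by Lemma~\ref{lemma_computability_obvious_properties} this is a decidable subset of the (obviously recursive) set of all local rules, so we may enumerate $\mathcal{R} = \{f_0, f_1, f_2, \dots\}$ effectively, and we introduce one generator symbol $g_i$ for each $f_i$. Since $\RTM(\ZZd,n,k)$ is generated (even) by this set with the $g_i \mapsto T_{f_i}$ map being surjective by definition, this gives a generating set. For the relations, I would take $R$ to consist of all relators of the form $g_i g_j g_k^{-1}$ such that $T_{f_i} \circ T_{f_j} = T_{f_k}$, together with $g_i g_j^{-1}$ whenever $T_{f_i} = T_{f_j}$, and $g_i$ whenever $T_{f_i} = \ID$. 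Using the decidability of rule equality and the effective computation of composition rules from Lemma~\ref{lemma_computability_obvious_properties}, the set $R$ is recursively enumerable (in fact decidable), and one checks easily that the group presented by $\langle (g_i)_i \mid R\rangle$ is isomorphic to $\RTM(\ZZd,n,k)$: the natural homomorphism is surjective because the $g_i$ cover all machines, and injective because any word in the generators can be collapsed, using the composition relators, to a single generator $g_m$, and the generator-identity relators then force $g_m = 1$ in the presented group exactly when $T_{f_m} = \ID$.

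For the word problem: given a word $w = g_{i_1}^{\varepsilon_1} \cdots g_{i_\ell}^{\varepsilon_\ell}$ in the generators, replace each $g_{i_t}^{-1}$ by the generator symbol corresponding to the (effectively computable) inverse rule of $f_{i_t}$, then iteratively fold adjacent pairs using the effective composition of local rules, arriving after $\ell-1$ steps at a single generator $g_m$ with $T_{f_m}$ equal to the product of the machines named by $w$. Finally test whether $T_{f_m} = \ID$, which is decidable (e.g.\ it is equivalent to $T_{f_m}$ having empty ``effective support'', or more simply: it is the composition-equality check $T_{f_m} = T_{f_{m'}}$ against the fixed rule $f_{m'}$ representing the identity). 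This procedure always halts and correctly answers whether $w = 1$ in the group, so the presentation has a decidable word problem.

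I do not anticipate a genuine obstacle here; the content is entirely carried by Lemma~\ref{lemma_computability_obvious_properties}, and the proof is a matter of carefully assembling those effectivity statements into a presentation. The one point requiring a little care is the \emph{uniformity} of the construction of $R$ — one must make sure the composition and equality tests are applied in a dovetailed enumeration so that $R$ is genuinely r.e.\ — and the verification that collapsing a word via composition relators is legitimate inside the presented group, i.e.\ that the relators $g_i g_j g_k^{-1}$ really do suffice to reduce any word to a single generator. Both are routine, so the proof will be short.
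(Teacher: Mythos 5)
Your proposal is correct and follows essentially the same route as the paper: enumerate the reversible local rules as generators using the decidability of reversibility from Lemma~\ref{lemma_computability_obvious_properties}, and decide the word problem by effectively composing a word down to a single rule and testing it against the identity. Your explicit relator set $\{g_ig_jg_k^{-1},\ g_ig_j^{-1},\ g_i\}$ and the isomorphism check merely make precise what the paper leaves implicit.
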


\begin{proof}
The set of local rules giving moving head Turing machines can be recursively enumerated. Indeed, consider the sequence of sets $\Lambda_n = \{-n,\dots,n\}^d$ indexed by $n \in \NN$ and list in some lexicographical order all local rules $f\colon \Sigma^{\Lambda_n} \times Q \to \Sigma^{\Lambda_n} \times Q \times \Lambda_n$. For each $n \in \NN$ this is a finite set and thus all local rules can be recursively listed as $(f_{i})_{i \in \NN}$. By Lemma~\ref{lemma_computability_obvious_properties} it is decidable which local rules define reversible Turing machines, and thus one can run that algorithm on each $f_i$ to obtain a recursive enumeration $(f_{\varphi(i)})_{i \in \NN}$ of all reversible moving head Turing machines. Also, using Lemma~\ref{lemma_computability_obvious_properties} one can reduce every word $f_{\varphi(i_1)}f_{\varphi(i_2)}\cdots f_{\varphi(i_k)}$ to some equivalent rule $f_{\text{eq}}$. It suffices to test the equality of $f_{\text{eq}}$ with the identity machine to decide the word problem of this presentation. 
\end{proof}

\subsection{The torsion problem of elementary Turing machines}

\begin{definition}
	Let $G$ be a group which is generated by $S \subset G$. The \define{torsion problem} of $G$ is the set of words $w \in S^*$ for which there is $n \in \ZZ_+$ such that the element represented by $w^n$ is the identity of $G$.
\end{definition}

If a group $G$ is recursively presented, then the torsion problem is recursively enumerable. However, the torsion problem may not be decidable even when $G$ has decidable word problem. Many such examples are known, and the main result of this section provides a new such example.

As discussed in the introduction, we say a moving head Turing machine is \define{classical} if its in- and out-radii are $0$, and its move-radius is $1$. Here we characterize reversibility in classical Turing machines. If $T_0$ has in-, out- and move-radius $0$, that is, $T_0$ only performs a permutation of the set of pairs $(s,q) \in \Sigma \times Q$ at the position of the head, then we say $T_0 \in \LP(\ZZ,n,k)$ is a \define{state-symbol permutation}. If $T_1$ has in-radius $-1$, never modifies the tape, and only makes movements by vectors in $\{-1,0,1\}$, then $T_1 \in \FG(\ZZ,n,k)$ is called a \define{state-dependent shift}.

If we consider the class of all classical Turing machines on some finite alphabet and number of states then the torsion problem is undecidable. This result was shown by Kari and Ollinger in~\cite{KaOl08} --they call it the periodicity problem in their setting-- using a reduction from the mortality problem which in turn they also prove to be undecidable following a reduction from the mortality problem of reversible 2-counter machines. In this section, we show that the torsion problem is also undecidable for elementary Turing machines with a fixed alphabet and number of states. We do this by reducing to the classical machines. We begin by describing them in our setting.

\begin{restatable}{proposition}{ClassicalReversibility}
\label{prop:ClassicalReversibility}
A classical Turing machine $T \in \RTM(\ZZ,n,k)$ is reversible if and only if it can be expressed in the form $T_1 \circ T_0$ where $T_0$ is a state-symbol permutation and $T_1$ is a state-dependent shift. In particular, classical reversible Turing machines are in $\EL(\ZZ,n,k)$.
\end{restatable}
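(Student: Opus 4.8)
The plan is to analyze the structure of a classical reversible Turing machine directly from its local rule. Recall a classical machine $T$ has a local rule $f\colon \Sigma \times Q \to \Sigma \times Q \times \{-1,0,1\}$, which we write as $f(a,q) = (f_1(a,q), f_2(a,q), f_3(a,q))$. The key observation is that the symbol written and the new state depend only on $(a,q)$, while the movement depends on $(a,q)$ as well — but since $T_1$ must be a state-dependent shift (in-radius $-1$, no tape modification, moves in $\{-1,0,1\}$), its movement can only depend on the state it is currently in. So the natural candidate decomposition is: let $T_0$ be the state-symbol permutation acting by $(a,q) \mapsto (f_1(a,q), f_2(a,q))$ at the head position, and let $T_1$ be the state-dependent shift that, in state $q'$, moves by the vector $f_3(a,q)$ where $(a,q)$ is the unique preimage — but this only makes sense if the movement is determined by the \emph{outgoing} state $q' = f_2(a,q)$.

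First I would establish the ``only if'' direction's nontrivial content, namely that reversibility of the classical machine forces exactly this compatibility. If $T$ is reversible, then by Theorem~\ref{theorem_injective_surjective_reversible} (transported to the moving-head model via Proposition~\ref{injective_surjective_tape_vs_head}) it is bijective, hence injective. I would argue that injectivity of a classical machine forces the map $(a,q) \mapsto (f_1(a,q), f_2(a,q))$ to be a bijection of $\Sigma \times Q$: if two distinct pairs $(a,q) \neq (a',q')$ had the same image symbol and state, one can build two distinct configurations (placing $a$ resp. $a'$ under the head in state $q$ resp. $q'$, agreeing elsewhere) mapping to the same image, since after the write the tape contents and state coincide and the subsequent shift is by a vector depending only on the common outgoing data — here is where I must be slightly careful, as the shift amount $f_3(a,q)$ vs $f_3(a',q')$ could differ. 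The correct statement is the classical reversibility criterion: $T$ is reversible iff $(a,q)\mapsto (f_1(a,q),f_2(a,q))$ is a bijection \emph{and} $f_3(a,q)$ depends only on the outgoing state $f_2(a,q)$ (equivalently, whenever $f_2(a,q) = f_2(a',q')$ we have $f_3(a,q) = f_3(a',q')$). The forward implication: if $f_2(a,q)=f_2(a',q')$ but $f_3(a,q)\neq f_3(a',q')$, then even though the written symbols might differ, one can find configurations where after writing and shifting the images collide — this needs a short argument placing the differing written symbols far enough that the head lands in a region where things agree; I would spell this out. The bijectivity of $(a,q)\mapsto(f_1(a,q),f_2(a,q))$ follows because, given the movement is now a function of the outgoing state, any collision in the write map immediately produces a collision of $T$.

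Conversely, given these two combinatorial conditions, I would simply \emph{define} $T_0 \in \LP(\ZZ,n,k)$ by the permutation $\pi(a,q) = (f_1(a,q), f_2(a,q))$ of $\Sigma\times Q$ (a state-symbol permutation, automatically reversible) and $T_1 \in \FG(\ZZ,n,k)$ the state-dependent shift that in state $q'$ moves by $g(q') := f_3(a,q)$ for any/the preimage with $f_2(a,q)=q'$ (well-defined by the second condition; reversible since it is a state-dependent shift and the underlying map on $\ZZ\times Q$ is a bijection). Then a direct check shows $T_1 \circ T_0 = T$: applying $T_0$ performs the write and state change without moving, and then $T_1$ reads the new state and shifts by exactly $f_3(a,q)$. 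Since $T_0 \in \LP(\ZZ,n,k)$ and $T_1 \in \FG(\ZZ,n,k)$, we get $T = T_1\circ T_0 \in \langle \LP(\ZZ,n,k), \FG(\ZZ,n,k)\rangle = \EL(\ZZ,n,k)$, giving the ``in particular'' clause.

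\textbf{The main obstacle} I anticipate is the careful bookkeeping in the forward direction: proving that injectivity of a classical $T$ genuinely forces the movement to factor through the outgoing state. The subtlety is that a classical machine writes \emph{before} moving, so a naive ``two configurations with the same image'' argument has to contend with the possibility that the written symbols differ, which means the configurations are not yet equal after the write step — one must choose the ambient configurations so that the post-shift images still coincide, exploiting that the head moves by at most one cell and the disagreement can be localized. Once this normal form for reversible classical machines is pinned down, the rest is routine verification. I would also remark that this proposition is exactly what feeds the simulation argument for Theorem~\ref{thm:decidability_torsion_elementaryTM}, since it places all iterates of a reversible classical machine inside the finitely generated group $\EL(\ZZ,n,k)$.
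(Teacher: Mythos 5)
Your proposal is correct and follows essentially the same route as the paper: both establish that reversibility forces the movement to be a function of the outgoing state via a two-configuration collision argument, deduce that the write map $(a,q)\mapsto(f_1(a,q),f_2(a,q))$ is a bijection of $\Sigma\times Q$, and then read off the decomposition $T=T_1\circ T_0$ with $T_1$ the state-dependent shift keyed to the outgoing state. The one spot where your description of the obstacle is slightly off is the collision construction: the fix is not to place the written symbols ``far enough away'' (they always sit at the head position) but to choose the ambient tape so that the cell the moving head lands on already carries the symbol the non-moving machine writes, exactly as in the paper's example $T((xb'.ay),q)=(xb'.by,r)=T((x.a'by),q')$.
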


\begin{proof}
We only need to show that if $T$ is reversible then it is of the stated form. Let $f_T \colon \Sigma \times Q \to \Sigma \times Q \times \{-1,0,1\}$ be a local rule for $T$ in the moving tape model. We claim that if $f_T(a,q) = (b,r,d)$ and $f_T(a',q') = (b',r,d')$ then $d = d'$. Namely, if it is not the case, one can easily find two configurations with the same image. There are multiple cases to consider which can all be treated similarly, for instance, let $x,y$ be a left and right infinite sequence of symbols of $\Sigma$ respectively and suppose $d = 0$ and $d' = 1$. In this case:
\[ T((xb'.ay), q) = (xb'.by, r) = T((x.a'by), q'). \]

Therefore contradicting the reversibility of $T$. Repeating this argument over all pairs $d \neq d'$ we obtain that the direction of movement is entirely determined by the output state. Of course, for $T$ to be injective, also $f_T$ must be injective, so the map $g \colon \Sigma \times Q \to \Sigma \times Q$ defined by $g(a,q) = (b,r)$ if $f_T(a,q) = (b,r,d)$ is injective, thus a bijection. From here, the only remaining possibility is that $T_0$ is the permutation $g$, and $T_1$ as the finite-state automaton with local rule $f_{T_1}(a,q) = (a,q,d)$ if $f_T(b,q') = (b',q,d)$ for some $(b, q) \in \Sigma \times Q$. 
\end{proof}

It follows that the inverse of a reversible classical Turing machine is always of the form $T_0 \circ T_1$ where $T_0$ is a state-symbol permutation and $T_1$ is a state-dependent shift. In the terminology of Section~\ref{section.subgroups}, the theorem implies that all reversible classical Turing machines are elementary.

%

Now we are ready to prove Theorem~\ref{thm:decidability_torsion_elementaryTM}, namely, that the torsion problem of $\EL(\ZZd,n,k)$ is undecidable for all $n \geq 2$, $k\geq 1$ and $d\geq 1$.

\begin{proof}[Proof of Theorem~\ref{thm:decidability_torsion_elementaryTM}]
As $\EL(\ZZ,n,1)$ embeds into $\EL(\ZZd,n,k)$ for every $k,d \geq 1$, It is enough to prove this for $d=1$ and $k=1$. 

The machines constructed in~\cite{KaOl08} are already elementary, but their alphabets are not
bounded (note that these are classical Turing machines so there are only finitely many machines
of any given state-alphabet pair). However, one can simulate them with elementary Turing machines with fixed alphabets.
That is, given a classical reversible Turing machine $T$ (on some finite alphabet and number of states), one can construct a machine in $\EL(\ZZ,n,1)$ which
is periodic if and only if $T$ is. Our result then follows from the main result of~\cite{KaOl08}.

Let $T$ be a given classical Turing machine with state set $Q$ and tape alphabet $\Sigma$, and let $f_T$ be its local rule. Note that in this proof we use $\Sigma$ and $Q$ for the alphabet and state set of $T$, rather than those of the machines in $\EL(\ZZ,n,1)$.

By Proposition~\ref{prop:ClassicalReversibility} we have $T=T_1 \circ T_0$ where $T_0$ is a state-symbol permutation and $T_1$ is a state-dependent shift. We can further assume that $T_1$ does not
use movement by $0$: For each state $q$ that would move by $0$ we introduce a new state $q'$, and modify shift $T_1$ to move by $-1$ in state $q$ and by $+1$ in state $q'$. We also modify
state-symbol permutation $T_0$ to enter always state instead $q'$, whenever it would enter $q$, and to map $(a,q')\mapsto (a,q)$  for all tape symbols $a$. In this way a single non-moving step gets replaced by two
steps moving to the right and to the left, respectively. Clearly the modified machine is periodic if and only if the original one is.
Let us denote by $L\subset Q$ and $R\subset Q$ the sets of left and right moving states in $T_1$.
Then $Q$ is the disjoint union of $L$ and $R$

Let $\perp$ be a symbol which is not in $Q$ and $m$ be such that $n^m\geq |\Sigma\times (Q\cup\{\perp\})|$ so that we can encode tape symbols (represented by pairs in $\Sigma \times \{\perp\})$ and state-symbol pairs as unique blocks of length $m$ over the $n$ letter alphabet. Pair
$(a,q)$ represents a tape cell containing $a\in\Sigma$ and being read by the machine in state $q\in Q$, while $(a,\perp)$ is a cell containing $a\in\Sigma$ and not currently read by the
machine.
Take an arbitrary one-to-one encoding function
$\Sigma\times (Q\cup\{\perp\}) \to \{1,\dots, n\}^m$. Let us denote by $[a,r]$ the encoding of $(a,r)\in (Q\cup\{\perp\})$, a word of length $m$ in the alphabet $\{1,\dots, n\}$.

The idea is that a configuration of $T$ with tape content
$x\in \Sigma^\ZZ$, state $q\in Q$ and position $p\in \ZZ$ is represented as the encoded configuration with tape content $x'\in\{1\dots, n\}^\ZZ$, where $x'_{[im,(i+1)m)}=[x_i,\perp]$ for
$i\neq p$ and $x'_{[pm,(p+1)m)}=[x_p,q]$, and
with the stateless head in position $pm$. Notice that there might be many configurations in $\{1,\dots, n\}^\ZZ$ that are not valid encodings.

We construct an element of $\EL(\ZZ,n,1)$ that is a composition $T_1' \circ T_0'$ where  $T_0'\in \LP(\ZZ,n,1)$ and $T_1'\in \RFA(\ZZ,n,1)$ are both involutions. Informally, the local permutation $T_0'$
implements the state-symbol permutation $T_0$ on the encoded tape, including writing the new state in the correct position, while the finite-state automaton $T_1'$ simply moves the machine head $m$
positions to the right or to the left to scan the block containing the new state. We complete the local rules of $T_0'$ and $T_1'$ into involutions that also perform the inverse steps, and act
as identities in all other situations. This automatically causes the machine to reverse its time direction and start retracing it steps backwards
if it encounters a non-valid pattern.

More precisely, the local permutation $T_0'$ is defined with neighborhood $F_0=\{-m,\dots ,2m-1\}$ and so that the local rule $f_0$ is a permutation of $\{1,\dots, n\}^{F_0}$. The machine sees three $m$-blocks: the
current block in the center and neighboring blocks on both sides.
For every $a,b,c,d\in\Sigma$ and $q,p\in Q$,

\begin{enumerate}
	\item if $f_T(a,q)=(b,p,-1)$ then $f_0$ swaps
	\[[c,\perp] [a,q] [d,\perp] \longleftrightarrow [c,p] [b,\perp] [d,\perp],\]
	\item if $f_T(a,q)=(b,p,+1)$ then $f_0$ swaps
	\[[c,\perp] [a,q] [d,\perp] \longleftrightarrow [c,\perp] [b,\perp] [d,p].\]
\end{enumerate}

All other patterns in $\{1\dots, n\}^{F_0}$ are mapped to themselves. By the reversibility of $T$ this $f_0$ is a well-defined involution.

The reversible single-state finite automaton $T_1'$ uses the neighborhood $F_1=\{-2m,\dots ,3m-1\}$, so the machine sees two blocks to the left and to the right from its current block.  Its
local rule is a function
$f_1\colon \{1,\dots, n\}^{F_1}\longrightarrow\ZZ$ where $f_1(u)$ gives the head movement on tape pattern $u$.
\begin{enumerate}
	\item Let $p\in L$. Then  $f_1$ maps, for all $b,c,d\in \Sigma$,
	\[\begin{array}{ccccccccl}
	* & [c,p] & [b,\perp] & [d,\perp] & * & \ & \mapsto & \ & -m,\\
	* & * & [c,p] & [b,\perp] & [d,\perp] & & \mapsto & & +m,
	\end{array}\]
	where $*$ represents any $m$-block in $\{1\dots, n\}^m$. In other words, if the machine sees a state $p\in L$ on the block to its left and non-states $\perp$ at its current block and the
	block on its right, the machine moves left. If it sees a state $p\in L$ at its current block and $\perp$ on the  two blocks to its right, it moves right. These moves are inverses of each
	other: if one is applicable now, the other one is applicable on the next time step and it makes the machine return to its original position.
	\item Symmetrically, let $p\in R$. Then $f_1$ maps, for all $c,b,d\in \Sigma$,
	$$
	\begin{array}{ccccccccl}
	* & [c,\perp] & [b,\perp] & [d,p] & * & \  &\mapsto & \ & +m,\\
	\lbrack c,\perp] & [b,\perp] & [d,p] & * & * &  &\mapsto & & -m.
	\end{array}
	$$
	Also these moves are inverses of each other.
\end{enumerate}
On all other patterns in $\{1\dots, n\}^{F_1}$ the movement is by $0$. Because $L$ and $R$ are disjoint, $T_1'$ is well-defined. It is also an involution.

Machines $T_0'$ and $T_1'$  are so designed that $T_1' \circ T_0'$ simulates one step of $T$ on valid encodings of its configurations. It follows that if $T$ is not periodic neither is $T_1'
\circ T_0'$.

Suppose then that $T_1' \circ T_0'$ is not periodic. When iterating $T_0'$ and $T_1'$ alternatingly, if at any moment either
machine does not change the configuration then the iteration starts to retrace its steps back in time. This follows from the fact that $T_0'$ and $T_1'$ are involutions. Any orbit containing
two such changes of time direction is periodic. By this and compactness, if $T_1' \circ T_0'$ is not periodic, there exists a bi-infinite orbit where each application of $T_0'$ and $T_1'$
changes the configuration. By the construction, the machine locally only sees valid encodings. The orbit -- in either forward or backward time direction -- is then a valid simulation of $T$ on
the portion of the tape that $T$ sees. It follows that $T$ has a non-periodic orbit as well.        
\end{proof}

Combining Theorem~\ref{thm:decidability_torsion_elementaryTM} and Theorem~\ref{theorem_EL_is_FG}, we immediately obtain the following corollary.

\begin{corollary}
There is a finitely generated subgroup of $\RTM(\ZZ,n,k)$ whose torsion problem is undecidable.
\end{corollary}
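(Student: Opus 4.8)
The plan is to deduce the statement directly from the two headline results already established in the paper. By Theorem~\ref{theorem_EL_is_FG} the group $\EL(\ZZ,n,k)$ is finitely generated, and since $\EL(\ZZ,n,k) = \langle \FG(\ZZ,n,k), \LP(\ZZ,n,k) \rangle \leqslant \RTM(\ZZ,n,k)$ by definition, it is a finitely generated subgroup of $\RTM(\ZZ,n,k)$. By Theorem~\ref{thm:decidability_torsion_elementaryTM} applied with $d = 1$, the torsion problem of $\EL(\ZZ,n,k)$ is undecidable for every $n \geq 2$ and $k \geq 1$. Hence, for $n \geq 2$, the subgroup $\EL(\ZZ,n,k)$ itself witnesses the corollary. (For $n = 1$ the ambient group $\RTM(\ZZ,1,k)$ is finite and so has trivially decidable torsion problem, so the statement is to be understood for $n \geq 2$.)

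The only point that deserves an explicit remark is that the notion of torsion problem used in Theorem~\ref{thm:decidability_torsion_elementaryTM} is attached to a choice of generating set $S$, so one must check this set can be taken to be the finite generating set produced by Theorem~\ref{theorem_EL_is_FG}. This is the standard observation that, for a finitely generated group, decidability of the torsion problem does not depend on the chosen finite generating set: if $S$ and $S'$ are two finite generating sets of a group $G$, then each element of $S$ can be written as a fixed word over $S'$ and vice versa, which yields a computable substitution $S^* \to (S')^*$ sending a word to a word representing the same group element; composing a decision procedure for the torsion problem over $S'$ with this substitution gives one over $S$. Moreover, since the word problem of $\RTM(\ZZ,n,k)$ is decidable, one can effectively rewrite any Turing machine given by a local rule — in particular the machine produced by the reduction inside the proof of Theorem~\ref{thm:decidability_torsion_elementaryTM} — as a word over the fixed finite generating set of $\EL(\ZZ,n,k)$, so the reduction really lands in the torsion problem in the sense of the definition.

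I do not expect any genuine obstacle here: both ingredients are proved earlier in the paper, and the combination is immediate once the generating-set-independence remark is in place; all the substance of the argument already lies inside the proofs of Theorems~\ref{theorem_EL_is_FG} and~\ref{thm:decidability_torsion_elementaryTM}.
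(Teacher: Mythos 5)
Your proposal is correct and is exactly the paper's argument: the corollary is obtained by combining Theorem~\ref{theorem_EL_is_FG} (finite generation of $\EL(\ZZ,n,k)$) with Theorem~\ref{thm:decidability_torsion_elementaryTM} (undecidability of its torsion problem), and your extra remark on the independence of the torsion problem from the choice of finite generating set is a standard point the paper leaves implicit.
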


\subsection{The torsion problem of cellular automata}
\label{sec:CATorsion}

In this section, we show that there is a finitely generated group of cellular automata whose torsion problem is undecidable. It is tempting to try to prove this by constructing an embedding from $\EL(\ZZ,n,k)$ to $\Aut(A^\ZZ)$, but this is impossible: it is well-known that $\Aut(A^\ZZ)$ is residually finite, however, the group of finitely supported permutations of $\NN$ embeds into $\LP(\ZZ,n,k)$ and therefore $\EL(\ZZ,n,k)$ can not be residually finite. Nevertheless, there are standard ways to construct cellular automata from Turing machines, and we show that while one cannot preserve the complete group structure, one can preserve torsion. 

One could do this similarly as in the previous section, by showing directly that an automorphism simulating a Turing machine will be periodic if and only if the Turing machine is. We will prove a slightly stronger abstract result, namely we construct a map from $\EL(\ZZ,n,k)$ to $\Aut(A^\ZZ)$ that preserves some of the group structure of $\EL(\ZZ,n,k)$ while adding only some ``local'' identities. To achieve this, it seems we cannot use the same construction as in the previous section, as discussed below in Remark~\ref{rem:NotTheOnly} discusses.

In the following, the free monoid generated by the elements of a group $G$ is written $G^*$, and consists of formal words $w$ where $w_i \in G$ for all $i = 1, \ldots, |w|$. For $w \in G^*$, write $ \underline{w} = w_1 \cdot w_2 \cdots w_{|w|} \in G$ for the corresponding element of $G$.

\begin{definition}
\label{def:PPreserving}
Let $G$ and $H$ be groups and $\mathcal{P}$ be a group property. We say a function $\phi \colon G \to H$ is $\mathcal{P}$-preserving if the following holds: For every finite set $F \subset G^*$ the group $\langle \underline{w} \;|\; w \in F \rangle \leq G$ has property $\mathcal{P}$ if and only if the group $\langle \underline{\phi(w_1) \phi(w_2) \cdots \phi(w_{|w|})} \;|\; w \in F \rangle$ has property $\mathcal{P}$
\end{definition}

We remark that $\mathcal{P}$-preserving functions $\phi$ need not be morphisms as we do not ask that $\phi(w_1w_2) = \phi(w_1)\phi(w_2)$. We only demand that property $\mathcal{P}$ is preserved when applying $\phi$ to the symbols appearing in the words in $F$.

In what follows we are going to use the property $\mathcal{P}$ of being finite. We use this property to extend computability invariants such as the torsion problem of a group onto another group even if no embedding from the first group to the second exists. This kind of extension obviously demands that the function $\phi$ is in some way computable. We will say a function $\phi \colon G \to H$ is \define{computable} if both $G$ and $H$ have decidable word problem for some fixed presentation and there is a Turing machine which turns any word $w$ in the presentation of $G$ such that $\underline{w}=g \in G$ into a word $u$ in the presentation of $H$ such that $\underline{u}=\phi(g)\in H$.

\begin{lemma}\label{lemma_blurphism}
Let $G$ be a finitely generated group with undecidable torsion problem and generating set $S$, and suppose there exists a computable finiteness-preserving function $\phi \colon G \to H$. Then the subgroup $H' =  \langle \{\phi(s)\}_{s \in S} \rangle \leq H$ has undecidable torsion problem.
\end{lemma}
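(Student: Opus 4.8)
The plan is to exhibit a computable many-one reduction from the torsion problem of $G$, which is undecidable by hypothesis, to the torsion problem of $H'$ with respect to its generating set $\{\phi(s)\}_{s\in S}$. Recall first that an element $g$ of any group has finite order exactly when the cyclic subgroup $\langle g\rangle$ is finite; so, taking $\mathcal{P}$ to be the property of being a finite group, a word $w\in S^*$ lies in the torsion problem of $G$ if and only if the group $\langle \underline{w}\rangle$ has property $\mathcal{P}$.

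The crucial point is to apply Definition~\ref{def:PPreserving} to a \emph{singleton} family. Taking $F=\{w\}\subseteq G^*$, the finiteness-preserving hypothesis on $\phi$ says precisely that $\langle \underline{w}\rangle$ is finite if and only if $\langle \underline{\phi(w_1)\phi(w_2)\cdots\phi(w_{|w|})}\rangle$ is finite. Since each $\phi(w_i)$ is one of the designated generators of $H'$, the formal word $\phi(w_1)\phi(w_2)\cdots\phi(w_{|w|})$ is a legitimate word over the generating set $\{\phi(s)\}_{s\in S}$, and the element it represents is $\underline{\phi(w_1)\cdots\phi(w_{|w|})}\in H'$; its order is the same whether computed in $H$ or in $H'$. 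Combining this with the previous paragraph, $w$ lies in the torsion problem of $G$ if and only if the word $\phi(w_1)\cdots\phi(w_{|w|})$ lies in the torsion problem of $H'$.

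Finally I would check that the assignment $w=w_1w_2\cdots w_{|w|}\mapsto \phi(w_1)\phi(w_2)\cdots\phi(w_{|w|})$ is computable, so that it constitutes a genuine reduction. This is immediate: it is the letter-to-letter substitution $s\mapsto \phi(s)$ on the finite alphabet $S$, and by computability of $\phi$ each $\phi(s)$ can even be produced as an explicit word in the fixed presentation of $H$ (although, for the reduction, it suffices to view each $\phi(s)$ as a formal generator symbol of $H'$). Hence a decision procedure for the torsion problem of $H'$ would yield one for the torsion problem of $G$; as the latter does not exist, neither does the former.

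I do not anticipate a serious obstacle: once the notion of a finiteness-preserving function is available, the lemma is an unwinding of the definitions, the only points needing care being the equivalence ``torsion element $\Leftrightarrow$ finite cyclic subgroup'' and the observation that a one-element family $F$ already suffices in Definition~\ref{def:PPreserving}.
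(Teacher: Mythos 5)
Your proof is correct and follows essentially the same route as the paper's: apply the finiteness-preserving property to the singleton family $F=\{w\}$, use the equivalence between torsion and finiteness of the cyclic subgroup, and observe that the letter-to-letter substitution $s\mapsto\phi(s)$ gives a computable reduction from the torsion problem of $G$ to that of $H'$.
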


\begin{proof}
Suppose the group $H'$ generated by the $\phi(s)$ has decidable torsion problem and let $w \in S^*$. As $\phi$ is finiteness-preserving we have that $\langle \bar w \rangle$ is finite if and only if $\langle \phi(w_1) \phi(w_2) \cdots \phi(w_{|w|}) \rangle$ is finite. This means $\underline{w}$ has finite order in $G$ if and only if $\phi(w_1) \phi(w_2) \cdots \phi(w_{|w|})$ has finite order in $H'$. We can compute $\phi(w_1) \phi(w_2) \cdots \phi(w_{|w|})$ from $w$ and from the finite set $\{\phi(s) | s \in S\} \subset H$. Thus, the algorithm to decide the torsion problem in $H'$ can be used to decide the torsion problem in $G$, raising a contradiction.  
\end{proof}

The previous lemma indicates that in order to prove that $\Aut(A^{\ZZ})$ contains a finitely generated subgroup with undecidable torsion problem, we now simply need to provide a computable finiteness-preserving map from $\EL(\ZZ,n,k)$ into $\Aut(A^\ZZ)$.

\begin{lemma}
Let $ A = \Sigma^2 \times (\{\leftarrow, \rightarrow\} \cup Q \times \{\uparrow,\downarrow\})$.
There is a computable finiteness-preserving map $\phi \colon  \RTM(\ZZ,n,k) \to \Aut(A^\ZZ)$.
\end{lemma}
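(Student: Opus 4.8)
The plan is to realize each reversible Turing machine $T \in \RTM(\ZZ,n,k)$ as a cellular automaton on $A^\ZZ$ using the standard ``two-track plus bouncing head'' trick, where the first $\Sigma$-track carries the simulated tape, and the extra data $\{\leftarrow,\rightarrow\} \cup Q \times \{\uparrow,\downarrow\}$ encodes a bouncing head (or a head-free bouncing signal) together with a time direction. The key idea, already used in the proof of Theorem~\ref{thm:decidability_torsion_elementaryTM}, is that whenever the simulated configuration fails to be a legal encoding, the automaton \emph{reverses its time direction} and retraces its steps, so that $\phi(T)$ is periodic on any orbit that ever encounters an illegal local pattern. Since $T$ is a composition $T_1 \circ T_0$ with $T_0 \in \LP$ an involutive local permutation and $T_1 \in \RFA$ an involutive state-dependent shift (after the classical-machine normalization, or more generally by first decomposing $T$ into the generators of $\EL(\ZZ,n,k)$), I would build $\phi$ by specifying it on a \emph{fixed finite generating set}: define $\phi$ on local permutations and on finite-state shifts separately, each realized as an involutive cellular automaton that acts correctly on legal encodings and bounces elsewhere, and then extend $\phi$ to all of $\RTM(\ZZ,n,k)$ by sending a machine to a canonical product of generator-images obtained from its (computable) decomposition. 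Concretely, I would use one copy of $\Sigma$ for the tape and a second $\Sigma$ as scratch/ancilla; the symbol in $\{\leftarrow,\rightarrow\}$ marks a headless signal moving to fetch the next relevant block, and $Q \times \{\uparrow,\downarrow\}$ marks the head carrying state $q \in Q$ together with the current time direction $\uparrow$ (forward) or $\downarrow$ (backward).

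First I would make $\phi$ \emph{computable}: both $\RTM(\ZZ,n,k)$ and $\Aut(A^\ZZ)$ have decidable word problem (the former by the recursive presentation established above, the latter by the standard decidability of equality of cellular automata given local rules), and from a local rule for $T$ one can effectively extract a decomposition of $T$ into the finitely many chosen generators — this uses the constructive content of Proposition~\ref{prop:ClassicalReversibility} and Theorem~\ref{theorem_EL_is_FG} for the $\EL$ part, and, on the possibly larger group $\RTM$, one can simply enumerate products of generator-words until one matching $T$ is found (word problem decidable), so $\phi$ is computable as a function $w \mapsto$ (corresponding product of generator-images). Next I would verify finiteness-preservation in the sense of Definition~\ref{def:PPreserving}: given a finite $F \subset \RTM(\ZZ,n,k)^*$, the group $G_F = \langle \underline w : w \in F\rangle$ and the group $H_F = \langle \underline{\phi(w_1)\cdots\phi(w_{|w|})} : w \in F\rangle$ must be simultaneously finite. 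The crucial point is that the automata $\phi(w_1)\cdots\phi(w_{|w|})$, restricted to \emph{legal encoded configurations}, exactly simulate $\underline w$ acting on $\Sigma^\ZZ \times Q \times \ZZ$ (with a fixed, machine-independent slowdown given by the block length $m$ and the bounce steps), while on configurations that ever meet an illegal pattern the time-reversal mechanism makes every element of $H_F$ act periodically with a uniformly bounded period on that part of the orbit. Hence $H_F$ is finite iff its action on legal encodings is finite iff $G_F$ is finite — the two directions being: if $G_F$ is finite then every $\underline w$ has bounded period on $\Sigma^\ZZ \times Q \times \ZZ$, which forces bounded period of the $\phi$-images on legal configurations, and on illegal ones the bound is automatic; conversely if $G_F$ is infinite, pick $\underline w$ of infinite order in $G_F$, and its $\phi$-image also has infinite order since it faithfully simulates $\underline w$ on (the dense set of) legal encodings.

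The main obstacle I expect is ensuring the bounce/time-reversal mechanism is \emph{genuinely involutive and globally reversible} and that it interacts correctly with \emph{compositions} of generator-images: a single generator-image is easy to make an involution (as in the proof of Theorem~\ref{thm:decidability_torsion_elementaryTM}), but $\phi$ is applied symbol-by-symbol to words $w \in F$, so I need that the product $\phi(w_1)\cdots\phi(w_{|w|})$, which is \emph{not} in general an involution, still has the property that any orbit touching an illegal pattern becomes periodic. This requires carefully synchronizing the ``forward/backward'' flag $\{\uparrow,\downarrow\}$ so that each factor either advances the simulation one legal micro-step or triggers a flip of the global time direction, and a flipped orbit is then forced to retrace exactly; I would handle this by making the simulation run in discrete ``macro-steps'' where illegality is detected \emph{before} any irreversible-looking action, so that the detecting factor acts as identity except for flipping the flag, guaranteeing the orbit is eventually periodic with period at most twice the distance to the first illegal event along the orbit. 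A secondary technical point is making the $\Sigma^2$ track and the bounce signal bookkeeping precise enough that $\phi(T)$ really is a well-defined automorphism of $A^\ZZ$ (bijectivity on all of $A^\ZZ$, including junk configurations), which I would get for free by designing each generator-image as a composition of an involutive local permutation of a bounded window with an involutive finite-state shift, exactly mirroring the structure $T_1' \circ T_0'$ of the previous section but with the head state absorbed into the alphabet $A$.
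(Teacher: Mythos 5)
Your approach is the time-reversal / bouncing-head simulation, and this is precisely the construction that the paper explicitly rules out for this lemma: Remark~\ref{rem:NotTheOnly} observes that the direction-bit mechanism is \emph{not} finiteness-preserving in the sense of Definition~\ref{def:PPreserving}. The failure mode is not the one you anticipate (orbits hitting illegal patterns), and it cannot be repaired by detecting illegality earlier: on a configuration whose direction flag already points backward in time, the composition $\phi(w_1)\circ\cdots\circ\phi(w_{|w|})$ simulates the \emph{reversed} word. Taking $F=\{(f^{-1}g^{-1})\cdot g\cdot f\}$ for machines $f,g$ satisfying no relations, the underlying group element is the identity (hence torsion), but the composed automaton simulates $g\circ f\circ g^{-1}\circ f^{-1}$ on backward configurations, which has infinite order. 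No synchronization of the flag fixes this, because the backward configurations are perfectly legal encodings and $\phi$ is applied letter by letter to the formal word rather than to its product. (Relatedly, you misread the alphabet: in the intended construction $\uparrow,\downarrow$ records which of the two $\Sigma$-tracks the head currently sits on, not a time direction.)

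The paper's actual construction avoids time reversal entirely. The third track partitions each configuration into zones of the form $\rightarrow^*(q,a)\leftarrow^*$ or $\rightarrow^*\leftarrow^*$; each bounded zone is folded into a conveyor belt whose top and bottom $\Sigma$-tracks form a finite cyclic tape, and $\phi(T)$ runs $T$ on that cyclic tape (acting as $T$ on a genuine bi-infinite tape only in unbounded zones). On every bounded zone $\phi(T)$ is therefore a permutation of a finite set, which gives reversibility for free and makes composition behave correctly; finiteness-preservation follows because an infinite $\langle F\rangle$ is detected on the single-unbounded-zone configurations, while a finite $\langle F\rangle$ bounds head displacement and hence bounds $|H|$ by a product of factorials over zone lengths. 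A second gap in your plan: you propose to define $\phi$ on a finite generating set and extend by decomposition, but $\RTM(\ZZ,n,k)$ is not finitely generated (Theorem~\ref{RTM_is_not_finitely_generated}) and contains non-elementary machines, so your enumeration of generator-words ``until one matching $T$ is found'' would not terminate on general input; the lemma requires $\phi$ on all of $\RTM(\ZZ,n,k)$, and the conveyor-belt construction achieves this directly from the local rule of $T$.
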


\begin{proof}
The alphabet $A$ consists of triples and thus $A^{\ZZ}$ can be thought of as consisting of three tapes. The two first tapes carry a configuration in $\Sigma^{\ZZ}$ while the third tape has symbols in $(\{\leftarrow, \rightarrow\} \cup (Q \times \{\uparrow,\downarrow\}))$ and deals with the heads and calculation zones. A head is represented by a tuple in $Q \times \{\uparrow,\downarrow\}$, where $Q$ is the set of states of the Turing machine and $\{\uparrow,\downarrow\}$ is the track the machine is in. If this value is $\uparrow$, the head is on the first (``topmost'') tape and if it is $\downarrow$, on the second (``bottom'') tape. $\leftarrow$ means the head is to the left of the current cell on the current zone (if the current zone contains a head), while $\rightarrow$ means the head is to the right.

A configuration in $A^\ZZ$ is split into zones by the contents of the third tape. Namely, every finite portion of the second track can be split uniquely into pieces of the forms $\rightarrow^* (q, a) \leftarrow^*$ and $\rightarrow^* \leftarrow^*$ where $(q,a) \in Q \times \{\uparrow,\downarrow\}$. We call these pieces \define{zones}, see Figure~\ref{conveyor_zones_pic}. To define the finiteness-preserving map $\phi \colon \RTM(\ZZ,n,k) \to A^\ZZ$ it is enough to do so in every piece of this form.

\begin{figure}[h!]
	\centering
	\begin{tikzpicture}[scale = 0.75]

\foreach \i in {-15,...,14}{
	\draw[ fill = white] (\i/2,0) rectangle (\i/2+0.5,0.5);
	\node at (\i/2+0.25,0.25) {$0$};
	\draw[ fill = white] (\i/2,-1) rectangle (\i/2+0.5,-0.5);
	\node at (\i/2+0.25,-0.75) {$0$};
	\draw[ fill = white] (\i/2,-2) rectangle (\i/2+0.5,-1.5);
}
\foreach \i in {-15,-12,-11,-9,-8,-7,-3,-2,-1,3,5,6,7,8,10,11,14}{
	\draw[ fill = white] (\i/2,0) rectangle (\i/2+0.5,0.5);
	\node at (\i/2+0.25,0.25) {$1$};
}
\foreach \i in {-13,-12,-10,-8,-7,-6,-5,-1,0,1,3,5,6,8,10,11,13,14}{
	\draw[ fill = white] (\i/2,-1) rectangle (\i/2+0.5,-0.5);
	\node at (\i/2+0.25,-0.75) {$1$};
}

\draw[<->,thick] (-8,0) to (8,0);
\draw[<->,thick] (-8,0.5) to (8,0.5);
\draw[<->,thick] (-8,-0.5) to (8,-0.5);
\draw[<->,thick] (-8,-1) to (8,-1);
\draw[<->,thick] (-8,-1.5) to (8,-1.5);
\draw[<->,thick] (-8,-2) to (8,-2);

\node at (-15/2+0.25,-1.75) {$\rightarrow$};
\node at (-14/2+0.25,-1.75) {$\rightarrow$};
\node at (-13/2+0.25,-1.75) {$\leftarrow$};
\draw[ dashed, thick ] (-12/2, -2.5) to (-12/2, 1);
\node at (-12/2+0.25,-1.75) {$\rightarrow$};
\draw[ fill = white] (-11/2,-2.25) rectangle (-11/2+0.5,-1.75);
\draw[ fill = white] (-11/2,-1.75) rectangle (-11/2+0.5,-1.25);
\node at (-11/2+0.25,-1.5) {$q$};
\node at (-11/2+0.25,-2) {$\uparrow$};
\node at (-10/2+0.25,-1.75) {$\leftarrow$};
\node at (-9/2+0.25,-1.75) {$\leftarrow$};
\draw[ dashed, thick ] (-8/2, -2.5) to (-8/2, 1);
\draw[ fill = white] (-8/2,-2.25) rectangle (-8/2+0.5,-1.75);
\draw[ fill = white] (-8/2,-1.75) rectangle (-8/2+0.5,-1.25);
\node at (-8/2+0.25,-1.5) {$q$};
\node at (-8/2+0.25,-2) {$\downarrow$};
\draw[ dashed, thick ] (-7/2, -2.5) to (-7/2, 1);
\draw[ fill = white] (-7/2,-2.25) rectangle (-7/2+0.5,-1.75);
\draw[ fill = white] (-7/2,-1.75) rectangle (-7/2+0.5,-1.25);
\node at (-7/2+0.25,-1.5) {$q$};
\node at (-7/2+0.25,-2) {$\downarrow$};
\node at (-6/2+0.25,-1.75) {$\leftarrow$};
\node at (-5/2+0.25,-1.75) {$\leftarrow$};
\node at (-4/2+0.25,-1.75) {$\leftarrow$};
\draw[ dashed, thick ] (-3/2, -2.5) to (-3/2, 1);
\node at (-3/2+0.25,-1.75) {$\rightarrow$};
\node at (-2/2+0.25,-1.75) {$\rightarrow$};
\node at (-1/2+0.25,-1.75) {$\rightarrow$};
\node at (0.25,-1.75) {$\rightarrow$};
\node at (1/2+0.25,-1.75) {$\rightarrow$};
\draw[ fill = white] (2/2,-2.25) rectangle (2/2+0.5,-1.75);
\draw[ fill = white] (2/2,-1.75) rectangle (2/2+0.5,-1.25);
\node at (2/2+0.25,-1.5) {$q$};
\node at (2/2+0.25,-2) {$\uparrow$};
\node at (3/2+0.25,-1.75) {$\leftarrow$};
\node at (4/2+0.25,-1.75) {$\leftarrow$};
\node at (5/2+0.25,-1.75) {$\leftarrow$};
\node at (6/2+0.25,-1.75) {$\leftarrow$};
\node at (7/2+0.25,-1.75) {$\leftarrow$};
\draw[ dashed, thick ] (8/2, -2.5) to (8/2, 1);
\node at (8/2+0.25,-1.75) {$\rightarrow$};
\node at (9/2+0.25,-1.75) {$\leftarrow$};
\draw[ dashed, thick ] (10/2, -2.5) to (10/2, 1);
\node at (10/2+0.25,-1.75) {$\rightarrow$};
\draw[ fill = white] (11/2,-2.25) rectangle (11/2+0.5,-1.75);
\draw[ fill = white] (11/2,-1.75) rectangle (11/2+0.5,-1.25);
\node at (11/2+0.25,-1.5) {$q$};
\node at (11/2+0.25,-2) {$\uparrow$};
\node at (12/2+0.25,-1.75) {$\leftarrow$};
\node at (13/2+0.25,-1.75) {$\leftarrow$};
\node at (14/2+0.25,-1.75) {$\leftarrow$};

\draw[fill = green, opacity = 0.2] (-6,0) rectangle (-4,0.5);
\draw[fill = green, opacity = 0.2] (-4,-1) rectangle (-1.5,-0.5);
\draw[fill = green, opacity = 0.2] (-1.5,0) rectangle (4,0.5);
\draw[fill = green, opacity = 0.2] (5,0) rectangle (7.5,0.5);

\end{tikzpicture}
	\caption{A finite word in $A^*$ is divided into zones by the third layer. The dashed lines separate each zone and the colours indicate which tape is being pointed at by the arrow next to the state.}
	\label{conveyor_zones_pic}
\end{figure}
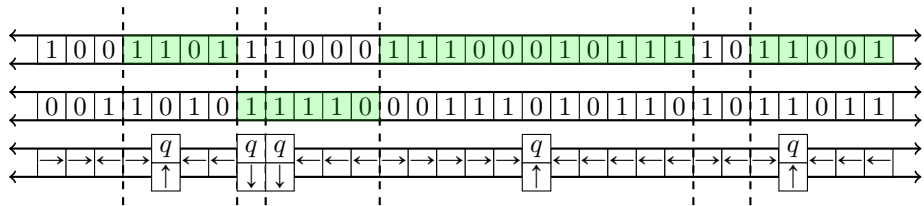

Let $T \in \RTM(\ZZ,n,k)$ be a moving head Turing machine of radius $r$. We define $\phi(T) \in \Aut(A^{\ZZ})$ by defining its action over each zone as follows: If the zone has no head or the zone is of size less than $2r+1$, do nothing. Otherwise let $u_0,\dots,u_{m-1}$ and $v_0,\dots,v_{m-1}$ be the words in the first and second track respectively, $(q,a) \in Q \times \{\uparrow,\downarrow\}$ be the head and $\ell \in \{0,\dots,m-1 \}$ the position of the head in the third track. Using this information we can construct the configuration $x \in \Sigma^{\ZZ}$ given by:

$$x_i = \begin{cases}  u_{j} & \mbox{ if    } j = (i\mod{2m}) \in \{0,\dots,m-1\} \\ v_{2m-j-1} & \mbox{ if    } j = (i\mod{2m})\in \{m,\dots,2m-1\} \end{cases}$$

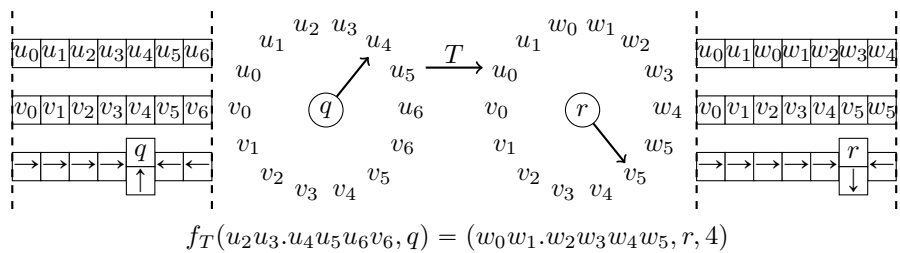
\begin{figure}[h!]
	\centering
	\begin{tikzpicture}[scale = 0.75]

\begin{scope}[shift={(-6,0)}]

\foreach \i in {-2,...,4}{
	\draw[ fill = white] (\i/2,0) rectangle (\i/2+0.5,0.5);
	\draw[ fill = white] (\i/2,-1) rectangle (\i/2+0.5,-0.5);
	\draw[ fill = white] (\i/2,-2) rectangle (\i/2+0.5,-1.5);
}

\node at (-2/2+0.25,0.25) {$u_0$};
\node at (-1/2+0.25,0.25) {$u_1$};
\node at (0.25,0.25) {$u_2$};
\node at (1/2+0.25,0.25) {$u_3$};
\node at (2/2+0.25,0.25) {$u_4$};
\node at (3/2+0.25,0.25) {$u_5$};
\node at (4/2+0.25,0.25) {$u_6$};

\node at (-2/2+0.25,-0.75) {$v_0$};
\node at (-1/2+0.25,-0.75) {$v_1$};
\node at (0.25,-0.75) {$v_2$};
\node at (1/2+0.25,-0.75) {$v_3$};
\node at (2/2+0.25,-0.75) {$v_4$};
\node at (3/2+0.25,-0.75) {$v_5$};
\node at (4/2+0.25,-0.75) {$v_6$};

\draw[ dashed, thick ] (-2/2, -2.5) to (-2/2, 1);
\node at (-2/2+0.25,-1.75) {$\rightarrow$};
\node at (-1/2+0.25,-1.75) {$\rightarrow$};
\node at (0.25,-1.75) {$\rightarrow$};
\node at (1/2+0.25,-1.75) {$\rightarrow$};
\draw[ fill = white] (2/2,-2.25) rectangle (2/2+0.5,-1.75);
\draw[ fill = white] (2/2,-1.75) rectangle (2/2+0.5,-1.25);
\node at (2/2+0.25,-1.5) {$q$};
\node at (2/2+0.25,-2) {$\uparrow$};
\node at (3/2+0.25,-1.75) {$\leftarrow$};
\node at (4/2+0.25,-1.75) {$\leftarrow$};
\draw[ dashed, thick ] (5/2, -2.5) to (5/2, 1);

\end{scope}

\begin{scope}[shift={(-1.5,-0.75)}]

\node at (1.5,0.0)  {$u_6$};
\node at (1.3514533018536288,0.6508256086763372)  {$u_5$};
\node at (0.9352347027881004,1.1727472237020446)  {$u_4$};
\node at (0.3337814009344717,1.4623918682727355)  {$u_3$};
\node at (-0.3337814009344715,1.4623918682727355)  {$u_2$};
\node at (-0.9352347027881003,1.1727472237020449)  {$u_1$};
\node at (-1.3514533018536286,0.6508256086763373)  {$u_0$};
\node at (-1.5,0.0) {$v_0$};
\node at (-1.3514533018536288,-0.650825608676337) {$v_1$};
\node at (-0.9352347027881005,-1.1727472237020446) {$v_2$};
\node at (-0.3337814009344719,-1.4623918682727355) {$v_3$};
\node at (0.33378140093447134,-1.4623918682727355) {$v_4$};
\node at (0.9352347027881001,-1.1727472237020449) {$v_5$};
\node at (1.3514533018536286,-0.6508256086763375) {$v_6$};

\draw[->, thick] (0,0) to (0.7481877622304803,0.9381977789616357);
\draw [fill = white] (0,0) circle (0.3);
\node at (0,0) {$q$};

\end{scope}

\begin{scope}[shift={(3,-0.75)}]


\node at (1.5,0.0)  {$w_4$};
\node at (1.3514533018536288,0.6508256086763372)  {$w_3$};
\node at (0.9352347027881004,1.1727472237020446)  {$w_2$};
\node at (0.3337814009344717,1.4623918682727355)  {$w_1$};
\node at (-0.3337814009344715,1.4623918682727355)  {$w_0$};
\node at (-0.9352347027881003,1.1727472237020449)  {$u_1$};
\node at (-1.3514533018536286,0.6508256086763373)  {$u_0$};
\node at (-1.5,0.0) {$v_0$};
\node at (-1.3514533018536288,-0.650825608676337) {$v_1$};
\node at (-0.9352347027881005,-1.1727472237020446) {$v_2$};
\node at (-0.3337814009344719,-1.4623918682727355) {$v_3$};
\node at (0.33378140093447134,-1.4623918682727355) {$v_4$};
\node at (0.9352347027881001,-1.1727472237020449) {$v_5$};
\node at (1.3514533018536286,-0.6508256086763375) {$w_5$};

\draw[->, thick] (0,0) to (0.74818776223048,-0.9381977789616358);
\draw [fill = white] (0,0) circle (0.3);
\node at (0,0) {$r$};

\end{scope}

\begin{scope}[shift={(6,0)}]

\foreach \i in {-2,...,4}{
	\draw[ fill = white] (\i/2,0) rectangle (\i/2+0.5,0.5);
	\draw[ fill = white] (\i/2,-1) rectangle (\i/2+0.5,-0.5);
	\draw[ fill = white] (\i/2,-2) rectangle (\i/2+0.5,-1.5);
}

\node at (-2/2+0.25,0.25) {$u_0$};
\node at (-1/2+0.25,0.25) {$u_1$};
\node at (0.25,0.25) {$w_0$};
\node at (1/2+0.25,0.25) {$w_1$};
\node at (2/2+0.25,0.25) {$w_2$};
\node at (3/2+0.25,0.25) {$w_3$};
\node at (4/2+0.25,0.25) {$w_4$};

\node at (-2/2+0.25,-0.75) {$v_0$};
\node at (-1/2+0.25,-0.75) {$v_1$};
\node at (0.25,-0.75) {$v_2$};
\node at (1/2+0.25,-0.75) {$v_3$};
\node at (2/2+0.25,-0.75) {$v_4$};
\node at (3/2+0.25,-0.75) {$v_5$};
\node at (4/2+0.25,-0.75) {$w_5$};

\draw[ dashed, thick ] (-2/2, -2.5) to (-2/2, 1);
\node at (-2/2+0.25,-1.75) {$\rightarrow$};
\node at (-1/2+0.25,-1.75) {$\rightarrow$};
\node at (0.25,-1.75) {$\rightarrow$};
\node at (1/2+0.25,-1.75) {$\rightarrow$};
\draw[ fill = white] (3/2,-2.25) rectangle (3/2+0.5,-1.75);
\draw[ fill = white] (3/2,-1.75) rectangle (3/2+0.5,-1.25);
\node at (3/2+0.25,-1.5) {$r$};
\node at (3/2+0.25,-2) {$\downarrow$};
\node at (2/2+0.25,-1.75) {$\rightarrow$};
\node at (4/2+0.25,-1.75) {$\leftarrow$};
\draw[ dashed, thick ] (5/2, -2.5) to (5/2, 1);

\end{scope}

\draw[, ->, thick] (0.25,0) to (1.25,0);
\node at (0.75,0.25) {$T$};

\node at (0.8,-3) {$f_T(u_2u_3.u_4u_5u_6v_6,q) = (w_0w_1.w_2w_3w_4w_5,r,4)$};

\end{tikzpicture}
	\caption{Every zone is wrapped around as a conveyor belt, where $\phi(T)$ acts as if it were $T$ seeing a periodic word.}
	\label{conveyor_belt_pic}
\end{figure}

And apply $T$ to $x$, where the position of the head is on $\ell$ if $a = {\uparrow}$ and on $2m-\ell-1$ otherwise. After applying the Turing machine's local rule, recode the result again updating the left and right arrows so that the zone does not change its shape as shown in Figure~\ref{conveyor_belt_pic}. With this definition, in each bounded zone $\phi(T)$ induces a permutation of all possible heads and tape contents while on unbounded zones it acts as $T$ on an infinite configuration.

The map $\phi(T)$ is clearly continuous and shift invariant and therefore $\phi(T) \in \Aut(A^{\ZZ})$. Also, $\phi$ is clearly a computable map. It suffices to show that $\phi$ preserves the property of being finite.

Consider $F \subset \RTM(\ZZ,n,k)^*$, and $H = \langle \phi(w_1) \phi(w_2) \cdots \phi(w_{|w|}) \;|\; w \in F \rangle$ generated in $\Aut(A^\ZZ)$. If $\langle F \rangle$ is infinite, just note that the action of $\phi(T)$ over any configuration such that the third tape is a single unbounded zone with a head, (that is $\ldots \leftarrow\leftarrow\leftarrow (q,\uparrow) \rightarrow\rightarrow\rightarrow\ldots$) replicates exactly the behavior of $T$ on the first tape. Therefore each element of $\langle F \rangle$ will act differently over at least one configuration of this form, implying that $H$ is infinite. Conversely, if $\langle F \rangle$ is finite, 
consider the action over any zone which is unbounded or larger than the maximum movement from the origin attained by a machine in $\langle F \rangle$. Obviously such a machine acts as the original machine, so the action on such zones satisfies the same relations as the action of $\langle F \rangle$. Consider then the zones whose length is bounded by a fixed length $h$. The number of different machine actions on these zones is bounded as each action is a permutation over a finite set, thus again the movement of the Turing machine head is bounded. Therefore $H$ is finite and a rough bound is $|H| \leq |\langle F \rangle | \cdot (\prod_{m \leq h} (2kmn^{2m})!)$ 
\end{proof}

\begin{remark}
\label{rem:NotTheOnly}
The above is not the only possible construction for making cellular automata out of Turing machines. For example in \cite{KaOl08}, one instead uses a direction bit, and flips the running direction of the Turing machine (from forward to backward) if it hits a border of a computation zone. This roughly corresponds to the construction in the previous section. This construction does not, at least without some modifications, give a finiteness-preserving map in the sense of the definition above: Suppose $f$ and $g$ are Turing machines satisfying no relations. Pick $F = \{ (f^{-1}g^{-1}) \cdot g \cdot f \}$ (a formal product of length 3 in $G^*$) in Definition~\ref{def:PPreserving}. Since $\ID = (f^{-1}g^{-1}) \cdot g \cdot f$ is torsion as a Turing machine, we should have that the corresponding cellular automaton $\phi(f^{-1}g^{-1}) \circ \phi(g) \circ \phi(f)$ is. But on a configuration where the direction bit points backwards in time, this cellular automata in fact simulates the Turing machine $g \circ f \circ g^{-1} \circ f^{-1}$ which is of infinite order.
\end{remark}	

Using that $\EL(\ZZd,n,k)$ is finitely generated and considering the restriction of the map of Lemma~\ref{lemma_blurphism} to $\EL(\ZZd,n,k)$ we obtain that:

\begin{corollary}\label{cor_somealphabet}
For some alphabet $A$ with at least two symbols, there is a finitely generated subgroup $G \leq \Aut(A^\ZZ)$ with undecidable torsion problem.
\end{corollary}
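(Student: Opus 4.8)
The plan is to obtain the corollary by transporting the undecidability of the torsion problem of $\EL(\ZZ,n,k)$ (Theorem~\ref{thm:decidability_torsion_elementaryTM}) across the computable finiteness-preserving map $\phi\colon\RTM(\ZZ,n,k)\to\Aut(A^{\ZZ})$ of the previous lemma, by means of the abstract transfer principle Lemma~\ref{lemma_blurphism}. Concretely, I would fix $n=2$, $k=1$, and take $A=\Sigma^{2}\times(\{\leftarrow,\rightarrow\}\cup Q\times\{\uparrow,\downarrow\})$ as in that lemma; this $A$ has at least two symbols, which is all the statement requires of the alphabet.

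The first step is to use Theorem~\ref{theorem_EL_is_FG} to pick a finite generating set $S$ of $\EL(\ZZ,2,1)$. Since $\EL(\ZZ,2,1)\leq\RTM(\ZZ,2,1)$ and the latter is recursively presented with decidable word problem (as shown earlier in this section), the group $\EL(\ZZ,2,1)$ has decidable word problem with respect to $S$: a word $w\in S^{*}$ represents the identity if and only if the product of the corresponding local rules reduces to the identity rule, which is decidable by Lemma~\ref{lemma_computability_obvious_properties}. In particular $\EL(\ZZ,2,1)$ fits the framework of Definition~\ref{def:PPreserving} as the domain of a computable map.

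The second step is to check that the restriction $\psi:=\phi|_{\EL(\ZZ,2,1)}$ is still computable and finiteness-preserving. Computability is immediate: from a word in $S^{*}$ one first computes a local rule for the corresponding element of $\RTM(\ZZ,2,1)$ and then applies the explicitly defined map $\phi$. Finiteness-preservation holds because, for any finite $F\subset\EL(\ZZ,2,1)^{*}$, the two groups compared in Definition~\ref{def:PPreserving} are $\langle\,\underline{w}\mid w\in F\,\rangle$, which is a subgroup of $\RTM(\ZZ,2,1)$, and the image of this word set under $\phi$; so the equivalence already established for $\phi$ applies verbatim to $\psi$.

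The final step is to apply Lemma~\ref{lemma_blurphism} with $G=\EL(\ZZ,2,1)$ (which has undecidable torsion problem by Theorem~\ref{thm:decidability_torsion_elementaryTM}), generating set $S$, target $H=\Aut(A^{\ZZ})$, and the map $\psi$. It yields that $G':=\langle\{\psi(s)\}_{s\in S}\rangle\leq\Aut(A^{\ZZ})$ has undecidable torsion problem, and $G'$ is finitely generated because $S$ is finite, which is exactly the assertion. The only point requiring genuine care — and hence the step I expect to be the main obstacle — is verifying that the restricted map $\psi$ inherits both computability and the finiteness-preservation property; everything else is assembling results already in hand.
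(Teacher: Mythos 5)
Your proposal is correct and follows exactly the paper's route: the paper likewise obtains the corollary by combining Theorem~\ref{theorem_EL_is_FG}, Theorem~\ref{thm:decidability_torsion_elementaryTM}, the computable finiteness-preserving map into $\Aut(A^\ZZ)$, and Lemma~\ref{lemma_blurphism}, applied to the restriction of that map to the finitely generated group $\EL(\ZZ,n,k)$. The extra care you take in verifying that the restriction inherits computability and finiteness-preservation is a sound (if routine) check that the paper leaves implicit.
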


In Lemma 7 of~\cite{Sa15}, it is shown that for every finite set $A$, and every uncountable sofic subshift $X$, we have that $\End(A^{\ZZ})$ embeds into $\End(X)$. In particular $\Aut(A^{\ZZ})$ embeds into $\Aut(X)$. Letting $A$ be the alphabet of Corollary~\ref{cor_somealphabet} we obtain Corollary~\ref{cor:autgroup_undecidabletorsion}, that is, that the automorphism group of every uncountable sofic $\ZZ$-subshift contains a finitely generated subgroup $G$ with undecidable torsion problem.

\subsection{The torsion problem of finite-state machines}

The torsion problem of $\OB(\ZZ^d,n,k)$ is not of much interest, as an element of this group is periodic if and only if its shift component is $0^d$. The group $\FG(\ZZd,n,k)$ for $n \geq 2$, however, is quite interesting. Namely, the decidability of the torsion problem turns out to be dimension-sensitive as stated in Theorem~\ref{thm:decidability_torsion_FSA}. In this section we prove that theorem in two parts, namely, the one-dimensional part is a corollary of Theorem~\ref{thm:decidability_torsion1d_FSA_inchapter} whereas the multidimensional result is given in Theorem~\ref{thm:decidability_torsion_FSA_inchapter}. 

We first show that for $d =1$ the torsion problem is decidable. In fact, we obtain this from a more general result. We say the finiteness problem of a group presentation is decidable if there exists a Turing machine which on input a finite set of words in the presentation accepts if and only if the elements of the group represented by those words span a finite subgroup. This generalizes the torsion problem which can be regarded as the special case where the set is a singleton.

The proof we presented in the appendix of the conference version \cite{BaKaSa16} contained errors, here we present a new proof which is based on a strong form of the pigeonhole principle, namely the Ramsey theorem. A dynamical proof of the same result is given in \cite{Sa16}.

\begin{restatable}{theorem}{OneDTorsion}\label{thm:decidability_torsion1d_FSA_inchapter}
Let $X$ be a sofic $\ZZ$-subshift. Then the finiteness problem of $\FG(X,n,k)$ is decidable for every $n,k \geq 1$.
\end{restatable}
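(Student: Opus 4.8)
The plan is to decide, given finitely many finite-state automata $T_1,\dots,T_m \in \FG(X,n,k)$ (presented by local rules), whether the subgroup $\langle T_1,\dots,T_m\rangle \leq \FG(X,n,k)$ is finite. First I would pass to the permutation model and reduce to the one-state case: by Lemma~\ref{lem:OneState} we may assume $k = 1$, and by Proposition~\ref{prop:RFAInAut} each $T_i$ becomes an automorphism $\phi_i$ of $(\Sigma\cup\{\#\})^\ZZ \times (\ZZ/2\ZZ)^\ZZ$ acting by a permutation of head positions on each orbit. The key structural observation is that a finite-state automaton $T$ acting on a fixed bi-infinite tape content $x \in X$ induces a permutation $\pi_{T,x}$ of $\ZZ$ (the positions), and $\langle T_1,\dots,T_m\rangle$ is finite if and only if there is a uniform bound $N$ such that for \emph{every} $x \in X$, every product $\pi_{W,x}$ of generators moves each position by at most $N$ and the induced action factors through a bounded-size group. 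Since $X$ is sofic, its language is computable, so the problem reduces to bounding head displacement uniformly over $x \in X$.

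Next I would make precise the ``bounded displacement'' criterion. Let $r$ be a common bound for the biradii of all $\phi_i$ in the permutation model. For a word $w$ in the generators and a point $x \in X$, applying $\phi_w$ to a single head at position $p$ moves it to some position $p + \delta_w(x,p)$, where $\delta_w(x,p)$ depends only on $x|_{[p-R,p+R]}$ for $R = r|w|$. The group $\langle T_i\rangle$ is infinite precisely when there exist arbitrarily long words $w$ and points $x$ with $|\delta_w(x,0)|$ arbitrarily large; equivalently, when on some $x$ a head drifts off to infinity. The Ramsey/pigeonhole step is: if $\langle T_i\rangle$ is infinite, then there is such a drift witnessed by a point $x$ that is eventually periodic in a strong sense, namely $x$ has a block structure $\cdots u u u v w w w \cdots$ where the generators act ``coherently'' across copies of $u$ and of $w$. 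Concretely, color each position $p$ (within a long finite window of $x$) by the \emph{local transition data} of all generators around $p$ up to radius $R$; by Ramsey's theorem a long enough witnessing window contains a long monochromatic arithmetic-like progression, and the coherence of local data on this progression forces the drift to be ``linear'', producing an eventually periodic $x$ on which some generator (or short product) acts as a nontrivial orbitwise shift — hence has infinite order by the head index map $H_T$ of the earlier section. This gives a computable bound: $\langle T_i\rangle$ is finite iff no product of generators of length below an explicit Ramsey threshold produces, on any tape content of bounded complexity, a nonzero net displacement; this is a finite check since the language of $X$ is computable.

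The algorithm then runs as follows: compute $r$ and the Ramsey threshold $L = L(n, k, m, r)$; for every word $w$ of length $\le L$ in the generators and every finite pattern $p \sqsubset X$ of length $\le 2R L$, compute the displacement $\delta_w(\bar p, \cdot)$ of a head placed inside the cyclic/extended version of $p$ using the local rules; accept (finite) iff all these displacements are zero and, in that case, additionally verify that the finite quotient action (the permutation the generators induce on head-position-plus-state tuples within each bounded zone, exactly as in the proof of the cellular-automaton construction of Section~\ref{sec:CATorsion}) generates a finite group — which is automatic once displacements vanish, since then heads never escape a bounded window and the induced group embeds into a symmetric group of computable size. Correctness in one direction is the Ramsey argument above; in the other, if all displacements up to the threshold vanish then by a straightforward cut-and-paste/compactness argument no head ever drifts on any $x \in X$, so every element of $\langle T_i\rangle$ acts as a bounded-radius permutation of positions on every configuration, and $\langle T_i\rangle$ embeds into a finite product of finite symmetric groups, hence is finite.

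The main obstacle I expect is the Ramsey step: making ``coherent local data on a long progression forces a nontrivial orbitwise shift'' fully rigorous. One must choose the coloring finely enough that monochromaticity genuinely propagates the head-movement behavior across the progression — in particular one needs to track not just where a single head goes but how an entire block of adjacent heads is permuted, since finite-state automata conserve heads and the displacement of one head is coupled to its neighbors (the ``balanced exchange'' phenomenon from Theorem~\ref{thm:RFAGenerators}). The cleanest route is probably to color windows by the \emph{partial permutation} that the word $w$ induces on head positions within the window (relative coordinates), apply Ramsey to get many windows with the same partial permutation appearing along a progression, glue them into a periodic (or eventually periodic) point in $X$ using soficness to check admissibility, and read off from the periodic structure that $\phi_w$ restricted to that orbit is a nonzero orbitwise shift, whence $H_{T_w} \not\equiv 0$ and $T_w$ — and therefore $\langle T_i \rangle$ — has infinite order.
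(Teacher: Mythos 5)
Your high-level strategy --- reduce finiteness to boundedness of head displacement over all $x\in X$, then use a pigeonhole/Ramsey argument on finite local data to extract a periodic witness of unbounded drift --- is indeed the shape of the paper's proof. But two of your concrete steps are wrong. First, the terminating test ``accept (finite) iff all displacements $\delta_w$ of words $w$ of length $\le L$ vanish'' does not characterize finiteness: a single controlled position swap $T_{u,a,v}$ is an involution, hence generates a group of order $2$, yet has nonzero displacement on every configuration containing $uav$. Finiteness is equivalent to displacements being \emph{uniformly bounded}, not zero; moreover, vanishing of all generators' displacements trivially propagates to all products, so your criterion degenerates to ``no generator ever moves the head''. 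Second, your infiniteness certificate --- ``some short product acts as a nontrivial orbitwise shift, hence $H_{T_w}\not\equiv 0$'' --- also fails: the index map is a homomorphism, so any subgroup generated by involutions (e.g.\ by controlled position swaps) lies entirely in its kernel, yet such subgroups can be infinite. More fundamentally, the group can be infinite because the \emph{products needed to carry a head across longer and longer words} grow in length, with no fixed short product drifting under iteration; infiniteness then comes from the unboundedness of the move-radii of these products, not from a single element of infinite order, let alone one detected by the index map.

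The missing idea is what to color. The paper colors each \emph{pair} of positions $\{i,j\}$ of a long word $w\sqsubset X$ by the crossing data $I(w_{[i,j]})=(\phi(w_{[i,j]}),\psi(w_{[i,j]}))$, where $\phi(u)$ records which (state, direction) pairs can exit $u$ for each way of entering it, under \emph{arbitrary} finite products of generators whose intermediate positions stay inside $u$, and $\psi(u)$ is the class of $u$ in the syntactic monoid of $L(X)$. This $I$ is multiplicative into a finite semigroup, so Ramsey for pairs yields a nonempty traversable idempotent $u$ with $I(u)=I(u^2)$; the $\psi$-component guarantees $u^k\sqsubset X$ for all $k$ and the $\phi$-component guarantees each $u^k$ is traversable, giving unbounded displacement and hence infiniteness. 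Conversely, if no such $u$ exists, every sufficiently long word blocks the head and the group is finite. Your coloring of single positions, or of windows by the partial permutation induced by one fixed word $w$, neither composes under concatenation nor guarantees that the glued periodic point lies in $X$, so the ``coherence along a progression'' step cannot be completed as written. Note finally that no explicit Ramsey threshold on product length is needed: one semi-algorithm enumerates products and accepts when the set of represented group elements stabilizes, the other searches for a traversable idempotent word, and decidability follows from running both in parallel.
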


\begin{proof}
Let $F$ be a finite set of Turing machines represented by their local rules and $n \in \NN$. By Lemma~\ref{lemma_computability_obvious_properties} given $u,v \in F^*$ we can compute the relation $\sim$ where $u \sim v$ if and only if $\underline{u} = \underline{v}$. A semialgorithm which accepts if and only if $\langle F \rangle$ is finite computes the set $U_n = (\bigcup_{m \leq n} F^n)/\sim$ and accepts if $U_n = U_{n+1}$ for some $n$. In other words, the finiteness problem is recursively enumerable, it suffices thus to show it is co-recursively enumerable.

For the other direction, we may assume that the movement of the machines in $F$ only depends on the current symbol under them, and the machines move by at most one cell on each step. To see this, take a higher power presentation of $X$ by a sufficiently large power $p$ (see Section 1.4 of~\cite{LiMa95}), and record the position in a cell modulo $p$ in the state, so that the size of $Q$ changes to $kp$.

Now, to each word $u \sqsubset X$ associate
\[ \phi(u) \colon  \{\leftarrow, \rightarrow\} \times Q \to 2^{\{\leftarrow, \rightarrow\} \times Q}, \]
where the interpretation of $(d',q') \in \phi(u)(q,d)$ is that, when entering the word $u$ in state $q$ by moving in direction $d$, it is possible to apply a finite sequence of finite-state machines in $F$ so that eventually the machine exits $u$ by moving in direction $d'$ in state $q'$. For example by $(\rightarrow,q') \in \phi(u)(\rightarrow,q)$ we mean that there exist $f_1,f_2,\ldots,f_k \in F$ such that
\[ (f_k \circ \cdots \circ f_1)(q,x.uy) = (q',xu.y) \]
where $x$ and $y$ are tails such that $xuy \in X$, and the head does not leave $u$ during the ``intermediate steps'', i.e.\ for all $1 \leq j < k$ we have $(f_j \circ \cdots \circ f_1)(q,xu'.u''y)$ where $u = u'u''$ and $|u'|, |u''| > 0$.

Recall that sofic $\ZZ$-subshifts have a regular language~\cite{LiMa95} and thus have an associated syntactic monoid. Associate to each nonempty $u$ its class $\psi(u)$ in the syntactic monoid of $L(X)$. Write $I(u) = (\phi(u),\psi(u))$. Then $I$ is a semigroup homomorphism where defined in the sense that if $uv \sqsubset X$ then
\[ I(uv) = (\phi(uv),\psi(uv)) = (\phi(u)\phi(v),\psi(u)\psi(v)) = I(u)I(v), \]
where maps $\phi(u) \in \{\leftarrow, \rightarrow\} \times Q \to 2^{\{\leftarrow, \rightarrow\} \times Q}$ are given a semigroup structure by observing that the possible ways one can exit $uv$ after entering it from the left or right are entirely determined by the corresponding information for $u$ and $v$.

Now, by the Ramsey theorem, there exists $N$ such that in every word $w \sqsubset X$ of length at least $N$, there is a subword $uv$ such that $u$ and $v$ are nonempty, and $I(u) = I(v) = I(uv)$. To see this, let $w$ be such a word and color the 2-subset $\{i,j\} \subset N$ by $I(w_{[i,j]})$. If $N$ is large enough, there is a monochromatic subset of size $3$, which corresponds to the word $uv$. In particular, by the previous paragraph it follows that
\[ I(u) = I(uv) = I(u)I(v) = I(u)I(u) = I(u^2) \cdots = I(u^k) \]
for all $k$.\footnote{Alternatively, one can apply the well-known fact that every finite semigroup has an idempotent, but the argument of the present paragraph makes it more explicit why one can pick the idempotent $u$ in the language of $X$ and so that it has the traversability property.}

If $F$ generates an infinite group, then there are arbitrarily long words $w$ that the head can walk over, under a suitable application of elements of $F$. Otherwise, every long enough word blocks movement, so $F$ must generate a finite group. More precisely, we have arbitrarily long words $w \in X$ which are \emph{traversable}, meaning
\[ (\rightarrow,q') \in \phi(w)(\rightarrow,q) \mbox{ or } (\leftarrow,q') \in \phi(w)(\leftarrow,q) \]
for some $q, q' \in Q$.

If $w$ has this property, then all its subwords do. It follows that if $F$ generates an infinite group then there is a traversable word $u$ such that $I(u) = I(uu)$. Once we find such a word $u$, we have $u^k \sqsubset X$ for all $k$ and that each $u^k$ is traversable, thus $F$ must be infinite.\end{proof}

Before tackling the problem in the multidimensional case, we recall the \define{snake tiling problem} introduced and shown to be undecidable in \cite{Ka03}. In this problem, a set $T$ of square tiles with colored edges which have an associated direction arrow is given, and the goal is to find a partial tiling of $\ZZ^2$ -- that is, some positions can be left without tiles -- such that if among two adjacent tiles the arrow of one points to the other then they share the same color on the adjacent edge. Furthermore we require that at least one infinite path appears in the partial tiling while following the direction associated to the tiles.

For the next proof we are going to use a slightly modified version of the snake tiling problem, which is also undecidable~\cite{Ka03}. Instead of using Wang tiles with just an outgoing direction we are going to use tiles which have both a left and right direction. Formally, let $T$ be a  finite set of tiles with colored edges and functions $\texttt{left},\texttt{right}\colon T \to D$ where $D = \{(1,0),(-1,0),(0,1),(0,-1)\}$ which satisfy $\texttt{left}(t) \neq \texttt{right}(t)$ for each $t \in T$. We are going to ask for a partial tiling $\tau\colon \ZZ^2 \to T \cup \{\epsilon\}$ such that there exists a function $p\colon \ZZ \to \ZZ^2$ such that $\tau(p(n)) \in T$, $p(n+1) - p(n) = \texttt{right}(\tau(p(n)))$ and $p(n)-p(n+1) = \texttt{left}(\tau(p(n+1)))$ for all $n \in \ZZ$ and all tiles match their non-$\epsilon$ neighbors along the arrows. If a partial tiling $\tau$ with such a path exists, we say the instance $(T,\texttt{left},\texttt{right})$ of the problem \define{admits a snake}.

One way to think about this version is that right arrows give instructions on how to walk to $+\infty$ in this path, while left arrows point to $-\infty$.

\begin{restatable}{theorem}{TwoDTorsion}\label{thm:decidability_torsion_FSA_inchapter}
For all $n \geq 2, k \geq 1, d \geq 2$, there is a finitely generated subgroup of $\FG(\ZZd,n,k)$ whose torsion problem in undecidable.
\end{restatable}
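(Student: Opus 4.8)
The plan is to reduce the undecidable snake tiling problem of \cite{Ka03} (in the left/right-arrow variant described above) to the torsion problem. Since $\FG(\ZZd,n,k)$ has a decidable word problem and one can effectively compute the local rule of any power of a finite-state machine (Lemma~\ref{lemma_computability_obvious_properties}), the torsion problem is recursively enumerable: to confirm that $\underline{w}$ is torsion, test $\underline{w}^N=\ID$ for $N=1,2,\dots$. Hence it is enough to fix a finite generating set $S$ and to give a computable map $I\mapsto w_I\in S^*$ from snake instances such that $\underline{w_I}$ has \emph{infinite order} if and only if $I$ admits a snake. The set of tile sets admitting a snake is undecidable by \cite{Ka03}, and its complement is recursively enumerable (the absence of a snake is witnessed, by compactness, by a bounded obstruction), so it is not recursively enumerable; therefore such a reduction shows that ``$\underline{w}$ has infinite order'' is not recursively enumerable, whence the torsion problem is undecidable. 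One may also reduce to a single case: a reversible element of $\FG(\ZZ^2,2,1)$ extends, acting as the identity on additional symbols, states and coordinates, to a reversible element of $\FG(\ZZd,n,k)$ for any $n\geq 2$, $k\geq 1$, $d\geq 2$, compatibly with the word problem, so it suffices to build the subgroup inside $\FG(\ZZ^2,2,1)$.

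Next I would set up the encoding. Fix a block size $m=m(I)$ large enough to code $T\cup\{\epsilon\}$ into $m\times m$ binary patterns, and associate to every partial tiling $\tau\colon\ZZ^2\to T\cup\{\epsilon\}$ the configuration $x_\tau\in\{0,1\}^{\ZZ^2}$ in which $\ZZ^2$ is split into $m\times m$ blocks, each carrying rigid corner markers (so that the block grid and one's coordinates inside a block are detectable in a bounded window) together with the code of $\tau$ at that cell and redundant copies of that tile's edge colours and of its $\texttt{left}$/$\texttt{right}$ arrows, placed on the block boundary in a locally checkable way. The heart of the construction is a fixed finite list $S$ of bounded-radius involutive finite-state machines in $\FG(\ZZ^2,2,1)$, together with a computable $w_I\in S^*$ such that $M_I:=\underline{w_I}$ implements the step ``if the head stands in a block coding a tile $t$, and the adjacent block in direction $\texttt{right}(t)$ codes a tile $t'$ with $\texttt{left}(t')=-\texttt{right}(t)$ and agreeing colour on the shared edge, move the head into that adjacent block''. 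As in the proof of Theorem~\ref{thm:decidability_torsion_elementaryTM}, $M_I$ is written as a product of two involutive finite-state machines (each itself a long, but uniformly computable, product of elements of $S$), so that iterating $M_I$ produces a walk that automatically retraces itself whenever the head can no longer move forward along a consistent snake; in particular every $M_I$ is reversible and lies in $\langle S\rangle$, with $S$ independent of $I$.

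The correctness of the reduction then splits in two. If $I$ admits a snake, choose a partial tiling $\tau$ with a bi-infinite self-avoiding directed path $p$; then $M_I^{\,j}(x_\tau)$ has its head in the block of $p(j)$, and since the $p(j)$ are pairwise distinct, $M_I^{\,N}(x_\tau)\neq x_\tau$ for all $N\geq 1$, so $M_I$ has infinite order. Conversely, if $I$ admits no snake, then by compactness there is a bound $L$ on the length of a self-avoiding directed segment occurring in any partial tiling; hence on any configuration the head moves forward for at most $O(L)$ applications of $M_I$ before it reverses, a second reversal makes the orbit periodic, and because the head then stays in a ball of radius $O(mL)$ while the dynamics there depends only on a bounded window of the tape, the orbit lengths are bounded by a computable $N_0$, so $M_I^{\,N_0}=\ID$ and $M_I$ is torsion.

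The main obstacle I expect is the assembly of $M_I$ from a \emph{fixed} finite set $S$ over a \emph{fixed} alphabet with only \emph{one} state and no ability to modify the tape: a single bounded-radius finite-state machine cannot inspect a whole $m\times m$ block, so $M_I$ must traverse a block cell by cell, verifying the marker structure and the encoding and comparing colours across a block boundary using no memory other than the head's position relative to the (marker-detected) grid, and it must do so while remaining a product of two involutions so that the retracing above is available. Arranging a self-synchronizing, cell-by-cell locally verifiable block layout for which all this can be carried out uniformly in $I$ — this is where two-dimensionality is genuinely used, and it is a tape-read-only, finite-state analogue of the conveyor-belt simulation of Section~\ref{sec:CATorsion} — is the technical core of the argument.
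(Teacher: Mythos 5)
Your overall strategy coincides with the paper's: reduce from the left/right-arrow snake tiling problem, encode tiles into rigid binary blocks, and let a reversible finite-state machine walk along the snake using a direction bit that bounces the head back when it meets an inconsistency, so that the resulting element has infinite order exactly when a snake exists. The reduction logic (torsion is r.e., ``admits a snake'' is co-r.e.\ and undecidable) and the two correctness directions are fine. But the proposal has a genuine gap exactly where you flag ``the main obstacle'': the construction of the instance-dependent machine $M_I$ as a word over a \emph{fixed} finite generating set $S$ is never carried out, and this is the entire content of the theorem --- without it one only gets undecidability of the torsion problem for $\FG(\ZZ^2,n,k)$ with unbounded local rules, which is the easy first half of the paper's argument. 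Your sketched route (have the head traverse an $m\times m$ block cell by cell, using only its position relative to the marker grid as memory, while remaining a product of two involutions) is not obviously realizable: with a read-only tape and a single state, a bounded-radius generator cannot accumulate the result of a comparison across a block, and you give no mechanism for a reversible traversal that returns carrying the verdict.

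The paper resolves this with an algebraic, not a traversal-based, device. It fixes a $7\times 7$ cell coding with a direction bit and an \emph{auxiliary} bit stored in the head's position, and takes as generators the shifts, a single walking machine $T_{\mathtt{walk}}$, and for each $7\times 7$ pattern $c$ the involutions $g_c,h_c$ (flip direction/auxiliary bit if $c$ is read) and $g_{+,c},h_{+,c}$ (add one bit to the other if $c$ is read). A controlled flip $g_{p^*}$ conditioned on an arbitrarily large pattern $p^*=p^*_{F\setminus\{\vec v\}}\cup p^*_{\vec v}$ is then obtained recursively as
\[ g_{p^*} = \bigl(T_{-7\vec v}\circ g_{+,c}\circ T_{7\vec v}\circ h_{p^*_{F\setminus\{\vec v\}}}\bigr)^2, \]
a Toffoli-from-CNOT style square of two controlled additions, which flips the direction bit iff both subpatterns are present while each factor only reads a bounded window. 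The instance-dependent element is then $T^*=(T_{\mathtt{walk}})^M\circ\prod_{p^*\in\mathcal M}g_{p^*}\circ\prod_{c\in C}g_c$, and the bounce-back analysis is done for this composition (including the case where the head ``falls off'' the valid macrotile array and the next application undoes the last $M$ steps). To complete your proof you would need either this commutator-type construction of large-support controlled operations from bounded-window generators, or a worked-out substitute for it; as written, the finite generation claim rests on an unproved step.
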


\begin{proof}For the rest of the proof we assume $d = 2$. In the general case, the result follows from the obvious fact that $\FG(\ZZ^2,n,k)$ embeds into $\FG(\ZZd,n,k)$.

First, let us explain why, if we let the alphabet $\Sigma$ be arbitrary and take the local rule of an element $f \in \FG(\ZZ^2,|\Sigma|,2)$, it is undecidable whether $T_f$ is a torsion element in that group. We then explain how to do this construction in the case of a fixed alphabet and number of states. Finally, we show that the torsion problem still remains undecidable in a finitely generated subgroup of $\FG(\ZZ^2,n,k)$ for every $n \geq 2$ and $k \geq 1$.

Consider an instance of our modified snake tiling problem $(T,\texttt{left},\texttt{right})$ where $T$ is the set of tiles. We choose $|\Sigma| > |T|$ and associate the first $|T|$ symbols in $\Sigma$ to the tiles in $T$ and the rest to the empty tile. We construct a local rule $f$ which gives a non-torsion element $T_f$ if and only if $(T,\texttt{left},\texttt{right})$ admits a snake.

In this instance we ask for two states, we will call them \define{direction bits}  $Q = \{R,L\}$ standing for right and left. The machine $T_f$ has radius $F = \{ (i,j) \in \ZZ^2 \mid |i|+|j| \leq 1 \}$ and acts as follows:

\begin{itemize}
	\item Let $t$ be the tile at $(0,0)$. If $t = \epsilon$, do nothing.
	\item Otherwise:
	\begin{itemize}
		\item If the state is $L$. Check the tile in the direction $\texttt{left}(t)$. If it matches correctly with $t$ move the head to that position, otherwise switch the state to $R$.
		\item If the state is $R$. Check the tile in the direction $\texttt{right}(t)$. If it matches correctly with $t$ move the head to that position, otherwise switch the state to $L$
	\end{itemize}
\end{itemize}

The machine $T_f$ is reversible and its inverse is given by the machine which does the same but switches the roles of $R$ and $L$. If $(T,\texttt{left},\texttt{right})$ admits a snake, it suffices to consider the configuration in $\Sigma^{\ZZ^2} \times X_{q}$ that contains an infinite snake and such that the head of the machine is positioned somewhere in the snake. Clearly $T_f$ walks to infinity in that configuration without repeating positions, thus showing that $T_f$ is a non-torsion element (recall we are using the moving head model). Conversely, if $(T,\texttt{left},\texttt{right})$ does not admit a snake, then there is an uniform bound on how far $T_f$ can walk from its starting position before encountering an error or entering a cycle and henceforth $T_f$ has finite order. This concludes the proof of undecidability of the torsion problem when the alphabet is arbitrary.

Now we explain how to pass to a fixed alphabet and how to avoid using states. For this we encode the tiles into squares of size $n \times n$. By having the bottom left corner of the coding contain $\begin{smallmatrix}1&1\\1&1\end{smallmatrix}$ and having no two adjacent $1$s elsewhere, we ensure that there is a unique way to ``parse'' a given tiling into encodings of squares. Clearly, the movements of the machine $T_f$ are now inflated by $n$ and anything which is not a valid coding of a tile is interpreted as the empty tile. Also, as there is a unique way to parse correct squares, one can also use the position of the head in the bottom left corner to code the states. Say, the lower left corner of $\begin{smallmatrix}1&1\\1&1\end{smallmatrix}$ codes $L$ and the lower right corner of $(1,0)$ codes $R$. Thus the state change amounts to a shift by either $(1,0)$ or $(-1,0)$.

The inconvenient part of the previous construction is that the subgroup of machines defined by it is not necessarily finitely generated. Thus a priori it might be the case that every finitely generated subgroup of $\FG(\ZZ^2,n,k)$ still has decidable torsion problem. In order to show this does not hold we construct a finite set of machines which simulates the previous construction. For this we are going to use a specific $7 \times 7$ square coding which is shown on Figure~\ref{snake_tile_code}. This coding is composed of three zones. The \define{outer zone} consists of a ring of $1$s of side length $7$ which serves to code unambiguously the boundary of the structure. The four bottom left $1$s of this zone are used to code the states, this is obtained by forcing the head of the machine to always stay in one of these positions modulo $\ZZ^2 /7\ZZ^2$. The \define{middle zone} consists of a ring of $0$s of side length $5$ which serves to separate the three zones so no ambiguity is possible. Finally there is the \define{inner zone} consisting of a $3 \times 3$ square containing a configuration in $\{0,1\}^9$. Four of these bits $l_1,l_2,r_1,r_2$ serve to code two directions in $D = \{(1,0),(-1,0),(0,1),(0,-1)\}$. The rest of the bits are going to be specified later on.
\begin{figure}[h!]
	\centering
	\begin{tikzpicture}[scale=0.50]

\draw[fill = blue!10] (0,0) rectangle (7,7);
\draw[fill = blue!30] (0,0) rectangle (4,1);
\draw[fill = red!10] (1,1) rectangle (6,6);
\draw[fill = green!30] (2,2) rectangle (5,5);
\foreach \i in {0,...,5 }{
	\node at (0.5,\i+0.5) {$1$};
	\node at (6.5,1+\i+0.5) {$1$};
	\node at (\i+1.5,0.5) {$1$};
	\node at (\i+0.5,6.5) {$1$};
}

\foreach \i in {1,...,4 }{
	\node at (1.5,\i+0.5) {$0$};
	\node at (5.5,1+\i+0.5) {$0$};
	\node at (\i+1.5,1.5) {$0$};
	\node at (\i+0.5,5.5) {$0$};
}
\node at (2.5,2.5) {$l_1$};
\node at (3.5,2.5) {$l_2$};
\node at (2.5,3.5) {$r_1$};
\node at (3.5,3.5) {$r_2$};
\node at (2.5,4.5) {$b_1$};
\node at (3.5,4.5) {$b_2$};
\node at (4.5,4.5) {$b_3$};
\node at (4.5,3.5) {$b_4$};
\node at (4.5,2.5) {$b_5$};
\draw (0,0) grid (7,7);
\end{tikzpicture}
	\caption{Basic coding of the construction. The outer ring of $1$s (blue) codes the boundary of the cell and the state. The middle ring of $0$s separates the zones The inner ring (green) codes the information.}
	\label{snake_tile_code}
\end{figure}

For this construction we are going to use a two bit string $s \in \{0,1\}^2$ as the set of states (which is to be coded by the head position amongst the four fixed places in the outer ring of $1$s). The first bit is the \define{direction bit}, that is, it takes the role of $L$ and $R$ for the first construction. The second bit is the \define{auxiliary bit}, whose role will become clear later on.

Let $C$ be the set of all patterns of shape as in Figure~\ref{snake_tile_code} centered in one of four fixed positions in the ring of $1$s --that is, such that the support is of the form $([-i,7-i]\times[0,7]) \cap \ZZ^2$ for some $i \in \{0,1,2,3\}$--. We consider the following finite set of machines as our generating set $S$.

\begin{enumerate}
	\item $\{T_{\vec v}\}_{v \in D}$ that walks in the direction $\vec v \in D$ independently of the configuration.
	\item $T_{\mathtt{walk}}$ that walks along the direction codified by $l_1,l_2$ or $r_1,r_2$ depending on the direction bit.
	\item $\{g_c\}_{c \in C}$ that flips the direction bit if the current pattern is $c \in C$,
	\item $\{h_c\}_{c \in C}$ that flips the auxiliary bit if the current pattern is $c \in C$,
	\item $\{g_{+,c}\}_{c \in C}$ that adds the auxiliary bit to the direction bit if the current pattern is $c \in C$, and
	\item $\{h_{+,c}\}_{c \in C}$ that adds the direction bit to the auxiliary bit if the current pattern is $c \in C$,
\end{enumerate}

The machine $T_{\mathtt{walk}}$ is the only one which needs to be carefully defined. It acts similarly to $T_f$ defined in the beginning. Formally it does the following:
\begin{itemize}
	\item If the pattern around the identity does not correspond to a $c \in C$, do nothing.
	\item Otherwise:
	\begin{itemize}
		\item If the direction bit is $0$ check the pattern centered in $7 \texttt{left}(t)$ from the actual head position. If it is a valid $c' \in C$ in the same state and its two right bits code $-\texttt{left}(t)$ then move the head by $7\texttt{left}(t)$. Otherwise flip the direction bit to $1$.
		\item If the direction bit is $1$. Check the pattern centered in $7 \texttt{right}(t)$ from the actual head position. If it is a valid $c' \in C$ in the same state and its two left bits code $-\texttt{right}(t)$ then move the head by $7\texttt{right}(t)$. Otherwise flip the direction bit to $0$.
	\end{itemize}
\end{itemize}

The machines $T_{\vec{v}}$ simply act as the shift by $\vec{v} \in D$ which is clearly reversible. As $\langle D \rangle = \ZZ^2$ we have that for every vector $\vec{u} \in \ZZ^2$ the machine $T_{\vec{u}}$ which moves the head by $\vec{u}$ belongs to $\langle S \rangle$.

Let $p^*$ be a pattern consisting of the concatenation of patterns from $c$ which are well aligned along the columns and lines of $1$s. More formally, for a finite $F \subset \ZZ^2$, $p^*$ is a pattern with support $7F +(([-i,7-i]\times[0,7]) \cap \ZZ^2)$ for some $i \in \{0,1,2,3\}$ and such that for every $\vec{v} \in F$ then $\sigma^{-7\vec{v}}(p^*)|_{([-i,7-i]\times[0,7]) \cap \ZZ^2} \in C$. We define $g_{p^*}$ and $h_{p^*}$ as the machines which flip the direction bit and the auxiliary bit respectively if they read $p^*$. We claim $g_{p^*}, h_{p^*} \in \langle S\rangle$. If $p^*$ is defined by some singleton $F = \{\vec{v}\}$ it suffices to note that $g_{p^*} = T_{-7\vec{v}}\circ g_c \circ T_{7\vec{v}}$ and $h_{p^*} = T_{-7\vec{v}}\circ h_c \circ T_{7\vec{v}}$ for the appropriate $c \in C$. Inductively, we can choose $\vec{v} \in F$ and separate $p^*$ as the disjoint union of the pattern $p^*_{F \setminus \{\vec{v}\}}$ defined by $F \setminus \{\vec{v}\}$, and the pattern $p^*_{\vec{v}}$ defined by $\vec{v}$ and thus write: $$g_{p^*} = (T_{-7\vec v} \circ g_{+,c} \circ T_{7\vec v} \circ h_{p^*_{F \setminus \{\vec{v}\}}})^2, \mbox{ and } h_{p^*} = (T_{-7\vec v} \circ h_{+,c} \circ T_{7\vec v} \circ g_{p^*_{F \setminus \{\vec{v}\}}})^2. $$

Consider an instance $(T,\texttt{left},\texttt{right})$ of the snake tiling problem. The information associated to each tile $ t\in T$ consists of a $4$-tuple of colors $(c_1,c_2,c_3,c_4)$ and the directions $\texttt{left}(t)$ and $\texttt{right}(t)$. Suppose the tiles of $T$ are defined using $N$ colors. Let $M \in \NN$ such that $M^2 > \log_{2}(N)$. We define for each $t \in T$ a macrotile $\mathcal{M}(t)$ as a fixed square array of patterns of shape as in Figure~\ref{snake_tile_code} of side length $M$ (see Figure~\ref{snake_macrotile_code}). We fix an enumeration of these patterns from the bottom left to the upper right as $\{c_{j}\}_{1 \leq j \leq M^2}$ and denote the bit $b_i$ of $c_j$ as $b_{i,j}$. We demand $\mathcal{M}(t)$ to satisfy the following properties:

\begin{itemize}
	\item For $i \in \{1,2,3,4\}$ the sequence of bits $\{b_{i,j}\}_{1 \leq j \leq M^2}$ codifies the color $c_i$.
	\item $b_{5,1} = 1$ and for all $j > 1$ the bit $b_{5,j} = 0$.
	\item The bits $l_1,l_2$ and $r_1,r_2$ of $c_1$ code $\texttt{left}(t)$ and $\texttt{right}(t)$ respectively.
	\item  If $\texttt{left}(t) =(1,0)$ then for all $2 \leq j \leq M$ we have that $l_1,l_2$ and $r_1,r_2$ of $c_j$ code $(1,0)$ and $(-1,0)$ respectively.
	\item  If $\texttt{right}(t) =(1,0)$ then for all $2 \leq j \leq M$ we have that $l_1,l_2$ and $r_1,r_2$ of $c_j$ code $(-1,0)$ and $(1,0)$ respectively.
	\item If $\texttt{left}(t) =(0,1)$ then for all $1 \leq j \leq M-1$ we have that $l_1,l_2$ and $r_1,r_2$ of $c_{1+jM}$ code $(0,1)$ and $(0,-1)$ respectively.
	\item If $\texttt{right}(t) =(0,1)$ then for all $1 \leq j \leq M-1$ we have that $l_1,l_2$ and $r_1,r_2$ of $c_{1+jM}$ code $(0,-1)$ and $(0,1)$ respectively.
\end{itemize}

As $M^2 > \log_{2}(N)$ it is possible to satisfy the first requirement. The rest are possible to satisfy as $\texttt{left}(t) \neq \texttt{right}(t)$ . An example of such a macrotile is represented in Figure~\ref{snake_macrotile_code}.

\begin{figure}[h!]
	\centering
	\begin{tikzpicture}[scale=0.15]

\def\baldosita{
\draw[fill = blue!10, draw = black!80] (0,0) rectangle (7,7);
\draw[fill = red!10, draw = black!50] (1,1) rectangle (6,6);
\draw[fill = green!30, draw = black!50] (2,2) rectangle (5,5);
\draw[fill = green!70, draw = none] (2,2) rectangle (4,4);
\draw[fill = green!10, draw = none] (4,2) rectangle (5,3);
\draw[fill = none, draw = black!50] (2,2) rectangle (5,5);
}

\node at (-5,21) {$\mathcal{M}(t) =$};
\foreach \i in {0,1,2,3,4,5}{
	
	\foreach \j in {0,1,2,3,4,5}{
		
		\begin{scope}[shift = {(7*\i,7*\j)}]
		\baldosita;
		\end{scope}
	}
}

\draw[fill = black!80] (4,2) rectangle (5,3);

\draw[thick, ->, red] (1,3) to (6,3);
\draw[thick, ->, blue] (3,1) to (3,6);
\foreach \i in {1,2,3,4,5}{
		\begin{scope}[shift = {(7*\i,0)}]
		\draw[thick, ->, red] (1,3) to (6,3);
		\draw[thick, <-, blue] (1,4) to (6,4);
		\end{scope}
		\begin{scope}[shift = {(0,7*\i)}]
		\draw[thick, ->, blue] (3,1) to (3,6);
		\draw[thick, <-, red] (4,1) to (4,6);
		\end{scope}
}

\end{tikzpicture}
	\caption{An example of macrotile $\mathcal{M}(t)$ of side $M = 6$. The red arrows represent the function $\texttt{left}(t) = (1,0)$ while the blue arrows represent $\texttt{right}(t) = (0,1)$. The bottom left black square represents $b_{5,1} = 1$.}
	\label{snake_macrotile_code}
\end{figure}
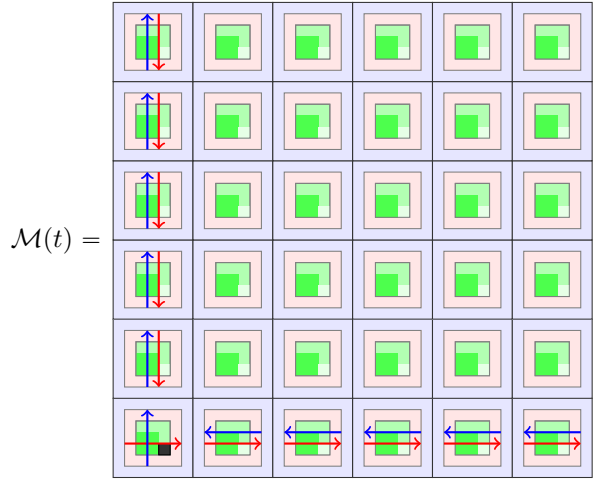

Associate all arrays of $M \times M$ codings which do not represent some $t \in T$ to the $\epsilon$ tile. Also, let $\mathcal{M}$ be the set of all patterns given as an array of $3 \times 3$ macrotiles which represent a valid local pattern of the snake problem and such that the middle tile is not an $\epsilon$ tile and are centered in the bottom left position of the middle macrotile.

Consider the machine $T^* \in \langle S \rangle$ given by: $$T^* = (T_{\mathtt{walk}})^M \circ \prod_{p^* \in \mathcal{M}}g_{p^*} \circ \prod_{c \in C} g_c$$

We claim that $T^*$ is a torsion element if and only if $(T,\texttt{left},\texttt{right})$ does not admit a snake.

If $(T,\texttt{left},\texttt{right})$ admits a snake, it suffices to take a configuration with a snake, replace each tile and $\epsilon$ in it by a corresponding macrotile and put the head of the machine in the lower left corner of a macrotile belonging to the snake. The machine $T^*$ will first detect some pattern $c \in C$, so exactly one $g_c$ will flip the direction bit once. Then it will detect a valid pattern $p^*$ of the snake problem and thus $g_{p^*}$ will flip again the direction bit amounting to no action at all. Finally, $(T_{\mathtt{walk}})^M$ will just walk towards the lower left corner of the next macrotile. As the initial configuration codes a snake, repeating this procedure will make $T^*$ walk to infinity, and thus $T^*$ is not a torsion element.

For the converse, we need to analyze more carefully the behavior of $T^*$. First of all, if in the initial configuration the head is not over a pattern $c \in C$, then $T^*$ by definition acts trivially. Otherwise, suppose the head is over a $c \in C$. The application of $\prod_{p^* \in \mathcal{M}}g_{p^*} \circ \prod_{c \in C} g_c$ can at most do a state change, and thus the head still sees some $c' \in C$ afterwards. Also, by definition of $T_{\texttt{walk}}$, the head will always see an element of $C$ after applying $(T_{\texttt{walk}})^M$. This means that the head will always be seeing a pattern in $C$ after applying $T^*$.

There are two possible behaviors of $T^*$ starting from a pattern in $C$. If the head is not over a valid array of macrotiles in $\mathcal{M}$ then the direction bit is flipped by $g_{c}$, the second part does nothing, and $T_{\mathtt{walk}}$ is applied $M$ times. Otherwise the direction bit is flipped two times, amounting to no flip at all and $T_{\mathtt{walk}}$ is applied $M$ times.

These two behaviors translate into the following: If the head is over a valid array of macrotiles in $\mathcal{M}$ then $T^*$ can either move into another valid array (and correctly simulate the working of $T_f$ defined at first in the proof), or it can fall outside a valid array of macrotiles. It it does this, then another application of $T^*$ undoes the last $M$ steps of $T_{\mathtt{walk}}$ and changes the direction bit. Therefore the machine continues to live inside a valid array of $\mathcal{M}$ and simulate $T_f$. In this case we can use the uniform bound on the length of the snake to find a bound $N$ such that $(T^*)^N$ acts trivially over all these configurations. The only case remaining is when initially the head is not over an array in $\mathcal{M}$ and after one application of $T^*$ it stays that way. In this case, we just have that $(T^*)^2$ acts trivially over these configurations. Thus showing that $(T^*)^{2N} = id$ and thus $T^*$ is a torsion element of $\langle S \rangle$. \end{proof} 

In the special case where $k = 1$ the previous result can be expressed in dynamical terms. Namely, $\FG(\ZZd,n,1)$ is exactly the topological full group of the full $\ZZ^2$-shift on $n$ symbols. This yields Corollary~\ref{cor:fullgroup_undecidabletorsion}, that is, for every $d \geq 2$ the topological full group of the full $\ZZ^2$-shift on $n$ symbols contains a finitely generated subgroup with undecidable torsion problem.

\section{Acknowledgments}

S.\ Barbieri was partially supported by the ANR projects CODYS (ANR-18-CE40-0007), CoCoGro (ANR-16-CE40-0005) and the FONDECYT grant 11200037. V.\ Salo was partially supported by FONDECYT grant 3150552 and Academy of Finland grant 2608073211.


\bibliographystyle{abbrv}
\bibliography{bib}

\end{document}